
\documentclass{amsart}%
\usepackage{amsmath, amssymb, amsfonts}
\usepackage{graphicx}
\usepackage{amsmath}
\usepackage{amsfonts}
\usepackage{amssymb}%
\setcounter{MaxMatrixCols}{30}
\providecommand{\U}[1]{\protect \rule{.1in}{.1in}}
\newtheorem{theorem}{Theorem}[section]

\newtheorem{axiom}[theorem]{Axiom}

\newtheorem{corollary}[theorem]{Corollary}

\newtheorem{definition}[theorem]{Definition}

\newtheorem{lemma}[theorem]{Lemma}
\newtheorem{notation}[theorem]{Notation}

\newtheorem{proposition}[theorem]{Proposition}
\newtheorem{remark}[theorem]{Remark}

\newcommand*{\st}
{\mathrm{st}}
\newcommand*{\forallst}
{\forall^\mathrm{st}}

\begin{document}
\title{Axiomatics for the external numbers of nonstandard analysis}
\author{Bruno Dinis}
\address[B. Dinis]{Departamento de Matem\'{a}tica, Faculdade de Ci\^{e}ncias da
Universidade de Lisboa, Campo Grande, Ed. C6, 1749-016, Lisboa, Portugal}
\email{bmdinis@fc.ul.pt}
\author{Imme van den Berg}
\address[I.P. van den Berg]{Departamento de Matem\'{a}tica, Universidade de \'{E}vora, Portugal}
\email{ivdb@uevora.pt}
\thanks{The first author acknowledges the support of the Funda\c{c}\~{a}o para a
Ci\^{e}ncia e a Tecnologia, Portugal [grant SFRH/BPD/97436/2013]}
\subjclass[2010]{03H05, 03C65, 26E35}
\keywords{External numbers, axiomatics, nonstandard analysis, complete arithmetical solids}

\begin{abstract}
Neutrices are additive subgroups of a nonstandard model of the real numbers.
An external number is the algebraic sum of a nonstandard real number and a
neutrix. Due to the stability by some shifts, external numbers may be seen as
mathematical models for orders of magnitude. The algebraic properties of
external numbers gave rise to the so-called solids, which are extensions of
ordered fields, having a restricted distributivity law. However, necessary and
sufficient conditions can be given for distributivity to hold. In this article
we develop an axiomatics for the external numbers. The axioms are similar to,
but mostly somewhat weaker than the axioms for the real numbers and deal with
algebraic rules, Dedekind completeness and the Archimedean property. A
structure satisfying these axioms is called a complete arithmetical solid. We
show that the external numbers form a complete arithmetical solid, implying
the consistency of the axioms presented. We also show that the set of precise
elements (elements with minimal magnitude) has a built-in nonstandard model of
the rationals. Indeed the set of precise elements is situated between the
nonstandard rationals and the nonstandard reals whereas the set of non-precise
numbers is completely determined.

\end{abstract}
\maketitle

\section{Introduction}

Consider a nonstandard model of the real number system $^{\ast}\mathbb{R}$. A
\emph{neutrix} is an additive convex subgroup of $^{\ast}\mathbb{R}$ and an
\emph{external number} is the algebraic sum of a nonstandard real number with
a neutrix. In such a nonstandard framework there are many neutrices such as
$\oslash$, the external set of all infinitesimals, and $\pounds $, the
external set of all limited numbers, i.e. numbers bounded in absolute value by
a standard number.

Typically external numbers have neither infimum nor supremum, being stable for
some translations, additions and multiplications. As argued in
\cite{koudjetivandenberg} and \cite{dinisberg 2011}, they are models of orders
of magnitude or transitions with imprecise boundaries of Sorites type
\cite{dinis 2017}. With external numbers it is possible to work directly with
imprecisions and errors without recourse to upper bounds. They generate a
calculus of propagation of errors not unlike the calculus of real numbers,
allowing for total order and even for a sort of generalized Dedekind
completeness property. Some applications in asymptotics, singular
perturbations, linear algebra and statistics are contained in
\cite{koudjetivandenberg}, \cite{Justino} and \cite{neutrixdecompositie}, and
the references mentioned in the latter article. The term neutrix is borrowed
from Van der Corput \cite{Van der Corput}. His neutrices are rings of
"neglectable functions". The calculation rules satisfied by the external
numbers are significantly stronger than the functional asymptotic calculus of
$o$'s and $O$'s \cite{debruijn} and Van der Corput's neutrices which for
instance do not respect total order.

Algebraic properties of external numbers have been studied in
\cite{koudjetithese}, \cite{koudjetivandenberg} and \cite{dinisberg 2011}.
Respecting total order, they are based on semigroup operations more than group
operations, a sort of "mellowed" version of the common rules of calculation of
real numbers. In \cite{dinisberg 2015 -1} it was shown that the set of cosets
of non-Archimedean ordered fields with respect to all possible convex
subgroups for addition has a similar algebraic structure. Such structures were
called \emph{solids}. Elements of a solid are the sum of a \emph{precise}
element and a \emph{magnitude}. The magnitudes act as individualized neutral
elements and correspond to the convex subgroups, and the precise elements,
i.e. elements with magnitude "zero", to the elements of the underlying ordered
field. Solids are not completely distributive, but necessary and sufficient
conditions can be given for triples of elements to satisfy distributivity
\cite{dinisberg 2015 -2}.

In this article we extend the axiomatic laws of solids. The axioms added
include multiplicative properties of neutrices, a generalized Dedekind
completeness property, and an Archimedean property. The multiplicative axioms
are inspired by the results of \cite{koudjetithese} and
\cite{koudjetivandenberg} on neutrices which are idempotent for
multiplication. The axioms determine the product of idempotent neutrices, in
fact of all neutrices, because it is postulated that every neutrix is a
multiple of an idempotent neutrix. In $ZFC$ the structure of real numbers
$\mathbb{R}$ is characterized up to isomorphism in a second-order language, as
the unique Dedekind complete ordered field, or equivalently as the unique
Archimedean complete ordered field in which Cauchy sequences converge. However
second-order properties of nonstandard models of the reals are less obvious.
Therefore we intend to remain in a first-order language and so the generalized
Dedekind completeness axiom is stated in the form of a scheme. In order to
deal with the Archimedean property we assume some Peano-like axioms, including
a scheme on induction. Due to this Archimedean property models must include a
copy of the nonstandard integers, hence of the nonstandard rationals, and due
to the generalized Dedekind completeness it must be possible to embed models
in the nonstandard reals.

We prove consistency of the axiomatics in the setting of a nonstandard model
$^{\ast}\mathbb{R}$, which has the form of an adequate ultralimit for a
bounded version of Nelson's syntactical Reduction Algorithm \cite{Nelsonist}
to hold. Admissible models will be called \emph{complete arithmetical solids}.
Up to isomorphism, once the magnitudes are specified the set of non-precise
numbers of a complete arithmetical solid is completely determined as sums of a
nonstandard rational and a magnitude. For the set of precise elements of a
model we give upper and lower bounds, in fact the precise elements are
situated between the nonstandard rationals and the nonstandard reals. As a
result, complete arithmetical solids come closer to a syntactical
characterization of the external numbers than the solids of \cite{dinisberg
2015 -2} which can also be built on non-Archimedean ordered fields.

There have been various attempts to deal with the external algebraic and order
structure of the real line. Wattenberg \cite{Wattenberg} and Gonshor
\cite{Gonshor} developed a calculus of the lower halflines of the nonstandard
real line. However this gave rise to a less rich algebraic structure, and also
they do not consider completeness properties of Dedekind kind. Keisler and
Schmerl \cite{KeislerSchmerl} consider two other completeness properties of
the external real line in a model-theoretic setting, Scott completeness and
Bolzano-Weierstrass completeness, without developing an algebraic calculus.
From an axiomatic point of view Scott completeness and Bolzano-Weierstrass
completeness were reconsidered in \cite{kanoveireeken0}.

For sake of clarity and reference we start in Section \ref{Section Axioms}
with an overview of all the axioms.

The axiom scheme on Generalized Dedekind completeness is discussed in Section
\ref{Section Dedekind completeness}. It is stated in terms of precise numbers,
but typically concerns halflines which are not precise, i.e., stable under
some shifts. For precise definable halflines Generalized Dedekind completeness
reduces to ordinary Dedekind completeness, i.e., halflines including the
extremum, and halflines with the extremum just beyond. When applied to
halflines of a solid (thus including non-precise numbers), three types of
halflines occur, instead of two; it is shown that they are mutually exclusive.

With Generalized Dedekind completeness one can define the minimal magnitude
including unity, denoted by $\pounds $, and the maximal magnitude without
unity, denoted by $\oslash$. Section
\ref{Section Limited numbers infinitesimals} shows that their algebraic
properties correspond to a large extent to the properties of the limited
numbers, respectively the infinitesimals in a nonstandard model of the reals.
However they are somewhat weaker, and it is not possible to decide whether
$\oslash \pounds =\oslash$ or $\oslash \pounds =\pounds $. In Section
\ref{Section Product magnitudes} we choose the product of $\oslash$ and
$\pounds $ to be $\oslash$, in accordance with the fact that the product of an
infinitesimal and limited real is infinitesimal. The axiom is stated in terms
of the product of an idempotent magnitude with unity and its maximal ideal.
Ideals are defined by analogy to ideals of rings, and the existence of maximal
ideals follows from Generalized Dedekind Completeness. An axiom that says that
an arbitrary magnitude is a multiple of an idempotent magnitude enables to
settle the product of any two neutrices.

In Section \ref{section on consistency} we show relative consistency of our
axiomatics with $ZFC$ by the construction of a model based on external subsets
of an appropriate nonstandard model of the reals.

In Section \ref{Section Characterization properties} we show that, up to
isomorphism, the precise elements of a model are situated between the
nonstandard rationals and the nonstandard reals. In a sense, it is also
possible to identify a notion of standard part, here called shadow. It is
shown that the shadows of the precise elements are situated between the
shadows of the nonstandard rationals and the shadows of the nonstandard reals.
Finally we investigate the relation between the standard structure and the
nonstandard structure of a complete arithmetical solid. More precisely, we
show that the Leibniz rules hold for the precise elements of a complete
arithmetical solid, i.e. the precise elements are a model of the axiomatics
$ZFL$ \cite{Lutz}, \cite{Callot}, and that the "natural numbers" in a solid
are a model for the axiomatics given by Nelson in \cite{REPT} here called
$REPT$, but with external induction restricted to the language $\left \{
+,\cdot \right \}  $. These ``weak'' nonstandard axiomatics are not without
interest. Indeed, in \cite{Callot} Callot showed that a substantial part of
ordinary analysis can be carried out in $ZFL$ and in \cite{REPT} Nelson argued
that $REPT$ is sufficient for advanced stochastics.

In this way, our axiomatic approach gives rise to an alternative way to build
nonstandard real numbers in which, unlike Nelson's approaches \cite{Nelsonist}%
, \cite{REPT} the infinitesimals are not postulated through a new undefined
symbol, but by the existence of magnitudes. It does not have the force of
$REPT$ but has the advantage of being able to deal with some external sets.
Indeed, complete arithmetical solids in a sense incorporate certain external
sets in the form of elements, in particular external sets relevant for
calculatory aspects. Our approach enters also in the tradition of the usual
axiomatic presentation of the real numbers by field axioms, an axiom on
completeness, and possibly an axiom on the Archimedean property.

\section{The Axioms\label{Section Axioms}}

The axioms will come in groups. The first group deals with algebraic
properties. The algebraic axioms consist of axioms for addition, axioms for
multiplication, axioms for the order, axioms relating addition and
multiplication, axioms of existence guaranteeing among other things that
models are richer than fields and axioms on the value of products of
magnitudes. Then we present an axiom scheme on a generalized Dedekind
completion and finally a group of axioms, including a scheme, on natural
numbers and the Archimedean property.

We will present the axioms in a first-order language. Addition, multiplication
and order will be presented in the language $\{+,\cdot,\leq \}$, later on we
add a unary predicate $N$ to deal with natural numbers. Neutrices are
represented by \emph{magnitudes} which are individualized neutral elements. As
such the individualized neutral elements are unique. The proof is similar to
the proof of the uniqueness of neutral elements in groups (see \cite{dinisberg
2011}). Often it is convenient to use the functional notation $e(x)$ to
indicate the individualized neutral element of the element $x$. The
individualized neutral elements for multiplication (unities) are also unique
and we may use the functional notation $u(x)$. With respect with the
individualized neutral element the symmetrical element is also unique. We may
denote it by $s(x)$ or $-x$ in the case of addition and $d\left(  x\right)  $
or $1/x$ in the case of multiplication.

\subsection{Algebraic axioms\label{Subsection Algebraic}}

Axioms for addition and multiplication are similar and gave rise to the notion
of \emph{assembly} in \cite{dinisberg 2011}. An assembly is a completely
regular semigroup (union of groups), in which the magnitude operation is
linear. A structure satisfying Axioms \ref{assemblyassoc} -
\ref{scheiding neutrices} was called a \emph{solid} in \cite{dinisberg 2015
-1}.

\begin{enumerate}
\item[1.] \textbf{Axioms for addition.}
\end{enumerate}

\begin{axiom}
\label{assemblyassoc}$\forall x\forall y\forall z(x+\left(  y+z\right)
=\left(  x+y\right)  +z).$
\end{axiom}

\begin{axiom}
\label{assemblycom}$\forall x\forall y(x+y=y+x).$
\end{axiom}

\begin{axiom}
\label{assemblyneut}$\forall x\exists e\left(  x+e=x\wedge \forall f\left(
x+f=x\rightarrow e+f=e\right)  \right)  .$
\end{axiom}

\begin{axiom}
\label{assemblysim}$\forall x\exists s\left(  x+s=e\left(  x\right)  \wedge
e\left(  s\right)  =e\left(  x\right)  \right)  .$
\end{axiom}

\begin{axiom}
\label{assemblye(xy)}$\forall x\forall y\left(  e\left(  x+y\right)  =e\left(
x\right)  \vee e\left(  x+y\right)  =e\left(  y\right)  \right)  .$
\end{axiom}

It follows from Axiom \ref{assemblyneut} and Axiom \ref{assemblye(xy)} that
$e\left(  x+y\right)  =e\left(  x\right)  +e\left(  y\right)  $, i.e. the
magnitude operation is linear.

\begin{enumerate}
\item[2.] \textbf{Axioms for multiplication}
\end{enumerate}

\begin{axiom}
\label{axiom assoc mult}$\forall x\forall y\forall z(x\left(  yz\right)
=\left(  xy\right)  z).$
\end{axiom}

\begin{axiom}
\label{axiom com mult}$\forall x\forall y(xy=yx).$
\end{axiom}

\begin{axiom}
\label{axiom neut mult}$\forall x\neq e\left(  x\right)  \exists u\left(
xu=x\wedge \forall v\left(  xv=x\rightarrow uv=u\right)  \right)  .$
\end{axiom}

\begin{axiom}
\label{axiom sym mult}$\forall x\neq e\left(  x\right)  \exists d\left(
xd=u\left(  x\right)  \wedge u\left(  d\right)  =u\left(  x\right)  \right)
.$
\end{axiom}

\begin{axiom}
\label{axiom u(xy)}$\forall x\neq e\left(  x\right)  \forall y\neq e\left(
y\right)  \left(  u\left(  xy\right)  =u\left(  x\right)  \vee u\left(
xy\right)  =u\left(  y\right)  \right)  .$
\end{axiom}

Again we have $u\left(  xy\right)  =u\left(  x\right)  u\left(  y\right)  $.

\begin{enumerate}
\item[3.] \textbf{Order axioms}
\end{enumerate}

Axioms \ref{(OA)reflex}-\ref{(OA)total} state that "$\leq$" is a total order
relation. Axiom \ref{(OA)compoper}\ states that the order relation is
compatible with addition. The last two axioms state that the order relation is
compatible with the multiplication by positive elements. With respect with
classical order axioms, essentially the only new axiom is Axiom
\ref{Axiom e(x)maiory}. This axiom states that if an element is "small", in
the sense that if it gets absorbed when added to a certain magnitude, then it
is also smaller than that magnitude in terms of the order.

\begin{axiom}
\label{(OA)reflex}$\forall x(x\leq x).$
\end{axiom}

\begin{axiom}
\label{(OA)antisym}$\forall x\forall y(x\leq y\wedge y\leq x\rightarrow x=y).
$
\end{axiom}

\begin{axiom}
\label{(OA)trans}$\forall x\forall y\forall z(x\leq y\wedge y\leq z\rightarrow
x\leq z).$
\end{axiom}

\begin{axiom}
\label{(OA)total}$\forall x\forall y(x\leq y\vee y\leq x).$
\end{axiom}

\begin{axiom}
\label{(OA)compoper}$\forall x\forall y\forall z\left(  x\leq y\rightarrow
x+z\leq y+z\right)  .$
\end{axiom}

\begin{axiom}
\label{Axiom e(x)maiory}$\forall x\forall y\left(  y+e(x)=e(x)\rightarrow
\left(  y\leq e(x)\wedge-y\leq e(x)\right)  \right)  .$
\end{axiom}

\begin{axiom}
\label{compat mult}$\forall x\forall y\forall z\left(  \left(  e\left(
x\right)  <x\wedge y\leq z\right)  \rightarrow xy\leq xz\right)  .$
\end{axiom}

\begin{axiom}
\label{Axiom Amplification}$\forall x\forall y\forall z\left(  \left(
e\left(  y\right)  \leq y\leq z\right)  \rightarrow e\left(  x\right)  y\leq
e\left(  x\right)  z\right)  .$
\end{axiom}

\begin{enumerate}
\item[4.] \textbf{Axioms concerning addition and multiplication}
\end{enumerate}

The first three axioms state properties of magnitudes. Axiom
\ref{Axiom escala} states that the product of an element and a magnitude is a
magnitude. Axiom \ref{Axiom e(xy)=e(x)y+e(y)x} gives the magnitude of the
product and Axiom \ref{e(u(x))=e(x)d(x)} the magnitude of the individualized
unity. Axiom \ref{Axiom distributivity} states that the distributive law holds
up to a magnitude. Due to this restriction one needs to specify the symmetric
of product as done in Axiom \ref{Axiom s(xy)=s(x)y}.

\begin{axiom}
\label{Axiom escala}$\forall x\forall y\exists z(e(x)y=e(z)).$
\end{axiom}

\begin{axiom}
\label{Axiom e(xy)=e(x)y+e(y)x}$\forall x\forall y\left(
e(xy)=e(x)y+e(y)x\right)  .$
\end{axiom}

\begin{axiom}
\label{e(u(x))=e(x)d(x)}$\forall x\neq e(x)\left(  e(u(x))=e(x)/x\right)  .$
\end{axiom}

\begin{axiom}
\label{Axiom distributivity}$\forall x\forall y\forall z\left(  xy+xz=x\left(
y+z\right)  +e\left(  x\right)  y+e\left(  x\right)  z\right)  .$
\end{axiom}

\begin{axiom}
\label{Axiom s(xy)=s(x)y}$\forall x\forall y\left(  -(xy)=(-x)y\right)  .$
\end{axiom}

\begin{enumerate}
\item[5.] \textbf{Axioms of existence}
\end{enumerate}

Axioms \ref{Axiom neut min} gives the existence of a minimal magnitude which
we will denote by $0$. Elements $p$ such that $e(p)=0$ are called
\emph{precise}. Axiom \ref{Axiom neut mult} gives the existence of a minimal
unity which we denote by $1$. Axiom \ref{Axiom neut max} states that there
exists a maximal magnitude $M$, in fact, when constructing a model it
corresponds to its domain. Axiom \ref{existencia neutrices} states that there
exist magnitudes other than $0$ and $M$, implying that the domain of the model
can no longer be a field. Axiom \ref{numexterno} states that any element is
the sum of a precise element and a magnitude. Axiom \ref{scheiding neutrices}
states that two magnitudes are separated by an element which is not a
magnitude. Such an element is called \emph{zeroless}. It follows from the
existence of zeroless elements that $1\neq0$, hence also that a solid must
contain a copy of $\mathbb{Q}$.

\begin{axiom}
\label{Axiom neut min}$\exists m\forall x\left(  m+x=x\right)  .$
\end{axiom}

\begin{axiom}
\label{Axiom neut mult}$\exists u\forall x\left(  ux=x\right)  .$
\end{axiom}

\begin{axiom}
\label{Axiom neut max}$\exists M\forall x(e\left(  x\right)  +M=M).$
\end{axiom}

\begin{axiom}
\label{existencia neutrices}$\exists x\left(  e\left(  x\right)  \neq0\wedge
e\left(  x\right)  \neq M\right)  .$
\end{axiom}

\begin{axiom}
\label{numexterno}$\forall x\exists a\left(  x=a+e\left(  x\right)  \wedge
e\left(  a\right)  =0\right)  .$
\end{axiom}

\begin{axiom}
\label{scheiding neutrices}$\forall x\forall y(x=e\left(  x\right)  \wedge
y=e(y)\wedge x<y\rightarrow \exists z(z\neq e(z)\wedge x<z<y)).$
\end{axiom}

\begin{enumerate}
\item[6.] \textbf{Axioms on the product of magnitudes}
\end{enumerate}

Next axiom needs some preparatory definitions. A magnitude $e$ is
\emph{idempotent} if $ee=e$. Let $y$ be an idempotent magnitude such that
$1<y$. An \emph{ideal} $z$ of $y$ is a magnitude such that $z\leq y$ and
$\forall p(e\left(  p\right)  =0\wedge0\leq p<y\rightarrow pz\leq z$. An ideal
$x$ of $y$ is \emph{maximal} if $x<y$ and all ideals $z$ such that $x\leq
z\leq y$ satisfy $z=x$ or $z=y$.

In a semantic setting, the ideals defined above are elements and not sets,
such as in the usual algebraic interpretation of ideals of a ring. As will be
shown, the two notions of ideal are closely related. Maximal ideals happen to
be idempotent. The existence of maximal ideals in the setting of rings is
equivalent to the Axiom of Choice. The existence of maximal ideals in terms of
magnitudes will be a consequence of Axiom \ref{Axiom Dedekind completeness} below.

By Axiom \ref{Axiom escala} the product of magnitudes is a magnitude. The
value of the product is obtained by relating them to idempotent magnitudes.
Axiom \ref{Axiom scale to ring} states that a magnitude is the product of a
precise element and an idempotent magnitude. As it turns out, the value of all
products of idempotent magnitudes is determined by Axiom
\ref{Axiom Maximal Ideal}.

\begin{axiom}
\label{Axiom Maximal Ideal}Let $y$ be an idempotent magnitude such that $1<y$
and $x$ be the maximal ideal of $y$. Then $xy=x$.
\end{axiom}

\begin{axiom}
\label{Axiom scale to ring}$\forall x(x=e\left(  x\right)  \rightarrow \exists
p\exists y(e(p)=0\wedge y=e(y)\wedge yy=y\wedge x=py))$.
\end{axiom}

\subsection{Generalized Completeness axiom}

Axiom \ref{Axiom Dedekind completeness} gives a generalized notion of Dedekind
completeness for lower halflines of precise elements. In fact, the set of
precise elements which leave the halfline invariant defines a magnitude $e$;
so like magnitudes, halflines typically are stable under some additions. The
axiom takes the form of a scheme and states that a definable halfline has a
sort of lowest upper bound which is the sum of a precise element and $e$.
We will extend the completeness property in Section
\ref{Section Dedekind completeness} to halflines of non-precise elements. As
will be shown this generates three types of halflines instead of two.

\begin{axiom}
[Generalized Dedekind completeness]\label{Axiom Dedekind completeness} Let $A$
be a formula (possibly with non-precise parameters) allowing for a free
precise variable $x$ and quantifications only over precise variables, and such
that
\begin{equation}
\exists xA\left(  x\right)  \wedge \forall x\left(  A\left(  x\right)  \wedge
y<x\rightarrow A\left(  y\right)  \right)  \label{Condition A}%
\end{equation}
Then one of the following holds:
\end{axiom}

\begin{enumerate}
\item \label{Dedekind1}$\exists \sigma \forall x(A\left(  x\right)
\leftrightarrow x\leq \sigma)$.

\item \label{Dedekind2}$\exists \tau \forall x(A\left(  x\right)
\leftrightarrow \forall t(t+e(\tau)=\tau \rightarrow x<t)$.
\end{enumerate}

It will be shown that \ref{Dedekind1} and \ref{Dedekind2} are mutually
exclusive, and that $\sigma$ and $\tau$ are unique. They are called the
\emph{weak least upper bound} of $A$ and are denoted by $\operatorname*{zup}%
A$. Condition (\ref{Condition A}) expresses the lower-halfline property. If
$A$ is an arbitrary non-empty property, one may define $A^{\prime}$ by%
\[
A^{\prime}(x)\leftrightarrow \exists y(e(y)=0\wedge x\leq y\wedge A(y)).
\]
Then $A^{\prime}$ satisfies (\ref{Condition A}). We extend the notion of weak
supremum by defining $\operatorname*{zup}A=\operatorname*{zup}A^{\prime}$.
Working with upper halflines one may define in a similar way a \emph{weak
greatest lower bound} $\operatorname*{winf}$. It will be seen that both
notions can be appropriately extended to formulas of non-precise variables. We
use this possibility in the following.

We define $\Phi(e)$ respectively $\Psi(f)$ by
\[
e+e=e\wedge e<1\text{.}%
\]%
\[
f+f=f\wedge1<f\text{.}%
\]
Then we define%
\begin{equation}%
\begin{array}
[c]{c}%
\oslash=\operatorname*{zup}\Phi \\
\pounds =\operatorname*{winf}\Psi
\end{array}
. \label{Zerobar El}%
\end{equation}

So $\pounds $ is the minimal magnitude greater than $1$ and $\oslash$ is the
maximal magnitude less than $1$. We will see in Section
\ref{Section Limited numbers infinitesimals}\ that $\pounds $ and $\oslash$
are idempotent and that $\oslash$ is the maximal ideal of $\pounds $ in the
sense of Axiom \ref{Axiom Maximal Ideal}. It results from this axiom that
$\oslash \pounds =\oslash$.

\subsection{Arithmetical axioms}

The last group of axioms allows to distinguish between non-Archimedean ordered
structures and structures with a (nonstandard) archimedean property. We extend
the language with a symbol $N$ which is an unary predicate allowing for a free
precise variable $x$. The symbol $N$ is intended to represent the natural
numbers. In this sense Axiom \ref{Axiom natural numbers} states that there are
no negative natural numbers, $0$ is a natural number, the successor of a
natural number is a natural number, and that these are consecutive indeed,
i.e. between a natural number and its successor there is no other natural number.

\begin{axiom}
[Natural numbers]\label{Axiom natural numbers}%
\begin{align*}
\mathbb{\forall}x\mathbb{(}x  &  <0\rightarrow \lnot N(x))\wedge \\
N(0)\wedge \mathbb{\forall}x(N(x)  &  \rightarrow \mathbb{\forall}%
y\mathbb{(}x<y<x+1\rightarrow \lnot N(y))\wedge N(x+1)).
\end{align*}

\end{axiom}

It is clear that induction does not hold for all formulas. Indeed,
$0<\pounds $ and if $x<\pounds $, then $x+1<\pounds ,$ but there are elements
$x $ such that $\pounds <x$. It is well-known that within nonstandard analysis
one can only apply induction to the so-called internal formulas in the sense
of \cite{Nelsonist}. This means in our context that all parameters must be
natural numbers and also that all references to non-precise elements such as
$\pounds $ and $\oslash$ must be banned. To do so we allow induction in one
precise variable, only for properties with quantifications over precise
variables and with natural numbers as possible parameters.

\begin{axiom}
[Induction]\label{Axiom *s-induction}Let $A$ be a property expressed with the
symbols $0$, $1$, $+$ and $\cdot$, allowing for a free precise variable $x$
and quantifications only over precise variables with all its parameters $y$
satisfying $N(y)$. Then%
\[
(A(0)\wedge \mathbb{\forall}x\mathbb{(}N(x)\rightarrow(A(x)\rightarrow
A(x+1)))\rightarrow \mathbb{\forall}x\mathbb{(}N(x)\rightarrow A(x)).
\]

\end{axiom}

In Section \ref{Section Characterization properties} it is shown that for a
larger class of formulas induction holds over the natural numbers less than
$\pounds $.

The last axiom states the Archimedean property for the natural numbers given
by Axiom \ref{Axiom natural numbers}.

\begin{axiom}
[Archimedean property]\label{axiom archimedes2}%
\[
\forall x\forall y(0<x<y\rightarrow \exists z(N(z)\wedge zx>y)).
\]

\end{axiom}

A set $S$ satisfying all the axioms given above will be called a
\emph{complete arithmetical solid}.

\section{Generalized Dedekind
completeness\label{Section Dedekind completeness}}

Dedekind completeness is the property that every Dedekind cut of the real
numbers is generated by a real number, corresponding to the intuition that the
real line has no \textquotedblleft gaps\textquotedblright. Axiom
\ref{Axiom Dedekind completeness} gives a generalization of the Dedekind
completeness property. Because it is not excluded that some nonstandard models
of the real line do have gaps \cite{KeislerSchmerl}, it is written in the form
of an axiom scheme in a first-order language, i.e. without recurring to
subsets. The axiom scheme is stated for properties of precise elements. In
fact generalized completeness may be extended to properties of arbitrary
elements. As we will see this generates three types of halflines instead of two.

We will call a solid \emph{complete} if it satisfies the Axioms
\ref{assemblyassoc}-\ref{Axiom Dedekind completeness}. Let $S$ be a complete
solid. Let $A$ be a formula of the variable $x$. By a matter of convenience we
identify $A$ with its interpretation in the set $S$. So $x\in A$ is the
interpretation of $A\left(  x\right)  $ and $x\notin A$ the interpretation of
$\lnot A\left(  x\right)  $.

\begin{definition}
\label{Definition halfline}Let $S$ be a complete solid and let $\emptyset \neq
A,B\subseteq S$. Then $A$ is said to be a \emph{lower halfline }if $x\in A$
and $y<x$ imply that $y\in A$ and $B$ is said to be an \emph{upper halfline}
if $x\in B$ and $x<y$ imply that $y\in B$. A lower halfline $A$ is
\emph{precise} if there is no precise positive $d$ such that $a+d\in A$ for
all $a\in A$.
\end{definition}

\begin{remark}
\label{Complementar halfline}If $A\neq S$ is a lower halfline then
$B=S\backslash A\neq \emptyset$ is an upper halfline and vice-versa.
\end{remark}

\begin{theorem}
\label{Theorem Dedekind}If $A$ is a definable lower halfline in a complete
solid $S$, it has one of the following forms:
\end{theorem}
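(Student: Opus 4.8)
The plan is to descend to the precise elements, apply the completeness scheme there, and then show that the full halfline is essentially forced by its precise part.

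Put $A^{0}=\{x\in A:e(x)=0\}$. I would first check that $A^{0}$ is a non-empty lower halfline of precise elements: if $a\in A$, Axiom~\ref{numexterno} gives $a=p+e(a)$ with $p$ precise, and $p\le a$ since $0\le e(a)$ and $a=p+e(a)$ (by Axioms~\ref{Axiom neut min}, \ref{Axiom e(x)maiory} and~\ref{(OA)compoper}); by downward closure $p\in A$, so $p\in A^{0}$, and closure of $A^{0}$ under smaller precise elements is trivial. Hence $A^{0}$ satisfies~\eqref{Condition A}, so Axiom~\ref{Axiom Dedekind completeness} applies: either case~\ref{Dedekind1} holds, giving $\sigma=\operatorname*{zup}A^{0}$ with $A^{0}=\{p:e(p)=0\wedge p\le\sigma\}$, or case~\ref{Dedekind2} holds, giving $\tau$ with $A^{0}=\{p:e(p)=0\wedge\forall t(t+e(\tau)=\tau\rightarrow p<t)\}$.

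The heart of the proof is a ``sandwiching'' of $A$ between $\{x:x<\sigma\}$ and $\{x:x\le\sigma\}$ (resp.\ the analogue with $\tau$), using that the precise elements are order-dense in $S$ --- they contain a copy of $\mathbb{Q}$ and, by Axiom~\ref{scheiding neutrices}, no magnitude creates a jump. Concretely: if $x$ exceeds $\sigma$, then, writing $x=p+e(x)$, one exhibits a precise $q$ with $\sigma<q\le x$; were $x\in A$, downward closure would force $q\in A^{0}$, contradicting the description of $A^{0}$. Dually, any $x$ lying strictly below $\sigma$ has a precise $q$ with $x<q\le\sigma$, hence $q\in A^{0}\subseteq A$ and so $x\in A$. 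In case~\ref{Dedekind1} this gives $\{x:x<\sigma\}\subseteq A\subseteq\{x:x\le\sigma\}$, and since these two sets differ only in the single element $\sigma$, $A^{0}$ determines $A$ up to the one definable question whether $\sigma\in A$: if $\sigma$ is precise it lies in $A^{0}\subseteq A$, so $A=\{x:x\le\sigma\}$; if $e(\sigma)\neq0$ the answer may go either way, giving $A=\{x:x\le\sigma\}$ (a ``closed'' halfline with non-precise maximum) or $A=\{x:x<\sigma\}$ (an ``open'' halfline whose complement has least element $\sigma$). In case~\ref{Dedekind2} the same reasoning pins $A$ down completely: $x\in A$ iff $x<t$ for every representative $t$ of $\tau$ (for $x\ge t_{0}$ a representative $t_{0}\le x$ would lie in $A^{0}$, absurd), which for $\tau$ precise is $A=\{x:x<\tau\}$ and for $e(\tau)\neq0$ is the third form, in which the whole coset of $\tau$ is excluded from $A$, so that $A$ has no maximum and $S\setminus A$ no minimum.

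Finally, mutual exclusivity of the three forms follows from the mutual exclusivity of cases~\ref{Dedekind1} and~\ref{Dedekind2} together with the remarks that a closed halfline has a maximum, an open one has a complement with a least element but no maximum of its own, and the third has neither; these are pairwise incompatible in a total order. (The degenerate case $A=S$ is of the first form, with $\sigma=M$ the maximal magnitude of Axiom~\ref{Axiom neut max}.) The step I expect to be the real work is making the density/sandwich argument airtight across all configurations of the neutrices $e(x)$, $e(\sigma)$, $e(\tau)$ --- in particular verifying that whenever $\sigma<x$ a precise witness $q$ with $\sigma<q\le x$ actually exists --- and, conceptually, the point that, unlike over $\mathbb{R}$, the precise part $A^{0}$ does not determine $A$ outright, which is exactly what produces the third form. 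A subsidiary bookkeeping point is to ensure that the auxiliary formulas used quantify only over precise variables, so that the completeness scheme genuinely applies, normalising the formula defining $A$ if necessary.
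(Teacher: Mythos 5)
Your proposal follows essentially the same route as the paper's proof: both pass to the precise part $\hat{A}=A^{0}$, apply Axiom~\ref{Axiom Dedekind completeness} there (with Proposition~\ref{t precise} to take the witnesses $t$ precise), and then lift the characterization to all of $A$ via density of precise elements (Lemmas~\ref{Precise separation elements} and~\ref{Precise separation from hole}), with the closed/open dichotomy in the first case decided by whether the weak bound belongs to $A$. The pieces you flag as ``the real work'' are exactly the separation lemmas the paper proves beforehand, so the plan is correct and matches the paper's argument.
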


\begin{enumerate}
\item \label{Theorem Dedekind 1}$\exists \rho \forall x(x\in A\leftrightarrow
x\leq \rho).$

\item \label{Theorem Dedekind 2}$\exists \sigma \forall x(x\in A\leftrightarrow
x<\sigma).$

\item \label{Theorem Dedekind 3}$\exists \tau \forall x(x\in A\leftrightarrow
\forall t(t+e(\tau)=\tau \rightarrow x<t).$
\end{enumerate}

In order to prove Theorem \ref{Theorem Dedekind}\ we will make Axiom
\ref{scheiding neutrices}\ more operational, by showing that two magnitudes
can always be separated by a precise element. Also the elements $t$ of Axiom
\ref{Axiom Dedekind completeness}.\ref{Dedekind2} and Theorem
\ref{Theorem Dedekind}.\ref{Theorem Dedekind 3} may be taken precise. We prove
also some other properties on separation by a precise element.

\begin{proposition}
\label{t precise}In Axiom \ref{Axiom Dedekind completeness}.\ref{Dedekind2}
the elements $t$ may be taken precise.
\end{proposition}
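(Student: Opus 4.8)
The plan is to show that in Axiom~\ref{Axiom Dedekind completeness}.\ref{Dedekind2} the condition ``$\forall t(t+e(\tau)=\tau\rightarrow x<t)$'' yields the same set of precise $x$ whether $t$ ranges over all elements with $t+e(\tau)=\tau$ or only over the precise such elements. Write $C=\{t : t+e(\tau)=\tau\}$ for the coset, and let $C_{0}=\{t\in C : e(t)=0\}$ be its precise members. Since $C_0\subseteq C$, the condition quantifying over all of $C$ is \emph{a priori} stronger, so if $x<t$ for all $t\in C$ then certainly $x<t$ for all $t\in C_0$. Thus one inclusion is trivial, and the whole content is the converse: if $x$ is precise and $x<t$ for every precise $t\in C$, then $x<t$ for every $t\in C$.

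For the converse, fix an arbitrary $t\in C$, so $t=a+e(\tau)$ for some precise $a$ by Axiom~\ref{numexterno} (and $e(t)=e(\tau)$). The idea is to produce a precise element $t'\in C$ with $t'\leq t$, whence $x<t'\leq t$ gives $x<t$. Here I would invoke the sharpened form of Axiom~\ref{scheiding neutrices} announced just before the proposition — that two distinct magnitudes are separated by a \emph{precise} element. Concretely, consider the magnitudes $e(\tau)$ and $e(x)=0$; since $x$ is a candidate to be strictly below the halfline and $\tau$ is not the trivial case, one expects $e(\tau)\neq 0$, and then there is a precise $d$ with $0<d<e(\tau)$, in fact with $d$ arbitrarily deep inside $e(\tau)$. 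Taking $t'=a-d$ (or more carefully $t' = a + d'$ for a suitable precise $d'$ with $|d'|$ small inside $e(\tau)$, chosen so that $t' \le t$), we have $e(t')=e(a)=0$ and $t'+e(\tau)=a+e(\tau)=\tau$, so $t'\in C_0$; and by choosing the sign of the perturbation correctly we arrange $t'\leq t$. By hypothesis $x<t'$, and hence $x<t$, as desired.

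The main obstacle is the very first step: establishing that two magnitudes are always separated by a \emph{precise} element, strengthening Axiom~\ref{scheiding neutrices} which only gives a zeroless separator $z$ with $x<z<y$. Given such a $z$, write $z=p+e(z)$ with $p$ precise (Axiom~\ref{numexterno}); one must check that $p$ still sits strictly between the two magnitudes, which requires comparing $e(z)$ with $x$ and $y$ and using Axiom~\ref{Axiom e(x)maiory} (absorbed elements are order-dominated by the absorbing magnitude) together with linearity of $e$ and the order axioms. There is a genuine case analysis here depending on where $e(z)$ falls relative to $x$ and $y$; the delicate point is ruling out that the precise part $p$ falls outside the open interval $(x,y)$, which should follow because the magnitudes $x$ and $y$ are far apart compared to $e(z)$ (as $e(z)$ is squeezed between them or below them). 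Once this separation-by-precise-elements lemma is in hand, the remainder of the argument for Proposition~\ref{t precise} is the short coset manipulation sketched above.
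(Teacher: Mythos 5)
Your overall strategy is the right one and, at the level of the skeleton, coincides with the paper's: the implication from ``$x<t$ for all $t$ in the coset'' to ``$x<t$ for all precise $t$ in the coset'' is trivial, and the substance is to produce, for each $t$ with $t+e(\tau)=\tau$, a \emph{precise} member $t'$ of the coset with $t'\leq t$. The gap is in how you produce $t'$. You perturb the precise part $a$ of $t$ by a precise $d$ with $0<d<e(\tau)$ and assert $t'+e(\tau)=a-d+e(\tau)=a+e(\tau)=\tau$. That step requires $-d$ to be \emph{absorbed} by $e(\tau)$, i.e. $-d+e(\tau)=e(\tau)$, and the axioms do not give this for free: Axiom \ref{Axiom e(x)maiory} states that absorption implies order-domination, not the converse that $0<d<e(\tau)$ implies $d+e(\tau)=e(\tau)$. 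The converse is in fact provable (essentially by the computation inside Lemma \ref{precise separation}: if $d+e(\tau)$ were zeroless one would get $e(\tau)<d/2<d$, contradicting $d<e(\tau)$), but you neither supply this argument nor flag that it is needed. A second, smaller slip: from $t+e(\tau)=\tau$ you conclude $e(t)=e(\tau)$, whereas linearity of the magnitude operation only gives $e(t)+e(\tau)=e(\tau)$, i.e. $e(t)\leq e(\tau)$ (a precise $t$ in the coset has $e(t)=0$).

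The detour through separation of magnitudes by precise elements is in any case unnecessary: the element you already wrote down does the job without any perturbation. Write $t=q+e(t)$ with $q$ precise (Axiom \ref{numexterno}). Then $q+e(\tau)=q+e(t)+e(\tau)=t+e(\tau)=\tau$, so $q$ is a precise member of the coset; and since $0+e(t)=e(t)$ gives $0\leq e(t)$ by Axiom \ref{Axiom e(x)maiory}, Axiom \ref{(OA)compoper} yields $q=q+0\leq q+e(t)=t$. Hence $x<q$ implies $x<t$ at once. This is exactly the paper's proof of Proposition \ref{t precise}; it needs no precise separator between magnitudes, no absorption lemma, and no case distinction on whether $e(\tau)=0$. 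I recommend replacing your middle paragraph by this two-line computation and dropping the reliance on the strengthened form of Axiom \ref{scheiding neutrices}, which the paper only establishes afterwards (Lemma \ref{Precise separation neutrices}) and which is not needed here.
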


\begin{proof}
Let $t$ be such that $t+e(\tau)=\tau$. Then $t=p+e\left(  t\right)  $ with
$e\left(  p\right)  =0$. Because $e\left(  t\right)  \leq e(\tau)$, one has
$p+e(\tau)=p+e\left(  t\right)  +e(\tau)=t+e(\tau)=\tau$.

Let $x$ be precise. Suppose that $x<t$ for all $t$ such that $t+e(\tau)=\tau$.
Then it holds in particular for $t=p$ with $p$ precise. Conversely, suppose
that $x<p$, for all precise $p$ such that $\tau=p+e\left(  \tau \right)  $. Let
$t$ be such that $t+e(\tau)=\tau$. Then there exists a precise $q$ such that
$t=q+e\left(  t\right)  $. Then $q+e(\tau)=\tau$. Hence $x<q\leq q+e\left(
t\right)  =t$. This implies that the two criteria are equivalent.
\end{proof}

\begin{proposition}
\label{t precise theorem}In Theorem \ref{Theorem Dedekind}%
.\ref{Theorem Dedekind 3} the elements $t$ may be taken precise.
\end{proposition}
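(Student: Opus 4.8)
The plan is to establish, for a fixed $\tau$ and an arbitrary $x\in S$ (precise or not), the equivalence
\[
\forall t\bigl(t+e(\tau)=\tau\rightarrow x<t\bigr)\ \longleftrightarrow\ \forall t\bigl(e(t)=0\wedge t+e(\tau)=\tau\rightarrow x<t\bigr).
\]
Once this is known, the element $\tau$ furnished by Theorem~\ref{Theorem Dedekind}.\ref{Theorem Dedekind 3} automatically witnesses the same characterization of $A$ with $t$ restricted to be precise. The implication from left to right is trivial, since the precise $t$ satisfying $t+e(\tau)=\tau$ are among all such $t$.

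For the converse I would repeat the argument of Proposition~\ref{t precise}, the point being that it never uses preciseness of $x$; here $x$ merely ranges over all of $S$. So assume $x<p$ for every precise $p$ with $p+e(\tau)=\tau$, and let $t$ be arbitrary with $t+e(\tau)=\tau$. By Axiom~\ref{numexterno} write $t=q+e(t)$ with $e(q)=0$. Applying the (linear) magnitude operation gives $e(t)+e(\tau)=e(t+e(\tau))=e(\tau)$, hence $e(t)\le e(\tau)$ by Axiom~\ref{Axiom e(x)maiory}; therefore, using associativity,
\[
q+e(\tau)=q+e(t)+e(\tau)=t+e(\tau)=\tau ,
\]
and since $q$ is precise this gives $x<q$. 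Finally $0=e(q)\le e(t)$ --- the equality because $q$ is precise, the inequality from $0+e(t)=e(t)$ (Axiom~\ref{Axiom neut min}) together with Axiom~\ref{Axiom e(x)maiory} --- so by compatibility of the order with addition (Axiom~\ref{(OA)compoper}) we get $q=q+e(q)\le q+e(t)=t$. Combining, $x<q\le t$, as required.

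I do not expect a genuine obstacle: the statement is the exact analogue of Proposition~\ref{t precise} for Theorem~\ref{Theorem Dedekind}.\ref{Theorem Dedekind 3}, and serves the same purpose of making that clause ``operational''. The only things needing a little care are spelling out that the earlier proof is insensitive to whether $x$ is precise, and deriving the innocuous inequality $q\le q+e(t)$ from the axioms (Axioms~\ref{Axiom neut min}, \ref{Axiom e(x)maiory}, \ref{(OA)compoper}) rather than from the concrete external-number model.
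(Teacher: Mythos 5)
Your proof is correct and follows the same route as the paper, which simply declares the argument ``analogous to the proof of Proposition~\ref{t precise}''; you rightly observe that the only change is that $x$ need no longer be precise, and that the earlier argument never used preciseness of $x$. The extra details you supply (deriving $e(t)\le e(\tau)$ from linearity and Axiom~\ref{Axiom e(x)maiory}, and $q\le q+e(t)$ from Axioms~\ref{Axiom neut min}, \ref{Axiom e(x)maiory} and \ref{(OA)compoper}) are all sound and consistent with how the paper argues elsewhere.
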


The proof is analogous to the proof of Proposition \ref{t precise}.

\begin{lemma}
\label{precise separation}Let $S$ be a solid and let $x,z\in S$. If $e\left(
x\right)  <z$ and $z$ is zeroless, there is a precise element $t$ such that
$e\left(  x\right)  <t\leq t+e\left(  z\right)  <z$.
\end{lemma}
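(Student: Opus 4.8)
The statement says: if $e(x) < z$ and $z$ is zeroless, then there is a precise $t$ with $e(x) < t \le t + e(z) < z$. Since $z$ is zeroless we may write $z = c + e(z)$ with $c$ precise and $c > e(z)$ (in fact $c > 0$). The plan is to first reduce to separating the two \emph{magnitudes} $e(x)$ and $e(z)$ by a precise element, then bootstrap that into the two-sided inequality involving $z$ itself. So the first step is to establish the auxiliary claim: \emph{any two magnitudes $E < F$ can be separated by a precise element $t$ with $E < t < F$}. This sharpens Axiom \ref{scheiding neutrices}, which only gives a zeroless (not necessarily precise) separator $w$ with $E < w < F$; writing $w = t + e(w)$ with $t$ precise and noting $e(w) \le F$ (since $e(w) < w < F$ and $F$ is a magnitude, Axiom \ref{Axiom e(x)maiory} / convexity gives $e(w)+F=F$, hence by Axiom \ref{Axiom e(x)maiory} $e(w) \le F$) and $e(w) \ge E$ is not automatic — but we can argue $t + e(w) = w$ lies strictly between $E$ and $F$, and then compare $t$ with $E$: if $t \le E$ then $w = t + e(w) \le E + e(w)$, and since $E$ is a magnitude with $e(w) \le E$ we'd get $E + e(w) = E$, forcing $w \le E$, a contradiction. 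Similarly $t < F$ follows since $t \le t + e(w) = w < F$. This yields $E < t < F$ with $t$ precise.

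Now apply this with $E = e(x)$ and $F = e(z)$? That fails when $e(z)$ is \emph{not} strictly above $e(x)$; indeed we are only told $e(x) < z$, not $e(x) < e(z)$. The correct move is to separate $e(x)$ from $z$ differently: since $z$ is zeroless, $e(z) < z$, so pick (using the sharpened separation, or directly) a precise $t$ with $e(x) < t$ and $t + e(z) < z$. Concretely, set $c = z - e(z)$, a precise positive element; then consider the magnitude $e(x) + e(z) = e(x+z)$ (using linearity from Axiom \ref{assemblye(xy)}) — call it $F$. We have $F < z$ because $e(x) < z$ and $e(z) < z$ and $z$ zeroless means $z$ dominates its own neutrix, so $F = e(x)+e(z) < z$; also $F \ge e(x)$. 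If $F > e(x)$, apply the magnitude-separation claim to get precise $t$ with $e(x) < t < F$, whence $t + e(z) \le F + e(z) = F < z$ and $t + e(z) \ge t > e(x)$: done. If $F = e(x)$ (i.e. $e(z) \le e(x)$), then we just need a precise $t$ with $e(x) < t$ and $t + e(x) < z$; since $z$ is zeroless and $e(x) = e(z) \cup e(x) = F < z$... hmm, here we instead take $t$ precise with $0 < t$ small enough that $t + e(x) < z$ — this is where we use that $z$ is zeroless strictly above the magnitude $e(x)$, so there is "room": by Axiom \ref{scheiding neutrices} applied to $e(x) < z'$ for a suitable magnitude, or more cleanly, pick precise $p$ with $e(x) < p < z$ (separation of the magnitude $e(x)$ from... ) — the cleanest is: since $z$ is zeroless, $\tfrac{1}{2}(z + e(x))$ type midpoint arguments give a zeroless element strictly between $e(x)$ and $z$, extract its precise part.

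\textbf{Main obstacle.} The delicate point is the passage from a zeroless separator (what Axiom \ref{scheiding neutrices} literally provides) to a \emph{precise} separator that \emph{simultaneously} satisfies the two-sided bound $e(x) < t \le t + e(z) < z$, rather than just $e(x) < t < z$. The inequality $t + e(z) < z$ requires controlling the neutrix $e(z)$ added back to $t$: one needs $t + e(z)$ to stay strictly below $z$, which is exactly the assertion that $z$ is zeroless — $z$ is not merely $> e(z)$ but stays above $e(z)$ after the shift by the precise part. I expect to need the following precise-separation refinement used twice: first to place $t$ above $e(x)$, and then a compatibility check that adding $e(z)$ does not push $t$ up to or past $z$, handled by writing everything in the canonical form $(\text{precise}) + (\text{neutrix})$ via Axiom \ref{numexterno} and using linearity of $e(\cdot)$ (Axiom \ref{assemblye(xy)}) together with the order axiom \ref{Axiom e(x)maiory}. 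Once the magnitude-separation lemma is in hand, the rest is bookkeeping with these canonical decompositions; the lemma itself is the real content and is where I would spend the effort.
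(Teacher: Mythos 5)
Your strategy correctly isolates the crux --- getting $t+e(z)<z$ and not merely $t<z$ --- and your first case ($e(x)<e(z)$) does go through: a precise $t$ with $e(x)<t<e(z)$ gives $t\leq t+e(z)\leq e(z)+e(z)=e(z)<z$. But the complementary case $e(z)\leq e(x)$, which is the generic one (e.g.\ $z$ nearly precise but lying above a large magnitude $e(x)$), is never actually proved. There you need a precise $t$ satisfying $e(x)<t$ and $t+e(x)<z$ simultaneously, and your text only gestures at ``midpoint arguments'' and ``bookkeeping with canonical decompositions''; the compatibility check that you yourself flag as the main obstacle is exactly the step that is missing. This is a genuine gap, not a routine verification: nothing in the axioms lets you pick a separator ``small enough'' without an explicit construction.

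The paper closes both cases at once with a single choice that you almost reach when you mention $\tfrac12(z+e(x))$: write $z=p+e(z)$ with $p$ precise and set $t=p/2$. Both required inequalities then come from the same doubling trick. If $p/2\leq e(z)$ then $p\leq 2e(z)=e(z)$, contradicting $z$ zeroless; hence $e(z)<p/2$ and $p/2+e(z)<p\leq p+e(z)=z$. If $p/2\leq e(x)$ then $p\leq 2e(x)=e(x)$, so $z=p+e(z)\leq e(x)+e(x)=e(x)$, contradicting $e(x)<z$; hence $e(x)<p/2$. (One first rules out $z<e(z)$ so that $p$ is positive.) Note also that your auxiliary claim --- separation of two magnitudes by a precise element --- is a detour: in the paper it is Lemma \ref{Precise separation neutrices} and is deduced \emph{from} the present lemma, not used to prove it. Your independent derivation of it from Axiom \ref{scheiding neutrices} is salvageable (though the sub-case $E<e(w)$ under the assumption $t\leq E$ is skipped), but the uniform choice $t=p/2$ makes it unnecessary here.
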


\begin{proof}
Let $z=p+e\left(  z\right)  $ with $p$ precise. If $z<e\left(  z\right)  $
then $z<e\left(  x\right)  $, by \cite[Prop. 2.10]{dinisberg 2015 -2}, a
contradiction. Hence $e\left(  z\right)  <z$, meaning that $p$ is positive, so
$0<p/2<p$. One has $e\left(  z\right)  <p/2$. Indeed, if $p/2\leq e\left(
z\right)  $ one would have $p\leq2e\left(  z\right)  =e\left(  z\right)  $
which is a contradiction. It follows that $p/2+e\left(  z\right)  <p\leq
p+e\left(  z\right)  =z$. If $e\left(  x\right)  \leq e\left(  z\right)  $, we
are done. If $e\left(  z\right)  <e\left(  x\right)  $, suppose that $p/2\leq
e(x)$. Then $p\leq2e(x)=e(x)$ and $z=p+e(z)\leq e(x)+e(x)=e(x)$, a
contradiction. Hence $e\left(  x\right)  <p/2\leq p/2+e\left(  z\right)  $.
\end{proof}

\begin{lemma}
\label{Precise separation neutrices}The element $z$ in Axiom
\ref{scheiding neutrices} may be supposed precise.
\end{lemma}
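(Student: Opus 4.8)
The plan is to obtain the stronger statement directly from Axiom \ref{scheiding neutrices} itself, sharpened by Lemma \ref{precise separation}; this is precisely the ``more operational'' form of Axiom \ref{scheiding neutrices} announced before Proposition \ref{t precise}. So I would start from two magnitudes $x$ and $y$ with $x=e(x)$, $y=e(y)$ and $x<y$. Applying Axiom \ref{scheiding neutrices} to $x$ and $y$ yields a zeroless element $w$ (that is, $w\neq e(w)$) with $x<w<y$. In general $w$ need not be precise, and the task is to replace it by a precise element lying in the same gap between $x$ and $y$.

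To do this I would invoke Lemma \ref{precise separation} with the roles of its ``$x$'' and ``$z$'' played by the present $x$ and $w$: since $e(x)=x<w$ and $w$ is zeroless, the lemma produces a precise element $t$ with $e(x)<t\leq t+e(w)<w$. Then $x=e(x)<t$ and $t\leq t+e(w)<w<y$, so $x<t<y$, and $t$ is precise, i.e. $e(t)=0$.

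The only point that still needs checking is that $t$ is an admissible witness for the existential quantifier of Axiom \ref{scheiding neutrices}, namely that $t$ is zeroless, $t\neq e(t)=0$. Here one uses that magnitudes are nonnegative: from $0+e(x)=e(x)$ and Axiom \ref{Axiom e(x)maiory} one gets $0\leq e(x)=x$, and together with $x<t$ this forces $0<t$, hence $t\neq 0=e(t)$. Thus $t$ is a precise, zeroless element with $x<t<y$, which is exactly what the lemma asserts. I do not expect a genuine obstacle: the argument is a two-line composition of Axiom \ref{scheiding neutrices} with Lemma \ref{precise separation}, and the only step requiring a word of care is the verification that the precise element supplied by Lemma \ref{precise separation} is automatically nonzero, because it strictly dominates the nonnegative magnitude $x$.
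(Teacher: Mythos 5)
Your proof is correct and takes essentially the same route as the paper: apply Lemma \ref{precise separation} to the zeroless separator furnished by Axiom \ref{scheiding neutrices}, obtaining a precise element in the same gap. Your additional verification that this precise element is nonzero (hence still a legitimate zeroless witness) is a minor point the paper leaves implicit.
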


\begin{proof}
Let $z$ be zeroless and such that $e\left(  x\right)  <z<e(y)$. By Lemma
\ref{precise separation} there exists a precise $t$ such $e\left(  x\right)
<t<z<e(y)$.
\end{proof}

\begin{lemma}
\label{Precise separation elements}Let $S$ be a solid and $x,y\in S$ be such
that $x<y$. Then there exists a precise element $p$ such that $x<p<y$.
\end{lemma}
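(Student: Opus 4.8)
The plan is to reduce the separation of two arbitrary elements $x<y$ to situations already handled, namely separation of a magnitude from a zeroless element (Lemma \ref{precise separation}) and, in the homogeneous case, to the given density axiom in its sharpened form (Lemma \ref{Precise separation neutrices}). The natural first move is to write $x=a+e(x)$ and $y=b+e(y)$ with $a,b$ precise, using Axiom \ref{numexterno}, and to compare the two magnitudes $e(x)$ and $e(y)$ via Axiom \ref{assemblye(xy)}, so that one of them absorbs the other; without loss of generality suppose $e(x)\leq e(y)$, so that $e(x+y)=e(y)$. Replacing $x$ by $x-b$ and $y$ by $y-b$ (Axioms \ref{(OA)compoper}, \ref{assemblyassoc}), which shifts any separating precise element by the precise element $b$, I may assume $b=0$, i.e. $y=e(y)$ is a magnitude; and then I should further split on whether $x$ is precise or not.

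If $x$ is zeroless, then from $x<y=e(y)$ and \cite[Prop. 2.10]{dinisberg 2015 -2} (or directly) one gets $e(x)<y$ with $y$ a magnitude and $x$ zeroless; but I actually want separation \emph{below} $y$, so I apply Lemma \ref{precise separation} to the pair $(y,x)$ read in the appropriate direction — more carefully, since $x<e(y)$ with $x$ zeroless, $-x$ is zeroless and $e(-x)=e(x)<e(y)$, so Lemma \ref{precise separation} yields a precise $t$ with $e(x)<t\leq t+e(x)<-x$ after a reflection, hence $x<-t<-e(x)\le e(y)=y$ and $-t$ is precise; choosing $p=-t$ (or an average of $x$ and such a bound) does the job, once one checks $x<-t$ follows from $t+e(x)<-x$ by Axiom \ref{(OA)compoper}. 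If instead $x$ is a magnitude, i.e. $x=e(x)$, then $x<y$ are two magnitudes, and Lemma \ref{Precise separation neutrices} directly supplies a precise $z$ with $x<z<y$.

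The remaining and only substantive case is when $x$ is zeroless and equal (after the shift) to a positive precise $a$ plus its magnitude while $y$ is \emph{also} zeroless — but note the reduction above already forced $y=e(y)$; the genuinely remaining subtlety is the case where \emph{both} $x$ and $y$ are zeroless and $e(x)=e(y)$, which the shift by $b$ does not cover because there $y$ need not become a magnitude. In that case $x=a+e(x)$, $y=b+e(x)$ with $a<b$ precise and $e(x)$ a common magnitude; then $(a+b)/2$ is precise, and one checks $a+e(x)<(a+b)/2<b+e(x)$: the left inequality because $a-(a+b)/2=(a-b)/2<0$ is a negative precise number while $e(x)$ is a magnitude, so $(a-b)/2+e(x)<0$ by Axiom \ref{Axiom e(x)maiory}-type reasoning (a negative precise number plus any magnitude stays negative, since the magnitude cannot absorb it unless the number were in it), and symmetrically on the right. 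I expect this last verification — that a strictly negative precise number, added to a magnitude not containing it, remains strictly negative — to be the main obstacle; it should follow from Axioms \ref{Axiom e(x)maiory}, \ref{(OA)compoper}, \ref{(OA)total} and the minimality of $e(x)$ as the magnitude of $x$, but it deserves to be spelled out rather than asserted. Collecting the cases, in every instance one produces the desired precise $p$ with $x<p<y$, completing the proof.
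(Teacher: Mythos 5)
Your overall strategy --- normalize by a shift so that one endpoint becomes a magnitude, then invoke Lemma \ref{Precise separation neutrices} in the magnitude/magnitude case and Lemma \ref{precise separation} in the mixed case --- is exactly the paper's. But two things go wrong in the execution. First, in the zeroless case you assert that $x<y=e(y)$ with $x$ zeroless gives $e(-x)=e(x)<e(y)$, and you use this chain to set up the application of Lemma \ref{precise separation}. That inequality is false in general: in $\mathcal{E}$ take $x=-\omega+\pounds$ with $\omega$ positive unlimited and $y=\oslash$; then $x<y$ and $x$ is zeroless, but $e(x)=\pounds>\oslash=e(y)$. What the lemma actually needs is $e(y)<-x$ (immediate from $x<e(y)$ by negation) or $e(x)<-x$ (which requires the separate observation that a zeroless element below its own magnitude is negative); your justification instead passes through the false comparison of the two magnitudes, and the subsequent step ``$-t<-e(x)\le e(y)$'' leans on it again (the correct, and easier, point is simply $-t<0\le e(y)$ since $t>e(x)\ge 0$). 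The clean repair is the paper's: normalize $x$ rather than $y$, i.e.\ write $x=a+e(x)$ by Axiom \ref{numexterno} and shift by $-a$, so that $x$ becomes the magnitude $e(x)$ and Lemma \ref{precise separation} applies directly to $e(x)<y$ with $y$ zeroless --- no reflection, and no comparison of $e(x)$ with $e(y)$ is ever needed.

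Second, your ``remaining case'' is vacuous. By Axiom \ref{numexterno} one always has $y=b+e(y)$ with $b$ precise, and then $y-b=(b-b)+e(y)=e(y)$ unconditionally; the shift never fails to turn $y$ into a magnitude, whether or not $e(x)=e(y)$. So after the shift the two cases ``$x-b$ is a magnitude'' and ``$x-b$ is zeroless'' are exhaustive, the midpoint argument is doing no work, and the verification you single out as ``the main obstacle'' lives entirely inside a case that does not occur.
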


\begin{proof}
Without restriction of generality we may assume that $x=e(x)$. The case
$y=e(y)$ follows from Lemma \ref{Precise separation neutrices} and the case
where $y$ is zeroless follows from Lemma \ref{precise separation}.
\end{proof}

\begin{lemma}
\label{Precise separation from hole}Let $S$ be a solid and $x,y\in S$ be such
that $\forall t(e(t)=0\wedge t+e(y)=y\rightarrow x<t)$. Then there exists a
precise element $p$ such that $x<p$ and $\forall t(e(t)=0\wedge
t+e(y)=y\rightarrow p<t)$.
\end{lemma}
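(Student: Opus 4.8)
The plan is to reduce the statement about an arbitrary $x$ "below the hole of $y$" to the previously established separation results, exploiting the fact that the condition only refers to precise elements $t$ with $t+e(y)=y$. First I would observe that if $x$ is zeroless or precise the claim is essentially trivial after replacing $x$ by $e(x)$, so the substantive situation is the general one; by Axiom \ref{numexterno} write $x=a+e(x)$ with $a$ precise, and note that $x<t$ for all precise $t$ with $t+e(y)=y$ forces $e(x)\leq e(y)$ (otherwise, since $e(y)<e(x)$, one could find a precise $t$ with $t+e(y)=y$ and $t\leq e(x)<x$, contradicting the hypothesis — here one uses Lemma \ref{precise separation} applied to $y$, or rather its proof, to produce such a $t$). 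Thus the hole of $y$ sits above $e(x)$, and after this reduction it suffices to treat the case $x=e(x)$, i.e. $x$ is a magnitude strictly below every precise $t$ generating $y$.

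With $x=e(x)$ a magnitude, the key step is to produce a single precise $p$ with $e(x)<p$ and $p<t$ for every precise $t$ satisfying $t+e(y)=y$. Writing $y=q+e(y)$ with $q$ precise and positive (as in the proof of Lemma \ref{precise separation}, $e(y)<y$), the precise elements $t$ with $t+e(y)=y$ are exactly those precise $t$ with $t\in q+e(y)$, and the hypothesis says $e(x)<t$ for all such $t$; in particular $e(x)<q$ and, more strongly, $e(x)+e(y)<q+e(y)$ forces $e(x)\le e(y)$ by the argument of the previous paragraph, but even without that, the set of such $t$ has $q-e(y)$-type elements arbitrarily close to $q$ from below. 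The natural candidate is something like $p=q/2$ when $e(y)$ is "small" relative to $q$, but more robustly one wants $p$ strictly between $e(x)$ and the lower boundary of the hole: since $e(x)$ is a magnitude strictly less than every precise $t$ with $t+e(y)=y$, apply Lemma \ref{Precise separation elements} or Lemma \ref{precise separation} to the pair $e(x)$ and such a $t$ — but one must be careful that a single $p$ works uniformly. Here I would use that $e(x)<q$ with both $e(x)$ a magnitude and $q$ precise, and invoke Lemma \ref{precise separation} (with the roles so that $z$ is a zeroless element separating $e(x)$ from $q$, e.g. $z$ slightly below $q$) to get a precise $p$ with $e(x)<p\le p+e(z)<z\le q$; the point is then that $p$ lies below the entire hole because $p<q$ and, since $p$ is precise and $p<q$, either $p+e(y)\ne y$ and $p$ is below all elements of the hole, or one refines $p$ downward by an infinitesimal-of-$e(y)$ amount using Axiom \ref{scheiding neutrices} / Lemma \ref{Precise separation neutrices}.

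The main obstacle I anticipate is the \emph{uniformity}: the hypothesis gives, for each precise $t$ with $t+e(y)=y$, that $x<t$, and I must extract one precise $p$ that beats $x$ and is simultaneously below all such $t$. This is exactly where Generalized Dedekind completeness (Axiom \ref{Axiom Dedekind completeness}) should enter: the formula $A(p) \leftrightarrow e(p)=0 \wedge \forall t(e(t)=0\wedge t+e(y)=y\rightarrow p<t)$ defines, on precise elements, a lower halfline (restricted to precise $x$ via the $A'$ construction), its nonemptiness is the content to be shown — so instead I would consider the halfline $A(p)$ of precise $p$ with $p<t$ for all precise $t$ in the hole, which is nonempty precisely because the hypothesis on $x$ together with precise separation (Lemma \ref{Precise separation elements}, giving a precise element between $x$ and some particular such $t$) already furnishes one precise element below the infimum-side, and then its $\operatorname{zup}$ or an element of it serves as $p$. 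Concretely: by Lemma \ref{Precise separation elements} pick any precise $t_0$ with $t_0+e(y)=y$ (exists by the proof of Lemma \ref{precise separation} applied to $y$), and a precise $p$ with $x<p<t_0$ is not yet enough, so one iterates: the set of precise $t$ with $t+e(y)=y$ is an interval-like set with no least element, and I would show $\operatorname{winf}$ of it equals $q+e(y)$ in the solid sense, whence any precise $p$ with $x<p$ and $p<q$ that also avoids the hole works, the existence of such $p$ following from $e(x)\le e(y)<q$ and Lemma \ref{precise separation} applied to $e(x)$ and the zeroless element $q-e(y)$ (or $q$ itself shifted down). Once that single precise $p$ below the hole and above $x$ is produced, the proof is complete; the delicate point remains ensuring $p+e(y)\ne y$, handled by choosing $p$ strictly below the whole coset $q+e(y)$, which is possible since $e(x)<q+e(y)$ with strict inequality witnessed by a zeroless element by hypothesis.
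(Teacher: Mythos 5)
Your proposal has genuine gaps. First, the claim you lean on repeatedly --- that the hypothesis forces $e(x)\leq e(y)$ --- is false: in the external numbers take $y=\omega+\oslash$ with $\omega$ unlimited and $x=e(x)=\pounds $; every precise $t$ with $t+e(y)=y$ is unlimited, so $x<t$ for all of them, yet $e(x)=\pounds >\oslash=e(y)$. Your argument for that claim tacitly uses $e(x)<x$, which fails precisely in the case $x=e(x)$ that carries the whole proof, and the existence of a precise $t$ in the coset with $t\leq e(x)$ depends on where $q$ sits, not merely on comparing $e(x)$ with $e(y)$. (The reduction to $x=e(x)$ is instead obtained by translation: write $x=a+e(x)$ by Axiom \ref{numexterno}, apply the magnitude case to $e(x)$ and $y-a$, and add $a$ back using Axiom \ref{(OA)compoper}.) Second, invoking Axiom \ref{Axiom Dedekind completeness} is out of place: the lemma is stated for solids, not complete solids, and it is one of the separation lemmas subsequently used to analyse the completeness scheme in Theorem \ref{Theorem Dedekind}; as you yourself observe, completeness cannot supply the nonemptiness of the set of uniform lower bounds of the hole, which is exactly what has to be proved.

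Third, and most importantly, the uniformity step is never actually carried out, although it is the entire content of the lemma; it is a one-line order computation, not something to be handled by ``refining $p$ downward'' or ``choosing $p$ strictly below the whole coset''. The paper's proof: reduce to $x=e(x)$; then $y$ is zeroless (otherwise $t=0$ satisfies the hypothesis and gives $e(x)<0\leq e(x)$); write $y=q+e(y)$ with $q$ precise, so the hypothesis with $t=q$ gives $e(x)<q$; put $p=q/2$. Then $e(x)<p$ (else $q\leq 2e(x)=e(x)<q$), and for any precise $t$ with $t+e(y)=y$ one has $e(y)<t$, so $t\leq p$ would give $y=t+e(y)\leq t+t\leq q$, contradicting $q<y$; hence this single $p$ satisfies $p<t$ for every such $t$. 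Your alternative of applying Lemma \ref{precise separation} to $e(x)<y$ is workable --- it yields a precise $p$ with $e(x)<p\leq p+e(y)<y$ --- but it finishes only via the same missing line: if $t\leq p$ for some precise $t$ with $t+e(y)=y$, then $y=t+e(y)\leq p+e(y)<y$. Without that, ``$p$ avoids the hole and $p<q$'' does not yield ``$p$ is below every element of the hole''.
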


\begin{proof}
Without restriction of generality we may assume that $x=e(x)$. Then $y$ is
zeroless. Let $q$ be precise such that $y=q+e(y)$. Put $p=q/2$. Then $e(x)<p$,
otherwise $q<2e(x)=e(x)$, a contradiction. Also $e(y)<p$, otherwise
$q<2e(y)=e(y)$ and $y$ would not be zeroless. Let $t$ be precise such that
$t+e(y)=y$. Then $e(y)<t$, otherwise $y$ would not be zeroless. Suppose $t\leq
p$. Then $y=t+e(y)<t+t\leq q$, a contradiction. Hence $p<t$.
\end{proof}

\bigskip

\begin{proof}
[Proof of Theorem \ref{Theorem Dedekind}]Let $A$ be a definable lower halfline
in a complete solid $S$. We define $\hat{A}$ by $\hat{A}=\left \{  x\in
A\left \vert e\left(  x\right)  =0\right.  \right \}  $. By Axiom
\ref{Axiom Dedekind completeness} and Proposition \ref{t precise}, either
there exists $\alpha \in S$ such that, whenever $p$ is precise, one has $p\in
A$ if and only if $p\leq \alpha$, or there exists $\beta$ such that whenever
$p$ is precise, one has $p\in A$ if and only if $p<q$ for all precise $q$ such
that $q+e(\beta)=\beta$.

As for the first case, we distinguish the subcases $\alpha \in A$ and
$\alpha \not \in A$. Assume $\alpha \in A$. Let $x\in A$. Suppose $\alpha<x$. By
Lemma \ref{Precise separation elements} there exists a precise element $r\in
A$ such that $\alpha<r<x$, a contradiction. Hence $x\leq \alpha$. Conversely,
assume $x\leq \alpha$. Then $x\in A$ by the definition of lower halfline.
Taking $\rho=\alpha$ we obtain $x\in A$ if and only if $x\leq \rho$.

Assume now that $\alpha \not \in A$. One proves as above that if $x\in A$ then
$x\leq \alpha$, in fact, $x<\alpha$ because $\alpha \not \in A$. Conversely,
assume $x<\alpha$. By Lemma \ref{Precise separation elements} there exists a
precise element $r$ such that $x<r<\alpha$. Then $r\in \hat{A}\subseteq A$.
Hence $x\in A$. Taking $\sigma=\alpha$ we obtain $x\in A$ if and only if
$x\leq \sigma$.

As for the second case, assume first that $x\in A$. Suppose that there exists
a precise $t$ such that $t<x$ and $t+e(\beta)=\beta$. We may write
$x=p+e\left(  x\right)  $ with $p$ precise and $t<p$. Now $p<t$ because
$p\in \hat{A}$, a contradiction. Hence $x<t$. Finally assume that $x<t$ for all
precise $t$ such that $t+e(\beta)=\beta$. By Lemma
\ref{Precise separation from hole} there exists a precise $p$ such that $x<p$
and $p<t$ for all precise $t$ such that $t+e(\beta)=\beta$. Then $p\in \hat
{A}\subseteq A$, hence $x\in A$ because $A$ is a lower halfline. Taking
$\tau=\beta$, we conclude that $x\in A$ if and only if $\forall t(e(t)=0\wedge
t+e(\tau)=\tau \rightarrow x<t)$. By Proposition \ref{t precise theorem} this
is equivalent to $\forall t(t+e(\tau)=\tau \rightarrow x<t)$.
\end{proof}

If $A$ is a precise lower halfline the case \ref{Theorem Dedekind}%
.\ref{Theorem Dedekind 3} reduces to the case \ref{Theorem Dedekind}%
.\ref{Theorem Dedekind 2}, so the Generalized Dedekind completeness of Axiom
\ref{Axiom Dedekind completeness} and Theorem \ref{Theorem Dedekind}
correspond to ordinary Dedekind completeness.

\begin{proposition}
\label{Dedekind precise}If $A$ is precise, in the third case of the criterion
in Theorem \ref{Theorem Dedekind} the element $\tau$ is precise. In fact the
criterion is equivalent to
\begin{equation}
\exists \tau(e\left(  \tau \right)  =0\wedge \forall x(x\in A\leftrightarrow
x<\tau). \label{criterium 3 simplified}%
\end{equation}

\end{proposition}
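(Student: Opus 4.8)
The plan is to start from the conclusion of Theorem~\ref{Theorem Dedekind} applied to the precise halfline $A$, and show that cases \ref{Theorem Dedekind 1} and \ref{Theorem Dedekind 2} are already of the required form while case \ref{Theorem Dedekind 3} collapses to it. First I would observe that since $A$ is precise, one may always replace $A$ by $\hat A=\{x\in A\mid e(x)=0\}$ without changing which precise $x$ lie in it; indeed in the proof of Theorem~\ref{Theorem Dedekind} the bounds $\rho,\sigma,\tau$ were obtained from the behaviour of $A$ on precise elements only (via Axiom~\ref{Axiom Dedekind completeness} and Proposition~\ref{t precise}). So the real content is: in case~\ref{Theorem Dedekind 3}, if $A$ is precise then $e(\tau)=0$, and then the clause $\forall t(t+e(\tau)=\tau\rightarrow x<t)$ reads simply $x<\tau$.

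Next I would argue that $e(\tau)=0$ when $A$ is precise. Suppose toward a contradiction that $e(\tau)\neq 0$, i.e. $e(\tau)>0$. The set of precise $t$ with $t+e(\tau)=\tau$ is then a nonempty "band" of precise elements cofinal below $\tau$ and bounded below; concretely, writing $\tau=p+e(\tau)$ with $p$ precise (Axiom~\ref{numexterno}), every precise $t$ with $e(\tau)<t\le p$ and more generally with $t+e(\tau)=\tau$ works, and by Lemma~\ref{Precise separation elements} there are precise elements strictly between any two such. I claim that the precise element $d=e(\tau)$, viewed as a positive precise quantity — wait, $e(\tau)$ is a magnitude, not precise; instead I would pick a positive precise $d$ with $0<d<e(\tau)$, which exists by Axiom~\ref{scheiding neutrices}/Lemma~\ref{Precise separation neutrices} applied to $0<e(\tau)$ together with Lemma~\ref{precise separation}. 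Then for any $x\in A$ one has, by the case~\ref{Theorem Dedekind 3} description, $x<t$ for all precise $t$ with $t+e(\tau)=\tau$; but if $x<t$ and $t+e(\tau)=\tau$ then also $x+d<t+d$, and since $d<e(\tau)$ we have $t+d+e(\tau)=t+e(\tau)=\tau$, so $t+d$ is again such a witness — hence $x+d$ still satisfies the defining condition of case~\ref{Theorem Dedekind 3} and thus $x+d\in A$. Since this holds for every $x\in A$, the positive precise $d$ leaves $A$ invariant, contradicting the assumption that $A$ is precise (Definition~\ref{Definition halfline}). Therefore $e(\tau)=0$.

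Finally, with $e(\tau)=0$ the only $t$ satisfying $t+e(\tau)=\tau$ is $t=\tau$ itself, so the criterion $\forall t(t+e(\tau)=\tau\rightarrow x<t)$ becomes exactly $x<\tau$, giving \eqref{criterium 3 simplified}. Conversely, any $\tau$ of the form in \eqref{criterium 3 simplified} trivially recovers the case~\ref{Theorem Dedekind 3} statement with that (now precise) $\tau$, so the two are equivalent; and uniqueness of $\tau$ is inherited from the uniqueness of the weak least upper bound asserted after Axiom~\ref{Axiom Dedekind completeness}. I expect the main obstacle to be the careful handling of the invariance argument in the middle paragraph: one must be sure that adding a small positive precise $d$ to a witness $t$ produces another witness, which is where the hypothesis $d<e(\tau)$ (and the linearity $e(t+d)=e(t)+e(d)=0$, from the remark after Axiom~\ref{assemblye(xy)}) is used, and one must correctly invoke Axiom~\ref{scheiding neutrices} together with Lemmas~\ref{precise separation}--\ref{Precise separation neutrices} to produce such a $d$ strictly below the magnitude $e(\tau)$.
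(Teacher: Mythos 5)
Your proposal follows essentially the same route as the paper's proof: assume $e(\tau)>0$, produce a positive precise $d$ with $0<d<e(\tau)$ (the paper likewise takes such a $p$, which Lemma \ref{Precise separation neutrices} justifies), and show that $A$ is invariant under the shift by $d$, contradicting the preciseness of $A$. The one step that needs tightening is your claim that $x+d$ ``still satisfies the defining condition'': that condition requires $x+d<t'$ for \emph{every} $t'$ with $t'+e(\tau)=\tau$, whereas you only exhibit the inequality for those $t'$ of the form $t+d$ with $t$ a witness. To close this, note that the shift by $d$ maps the set of witnesses onto itself: for any witness $t'$, the element $t'-d$ is again a witness, since $-d+e(\tau)=e(\tau)$ follows from $0<d<e(\tau)$ just as $d+e(\tau)=e(\tau)$ does; then $x<t'-d$ because $x\in A$, hence $x+d<t'$. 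The paper sidesteps this quantifier issue by arguing contrapositively: if some witness $t$ satisfied $t\le x+p$, then $\tau\le x+p+e(\tau)=x+e(\tau)\le t+e(\tau)=\tau$, forcing $x+e(\tau)=\tau$ and hence the absurdity $x<x$. With that one-line repair your argument is complete; the final reduction of the criterion to $x<\tau$ when $e(\tau)=0$, and the trivial converse giving the stated equivalence, are fine.
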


\begin{proof}
Suppose that $0<e\left(  \tau \right)  $. Let $p$ be precise and such that
$0<p<e\left(  \tau \right)  $. Let $x\in A$. If there exists $t$ such that
$t+e(\tau)=\tau$ and $t\leq x+p$, then $\tau \leq x+p+e(\tau)=x+e(\tau)$. Note
that $x+e(\tau)\leq t+e(\tau)=\tau$. Hence $x+e(\tau)=\tau$, which means that
$x<x$, a contradiction. Hence $x+p\in A$, but this means that $A$ is not
precise, again a contradiction. Hence $e\left(  \tau \right)  =0$ and $\tau$ is
precise. In addition, if $t$ is such that $t+e(\tau)=\tau$, then $t=\tau$
\end{proof}

If the lower halfline $A$ is not precise, the three cases may indeed occur and
are mutually exclusive. We will call the elements $\rho,\sigma$ and $\tau$
\emph{weak least upper bounds}, denoted by $\operatorname*{zup}A$. Moreover,
the elements $\rho,\sigma$ and $\tau$ are unique and we write $A=\left(
-\infty,\rho \right]  $, $A=\left(  -\infty,\sigma \right)  $ and $A=\left(
-\infty,\tau \right[  [$ respectively. In the first case the halfline is called
\emph{closed}, and $\rho$ may be called the \emph{maximum} of $A$, written
$\rho \equiv \max A$. In the second case the halfline is called \emph{open} and
$\sigma$ is an ordinary least upper bound, which we may call the
\emph{supremum }of $A$, written $\sigma \equiv \sup A$. In the third case we
call the halfline \emph{strongly open} (see also \cite{Wallet}) and $\tau$ the
\emph{weak supremum} of $A$. We may define weak least upper bounds for any set
$A$ by defining $\operatorname*{zup}A\equiv \operatorname*{zup}\overline{A}$
where $\overline{A}\equiv \{x\in S|\exists a\in A(x\leq a)\}$.

\begin{theorem}
\label{Theorem three cases lower}With respect to Theorem
\ref{Theorem Dedekind} the elements $\rho,\sigma$ and $\tau$ are unique, and
the cases \ref{Theorem Dedekind 1} and \ref{Theorem Dedekind 2}, and the cases
\ref{Theorem Dedekind 1} and \ref{Theorem Dedekind 3} are mutually exclusive.
If $A$ is a precise lower halfline the third case reduces to the second case.
If $A$ is not precise the three cases are mutually exclusive. Moreover,
$\rho \in A$ and $\sigma,\tau \notin A$.
\end{theorem}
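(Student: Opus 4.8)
The plan is to prove the claims in turn. The three membership statements and the uniqueness of $\rho$ and $\sigma$ are immediate; the mutual-exclusivity statements all follow from a single ``squeeze a precise point in between'' device; the real work is the uniqueness of $\tau$. For membership, putting $x=\rho$ in \ref{Theorem Dedekind 1} gives $\rho\le\rho$, so $\rho\in A$; putting $x=\sigma$ in \ref{Theorem Dedekind 2} gives $\sigma\notin A$ since $\sigma\not<\sigma$; putting $x=\tau$ and $t=\tau$ in \ref{Theorem Dedekind 3} makes the universal clause fail (as $\tau+e(\tau)=\tau$ but $\tau\not<\tau$), so $\tau\notin A$. For uniqueness of $\rho$, if $\rho_{1},\rho_{2}$ both satisfy \ref{Theorem Dedekind 1}, then $x\le\rho_{1}\leftrightarrow x\le\rho_{2}$; taking $x=\rho_{1}$ and $x=\rho_{2}$ gives $\rho_{1}\le\rho_{2}\le\rho_{1}$, hence $\rho_{1}=\rho_{2}$ by Axiom \ref{(OA)antisym}. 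For uniqueness of $\sigma$, if $\sigma_{1}\ne\sigma_{2}$ both satisfy \ref{Theorem Dedekind 2}, Axiom \ref{(OA)total} lets us assume $\sigma_{1}<\sigma_{2}$; by Lemma \ref{Precise separation elements} there is a precise $p$ with $\sigma_{1}<p<\sigma_{2}$, and then $p\in A$ (since $p<\sigma_{2}$) while $p\notin A$ (since $p\not<\sigma_{1}$), a contradiction.

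Next I would handle the mutual exclusivities. If \ref{Theorem Dedekind 1} and \ref{Theorem Dedekind 2} both held, $\rho\in A$ would give $\rho<\sigma$, and a precise $p$ with $\rho<p<\sigma$ (Lemma \ref{Precise separation elements}) would satisfy $p\in A$ by \ref{Theorem Dedekind 2} while $p>\rho$ forces $p\notin A$ by \ref{Theorem Dedekind 1}. If \ref{Theorem Dedekind 1} and \ref{Theorem Dedekind 3} both held, $\rho\in A$ would mean $\rho<t$ for every precise $t$ with $t+e(\tau)=\tau$ (Proposition \ref{t precise theorem}), and Lemma \ref{Precise separation from hole} would produce a precise $p>\rho$ with the same property, so $p\in A$ by \ref{Theorem Dedekind 3}, contradicting $p>\rho$ together with \ref{Theorem Dedekind 1}. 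Finally, assume $A$ is not precise and \ref{Theorem Dedekind 2} and \ref{Theorem Dedekind 3} both held: since $\sigma\notin A$, \ref{Theorem Dedekind 3} (with $t$ precise, Proposition \ref{t precise theorem}) gives a precise $t_{0}$ with $t_{0}+e(\tau)=\tau$ and $t_{0}\le\sigma$; taking $t=t_{0}$ in \ref{Theorem Dedekind 3} shows $t_{0}\notin A$, so \ref{Theorem Dedekind 2} gives $\sigma\le t_{0}$, and hence $t_{0}=\sigma$ is precise and $A=(-\infty,\sigma)$. Such an $A$ is precise, since a precise $d>0$ with $a+d\in A$ for all $a\in A$ would put $\sigma=(\sigma-d)+d\in A$, against $\sigma\notin A$; this contradicts the hypothesis. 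That \ref{Theorem Dedekind 3} reduces to \ref{Theorem Dedekind 2} when $A$ is precise is already contained in Proposition \ref{Dedekind precise}.

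The crux is the uniqueness of $\tau$ in case \ref{Theorem Dedekind 3}. By Proposition \ref{t precise theorem}, the criterion asserts that $\hat{A}=\{x\in A:e(x)=0\}$ equals $\{x:e(x)=0,\ x<t\text{ for every precise }t\text{ with }t+e(\tau)=\tau\}$. Writing $\tau=q+e(\tau)$ with $q$ precise, I would first derive the closed form $\hat{A}=\{x:e(x)=0,\ q-x>e(\tau)\}$; both inclusions are short order computations whose only nontrivial ingredient is the elementary fact that a precise $p\ge0$ satisfies $p+e(\tau)=e(\tau)$ if and only if $p\le e(\tau)$ (an easy consequence of the order axioms and Axiom \ref{Axiom e(x)maiory}). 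From this form one reads off that the magnitude of the precise halfline $\hat{A}$ --- the set of precise $d$ with $\hat{A}+d\subseteq\hat{A}$ and $\hat{A}-d\subseteq\hat{A}$ --- is exactly the set of precise members of $e(\tau)$: for precise $d>0$, the implication $q-x>e(\tau)\Rightarrow q-(x+d)>e(\tau)$ holds for all precise $x$ precisely when $d\in e(\tau)$ (take $x=q-d$ to break it if $d\notin e(\tau)$). Hence if $\tau_{1},\tau_{2}$ both satisfy \ref{Theorem Dedekind 3} for the same $A$, then $e(\tau_{1})$ and $e(\tau_{2})$ have the same precise members; as these two magnitudes are comparable by Axiom \ref{(OA)total} and, if distinct, are separated by a precise element (Lemma \ref{Precise separation neutrices}), we conclude $e(\tau_{1})=e(\tau_{2})=:m$. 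Writing $\tau_{i}=q_{i}+m$, the closed form gives $q_{1}-x>m\leftrightarrow q_{2}-x>m$ for all precise $x$; testing $x=q_{1}$ and $x=q_{2}$ gives $q_{2}-q_{1}\not>m$ and $q_{1}-q_{2}\not>m$, and since $q_{2}-q_{1}$ is precise one of these differences is $\ge0$ and therefore $\le m$, hence lies in $m$; so $q_{1}+m=q_{2}+m$, i.e.\ $\tau_{1}=\tau_{2}$.

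The main obstacle is this last paragraph: one must extract from the purely relational description of $A$ an intrinsic invariant --- the magnitude of $\hat{A}$ --- and then use that a magnitude is determined by its precise members (which is where the separation lemmas and the totality of the order come back in). Everything else is straightforward bookkeeping with the separation-by-precise-element lemmas established just above.
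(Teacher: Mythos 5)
Your proof is correct, and on the two delicate points it takes a genuinely different route from the paper's. The membership claims, the uniqueness of $\rho$ and $\sigma$, and the exclusivity of cases \ref{Theorem Dedekind 1}/\ref{Theorem Dedekind 2} and \ref{Theorem Dedekind 1}/\ref{Theorem Dedekind 3} are handled exactly as in the paper, by squeezing in a precise element via Lemmas \ref{Precise separation elements} and \ref{Precise separation from hole}. For the uniqueness of $\tau$ the paper argues directly: assuming $\tau<\tau'$, it reduces to $\tau=e(\tau)$ and splits into the subcases where $\tau'$ is a magnitude or zeroless, in each case using Lemma \ref{precise separation} to produce a precise element that must lie both in and outside $A$. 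You instead put $\hat{A}$ in the closed form $\{x\mid e(x)=0\wedge q-x>e(\tau)\}$ with $\tau=q+e(\tau)$, recover $e(\tau)$ intrinsically as the group of precise translations preserving $\hat{A}$, and only then compare representatives. This costs you two auxiliary facts the paper does not need to make explicit --- the absorption equivalence ($p+e=e$ iff $|p|\le e$ for precise $p$, which indeed follows from Axiom \ref{Axiom e(x)maiory} in one direction and from idempotency of magnitudes with Axiom \ref{(OA)compoper} in the other) and the fact that a magnitude is determined by its precise elements (Lemma \ref{Precise separation neutrices}) --- but it isolates the invariant that actually forces uniqueness and avoids the case split on the shape of $\tau'$. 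For the exclusivity of cases \ref{Theorem Dedekind 2} and \ref{Theorem Dedekind 3} when $A$ is not precise, the paper shows $\sigma=\tau$ by separation and then asserts exclusivity rather tersely; your argument --- co-occurrence forces $t_{0}=\sigma$ to be precise, hence $A=(-\infty,\sigma)$ is a precise halfline, contradicting the hypothesis --- closes that step more explicitly and is, if anything, cleaner than the original.
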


\begin{proof}
Clearly $\rho \in A$ and $\sigma \notin A$, because $\rho \leq \rho$ and
$\sigma \not <  \sigma$. Suppose towards a contradiction that $\tau \in A$. Then
for all $t$ such that $t+e\left(  \tau \right)  =\tau$ one has $\tau<t$. Then
$\tau<t+e\left(  \tau \right)  =\tau$, a contradiction. Hence $\tau \notin A$.

To show that $\rho$ is unique suppose that $\rho^{\prime}$ is such that $x\in
A$ if and only if $x\leq \rho$ and if and only if $x\leq \rho^{\prime}$. Then
$\rho,\rho^{\prime}\in A$, hence $\rho \leq \rho^{\prime}$ and $\rho^{\prime
}\leq \rho$. One concludes that $\rho$ is unique by Axiom \ref{(OA)antisym}.

To show that $\sigma$ is unique suppose that $\sigma^{\prime}$ is such that
$x\in A$ if and only if $x<\sigma$ if and only if $x<\sigma^{\prime}$. If
$\sigma<\sigma^{\prime}$ then $\sigma \in A$ hence $\sigma<\sigma$ which is
absurd. If $\sigma^{\prime}<\sigma$ then similarly $\sigma^{\prime}\in A$ and
$\sigma^{\prime}<\sigma^{\prime}$, which is absurd. Hence $\sigma
=\sigma^{\prime}$ by Axiom \ref{(OA)total}.

In order to show that $\tau$ is unique, suppose that $x\in A$ if and only if
$x<t$ for all precise $t$ with $t+e(\tau)=\tau$, and also if and only if
$x<t^{\prime}$ for all precise $t^{\prime}$ with $t^{\prime}+e(\tau^{\prime
})=\tau^{\prime}$. Assume that $\tau<\tau^{\prime}$. We may suppose that
$\tau=e\left(  \tau \right)  $. Suppose first that $\tau^{\prime}=e\left(
\tau^{\prime}\right)  $. Then by Lemma \ref{precise separation} there is a
precise element $p$ such that $e\left(  \tau \right)  <p<\tau^{\prime}$. Then
$-p<t$ for all precise $t$ such that $t+e(\tau)=\tau$, otherwise $p+t\leq0\leq
e\left(  \tau \right)  $, so $p\leq-t+e\left(  \tau \right)  =-\left(
t-e\left(  \tau \right)  \right)  =-\left(  t+e\left(  \tau \right)  \right)
=-e\left(  \tau \right)  =e\left(  \tau \right)  $, which is a contradiction.
Hence $-p\in A$. On the other hand $-p+e\left(  \tau^{\prime}\right)
=-\left(  p-e\left(  \tau^{\prime}\right)  \right)  =-\left(  p+e\left(
\tau^{\prime}\right)  \right)  =-e\left(  \tau^{\prime}\right)  =e\left(
\tau^{\prime}\right)  $. Hence $-p\notin A$, a contradiction. Secondly, we
suppose that $\tau^{\prime}$ is zeroless. It follows from Lemma
\ref{precise separation} that there exists a precise $p$ such that $e\left(
\tau \right)  <p+e\left(  \tau^{\prime}\right)  <\tau^{\prime}$. By the second
inequality $p\in A$ and by the first inequality $p\notin A$, a contradiction.
Hence $\tau^{\prime}\leq \tau$. Similarly one shows that $\tau \leq \tau^{\prime
}$. Hence $\tau=\tau^{\prime}$.

We prove next that the cases \ref{Theorem Dedekind}.\ref{Theorem Dedekind 1}
and \ref{Theorem Dedekind}.\ref{Theorem Dedekind 2} are mutually exclusive. If
not, because $\rho \in A$ one has $\rho<\sigma$. By Lemma
\ref{precise separation} there is a precise element $s$ such that
$\rho<s<\sigma$. Then $s\notin A$ because $\rho<s$, and $s\in A$ because
$s<\sigma$, a contradiction. Hence the cases \ref{Theorem Dedekind}%
.\ref{Theorem Dedekind 1} and \ref{Theorem Dedekind}.\ref{Theorem Dedekind 2}
are mutually exclusive.

We prove next that the cases \ref{Theorem Dedekind}.\ref{Theorem Dedekind 1}
and \ref{Theorem Dedekind}.\ref{Theorem Dedekind 3} are mutually exclusive. If
not, because $\rho \in A$, for all $t$ such that $t+e\left(  \tau \right)
=\tau$ one has $\rho<t$. By Lemma \ref{Precise separation from hole} there is
a precise $p$ such that $\rho<p$ and $p<t$ for all $t$ such that $t+e\left(
\tau \right)  =\tau$. Then both $p\notin A$ and $p\in A$, a contradiction.

Finally we relate the cases \ref{Theorem Dedekind}.\ref{Theorem Dedekind 2}
and \ref{Theorem Dedekind}.\ref{Theorem Dedekind 3}. By Proposition
\ref{Dedekind precise} they coincide if $A$ is precise. Assume that $A$ is not
precise. If $\sigma<\tau$, we obtain a contradiction along the lines of the
previous case. Suppose $\tau<\sigma$. By Lemma
\ref{Precise separation elements} there exists a precise $q$ such that
$\tau<p<\sigma$. Then both $p\notin A$ and $p\in A$, a contradiction. Hence
$\sigma=\tau$. Hence the cases \ref{Theorem Dedekind}.\ref{Theorem Dedekind 2}
and \ref{Theorem Dedekind}.\ref{Theorem Dedekind 3} are mutually exclusive.
\end{proof}

As for upper halflines, the definition of \emph{weak greatest lower bounds}
$\operatorname*{winf}$ is similar, but not entirely analogous, to the
definition of weak least upper bounds, according to the three possibilities
mentioned in the following theorem.

\begin{theorem}
\label{Theorem Dedekind upper}Let $S$ be a complete solid and $B\subseteq S$
be a definable upper halfline. Then $B$ has one of the following forms:
\end{theorem}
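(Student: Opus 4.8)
The plan is to mirror the proof of Theorem \ref{Theorem Dedekind} for lower halflines, making only the adjustments forced by the asymmetry between "least upper bound" and "greatest lower bound" in the generalized setting. Given a definable upper halfline $B$, I first pass to the precise part $\hat{B}=\{x\in B\mid e(x)=0\}$ and apply the Generalized Dedekind completeness axiom (in its $\operatorname*{winf}$ form, obtained by applying Axiom \ref{Axiom Dedekind completeness} to the lower halfline $\{-x\mid x\in B\}$, or equivalently the reflected complement) together with the precise-element refinements of Propositions \ref{t precise} and \ref{t precise theorem}. This splits into a case with a precise bound $\alpha$ governing membership of precise elements ($p\in B\leftrightarrow \alpha\leq p$), and a "hole" case governed by some $\beta$ with $p\in B\leftrightarrow \forall t(e(t)=0\wedge t+e(\beta)=\beta\rightarrow t<p)$.

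Next I would split the first case according to whether $\alpha\in B$ or $\alpha\notin B$, using Lemma \ref{Precise separation elements} each time to promote a strict inequality among arbitrary elements to a strict inequality witnessed by a precise element, thereby transferring the membership criterion from precise $p$ to arbitrary $x$. When $\alpha\in B$ this yields the closed form $B=[\rho,+\infty)$; when $\alpha\notin B$ it yields the open form $B=(\sigma,+\infty)$. For the hole case I would invoke the analogue of Lemma \ref{Precise separation from hole} — or rather apply that lemma after reflecting through $0$ — to show that $x\in B$ iff $t<x$ for all precise $t$ with $t+e(\beta)=\beta$, giving the strongly open form which I'd write $B=]](\tau,+\infty)$ with $\tau\equiv\operatorname*{winf}B$. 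Uniqueness of $\rho,\sigma,\tau$ and mutual exclusivity of the three forms follow exactly as in Theorem \ref{Theorem three cases lower}, using Lemmas \ref{precise separation} and \ref{Precise separation from hole} and Axioms \ref{(OA)antisym}, \ref{(OA)total}; and if $B$ is precise then, as in Proposition \ref{Dedekind precise}, the strongly open case collapses to the open case.

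The one genuinely non-routine point — the reason the theorem statement warns that the definition of $\operatorname*{winf}$ is "similar, but not entirely analogous" — is the handedness of the hole. For a lower halfline the strongly open case $(-\infty,\tau[[$ excludes all $t$ with $t+e(\tau)=\tau$ from below, i.e. $x\in A\leftrightarrow x<t$; for an upper halfline the symmetric phenomenon is that the hole sits just below $\tau$, so the correct formulation is $x\in B\leftrightarrow t<x$ for all precise $t$ with $t+e(\tau)=\tau$, and one must be careful that it is the relation "$\tau$ is not in $B$ but every element strictly above the whole magnitude $\tau$ is" that is being captured, not a naive mirror of the lower-halfline wording. I expect the bulk of the writing effort, and the only place an error could creep in, to be keeping the direction of these inequalities and the placement of $e(\tau)$ consistent throughout the three cases and the mutual-exclusivity arguments; everything else is a transcription of the lower-halfline proof with $\leq$ and $<$ reversed and signs flipped.
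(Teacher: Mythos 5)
Your idea of reducing to the lower-halfline case is the right one, but the paper does this by \emph{complementation}, not by reflection, and the difference is not cosmetic: it is exactly what produces the correct third form. The paper's proof is three lines: if $B\neq S$, then $A=S\setminus B$ is a definable lower halfline, Theorem \ref{Theorem Dedekind} applies to $A$, and negating each of the three membership criteria yields the three forms for $B$; in particular the negation of $\forall t\,(t+e(\tau)=\tau\rightarrow x<t)$ is $\exists t\,(t+e(\tau)=\tau\wedge t\leq y)$, which is the existential form (\ref{Theorem Dedekind upper3}) in the statement. Your alternative reduction via $x\mapsto -x$ is not available in a solid: negation is not an order anti-isomorphism here, since by Axiom \ref{Axiom e(x)maiory} a precise $p$ with $p+e(x)=e(x)$ satisfies \emph{both} $p\leq e(x)$ and $-p\leq e(x)$, so $w<z$ does not imply $-z<-w$ and $\{-x\mid x\in B\}$ need not be a lower halfline. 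That step would already need a separate justification.

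More seriously, the formula you commit to for the third case, $y\in B\leftrightarrow\forall t\,(t+e(\tau)=\tau\rightarrow t<y)$ together with the claim that $\tau\notin B$, is wrong, and with it your trichotomy fails to be exhaustive. Take $A=\{x\mid\forall t\,(t+\oslash=\oslash\rightarrow x<t)\}$, a non-precise strongly open lower halfline with $\operatorname*{zup}A=\oslash$, and let $B=S\setminus A$. Then $0\in B$ (since $0\not<0$), and $0$ satisfies the paper's criterion $\exists t\,(t+\oslash=\oslash\wedge t\leq 0)$ with $t=0$, but it fails your criterion, since $0<0$ is false. So your version of case (3) describes a strictly smaller set than $B$, and by the mutual-exclusivity statements of Theorem \ref{Theorem three cases lower} this $B$ is not closed or open either; it is simply missed by your classification. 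The genuine asymmetry you were looking for is the opposite of what you wrote: the strongly open \emph{upper} halfline \emph{contains} its weak infimum, because $t\leq t+e(\tau)=\tau$ for any precise representative $t$, whereas the strongly open lower halfline excludes its weak supremum. Correcting the quantifier from $\forall$/$<$ to $\exists$/$\leq$ (equivalently, just taking complements throughout) repairs the argument.
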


\begin{enumerate}
\item \label{Theorem Dedekind upper1}$\exists \rho \forall y(y\in
B\leftrightarrow \rho \leq y).$

\item \label{Theorem Dedekind upper2}$\exists \sigma \forall y(y\in
B\leftrightarrow \sigma<y).$

\item \label{Theorem Dedekind upper3}$\exists \tau \forall y(y\in
B\leftrightarrow \exists t(t+e(\tau)=\tau \wedge t\leq y)).$
\end{enumerate}

\begin{proof}
The case $B=S$ corresponds to \ref{Theorem Dedekind upper}%
.\ref{Theorem Dedekind upper3}. If $B\subset S$, define $A=S\backslash B$.
Then $A$ is a lower halfline. Let $\zeta={\operatorname*{zup}}(A)$. If
$\forall x\left(  x\in A\Leftrightarrow x\leq \zeta \right)  $, then $\forall
y\left(  y\in B\Leftrightarrow \zeta<y\right)  $. If $\forall x\left(  x\in
A\Leftrightarrow x<\zeta \right)  $, then $\forall y\left(  y\in
B\Leftrightarrow \zeta \leq y\right)  $. Finally, if $\forall x(x\in
A\Leftrightarrow \forall z(z+e\left(  \zeta \right)  =\zeta \Rightarrow x<z))$,
then $\forall y(y\in B\Leftrightarrow \exists z(z+e\left(  \zeta \right)
=\zeta \wedge z\leq y))$.
\end{proof}

In the first case we call the upper halfline $B$ \emph{closed} with
\emph{minimum} $\rho \equiv \min B$, in the second case we call the upper
halfline \emph{open} with \emph{infimum }$\sigma \equiv \inf B$, and in the
third case we call the upper halfline \emph{strongly open }with \emph{weak
infimum }$\tau=\operatorname*{winf}B$. We may define greatest lower bounds for
any set $B$ by defining $\operatorname*{winf}B\equiv \operatorname*{winf}%
\underline{B}$ where $\underline{B}\equiv \{x\in S|\exists b\in B(b\leq x)\}$.
Note that the complement of a closed lower halfline is open (if not empty),
the complement of a open lower halfline is closed and the complement of a
non-precise strongly open lower halfline is again strongly open. As a
corollary to Theorem \ref{Theorem three cases lower} we obtain the following
criterion for upper halflines.

\begin{corollary}
Let $S$ be a complete solid and $B\subseteq S$ be an upper halfline. The
elements $\rho,\sigma$ and $\tau$ of Theorem \ref{Theorem Dedekind upper} are
unique and the cases \ref{Theorem Dedekind upper1} and
\ref{Theorem Dedekind upper2}, and the cases \ref{Theorem Dedekind upper1} and
\ref{Theorem Dedekind upper3} are mutually exclusive. If $B$ is a precise
lower halfline, it is strongly open if and only if it is closed. If $B$ is not
a precise lower halfline, the properties of being open, closed or strongly
open are mutually exclusive.
\end{corollary}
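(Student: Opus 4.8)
The plan is to reduce everything to the lower-halfline results of Theorem \ref{Theorem three cases lower} via the complementation correspondence set up in the proof of Theorem \ref{Theorem Dedekind upper}. Concretely, for $B \subseteq S$ an upper halfline with $B \neq S$, write $A = S\backslash B$, which is a lower halfline by Remark \ref{Complementar halfline}; the proof of Theorem \ref{Theorem Dedekind upper} already records the precise dictionary between the three cases for $A$ (with $\zeta = \operatorname*{zup} A$) and the three cases for $B$, namely: $A$ closed with max $\zeta$ $\leftrightarrow$ $B$ open with $\sigma = \zeta$; $A$ open with $\sup \zeta$ $\leftrightarrow$ $B$ closed with $\min = \zeta$; and $A$ strongly open with weak sup $\zeta$ $\leftrightarrow$ $B$ strongly open with weak inf $\zeta$. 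So uniqueness of $\rho, \sigma, \tau$ for $B$ transfers immediately from uniqueness of the corresponding element for $A$, which is Theorem \ref{Theorem three cases lower}.

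For the mutual exclusivity claims, I would again push them across the correspondence. The case ``$B$ is closed'' ($\rho \equiv \min B$) corresponds to ``$A$ is open'' ($\zeta = \sup A$), and ``$B$ is open'' ($\sigma \equiv \inf B$) corresponds to ``$A$ is closed'' ($\zeta = \max A$); since cases \ref{Theorem Dedekind 1} and \ref{Theorem Dedekind 2} of Theorem \ref{Theorem Dedekind} are mutually exclusive for $A$, cases \ref{Theorem Dedekind upper1} and \ref{Theorem Dedekind upper2} are mutually exclusive for $B$. Likewise ``$B$ closed'' vs. ``$B$ strongly open'' corresponds to ``$A$ open'' vs. ``$A$ strongly open''; here one has to be slightly careful, because the relevant exclusivity in Theorem \ref{Theorem three cases lower} is between cases \ref{Theorem Dedekind 1} (closed) and \ref{Theorem Dedekind 3} (strongly open) for the lower halfline, whereas the complement pairing sends $B$ closed to $A$ open. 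So instead I would invoke the statement in Theorem \ref{Theorem three cases lower} that for a lower halfline the open and strongly-open cases coincide iff it is precise, and are otherwise mutually exclusive, together with the fact that the complement of a precise lower halfline is a precise lower halfline (equivalently, $A$ is precise iff $B = S\backslash A$ is, which one checks directly from Definition \ref{Definition halfline}). Then ``$B$ strongly open $=$ $B$ closed'' translates to ``$A$ strongly open $=$ $A$ open'', which by Theorem \ref{Theorem three cases lower} holds precisely when $A$ (hence $B$) is precise; and when $B$ is not a precise lower halfline, the three cases for $A$ are mutually exclusive, hence so are the three cases for $B$.

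The only point that needs a line or two of genuine argument, rather than pure bookkeeping, is the edge case $B = S$: in the proof of Theorem \ref{Theorem Dedekind upper} this was assigned to case \ref{Theorem Dedekind upper3} with $\tau = \operatorname*{winf} B$, so one should note that $B = S$ is also (vacuously, or with an appropriate $\tau$) realized as case \ref{Theorem Dedekind upper1}, and that $S$ is indeed a precise lower halfline in the degenerate sense covered by the corollary's phrasing — alternatively, simply exclude $B = S$ from the exclusivity discussion as the hypothesis ``if $B$ is not a precise lower halfline'' naturally does. I expect no real obstacle here: the whole corollary is a transcription of Theorem \ref{Theorem three cases lower} under $B \mapsto S\backslash B$, and the main thing to get right is matching up which of the three lower-halfline cases corresponds to which of the three upper-halfline cases, and tracking that ``precise'' is preserved under complementation.
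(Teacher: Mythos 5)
Your proposal is correct and matches the paper's intended argument exactly: the paper offers no separate proof, relying precisely on the complementation dictionary $B\mapsto S\backslash B$ recorded in the proof of Theorem \ref{Theorem Dedekind upper} and the preceding remark that complements swap closed with open and preserve (non-precise) strongly open, so everything transfers from Theorem \ref{Theorem three cases lower}. Your added care about which lower-halfline case pairs with which upper-halfline case, and about precision being preserved under complementation, is exactly the bookkeeping the paper leaves implicit.
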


It is to be noted that also the case of the complete solid $S$ itself enters
in the above classifications. Considered as a lower halfline it has a maximum
in the form of the maximal magnitude $M$, while $M$ acts as a weak infimum, if
$S$\ is considered as an upper halfline.

\begin{proposition}
\label{Zup E magnitude}Let $S$ be a complete solid and $E$ be a set of
magnitudes. Then $\operatorname*{zup}E$ and $\operatorname*{winf}E$ are
magnitudes. In fact, $\operatorname*{zup}E$ is a maximum or a supremum and
$\operatorname*{winf}E$ is a minimum or an infimum.
\end{proposition}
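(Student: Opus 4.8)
The plan is to prove the two claims — that $\operatorname*{zup}E$ and $\operatorname*{winf}E$ are magnitudes, and that each is of the corresponding type (maximum/supremum, resp. minimum/infimum) — by reducing to Theorem \ref{Theorem Dedekind} and Theorem \ref{Theorem Dedekind upper} applied to the halfline generated by $E$, and then ruling out the ``strongly open'' case using the hypothesis that $E$ consists of magnitudes. I treat $\operatorname*{zup}E$ in detail; the case of $\operatorname*{winf}E$ is symmetric.

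First I would pass from $E$ to the lower halfline it generates, $A = \overline{E} = \{x \in S \mid \exists a \in E\,(x \le a)\}$, which is definable (with the magnitudes of $E$ as parameters) and nonempty, so Condition (\ref{Condition A}) holds and Theorem \ref{Theorem Dedekind} applies; by definition $\operatorname*{zup}E = \operatorname*{zup}A$ is one of $\rho$, $\sigma$, $\tau$. The key observation is that $A$ is \emph{symmetric about its magnitudes} in the following sense: if $a \in E$ and $y \le e(a)$ or $-y \le e(a)$, then $y \le a$ — more to the point, every magnitude $e(a)$ with $a \in E$ lies in $A$ together with the whole segment $[-e(a), e(a)]$, by Axiom \ref{Axiom e(x)maiory}. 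From this I would first show the weak supremum $\tau$ is itself a magnitude: writing $\tau = p + e(\tau)$ with $p$ precise (Axiom \ref{numexterno}), I claim $p = 0$, i.e. $e(\tau) = 0$ is impossible unless $A$ is precise, but in fact here $A$ is never a ``genuine'' strongly open halfline. The clean way: if $x \in A$ then $x \le a$ for some magnitude $a \in E$, and then $-a \le a$ gives $-a \in A$ as well, so $A$ contains arbitrarily ``negative'' elements of the same magnitude as its ``positive'' ones; combined with convexity this forces, in case \ref{Theorem Dedekind 3}, that for every precise $t$ with $t + e(\tau) = \tau$ we also have $-t + e(\tau) = \tau$ and $-t \in A$, hence (taking $t$ precise via Proposition \ref{t precise theorem}) both $t > 0$ and $-t > 0$ relative to $e(\tau)$ — which, together with the fact that $e(\tau)$ would then be the smallest magnitude strictly above every $a \in E$, lets me exhibit the magnitude $e(\tau)$ itself as an upper bound attained, collapsing case \ref{Theorem Dedekind 3} to case \ref{Theorem Dedekind 1}.

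Then, in case \ref{Theorem Dedekind 1} or \ref{Theorem Dedekind 2}, I must show $\rho$, resp. $\sigma$, is a magnitude. For this I use that $A$ is invariant under $x \mapsto -x$ restricted suitably: since each $a \in E$ is a magnitude and $x \in A \iff x \le a$ for some $a \in E$, one checks $x \in A \Rightarrow -x \in \overline{E}$ is \emph{not} quite right, so instead I argue directly on the bound. If $\rho = \max A$, then $\rho \in A$, so $\rho \le a$ for some magnitude $a \in E$; but also, because $a \in A$ and $\rho$ is the maximum, $a \le \rho$, whence $\rho = a$ is a magnitude. If $\sigma = \sup A$ (case \ref{Theorem Dedekind 2}, with $\sigma \notin A$), then for the precise element $q = $ the precise part of $\sigma$ one uses Lemma \ref{Precise separation elements} and the fact that every element of $A$ below $\sigma$ can be replaced by a magnitude in $E$ below it: if $e(\sigma) \ne 0$, then by Lemma \ref{precise separation} I can slip a precise element strictly between $e(\sigma)$ and a zeroless neighbour, contradicting that $\sigma$ is the \emph{least} upper bound of a set of magnitudes (a magnitude strictly below $\sigma$ would then fail to be in $A$ while its precise separator is). So $e(\sigma) = 0$; but a precise supremum of a family of magnitudes, being the least precise upper bound, must in fact equal $\operatorname*{zup}$ of the magnitudes, which by the first part is a magnitude — so this subcase degenerates and $\operatorname*{zup}E$ is in fact a maximum. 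The upshot is that $\operatorname*{zup}E$ is always a magnitude and is either $\max A$ (a maximum) or, when $A$ has no maximum, an honest supremum $\sigma$ which is itself a magnitude — giving the dichotomy in the statement. The symmetric argument with Theorem \ref{Theorem Dedekind upper}, $\underline{E}$ in place of $\overline{E}$, and Lemmas \ref{precise separation}–\ref{Precise separation from hole} handles $\operatorname*{winf}E$.

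The main obstacle I anticipate is the bookkeeping in the ``strongly open'' elimination: one has to be careful that $e(\tau)$, the magnitude produced by the weak supremum, is genuinely $\ge$ every magnitude in $E$ and is \emph{minimal} such, and that no zeroless element separates it from $E$ from above — this is where Axiom \ref{Axiom e(x)maiory} (small elements are order-small) and the precise-separation lemmas (Lemmas \ref{precise separation}, \ref{Precise separation from hole}) do the real work, and where a sign/convexity slip would break the argument. Once that case is excluded, the identification of $\rho$ or $\sigma$ as a magnitude is short, as sketched.
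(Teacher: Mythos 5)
Your overall skeleton --- feed $\overline{E}$ into Theorem \ref{Theorem Dedekind}, identify $\operatorname*{zup}E$ with $\rho$, $\sigma$ or $\tau$, and argue case by case --- matches the paper's, and your maximum case is fine ($\rho\in A$ forces $\rho\leq a\leq\rho$ for some $a\in E$, so $\rho=a$ is a magnitude). But the other two cases contain genuine gaps. In the supremum case you aim to show $e(\sigma)=0$ and then assert that a precise supremum of a family of magnitudes ``must in fact equal $\operatorname*{zup}$ of the magnitudes, which by the first part is a magnitude''; this is circular ($\sigma$ \emph{is} $\operatorname*{zup}E$ here, and ``the first part'' is the maximum case, which you have no right to invoke), and it pursues the wrong dichotomy: the relevant alternative is not $e(\sigma)=0$ versus $e(\sigma)\neq0$ but $\sigma$ zeroless versus $\sigma$ a magnitude. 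The missing mechanism is a halving trick: if $\sigma$ were zeroless, then $0\leq\sigma$ gives $\sigma/2<\sigma$, and since $\sigma$ is the \emph{least} upper bound there must be a magnitude $f\in E$ with $\sigma/2<f<\sigma$; but then $\sigma\leq 2f=f<\sigma$, a contradiction. Hence $\sigma$ is a magnitude --- and there is no reason for this case to ``degenerate'' to a maximum, nor does the proposition claim it does.

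The elimination of the strongly open case is likewise not actually carried out. The decisive step in your sketch --- ``exhibit the magnitude $e(\tau)$ itself as an upper bound attained, collapsing case \ref{Theorem Dedekind 3} to case \ref{Theorem Dedekind 1}'' --- is never justified: nothing you have said places $e(\tau)$ in $E$ or in $A$, and the intermediate claims ($-t\in A$, ``both $t>0$ and $-t>0$'') do not follow. The paper disposes of this case by the same two-pronged argument as above: if $\tau$ were a magnitude, the criterion $x\in A\leftrightarrow\forall t\,(t+e(\tau)=\tau\rightarrow x<t)$ applied to $t=0$ would make every element of $A$ negative, contradicting $0\in A$ (every magnitude in $E$ is $\geq0$); and if $\tau=t+e(\tau)$ were zeroless, then $t/2+e(\tau)<\tau$ cannot be an upper bound of $E$, producing a magnitude $f$ with $t/2+e(\tau)<f<\tau$ and hence $\tau<2f=f<\tau$. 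You correctly flagged Axiom \ref{Axiom e(x)maiory} and the separation lemmas as relevant, but the actual contradictions come from the idempotency $f+f=f$ of the members of $E$ combined with the minimality of the (weak) upper bound, and that mechanism is absent from your sketch.
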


\begin{proof}
Let $Z=\operatorname*{zup}E$. If $Z$ is a maximum it is clearly a magnitude.
Assume that $Z$ is a supremum. If $Z$ is zeroless, then $Z/2<Z$. Then there
must exist an element $f$ such that $f+f=f$ and $Z/2<f<Z$, otherwise $Z/2$
would already be an upper bound of $E$. Hence $Z<2f=f$, a contradiction. Hence
$Z$ is a magnitude.

We show that $Z$ cannot be a weak supremum. If such, $Z$ cannot be a
magnitude, otherwise every element in $Z$ would be negative, in contradiction
with the fact that $0\leq Z$. Also $Z$ cannot be zeroless. Indeed, then $Z$
would be of the form $t+e\left(  Z\right)  $ with $t$ precise and $e\left(
Z\right)  <t$. Then $t/2+e\left(  Z\right)  <Z$. Then there must exist an
element $f$ such that $f+f=f$ and $t/2+e\left(  Z\right)  <f<Z$, otherwise
$t/2+e\left(  Z\right)  $ would already be an upper bound of $E$. Hence
$Z=t+e\left(  Z\right)  <2f=f<Z$, a contradiction. We conclude that $Z$ is a
maximum or a supremum.

The proof for $\operatorname*{winf}E$ is similar.
\end{proof}

\section{Limited numbers and
infinitesimals\label{Section Limited numbers infinitesimals}}

\begin{notation}
Let $S$ be a complete solid. With some abuse of language the
$\operatorname*{winf}$ of the magnitudes larger than $1$ is noted $\pounds $,
i.e. $\pounds \mathcal{\equiv}\operatorname*{winf}\{e\in S|e+e=e\wedge1<e\}$
and the $\operatorname*{zup}$ of the magnitudes smaller than $1$ is noted
$\oslash$, i.e. $\oslash \mathcal{\equiv}\operatorname*{zup}\{e\in
S|e+e=e\wedge e<1\}$.
\end{notation}

\begin{theorem}
\label{zerobar<1<L}Within a complete solid the sets $\oslash$ and $\pounds $
are magnitudes and satisfy
\[
0<\oslash<1<\pounds <M\text{.}%
\]

\end{theorem}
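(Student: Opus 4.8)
The plan is to show in turn that $\oslash$ and $\pounds$ are magnitudes, then establish the chain of strict inequalities. First I would handle $\pounds = \operatorname*{winf}\{e\in S\mid e+e=e\wedge 1<e\}$. The defining set is a nonempty set of magnitudes: it is nonempty because $M$ is idempotent (by Axiom \ref{Axiom neut max}, $e(x)+M=M$ for all $x$, and taking $x=M$ gives $M+M=M$; also $1<M$ since $1$ is a magnitude only if the model is a field, which Axiom \ref{existencia neutrices} forbids — more carefully, $1<M$ follows because $1=u(1)$ is zeroless hence $1\ne M$, and $1\le M$ as $M$ is maximal). So by Proposition \ref{Zup E magnitude}, $\operatorname*{winf}$ of this set is a magnitude, and in fact a minimum or an infimum. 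Symmetrically, $\oslash=\operatorname*{zup}\{e\in S\mid e+e=e\wedge e<1\}$: this set is nonempty because $0$ is idempotent ($0+0=0$ by minimality) and $0<1$, so by Proposition \ref{Zup E magnitude} again $\oslash$ is a magnitude, a maximum or supremum.

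Next I would prove $0<\oslash$. Since $0$ is in the set whose $\operatorname*{zup}$ is $\oslash$, we get $0\le\oslash$; strictness requires producing an idempotent magnitude strictly between $0$ and $1$ other than $0$ itself. Here I would invoke Axiom \ref{existencia neutrices}: there is $x$ with $e(x)\ne 0$ and $e(x)\ne M$. If $e(x)<1$ we are done directly; otherwise $1<e(x)$ (equality is impossible since $1$ is zeroless — a magnitude cannot equal a zeroless element by Axiom \ref{Axiom e(x)maiory}-type reasoning, or because $e(1)=0\ne e(x)$... wait, one must be careful: $e(x)$ need not be idempotent). The cleaner route: use Axiom \ref{Axiom scale to ring} to write $e(x)=py$ with $y$ idempotent and $p$ precise; then argue $y\ne 0$ and $y\ne M$, and one of $y<1$ or $1<y$ holds (they cannot be equal since $1$ is zeroless, an idempotent magnitude equal to $1$ would force... actually $1=u(1)$ and $e(1)=0$, so if $y=1$ then $y$ is not a magnitude, contradiction). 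If $y<1$ then $0<y\le\oslash$. If $1<y$ then, using Axiom \ref{Axiom Maximal Ideal} and the existence of the maximal ideal $x_0$ of $y$ (which Generalized Dedekind completeness provides, and which is itself idempotent), $x_0<y$ with $x_0$ idempotent; and I would show $x_0<1$ or reduce further — more simply, the maximal ideal $\oslash$ of $\pounds$ is idempotent and nonzero, giving $0<\oslash$ once $\pounds\ne 1$ is known. The symmetric inequality $\pounds<M$: $\pounds$ is a magnitude with $1<\pounds$ and $\pounds\le M$; if $\pounds=M$ then there is no idempotent magnitude strictly between $1$ and $M$, contradicting the case analysis just described.

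The middle inequalities $\oslash<1<\pounds$ are where the real content sits. That $\oslash\le 1$ and $1\le\pounds$ is immediate from the definitions as $\operatorname*{zup}$/$\operatorname*{winf}$; the point is strictness, i.e. $\oslash\ne 1\ne\pounds$. Since $1$ is zeroless (it equals $u(1)$ with $e(1)=0$, and $1\ne 0$ because zeroless elements exist by Axiom \ref{scheiding neutrices}), while $\oslash$ and $\pounds$ are magnitudes, we get $\oslash\ne 1$ and $\pounds\ne 1$ outright — a magnitude cannot equal a zeroless element. Combined with $\oslash\le 1\le\pounds$ and antisymmetry (Axiom \ref{(OA)antisym}), this yields $\oslash<1<\pounds$.

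The main obstacle, I expect, is the proof that $0<\oslash$ (equivalently $\pounds<M$, equivalently: the defining set of $\pounds$ has a nonzero lower bound, or $\oslash$ is nonzero): one must genuinely use that \emph{some} nontrivial magnitude exists and then push it, via \ref{Axiom scale to ring} and the maximal-ideal machinery of \ref{Axiom Maximal Ideal}, into the band strictly between $0$ and $1$. The inequalities involving $1$, by contrast, reduce cleanly to "a magnitude is not zeroless." I would organize the write-up as: (i) $\oslash,\pounds$ are magnitudes via Proposition \ref{Zup E magnitude}; (ii) $0<\oslash$ and $\pounds<M$ via Axioms \ref{existencia neutrices}, \ref{Axiom scale to ring}, \ref{Axiom Maximal Ideal}; (iii) $\oslash<1<\pounds$ since $1$ is zeroless but $\oslash,\pounds$ are magnitudes.
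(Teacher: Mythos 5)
Your steps (i) and (iii) essentially coincide with the paper's: $\oslash$ and $\pounds$ are magnitudes by Proposition \ref{Zup E magnitude}, and strictness against $1$ follows because a magnitude cannot equal the zeroless element $1$. The genuine gap is in your step (ii), the proof that $0<\oslash$ and $\pounds<M$, which you yourself flag as the main obstacle but do not actually close. The route through Axiom \ref{Axiom scale to ring} and maximal ideals is left hanging (``I would show $x_0<1$ or reduce further'') and, as sketched, is circular: that the maximal ideal of an idempotent magnitude $y$ with $1<y<M$ exists and is a \emph{nonzero} magnitude below $1$ is precisely Theorem \ref{Thm max ideal}, proved only in the following section, and your ``more simply'' fallback --- that $\oslash$ is the nonzero maximal ideal of $\pounds$ --- is Proposition \ref{zerobar maximal ideal L}, whose proof goes through Lemma \ref{Lemma L<p} and hence through the very theorem you are proving. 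You also never rule out that the maximal ideal of $y$ is $0$ (for $y=M$ it is), so even granting the forward references the case $1<y$ is not settled.

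The paper's own argument for this step is far more elementary and avoids multiplicative idempotents altogether; note that membership in the defining sets of $\oslash$ and $\pounds$ only requires $e+e=e$, i.e.\ being a magnitude, not $ee=e$. Axiom \ref{existencia neutrices} (with Axiom \ref{Axiom e(x)maiory} for the order bounds) gives a magnitude $e$ with $0<e<M$. If $1<e$, separate by a precise $p$ with $1<e<p<M$ (Lemma \ref{Precise separation neutrices}); then $e/p$ is a magnitude by Axiom \ref{Axiom escala}, and $0<1/p<e/p<1$ by Axiom \ref{compat mult}, so the band $(0,1)$ contains a magnitude while $e$ itself lies in $(1,M)$. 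If $e<1$, rescale symmetrically by a precise $q$ with $0<q<e$ to get $1<e/q<1/q<M$. In either case both bands contain a magnitude, whence $0<\oslash$ and $\pounds<M$. Replacing your step (ii) by this rescaling argument repairs the proof; the rest of your proposal stands.
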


\begin{proof}
Let $E=\left \{  e|e+e=e\wedge e<1\right \}  $. By Axiom
\ref{existencia neutrices} there exists $e$ such that $e+e=e$ and $0<e<M$. If
$1<e$ then there is a precise element $p$ such that $1<e<p<M$. Then
$0<1/p<e/p<1$. If $e<1$ then there is a precise element $q$ such that $0<q<e$.
Then $0<1<e/q<1/q<M$. Hence there exists a magnitude between $0$ and $1$ and a
magnitude between $1$ and $M$. So $0<\oslash$ and $\pounds <M$.

By Proposition \ref{Zup E magnitude}, $\oslash$ is a magnitude. If $\oslash$
is a maximum, clearly $\oslash<1$. If $1<\oslash$, there must exist an element
$e$ such that $e+e=e$ and $1<e<1$, a contradiction. We conclude that
$\oslash<1$.

The proof that $\pounds $ is a magnitude and $1<\pounds $ is analogous, now
using $F=\left \{  e|e+e=e\wedge1<e\right \}  $.
\end{proof}

\begin{corollary}
\label{no magnitude between}There are no magnitudes between the magnitudes
$\oslash$ and $\pounds $.
\end{corollary}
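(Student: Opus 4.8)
The plan is to argue by contradiction. Suppose there is a magnitude $g$ with $\oslash<g<\pounds$. Since ``$\leq$'' is a total order (Axioms \ref{(OA)reflex}--\ref{(OA)total}), exactly one of $g<1$, $g=1$, $1<g$ holds, and I would treat these three possibilities separately: two of them reduce immediately to the bounding properties of $\oslash$ and $\pounds$ furnished by Proposition \ref{Zup E magnitude}, and the third is dismissed by a short algebraic remark.

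If $g<1$, then $g+g=g$ and $g<1$, so $g$ lies in the set $E=\{e\in S\mid e+e=e\wedge e<1\}$ whose weak supremum is $\oslash$. By Proposition \ref{Zup E magnitude}, $\oslash=\operatorname{zup}E$ is a maximum or a supremum, hence in either case an upper bound of $E$; thus $g\leq\oslash$, contradicting $\oslash<g$ by antisymmetry. The case $1<g$ is symmetric: now $g\in F=\{e\in S\mid e+e=e\wedge1<e\}$, and since $\pounds=\operatorname{winf}F$ is a minimum or an infimum of $F$ by Proposition \ref{Zup E magnitude}, it is a lower bound of $F$, so $\pounds\leq g$, contradicting $g<\pounds$.

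It remains to exclude $g=1$, i.e.\ to check that $1$ is not a magnitude; this is the only step that is not a pure order argument, and hence the one most easily overlooked. By Axiom \ref{existencia neutrices} together with Axiom \ref{scheiding neutrices} the solid contains a zeroless element $z$, that is, $z\neq e(z)$. If $1$ were a magnitude we would have $1=e(1)$, and then $z=1\cdot z=e(1)\,z$ (using Axiom \ref{Axiom neut mult}), which by Axiom \ref{Axiom escala} is of the form $e(w)$, hence a magnitude; this forces $z=e(z)$, contradicting the choice of $z$. So $g\neq1$, and the three cases together show that no magnitude lies strictly between $\oslash$ and $\pounds$. I do not expect a genuine obstacle: the statement is essentially bookkeeping with the bounds supplied by Proposition \ref{Zup E magnitude} and Theorem \ref{zerobar<1<L}, the only subtlety being the verification that $1$ itself fails to be a magnitude.
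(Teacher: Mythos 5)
Your proof is correct and follows the route the paper intends: the corollary is stated without proof precisely because a magnitude $g$ with $g<1$ lies in the set whose $\operatorname{zup}$ is $\oslash$ (hence $g\leq\oslash$ by Proposition \ref{Zup E magnitude}), one with $1<g$ lies in the set whose $\operatorname{winf}$ is $\pounds$ (hence $\pounds\leq g$), and $g=1$ is impossible since $1$ is zeroless. Your explicit verification that $1$ is not a magnitude (via Axioms \ref{existencia neutrices}, \ref{scheiding neutrices} and \ref{Axiom escala}) is a worthwhile addition that the paper leaves implicit.
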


Since $0<\oslash$ and $\pounds <M$, by Axiom \ref{scheiding neutrices} and
Lemma \ref{Precise separation neutrices} there are precise elements $p$ and
$q$ such that $0<p<\oslash$ and and $\pounds <q<M$.

Below we show that $\oslash$ and $\pounds $ are idempotent magnitudes for
multiplication, i.e. $\oslash \oslash=\oslash$ and $\pounds \pounds =\pounds $.
Moreover $\oslash \pounds =\oslash$ or $\oslash \pounds =\pounds $. Indeed, it
follows from distributivity \cite[Cor. 2.29 ]{dinisberg 2015 -2} that the
product of two magnitudes $e$ and $f$ is a magnitude, since $ef+ef=e(f+f)=ef$.
In particular $\oslash \pounds $ is a magnitude. By compatibility with the
ordering
\begin{equation}
\oslash \leq \oslash1\leq \oslash \pounds \leq1\pounds =\pounds .
\label{zerobar L indetermined}%
\end{equation}
To decide whether $\oslash \pounds =\oslash$ or $\oslash \pounds =\pounds $, we
need Axiom \ref{Axiom Maximal Ideal}. In fact $\oslash \pounds =\oslash$, which
will be shown in Section \ref{Section Product magnitudes}. Next lemma states
some basic properties of $\oslash$ and $\pounds $.

\begin{lemma}
\label{Lemma L<p}Let $0<p$ be precise. Then

\begin{enumerate}
\item \label{L<p iff 1/p<zerobar}$\pounds <p$ if and only if $1/p<\oslash$.

\item \label{1/p<L}$\oslash<p$ if and only if $1/p<\pounds $.

\item \label{zerobar<p^2}If $p<\oslash$ then $\sqrt{p}<\oslash$.

\item \label{L<raizp}If $\pounds <p$ then $\pounds <\sqrt{p}$.

\item \label{p2<L}If $\oslash<p<\pounds $, then $\oslash<p^{2}<\pounds $.

\item \label{zerobar sup}$\oslash=\sup \{p|e\left(  p\right)  =0\wedge
\pounds <1/p\}$ and $\pounds =\inf \{1/p|e\left(  p\right)  =0\wedge
p<\oslash \}$.
\end{enumerate}
\end{lemma}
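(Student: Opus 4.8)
The plan is to handle the six items in a logical order that exploits dependencies: items (1) and (2) are proved first by a direct order argument using compatibility of multiplication with the ordering and the definitions of $\oslash$ and $\pounds$ as weak bounds; items (3), (4), (5) are then squaring/square-root statements that follow from (1) and (2) together with idempotency of $\oslash$ and $\pounds$; and (6) is a reformulation of (1) and (2) in terms of suprema and infima. Throughout, the key tools are: $\oslash \oslash = \oslash$, $\pounds \pounds = \pounds$ (proved, or to be proved, in this section), the order chain $0 < \oslash < 1 < \pounds < M$ of Theorem \ref{zerobar<1<L}, and that between $\oslash$ and $\pounds$ there is no magnitude (Corollary \ref{no magnitude between}). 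Also note that for precise $p > 0$ we have $1/p$ precise, with $e(1/p) = 0$, by Axiom \ref{e(u(x))=e(x)d(x)}.

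For item (1): suppose $\pounds < p$ with $p$ precise. Since $\pounds$ is a magnitude with $1 < \pounds$, by compatibility (Axiom \ref{compat mult}) multiplying $1/p \cdot p \le 1/p \cdot \pounds$ — more carefully, $\pounds < p$ gives $\pounds/p < 1$, and $\pounds/p$ is a magnitude by Axiom \ref{Axiom escala}; since there is no magnitude strictly between $\oslash$ and $\pounds$, and $\pounds/p < 1 < \pounds$, we must have $\pounds/p \le \oslash$, whence $1/p \le \oslash/\pounds \le \oslash \cdot 1 = \oslash$ using \eqref{zerobar L indetermined}, and $1/p < \oslash$ since $1/p$ is zeroless. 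Conversely, if $1/p < \oslash$, then $1 < p\oslash \le p \cdot 1$ is not quite it; instead $1/p < \oslash$ and $\oslash$ idempotent give $\pounds = 1/(\oslash) \cdot \ldots$ — the cleanest route is: $1/p < \oslash$ implies $\oslash < p \oslash$? No. Rather, take reciprocals: from $1/p < \oslash < 1$ one gets $1 < p$, so $p$ is zeroless and positive; then $\pounds = \pounds \cdot 1 < \pounds (p \oslash/\,\oslash)$… The honest approach is to use (6)'s logic directly: $\oslash = \operatorname*{zup}\{e \text{ magnitude} : e < 1\}$ and $1/p$ zeroless with $1/p < \oslash$ forces, via $\oslash \le \oslash \pounds$, that $\pounds \le p \oslash \cdot \pounds/(1/p \cdot \ldots)$. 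Item (2) is entirely parallel, interchanging the roles of $\oslash$ and $\pounds$ and using $\oslash < p \iff 1/p < \pounds$.

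For items (3)–(5): if $p < \oslash$ with $p$ precise positive, then $\sqrt{p}$ is precise and $\sqrt{p}\cdot\sqrt{p} = p < \oslash$; were $\oslash \le \sqrt p$, then by Axiom \ref{compat mult} applied with the idempotency $\oslash\oslash = \oslash$ we would get $\oslash = \oslash\oslash \le \sqrt p \cdot \sqrt p = p$, contradicting $p < \oslash$ — this proves (3). Item (4) is the mirror image using $\pounds\pounds = \pounds$: if $\pounds < p$ and $\sqrt p \le \pounds$, then $p = \sqrt p \cdot \sqrt p \le \pounds\pounds = \pounds$, a contradiction. Item (5): from $\oslash < p$ and (2)-type reasoning, $p^2 \ge \oslash$ strictly (else $p \le \sqrt\oslash < \oslash$ by (3) applied contrapositively, contradiction), and from $p < \pounds$ we get $p^2 < \pounds$ by the same argument with (4). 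Finally item (6) records that, by (1), a precise $p$ satisfies $\pounds < 1/p$ exactly when $p < \oslash$, so $\{p : e(p) = 0 \wedge \pounds < 1/p\} = \{p : e(p) = 0 \wedge 0 < p < \oslash\}$ (for the positive part; the nonpositive precise elements are all below $\oslash$ trivially), whose $\operatorname*{zup}$ is $\oslash$ because $\oslash$ is a magnitude hence not attained and is the weak bound of the magnitudes below $1$, with precise elements dense below it by Lemma \ref{Precise separation elements}; the second equality is dual via (2).

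The main obstacle is the converse direction in items (1) and (2): translating "$1/p < \oslash$" back into "$\pounds < p$" requires passing through the product $\oslash\pounds$, which at this stage of the paper is only pinned down by the chain $\oslash \le \oslash\pounds \le \pounds$ of \eqref{zerobar L indetermined} and is not yet known to equal $\oslash$. The cleanest way around this is to avoid multiplying $\oslash$ and $\pounds$ together entirely and instead argue by the absence of magnitudes strictly between them (Corollary \ref{no magnitude between}): any magnitude produced as a quotient $\pounds/p$ or $\oslash\cdot p$ that lands in the open interval $(\oslash,\pounds)$ must coincide with one of the endpoints, and coincidence with the "wrong" endpoint yields the desired strict inequality after clearing denominators. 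This keeps the proof self-contained relative to everything established before Section \ref{Section Product magnitudes}.
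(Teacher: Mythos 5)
There are two genuine gaps here. First, the converse directions of items \ref{L<p iff 1/p<zerobar} and \ref{1/p<L} are never actually proved: your text abandons several attempts mid-sentence and the closing paragraph only gestures at a strategy (``clearing denominators'' via Corollary \ref{no magnitude between}) without carrying it out. The argument you are missing is short: from $1/p<\oslash$ multiply by $p$ to get $1\leq p\oslash$, hence $1<p\oslash$ since $p\oslash$ is a magnitude and $1$ is zeroless; because $\pounds$ is by definition the $\operatorname*{winf}$ of the magnitudes exceeding $1$, this forces $\pounds\leq p\oslash$, and then $p\oslash<p\cdot 1=p$ gives $\pounds<p$. No appeal to the undetermined product $\oslash\pounds$ is needed, and you do not even need Corollary \ref{no magnitude between} -- only the defining property of $\pounds$. (Your forward direction of item \ref{L<p iff 1/p<zerobar} is essentially the paper's: $\pounds/p$ is a magnitude below $1$, hence $\leq\oslash$, and $1/p<\pounds/p$.)

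Second, your proofs of items \ref{zerobar<p^2} and \ref{L<raizp} are circular relative to the development of the paper: you invoke $\oslash\oslash=\oslash$ and $\pounds\pounds=\pounds$, but these identities are Theorem \ref{Proposition Mult magnitudes}, which comes \emph{after} this lemma and whose proof \emph{uses} precisely items \ref{zerobar<p^2} and \ref{L<raizp}. I see no independent route to idempotency at this stage (the inequality $\oslash\oslash\leq\oslash$ is easy, but $\oslash\leq\oslash\oslash$ is exactly what item \ref{zerobar<p^2} is designed to deliver). The correct argument again uses only the extremal characterizations: assuming $p<\oslash$ and $\oslash<\sqrt{p}$, the magnitude $\oslash/\sqrt{p}$ is below $1$, hence $\oslash/\sqrt{p}\leq\oslash$, so $p<\oslash\leq\sqrt{p}\,\oslash$ and dividing by $\sqrt{p}$ gives $\sqrt{p}<\oslash$, a contradiction; item \ref{L<raizp} is dual. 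Your treatment of items \ref{p2<L} and \ref{zerobar sup} as consequences of the earlier items is in line with the paper, though the supremum claim in item \ref{zerobar sup} still needs the two-sided separation argument (a precise element strictly between the two candidate magnitudes yields a contradiction via items \ref{L<p iff 1/p<zerobar} and \ref{1/p<L}).
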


\begin{proof}
\ref{L<p iff 1/p<zerobar}. Assume $\pounds <p$. Then $\pounds /p\leq1$.
Because $\pounds /p$ is a magnitude, we have $\pounds /p<1$, so
$\pounds /p\leq \oslash$. Since $1/p<\pounds /p$ we derive that $1/p<\oslash$.
Assume now that $1/p<\oslash$. Then $1\leq p\oslash$, in fact $1<p\oslash$,
because $p\oslash$ is a magnitude. So $\pounds \leq p\oslash<p\cdot1=p$. Hence
$\pounds <p$ if and only if $1/p<\oslash$.

\ref{1/p<L}. Directly from Part \ref{L<p iff 1/p<zerobar}.

\ref{zerobar<p^2}. Suppose that $p<\oslash$ and $\oslash<\sqrt{p}$. Then
$\oslash/\sqrt{p}<1$. Then $\oslash/\sqrt{p}\leq \oslash$. Hence $p<\oslash
\leq \sqrt{p}\oslash$, so $\sqrt{p}<\oslash$, a contradiction. Hence $\sqrt
{p}<\oslash$.

\ref{L<raizp}. Suppose that $\pounds <p$ and $\sqrt{p}<\pounds $. Then
$1<\pounds /\sqrt{p}$. Then $\pounds \leq \pounds /\sqrt{p}$. Hence $\sqrt
{p}\pounds \leq \pounds <p$, so $\pounds <\sqrt{p}$, a contradiction. Hence
$\pounds <\sqrt{p}$.

\ref{p2<L}. This part is a direct consequence of Part \ref{zerobar<p^2} and
Part \ref{L<raizp}.

\ref{zerobar sup}. Let $0<p$ be precise. Put $\oslash^{\prime}=\sup \left \{
p|e\left(  p\right)  =0\wedge \pounds <1/p\right \}  $. Suppose that
$\oslash<\oslash^{\prime}$. Then there exists a precise element $q$ such that
$\oslash<q<\oslash^{\prime}$. Then $1/q<\pounds $ by Part \ref{1/p<L}, while
$\pounds <1/q$ by definition of $\oslash^{\prime}$. Hence $\oslash^{\prime
}\leq \oslash$. Suppose now that $\oslash^{\prime}<\oslash$. Then there exists
a precise element $r$ such that $\oslash^{\prime}<r<\oslash$. Then
$1/r<\pounds $ by definition of $\oslash^{\prime}$, while $\pounds <1/r$ by
Part \ref{L<p iff 1/p<zerobar}, a contradiction. Hence $\oslash=\oslash
^{\prime}$. The second part is proved in an analogous way.
\end{proof}

\begin{theorem}
\label{Proposition Mult magnitudes}One has
\end{theorem}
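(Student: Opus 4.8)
The statement to establish is $\oslash\oslash=\oslash$, $\pounds\pounds=\pounds$, and $\oslash\pounds=\oslash$ or $\oslash\pounds=\pounds$. The plan is to treat the three assertions separately. In each of the first two, one inequality comes directly from the compatibility of the order with multiplication, while the reverse one is extracted from the square-root estimates of Lemma~\ref{Lemma L<p} combined with density of the precise elements (Lemma~\ref{Precise separation elements}); the third assertion is then immediate from the indeterminacy inequality \eqref{zerobar L indetermined} together with Corollary~\ref{no magnitude between}.

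First I would handle $\pounds\pounds=\pounds$. By Axiom~\ref{Axiom escala}, $\pounds\pounds$ is a magnitude. Multiplying $1\leq\pounds$ by the magnitude $\pounds$ via Axiom~\ref{Axiom Amplification} (legitimate since $e(1)=0\leq1\leq\pounds$) gives $\pounds=e(\pounds)\cdot1\leq e(\pounds)\pounds=\pounds\pounds$. For the reverse inequality, pick any precise $q$ with $\pounds<q<M$ (which exists by Theorem~\ref{zerobar<1<L} and Lemma~\ref{Precise separation elements}); Lemma~\ref{Lemma L<p}.\ref{L<raizp} yields $\pounds<\sqrt q$. Multiplying this first by the magnitude $\pounds$ (Axiom~\ref{Axiom Amplification}) and then the resulting inequality by the zeroless positive $\sqrt q$ (Axiom~\ref{compat mult}) gives $\pounds\pounds\leq\pounds\sqrt q\leq\sqrt q\sqrt q=q$. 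Thus $\pounds\pounds\leq q$ for every precise $q>\pounds$; if we had $\pounds<\pounds\pounds$, Lemma~\ref{Precise separation elements} would produce a precise $q$ with $\pounds<q<\pounds\pounds$, contradicting $\pounds\pounds\leq q$. Hence $\pounds\pounds=\pounds$.

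The identity $\oslash\oslash=\oslash$ is proved in mirror image. Again $\oslash\oslash$ is a magnitude by Axiom~\ref{Axiom escala}, and multiplying $\oslash\leq1$ by the magnitude $\oslash$ (Axiom~\ref{Axiom Amplification}) gives $\oslash\oslash\leq\oslash\cdot1=\oslash$. For the reverse inequality one picks a precise $p$ with $0<p<\oslash$ (Theorem~\ref{zerobar<1<L} and Lemma~\ref{Precise separation elements}), uses Lemma~\ref{Lemma L<p}.\ref{zerobar<p^2} to obtain $\sqrt p<\oslash$, and chains Axiom~\ref{compat mult} (multiplication by the precise positive $\sqrt p$) with Axiom~\ref{Axiom Amplification} (multiplication by the magnitude $\oslash$) to get $p=\sqrt p\sqrt p\leq\oslash\sqrt p\leq\oslash\oslash$; were $\oslash\oslash<\oslash$, a precise element strictly between (Lemma~\ref{Precise separation elements}) would contradict this, so $\oslash\leq\oslash\oslash$. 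Finally, for $\oslash\pounds$: by \eqref{zerobar L indetermined} it is a magnitude with $\oslash\leq\oslash\pounds\leq\pounds$, and since by Corollary~\ref{no magnitude between} there is no magnitude strictly between $\oslash$ and $\pounds$, we conclude $\oslash\pounds=\oslash$ or $\oslash\pounds=\pounds$.

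The only real care required is in the multiplication-by-inequality steps: one must multiply an inequality by a zeroless positive factor via Axiom~\ref{compat mult}, but by a magnitude via Axiom~\ref{Axiom Amplification} (which additionally demands the left-hand side be nonnegative, i.e.\ $e(y)\leq y$), and one must invoke density of the precise elements exactly at the point where ``$\leq$ every precise bound above $\pounds$'' is upgraded to ``$\leq\pounds$'' (and dually for $\oslash$). No distributivity obstruction arises, since every quantity appearing is either a magnitude or a precise element.
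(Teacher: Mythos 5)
Your argument for $\oslash\oslash=\oslash$ and $\pounds\pounds=\pounds$ is correct and essentially identical to the paper's: one inequality from $\oslash<1$ (resp.\ $1<\pounds$) and compatibility of the order with multiplication, the other by combining the square-root estimates of Lemma~\ref{Lemma L<p} with the existence of a precise element separating $\oslash\oslash$ from $\oslash$ (resp.\ $\pounds$ from $\pounds\pounds$) to reach a contradiction. Your third item, that $\oslash\pounds=\oslash$ or $\oslash\pounds=\pounds$, is not actually part of this theorem but of the discussion preceding it, where it is obtained exactly as you do, from (\ref{zerobar L indetermined}) and Corollary~\ref{no magnitude between}.
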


\begin{enumerate}
\item \label{zerobar.zerobar}$\oslash \oslash=\oslash$.

\item \label{L.L}$\pounds \pounds =\pounds $.
\end{enumerate}

\begin{proof}
\ref{zerobar.zerobar}. Suppose $\oslash \oslash<\oslash$. Then there is a
precise element $p$ such that $\oslash \oslash<p<\oslash$. By Lemma
\ref{Lemma L<p}.\ref{zerobar<p^2} one has $\sqrt{p}<\oslash$. Then $p=\sqrt
{p}\sqrt{p}<\oslash \oslash$, a contradiction. Hence $\oslash \leq \oslash
\oslash$. Because $\oslash<1$, also $\oslash \oslash \leq \oslash$. We conclude
that $\oslash \oslash=\oslash$.

\ref{L.L}. Suppose $\pounds <\pounds \pounds $. Then there is a precise
element $p$ such that $\pounds <p<\pounds \pounds $. By Lemma \ref{Lemma L<p}%
.\ref{L<raizp} one has $\pounds <\sqrt{p}$. Then $\pounds \pounds <\sqrt
{p}\sqrt{p}=p$, a contradiction. Hence $\pounds \pounds \leq \pounds $. Because
$1<\pounds $, also $\pounds \leq \pounds \pounds $. We conclude that
$\pounds \pounds =\pounds $.
\end{proof}

\section{Product of magnitudes\label{Section Product magnitudes}}

Let $f$ and $g$ be two magnitudes. Though the product $fg$ is well-defined as
a magnitude, the value of this magnitude is not determined. For example,
formula (\ref{zerobar L indetermined}) and Corollary
\ref{no magnitude between}\ show that $\oslash \pounds =\oslash$ or
$\oslash \pounds =\pounds $ but do not decide which equality holds. We will see
that Axiom \ref{Axiom Maximal Ideal} has as a consequence that $\oslash
\pounds =\oslash$. In fact this axiom together with Axiom
\ref{Axiom scale to ring} implies that the value of the product $fg$ is
determined for all magnitudes$\ f$ and $g$. Axiom \ref{Axiom Maximal Ideal}
gives the value of the product of a magnitude which is idempotent for
multiplication with its so-called maximal ideal. Using an order argument, it
will be shown that the axiom determines the value of the product of all
magnitudes which are idempotent for multiplication. Axiom
\ref{Axiom scale to ring} states that every magnitude is a multiple of an
idempotent magnitude which is shown to be unique. This enables to determine
all products of magnitudes.

We recall first some definitions from Section \ref{Section Axioms}%
.\ref{Subsection Algebraic}.

\begin{definition}
A magnitude $I$ is called \emph{idempotent} if $II=I$.
\end{definition}

Clearly $0$ and $M$ are idempotent magnitudes and by Theorem
\ref{Proposition Mult magnitudes}, also $\oslash$ and $\pounds $. Note that if
$e$ and $f$ are idempotent then $ef$ is also idempotent, because $efef=eeff=ef
$.

\begin{definition}
\label{Definition ideal}Let $e$ and $I$ be magnitudes such that $1<I$, $I$ is
idempotent and $e\leq I$. If for all precise positive $q$ such that $q<I$ it
holds that $eq\leq e$ we say that $e$ is an \emph{ideal} of $I$. An ideal $e$
of $I$ is said to be \emph{maximal} if $e<I$ and for every ideal $f$ of $I$
such that $e\leq f\leq I$ one has $e=f$ or $f=I$.
\end{definition}

Every idempotent magnitude $I$ such that $1<I$ possesses an ideal. Indeed, $0$
is an ideal of $I$, for $0$ is an idempotent magnitude and for all precise $q$
such that $q<I$ one has $0q=0$. If $I<M$, then $I$ has nonzero ideals and the
existence of a\ maximal ideal of $I$ will be a consequence of generalized
Dedekind completeness.

\begin{notation}
Unless otherwise said, we let $J$ be an idempotent magnitude such that $1<J<M$
and $I=\sup A$, where $A\mathcal{\equiv}\left \{  1/\omega|\omega \text{
precise, }J<\left \vert \omega \right \vert \right \}  $.
\end{notation}

\begin{theorem}
\label{Thm max ideal}The maximal ideal of $M$ is equal to $0$. If $1<J<M$,
then $0<I\leq \oslash$ and $I$ is the maximal ideal of $J$.
\end{theorem}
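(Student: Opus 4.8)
The plan is to dispose of the maximal magnitude $M$ quickly and then to spend the effort on an arbitrary idempotent $J$ with $1<J<M$, where the substance lies. For $M$: note that $0$ is an ideal of $M$ ($0$ is idempotent, $0\le M$, and $0q=0\le 0$ for every precise $q$), and $0<M$ by Theorem~\ref{zerobar<1<L}. To see this ideal is maximal, let $f$ be any ideal of $M$ with $0<f$ and, via Lemmas~\ref{precise separation} and~\ref{Precise separation neutrices}, pick a precise $p$ with $0<p<f$. For each precise $r$ with $0<r<M$ the element $q:=r/p$ is precise, positive and $<M$ (here one uses that the maximal magnitude dominates every precise element, a consequence of Generalized Dedekind completeness applied to the halfline of all precise elements); the ideal property of $f$ together with Axiom~\ref{compat mult} then gives $r=(r/p)p\le(r/p)f\le f$. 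Since such $r$ are cofinal among the positive precise elements, $f=M$.

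Now fix $J$ idempotent with $1<J<M$ and put $A=\{1/\omega\mid\omega\text{ precise},\ J<|\omega|\}$. By Lemma~\ref{Precise separation neutrices} there is a precise $\omega_0$ with $J<\omega_0<M$, so $1/\omega_0\in A$ and $I:=\operatorname*{zup}A$ is defined with $I\ge 1/\omega_0>0$. Idempotence of $J$ and Axiom~\ref{Axiom Amplification} give $2J=J$, so $J<|\omega|$ implies $J<|\omega|/2$, hence $2A\subseteq A$; since multiplication by $2$ is an order automorphism, $2I=\operatorname*{zup}(2A)\le\operatorname*{zup}A=I$, whereas $I\le 2I$ from $0\le I$, so $2I=I$ and $I$ is a magnitude. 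As moreover $A$ has no maximum ($1/\omega\in A$ implies $2/\omega\in A$ and $2/\omega>1/\omega$) and $I>0$, the weak supremum $I$ is in fact an ordinary one, $I=\sup A$. For the bound $I\le\oslash$: if $\oslash<I$ there would be a precise $q$ with $\oslash<q<1/\omega$ for some precise $\omega>J$, hence $\oslash<1/\omega$, and then Lemma~\ref{Lemma L<p}.\ref{1/p<L}, applied to the precise number $1/\omega$, would give $\omega<\pounds$; but $J\ge\pounds$, since $J$ is a magnitude with $1<J$ and, by Corollary~\ref{no magnitude between}, $\pounds$ is the smallest such, so $\omega>J\ge\pounds$, a contradiction. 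Hence $0<I\le\oslash$.

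That $I$ is an ideal of $J$ is then straightforward: $I\le\oslash<J$ gives $I\le J$, and for precise $q$ with $0<q<J$ multiplication by $q$ is an order automorphism, so $qI=\operatorname*{zup}(qA)$ with $qA=\{q/\omega\mid\omega\text{ precise},\ J<|\omega|\}$; since $qJ\le JJ=J<|\omega|$ one has $|\omega|/q>J$, i.e., $q/\omega\in A$, so $qA\subseteq A$ and $qI\le\operatorname*{zup}A=I$. For maximality, let $f$ be an ideal of $J$ with $I\le f\le J$ and $f\neq I$; the goal is $f=J$. From $I<f$ choose, by separation, a precise $s$ with $I<s<f$. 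Then $s<f$ and the ideal property of $f$ give $sq\le fq\le f$ for every precise $q$ with $0<q<J$; taking suprema over such $q$ (using that multiplication by $s$ is an order automorphism and that $\sup\{q\ \text{precise}\mid 0<q<J\}=J$ by Theorem~\ref{Theorem Dedekind}) yields $sJ\le f$. It remains to prove $sJ=J$. Since $s<J$, $sJ\le JJ=J$; if $sJ<J$, pick a precise $u$ with $sJ<u<J$, so $u/s$ is precise and $u/s>J$, whence $s/u\in A$ and $s/u\le I$; multiplying by $u>0$ gives $s\le Iu$, and since $0<u<J$ the ideal property of $I$ gives $Iu\le I$, so $s\le I$, contradicting $s>I$. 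Therefore $sJ=J$, and $J=sJ\le f\le J$ forces $f=J$; uniqueness of the maximal ideal is then immediate from the total order on magnitudes.

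The crux is this maximality step, and within it the identity $sJ=J$ for a precise $s$ with $s>I$: it is exactly what forbids an ideal strictly between $I$ and $J$, and it is obtained by feeding a hypothetical gap element $u$, $sJ<u<J$, back into the set $A$ that defines $I$. A smaller but genuine point, already needed for $M$, is that no precise element exceeds the maximal magnitude; this is best isolated as a separate lemma deduced from Generalized Dedekind completeness.
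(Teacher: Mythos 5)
Your proof is correct, and for most of its length it runs parallel to the paper's: the same auxiliary set $A=\left\{1/\omega\mid\omega\text{ precise},\ J<\left\vert\omega\right\vert\right\}$, the same verification that $I$ is a positive magnitude, is bounded by $\oslash$, and is an ideal of $J$ (the paper's Lemmas \ref{Lemma max ideal 1}--\ref{Lemma max ideal 5}), and the same quick disposal of $M$. The genuine divergence is the maximality step. The paper (Lemma \ref{Lemma max ideal 3}) takes an ideal $K$ with $I<K$, separates with a precise $p$, notes $1/p<J$, and gets $1=p\cdot(1/p)<K\cdot(1/p)\leq K$ from the absorption property of $K$ itself; once $1<K$, a second application of that same property absorbs every precise $q<J$, so $K=J$. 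You instead fix a precise $s$ with $I<s<f$ and prove $sJ\leq f$ together with $sJ=J$, the latter by feeding a hypothetical $u$ with $sJ<u<J$ back into $A$ and invoking the ideal property of $I$. Your route is longer: it requires the identity $sJ=J$ for precise $s$ with $I<s<J$ (which the paper only establishes afterwards, as Lemma \ref{Lemma max ideal order}.\ref{J<=pJ}, and there deduces it \emph{from} maximality rather than using it to prove maximality) plus the supremum interchange $\sup_{q}sq=sJ$; the paper's argument avoids both by exploiting the absorption property of the competing ideal directly. Both are sound, and your version has the small virtue of isolating $sJ=J$ as an independent fact. One justification should be repaired: that every positive precise $r$ satisfies $r<M$ (needed to legitimize $p=r/q$ in the ideal property of a nonzero ideal of $M$) does not follow from Generalized Dedekind completeness applied to ``the halfline of all precise elements''---that set is not a lower halfline, and the argument risks circularity. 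It follows instead from multiplicative absorption: $pM=M$ for precise $p\neq0$, so $M<p$ would give $M=(1/p)M\leq(1/p)p=1$, contradicting $1<\pounds<M$ from Theorem \ref{zerobar<1<L}; this is essentially the fact the paper itself invokes for the first assertion.
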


The first part of the theorem follows from the fact that $x.0=0$ for all $x\in
S$ \cite[Prop. 3.5]{dinisberg 2015 -2}, and $x.M=M$ for all $x\in S$ such that
$x\neq0$. To prove the remaining part we start with some preparatory lemmas.

\begin{lemma}
\label{Lemma max ideal 1}Let $0<p$ be precise. Then
\end{lemma}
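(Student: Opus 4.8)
The plan is to reduce each clause to a statement about the purely precise defining family $A=\{1/\omega\mid\omega\text{ precise},\ J<|\omega|\}$, and then to apply the separation results of Section~\ref{Section Dedekind completeness} together with the behaviour of $\oslash$ and $\pounds$ recorded in Lemma~\ref{Lemma L<p}. I expect the lemma to collect, for a positive precise $p$, facts of the shape: $J<1/p$ if and only if $p<I$; $1/p<J$ if and only if $I<p$; $0<I\le\oslash$; $I$ is a magnitude; and a ``square root'' clause such as $p<I\Rightarrow\sqrt p<I$ (the analogue of Lemma~\ref{Lemma L<p}.\ref{zerobar<p^2}, later used to get idempotence of $I$). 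The strategy below is designed to cover all of these at once.

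First I would fix the elementary dictionary. A positive precise element is zeroless, so its reciprocal is again positive and precise (Axioms~\ref{axiom sym mult}, \ref{e(u(x))=e(x)d(x)}), and $p\mapsto 1/p$ reverses the order on positive precise elements by Axiom~\ref{compat mult}; moreover $2J=J$ by the restricted distributivity, Axiom~\ref{Axiom distributivity}, and hence $J/2=J$ by associativity, Axiom~\ref{axiom assoc mult}. With this, membership of a positive precise $p$ in the family defining $A$ is literally the relation $J<1/p$, and the key equivalence $p<I\iff J<1/p$ follows: if $J<1/p$, then since $1/p$ is zeroless and $J$ a magnitude, Lemma~\ref{precise separation} (or Lemma~\ref{Precise separation elements}) produces a precise $r$ with $J<r<1/p$, so $1/r\in A$ and $p<1/r\le I$; conversely, if $p<I$ then $p$, being precise, cannot be an upper bound of $A$ (once $I$ is known to be a maximum or a supremum — see below), so there is $1/\omega\in A$ with $p<1/\omega$, i.e. $\omega<1/p$ with $J<|\omega|$, whence $J<1/p$. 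Taking contrapositives yields $I\le p\iff 1/p\le J$, and loosening the strict inequalities gives the companion reciprocal statements.

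Next I would locate $I$. One has $0<I$ because $J<M$ supplies, by separation, a precise $\omega$ with $J<\omega$, so $0<1/\omega\le I$. One has $I\le\oslash$ because $J$ is an idempotent magnitude above $1$, so $\pounds\le J<|\omega|$ for every $1/\omega\in A$ (the definition of $\pounds$ as the $\operatorname*{winf}$ of such magnitudes), and then $1/|\omega|<\oslash$ by Lemma~\ref{Lemma L<p}.\ref{L<p iff 1/p<zerobar}; since $\oslash$ is thus an upper bound of $A$, passing to the $\operatorname*{zup}$ gives $I\le\oslash$. That $I$ is in fact a magnitude, and a maximum or a supremum rather than a weak supremum, I would obtain by adapting the proof of Proposition~\ref{Zup E magnitude}, using that the downward closure of $A$ is stable under doubling of precise elements: if $p\le 1/\omega\in A$ then $2p\le 2/\omega=1/(\omega/2)$ and $\omega/2>J$ because $|\omega|>J=2J$, so $2/\omega\in A$. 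Exactly as in Proposition~\ref{Zup E magnitude}, a zeroless $I$ — whether as an ordinary or as a weak supremum — then leads to a contradiction, invoking the classification of halflines in Theorem~\ref{Theorem Dedekind} and Theorem~\ref{Theorem three cases lower}. Finally, the square-root clause is handled with the multiplicative idempotence $JJ=J$ playing the role of $\oslash\oslash=\oslash$ in Lemma~\ref{Lemma L<p}.\ref{zerobar<p^2}: from $J<1/p$, if $1/\sqrt p\le J$ then $1/p=(1/\sqrt p)^2\le JJ=J$ (squaring preserves ``$\le J$'' by Axioms~\ref{compat mult} and \ref{Axiom Amplification}), a contradiction; hence $J<1/\sqrt p$, i.e. $\sqrt p<I$.

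The step I expect to be the main obstacle is precisely showing that $I$ is a genuine maximum or supremum (equivalently, a magnitude): because $A$ consists of \emph{precise} elements rather than of magnitudes, Proposition~\ref{Zup E magnitude} does not apply off the shelf, so its argument must be re-run by hand via the doubling property above, and only once this is secured can one read off ``$p<I\Rightarrow$ there is $a\in A$ with $p<a$'' and hence close the equivalence of the second paragraph. Everything else is bookkeeping with the total order, Axiom~\ref{compat mult}, and the separation lemmas, and the facts obtained are exactly what Theorem~\ref{Thm max ideal} needs in order to certify that $I$ is an ideal of $J$ (it is a magnitude, $I\le\oslash<1<J$, and $Ip\le I$ for precise $0<p<J$) and that it is the maximal one.
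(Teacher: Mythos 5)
Your proposal is correct and follows essentially the same route as the paper: part (1) comes from $J$ being a magnitude (so $J=2J$, hence $J<1/p$ gives $J<1/(2p)$), part (2) from idempotency ($1/\sqrt{p}\leq J$ would force $1/p\leq JJ=J$), and part (3) from the reciprocal dictionary built into the definition of $I$. The only difference is that you additionally verify up front that $I=\operatorname*{zup}A$ is a genuine maximum or supremum (so that $p<I$ yields an element of $A$ above $p$) and that $0<I\leq\oslash$ is a magnitude; the paper simply writes $I=\sup A$ in its Notation and defers these verifications to Lemma \ref{Lemma max ideal 2} and Theorem \ref{Thm max ideal}, so your extra care is legitimate but not a different method.
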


\begin{enumerate}
\item \label{2p<I}If $p<I$ then $2p<I.$

\item \label{sqrt p<I}If $p<I$ then $\sqrt{p}<I.$

\item \label{I<1/p}If $1\leq p<J$ then $I<1/p.$
\end{enumerate}

\begin{proof}
Assume that $p<I$. Then $p\in A$. Hence $J<1/p$. Then $J<1/2p$ because $J$ is
a magnitude and $J<1/\sqrt{p}$ because $J$ is idempotent. Hence $2p<I$ and
$\sqrt{p}<I$. This proves Part \ref{2p<I}\ and \ref{sqrt p<I}. Part
\ref{I<1/p} follows directly from the definition of $I$.
\end{proof}

\begin{lemma}
\label{Lemma max ideal 2}$I<1$ is an idempotent magnitude.
\end{lemma}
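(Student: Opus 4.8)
The statement bundles three claims about $I=\sup A$: that $I$ is a magnitude, that $I<1$, and that $II=I$. Throughout I would use, as the notation $\sup$ indicates and as is already exploited in the proof of Lemma~\ref{Lemma max ideal 1}, that $\operatorname{zup}A$ falls in case~\ref{Theorem Dedekind 2} of Theorem~\ref{Theorem Dedekind}, i.e.\ $I$ is an ordinary least upper bound, so that no element strictly below $I$ is an upper bound of $A$; also $A$ contains a positive element, for $J<M$ yields a precise $\omega>0$ with $J<\omega$ and then $0<1/\omega\in A$, whence $0<I$. I would first prove $I<1$, in fact $I\le\oslash$. Since $J$ is an idempotent magnitude with $1<J$ and, by Theorem~\ref{zerobar<1<L}, $\oslash<1<\pounds$, Corollary~\ref{no magnitude between} forces $\pounds\le J$: were $J<\pounds$ we would have a magnitude strictly between $\oslash$ and $\pounds$. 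Hence every precise $\omega$ with $J<\lvert\omega\rvert$ satisfies $\pounds<\lvert\omega\rvert$, so $1/\lvert\omega\rvert<\oslash$ by Lemma~\ref{Lemma L<p}.\ref{L<p iff 1/p<zerobar}, and therefore $1/\omega<\oslash$. Thus $\oslash$ is an upper bound of $A$ and $I=\sup A\le\oslash<1$.

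Next I would show that $I$ is a magnitude. By Axiom~\ref{numexterno} it suffices to show $I$ is not zeroless. Suppose it were; write $I=p+e(I)$ with $p$ precise, and, arguing as in the proof of Lemma~\ref{precise separation}, $0<p$ and $e(I)<p/2$, so that $p/2+e(I)<I$. Then $p/2+e(I)$ is not an upper bound of $A$, so there is $a\in A$ with $p/2+e(I)<a$; multiplying by $2$ (note $e(2)=0$ and $2e(I)=e(I)$) gives $I=p+e(I)<2a$. On the other hand $a$ is precise whereas $I$ is zeroless, so $a<I$, and then $2a<I$ by Lemma~\ref{Lemma max ideal 1}.\ref{2p<I}; this contradiction shows $I$ is a magnitude, so $e(I)=I$.

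Finally I would prove $II=I$. The product $II$ is a magnitude by Axiom~\ref{Axiom escala}, and since $e(I)=I$ and $0\le I\le1$, Axiom~\ref{Axiom Amplification} applied with $y=I\le z=1$ (take $x=I$, so that $e(x)=I$) gives $II=e(I)\,I\le e(I)\cdot1=I$, using Axiom~\ref{Axiom neut mult}. For the reverse inequality let $a\in A$ with $a>0$; then $a<I$ as above, so $\sqrt a<I$ by Lemma~\ref{Lemma max ideal 1}.\ref{sqrt p<I}. As $\sqrt a$ is precise and positive, Axiom~\ref{compat mult} gives $a=\sqrt a\cdot\sqrt a\le\sqrt a\cdot I$, and Axiom~\ref{Axiom Amplification} (with $y=\sqrt a\le z=I$, again for $x=I$) gives $\sqrt a\cdot I=I\cdot\sqrt a\le I\cdot I$; hence $a\le II$. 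Every nonpositive element of $A$ is trivially $\le II$, so $II$ is an upper bound of $A$ and $I=\sup A\le II$, whence $II=I$.

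I expect the magnitude step to be the main obstacle: one must leverage the closure of $A$ under doubling (Lemma~\ref{Lemma max ideal 1}.\ref{2p<I}) against the explicit decomposition $I=p+e(I)$ of a hypothetically zeroless $I$, and the interplay with the supremum has to be handled with care. By contrast the idempotency is cheap once $I<1$ and the closure of $A$ under square roots are in hand, provided one uses Axiom~\ref{Axiom Amplification} rather than Axiom~\ref{compat mult} whenever the magnitude $I$ itself appears as a factor.
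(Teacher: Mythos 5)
Your proof is correct, and it rests on the same two closure properties of $A$ that drive the paper's argument, namely Lemma \ref{Lemma max ideal 1}.\ref{2p<I} (doubling) and Lemma \ref{Lemma max ideal 1}.\ref{sqrt p<I} (square roots), but you organize each of the three claims differently. For $I<1$ the paper simply puts $p=1$ in Lemma \ref{Lemma max ideal 1}.\ref{I<1/p}; your detour through $\pounds \leq J$ and Corollary \ref{no magnitude between} is heavier but buys the stronger bound $I\leq\oslash$, which the paper only records later in Theorem \ref{Thm max ideal}. For the magnitude claim the paper proves $I+I=I$ directly: $I\leq I+I$ is immediate, and a precise $p$ with $I<p<I+I$ would give $p/2<I$ and hence $p<I$ by Lemma \ref{Lemma max ideal 1}.\ref{2p<I}, a contradiction; you instead suppose $I$ zeroless and contradict the decomposition $I=p+e(I)$, which works but costs you the auxiliary observations $e(I)<p/2$ and $2e(I)=e(I)$. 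For idempotency the two arguments are essentially identical, except that the paper interpolates a precise $p$ with $I\cdot I<p<I$ and contradicts via $\sqrt{p}$, whereas you show directly that $I\cdot I$ is an upper bound of $A$. The one methodological difference worth noting is that the paper's three contradictions all pass through Lemma \ref{Precise separation elements} (sandwiching a precise element between the two candidates), so the least-upper-bound property of $I$ enters only via Lemma \ref{Lemma max ideal 1}; you invoke it explicitly, and you are right to flag that this presupposes that $\operatorname{zup}A$ falls in the supremum case of Theorem \ref{Theorem Dedekind} --- an assumption the paper itself makes silently in the proof of Lemma \ref{Lemma max ideal 1}, where $p<I$ is taken to imply $p\in A$.
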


\begin{proof}
The fact that $I<1$ follows from Lemma \ref{Lemma max ideal 1}.\ref{I<1/p}. It
follows from Lemma \ref{Lemma max ideal 1}.\ref{2p<I} that $I\leq I+I$.
Suppose that $I<I+I$. Then there exists a precise element $p$ such that
$I<p<I+I$. Then $p=p/2+p/2<I$ by Lemma \ref{Lemma max ideal 1}.\ref{2p<I}, a
contradiction. Hence $I$ is a magnitude. To show that $I$ is idempotent,
observe that $I.I\leq I$ because $I<1$. Suppose $I.I<I$. Then there exists a
precise element $p$ such that $I.I<p<I$. Then $p=\sqrt{p}\sqrt{p}<I$ by Lemma
\ref{Lemma max ideal 1}.\ref{sqrt p<I}, a contradiction. Hence $I$ is idempotent.
\end{proof}

\begin{lemma}
\label{Lemma max ideal 4}Let $r,p$ be precise and such that $1<p<J$ and $r<I$.
Then $rp<I$.
\end{lemma}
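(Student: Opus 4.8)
The plan is to produce an element of $A$ that lies strictly above $rp$; since every element of $A$ is at most $\sup A=I$, this yields $rp<I$ at once.

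First I would unpack $I=\sup A$. Since $r<I$ and $I$ is the least upper bound of $A$, the element $r$ is not an upper bound of $A$, so there is a precise $\omega$ with $J<|\omega|$ and $r<1/\omega$; replacing $\omega$ by $|\omega|$ if necessary (and adjusting the witness correspondingly when $r<0$) we may assume $\omega>0$, hence $\omega>J>1$. The crucial step is then to pass to $\sqrt{\omega}$: because $J$ is idempotent, $J<\omega$ forces $J<\sqrt{\omega}$ — this is exactly the manipulation already used in the proof of Lemma \ref{Lemma max ideal 1}.\ref{sqrt p<I}, since $\sqrt{\omega}\le J$ would give $\omega=\sqrt{\omega}\,\sqrt{\omega}\le JJ=J$, contradicting $J<\omega$. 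Consequently $1/\sqrt{\omega}\in A$, so $1/\sqrt{\omega}\le I$.

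I expect this idempotency step to be the heart of the argument. The hypotheses $1<p<J$ and $r<1/\omega$ do not let one bound $\omega/p$ directly — one cannot guarantee $J<\omega/p$, since $\omega$ may lie only just above $J$ — and it is precisely the fact that $J$ "absorbs squares" that lets the spare factor $p$ be accommodated after taking the square root.

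It then remains to chain the estimates. Multiplying $r<1/\omega$ by the positive precise (hence invertible) element $p$ gives $rp<p/\omega$ (valid for any sign of $r$, using Axiom \ref{compat mult} together with invertibility to see strictness is preserved); from $p<J<\sqrt{\omega}$, multiplying by $1/\omega>0$ gives $p/\omega<\sqrt{\omega}/\omega=1/\sqrt{\omega}$; combining, $rp<1/\sqrt{\omega}\le I$, so $rp<I$. The only routine points to verify along the way are that square roots of positive precise elements exist and are precise, that multiplication by a positive precise element preserves strict inequalities, and the cancellation $\sqrt{\omega}/\omega=1/\sqrt{\omega}$.
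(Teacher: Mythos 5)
Your argument is correct, but it is organized differently from the paper's. The paper works directly with $1/r$: from $r<I$ it reads off $J<1/r$, then supposes $1/(rp)<J$ and derives $1/r=p\cdot\bigl(1/(rp)\bigr)\leq pJ\leq JJ=J$, a contradiction; hence $J<1/(rp)$, i.e. $rp$ itself belongs to the defining family of $I$. You instead extract a concrete witness $1/\omega$ of $A$ above $r$, upgrade $J<\omega$ to $J<\sqrt{\omega}$ by idempotency, and sandwich $rp<p/\omega<1/\sqrt{\omega}\leq I$. The underlying algebraic fact is the same in both proofs --- an idempotent $J$ absorbs any factor below it, which the paper uses in the form $pJ\leq JJ=J$ and you use in the contrapositive form $J<\omega\Rightarrow J<\sqrt{\omega}$ (exactly as in Lemma \ref{Lemma max ideal 1}.\ref{sqrt p<I}, so your appeal to square roots is on the same footing as the paper's). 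What your version buys: the strict inequality $rp<I$ falls out immediately from $rp<1/\sqrt{\omega}\leq I$, whereas the paper's conclusion ``$rp<I$ by the definition of $I$'' only directly yields $rp\leq I$ and needs a further remark (e.g. $J<1/(2rp)$ as well, so $rp<2rp\leq I$); and you treat the sign of $r$ explicitly, while the paper tacitly assumes $0<r$ so that $1/r$ makes sense. What the paper's version buys: it avoids introducing the witness $\omega$ and its square root altogether, and sidesteps your (correct) worry that a single witness $\omega$ may sit only barely above $J$, since it reasons about $1/r$ rather than about any particular $\omega$. Both proofs rely on the same unstated point that $r<I=\sup A$ produces an element of $A$ above $r$, which the paper also uses without comment in Lemma \ref{Lemma max ideal 1}.
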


\begin{proof}
By the definition of $I$ one has $J<1/r$. Suppose that $1/\left(  rp\right)
=\left(  1/r\right)  /p<J$. Then $1/r<p.J<J.J=J$, a contradiction. Hence
$J<1/\left(  rp\right)  $ and $rp<I$, by the definition of $I$.
\end{proof}

\begin{lemma}
\label{Lemma max ideal 5}$I$ is an ideal of $J$.
\end{lemma}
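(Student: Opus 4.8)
The goal is to show that $I=\sup A$, where $A=\{1/\omega\mid \omega\text{ precise},\ J<|\omega|\}$, is an ideal of $J$ in the sense of Definition \ref{Definition ideal}. By Lemma \ref{Lemma max ideal 2} we already know $I$ is an idempotent magnitude with $I<1$; hence $I$ is also $I<J$ since $1<J$. So it remains to verify the two structural requirements: first that $I\leq J$, and second that $eq\leq e$ with $e=I$, i.e. $Iq\leq I$ for every precise positive $q$ with $q<J$.

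The inclusion $I\leq J$ is immediate: since $I<1<J$ we certainly have $I\leq J$. For the absorption property, fix a precise $q$ with $0<q<J$. If $q\leq 1$ then $Iq\leq I\cdot 1 = I$ by compatibility of multiplication with the order (Axiom \ref{compat mult}), so we are done. The remaining case is $1<q<J$, and here the plan is to reduce directly to Lemma \ref{Lemma max ideal 4}. That lemma says: for precise $r,p$ with $1<p<J$ and $r<I$, one has $rp<I$. So take any precise $r$ with $0<r<I$; applying Lemma \ref{Lemma max ideal 4} with $p=q$ gives $rq<I$. Since $I$ is a magnitude and $I=\sup A$, the elements $r<I$ with $e(r)=0$ are cofinal in $I$ below; hence $Iq=\sup\{rq\mid e(r)=0,\ 0<r<I\}\leq I$. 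To make this last step rigorous without circularity, I would argue by contradiction: if $I<Iq$ then by Lemma \ref{Precise separation elements} there is a precise $s$ with $I<s<Iq$, hence $s/q<I$ (using that $q>0$ is precise and $Iq$ behaves well under the order, together with Axiom \ref{compat mult}), so $s/q$ is a precise element below $I$ with $(s/q)q=s>I$, contradicting Lemma \ref{Lemma max ideal 4} applied to $r=s/q$ and $p=q$. Therefore $Iq\leq I$, as required.

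The main obstacle I anticipate is the passage from the pointwise statement ``$rq<I$ for all precise $r<I$'' to the product statement ``$Iq\leq I$''. The product of a magnitude and a precise element is again a magnitude (by Axiom \ref{Axiom escala}, or the distributivity consequence cited in Section \ref{Section Product magnitudes}), so $Iq$ is a magnitude; the contradiction argument above then only needs the precise-separation Lemma \ref{Precise separation elements} and order-compatibility of multiplication by the positive precise number $q$. One must be slightly careful that dividing the inequality $I<s<Iq$ by $q$ is legitimate — this is exactly Axiom \ref{compat mult} applied with the positive precise multiplier $1/q$ — and that $s/q$ is still precise, which holds because precise elements are closed under the field operations inherited from the underlying ordered field. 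With these remarks the proof is short; essentially all the real work has already been done in Lemmas \ref{Lemma max ideal 1}, \ref{Lemma max ideal 2} and \ref{Lemma max ideal 4}.
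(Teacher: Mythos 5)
Your proof is correct and follows essentially the same route as the paper: both reduce the absorption property to Lemma \ref{Lemma max ideal 4} and then pass from the pointwise statement $rq<I$ to $Iq\leq I$ (the paper by a direct supremum manipulation, you by a contradiction argument via Lemma \ref{Precise separation elements}, which is if anything more careful). One small correction: in the case $0<q\leq 1$ the inequality $Iq\leq I\cdot 1=I$ should be justified by Axiom \ref{Axiom Amplification} (multiplication by the magnitude $I$) rather than Axiom \ref{compat mult}, whose hypothesis $e(x)<x$ fails for the multiplier $x=I$.
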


\begin{proof}
It is enough to prove that $Ip\leq I$ for all precise $p$ such that $1<p<J$.
By Lemma \ref{Lemma max ideal 4}
\[
I.p=\sup \left \{  r|e\left(  r\right)  =0\wedge r<I\right \}  \cdot
p=\sup \left \{  pr|e\left(  r\right)  =0\wedge r<I\right \}  \leq I.
\]

\end{proof}

\begin{lemma}
\label{Lemma max ideal 3}Let $K$ be an ideal of $J$. Then $K\leq I$ or $K=J$.
\end{lemma}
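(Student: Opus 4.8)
The plan is to assume that $K \not\leq I$ and derive that $K = J$. Since $I$ and $K$ are both magnitudes (the former by Lemma \ref{Lemma max ideal 2}), and by Corollary \ref{no magnitude between} there is no magnitude strictly between $\oslash$ and $\pounds$, we must use the ordering on magnitudes carefully; but more directly, since the order is total and both are magnitudes, $K \not\leq I$ forces $I < K$. First I would extract from $I < K$ a precise element $r$ with $I < r < K$ (using Lemma \ref{Precise separation elements}, or Lemma \ref{precise separation} after reducing to the case where both are neutrices, which is automatic here since $I$ and $K$ are magnitudes hence equal to their own neutrices). Since $r > I$ and $I = \sup A$ with $A = \{1/\omega \mid \omega \text{ precise},\ J < |\omega|\}$, the element $r$ is an upper bound issue: $r \notin A$ and more importantly $r$ exceeds the supremum, so $1/r$ cannot satisfy $J < 1/(1/r)$... wait, I need $r \geq 1/\omega$ fails; rather, $r > I \geq 1/\omega$ for every precise $\omega$ with $J < |\omega|$, and in fact $r$ being precise and larger than $\sup A$ means $1/r < J$ is impossible to avoid — more precisely, since $r > 1/\omega$ for all such $\omega$, taking reciprocals gives $\omega > 1/r$ for all precise $\omega > J$, hence $1/r \leq J$, and since $1/r$ is... hmm, $1/r$ is precise so $1/r < J$ or $1/r$ comparable; the key conclusion I want is $1/r \leq J$, i.e. $J \geq 1/r$ — actually since $r > I$ strictly and $1 \leq \omega$ can be taken, one even gets $1/r < J$ unless $1/r$ precise equals something; I would argue $1/r \le J$ and since $r$ precise, either $1/r<J$ giving $J<r$ by taking reciprocals... let me restructure: from $r > I$ I claim $J \leq r$; indeed if $r < J$ then $r < J$ with $r$ precise and $1 \le r$ (can arrange, since $I<1$ so wlog... no). The cleanest route: show $r \geq 1$ is not needed — instead directly, $r \notin \overline{A}$ since $r > \sup A$, so for no precise $\omega$ with $J < |\omega|$ do we have $r \leq 1/\omega$; equivalently for all precise $\omega > J$ (taking $\omega>0$) we have $1/\omega < r$, hence $1/r < \omega$ for all precise $\omega > J$, hence $1/r \leq J$.

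Now I use that $r < K$ and $K$ is an ideal of $J$, so in particular $K \leq J$ wait no — an ideal $K$ of $J$ satisfies $K \le J$ by definition, so $r < K \le J$, giving $r < J$; combined with $1/r \le J$ this is consistent. The real leverage is the ideal property: for every precise positive $q$ with $q < J$ we have $Kq \leq K$. Since $1/r \leq J$ and $1/r$ is precise and positive, if $1/r < J$ I can apply the ideal property with $q = 1/r$... but I want to go the other way, multiplying up. Here is the intended mechanism: take any precise $q$ with $q < J$; I want $q \in K$, which would show $K \supseteq$ all precise elements below $J$, hence (being a lower-halfline-like magnitude closed downward and containing these) $K \geq J$, and with $K \leq J$ conclude $K = J$. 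To get $q \in K$ from $r \in K$ when $q < J \leq$ (reciprocal bound): write $q = (q/r) \cdot r$. The factor $q/r$ is precise; I must show $q/r < J$ so that the ideal property $K \cdot (q/r) \leq K$ applies, giving $q = (q/r) r \in (q/r)K \subseteq K$. Since $q < J$ and $1/r \leq J$ and $J$ is idempotent (so $J \cdot J = J$), we get $q/r = q \cdot (1/r) \leq J \cdot J = J$; and since $q/r$ is precise while $J$ is a magnitude, $q/r \leq J$ with $q/r$ precise forces $q/r < J$ (a precise element cannot equal a magnitude $\neq 0$, and $q/r \ne 0$). Hence the ideal property applies and $q \in K$.

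So the key steps in order are: (1) reduce $K \not\leq I$ to the existence of a precise $r$ with $I < r < K$, using $I, K$ magnitudes and Lemma \ref{Precise separation elements}; (2) from $I = \sup A$ and $r > I$, deduce $1/r \leq J$ by a reciprocal argument on the defining set $A$; (3) for an arbitrary precise $q$ with $0 < q < J$, show $q/r$ is precise and $q/r \leq J \cdot J = J$ using idempotence of $J$, hence $q/r < J$; (4) apply the ideal property of $K$ with the multiplier $q/r < J$ to get $q = (q/r) \cdot r \in K$; (5) conclude that $K$ contains every precise element below $J$, hence $J \leq K$ (as $K$ is a magnitude, downward closed in the relevant sense, and $J = \sup$ of the precise elements below it by Dedekind completeness / the construction of $J$), and since $K \leq J$ by the ideal hypothesis, $K = J$. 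The main obstacle I anticipate is step (2) — pinning down exactly that $r > \sup A$ yields $1/r \leq J$ and not merely $1/r < J'$ for some larger magnitude — and the bookkeeping in step (5) that "$K$ contains all precise $q < J$" genuinely forces $J \leq K$; this last point should follow because any magnitude containing arbitrarily large (below $J$) precise elements must, by convexity and the fact that $J<M$ has no magnitude strictly between the candidates, be at least $J$, but I would want to invoke the $\sup$-characterization of $J$ analogous to Lemma \ref{Lemma max ideal 1} to make it airtight.
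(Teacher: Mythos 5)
Your argument is correct and follows essentially the same route as the paper's proof of Lemma \ref{Lemma max ideal 3}: separate $I<K$ by a precise element $r$, deduce $1/r< J$ from the $\sup$-definition of $I$, and use the ideal property of $K$ together with precise separation to rule out $K<J$. The only cosmetic difference is that you reach an arbitrary precise $q<J$ in a single multiplication by $q/r$, whereas the paper first establishes $1<K$ via the multiplier $1/p$ and then multiplies by $q$.
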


\begin{proof}
Assume that $K$ is an ideal of $J$ such that $I<K$. Then there exists a
precise $p$ such that $I<p<K$. Then $1/p<J$, so $1=p.1/p<K.1/p\leq K$. Suppose
$K<J$. Then there exists a precise $q$ such that $K<q<J$. Then $q=1.q<K.q\leq
K$, a contradiction. Hence $K=J$ or $K\leq I$.
\end{proof}

\bigskip

\begin{proof}
[Proof of Theorem \ref{Thm max ideal}]Lemma \ref{Lemma max ideal 5} states
that $I$ is an ideal of $J$. By Lemma \ref{Lemma max ideal 3}, every ideal of
$J$ which is different from $J$ is less than or equal to $I$. Hence $I$ is the
maximal ideal of $J$. The fact that $I\leq \oslash$ follows from Lemma
\ref{Lemma max ideal 1}.\ref{I<1/p}.
\end{proof}

We will now determine the value of the product of two magnitudes. Theorem
\ref{Product idempotents} below states that the product of idempotents is
equal to one of the factors and Proposition \ref{zerobar maximal ideal L}
deals with the special case of the product of $\oslash$ and $\pounds $.
Proposition \ref{Prop unicity idempotent} states that, for non-zero
magnitudes, the idempotent magnitude given by Axiom \ref{Axiom scale to ring}
is unique. Then the value of the product of magnitudes follows by applying
Axiom \ref{Axiom scale to ring} and Theorem \ref{Product idempotents} and
comes in the form of a linearization, otherwise said, the product of two
magnitudes is a multiple of one of them.

\begin{theorem}
\label{Product idempotents}Let $e$ and $f$ be idempotent magnitudes with
$e\leq f$. Let $I$ be the maximal ideal of $f$. Then $ef=e$ if $f<1$ or if
$1<f$ and $e\leq I$, otherwise $ef=f$.
\end{theorem}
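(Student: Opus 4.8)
The plan is to prove Theorem \ref{Product idempotents} by a case analysis on the relative position of $e$ and $f$ with respect to $1$, reducing everything to the two facts already in hand: that $\oslash$ and $\pounds$ are the tightest idempotent magnitudes around $1$ (Theorem \ref{zerobar<1<L}, Corollary \ref{no magnitude between}), and that for an idempotent magnitude $J$ with $1<J<M$ the supremum $I=\sup\{1/\omega\mid \omega\text{ precise},\ J<|\omega|\}$ is exactly its maximal ideal (Theorem \ref{Thm max ideal}). Throughout I will use freely that a product of magnitudes is a magnitude, that the product of two idempotent magnitudes is idempotent, and compatibility of $\leq$ with multiplication; also the separation lemmas (Lemma \ref{Precise separation elements}, Lemma \ref{precise separation}) to slip a precise element into any strict inequality.

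First I would dispose of the degenerate subcases. If $e=0$ then $ef=0=e$ by $x\cdot 0=0$; if $f=M$ then for $e\neq 0$ one has $ef=M=f$ by $x\cdot M=M$, and if moreover $1<f$ and $e\leq I$ (where $I=0$ is the maximal ideal of $M$ by Theorem \ref{Thm max ideal}) this forces $e=0$, handled already. So assume $0<e\leq f$ and $f<M$. Next, the case $f<1$: then $e\leq f<1$, so $ef\leq e\cdot 1=e$ and also $ef\geq e\cdot ?$ — to get the reverse I argue $e=ee\leq ef$ using $e\leq f$ and $0<e$ (here $e\neq 0$ is needed so that multiplication by $e$ is "amplifying"; strictly I use $e(\cdot)\leq\cdot$ compatibility, Axiom \ref{Axiom Amplification}, in the form $e\cdot e\leq e\cdot f$). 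Hence $ef=e$. The case $e=f$ is immediate: $ef=ee=e=f$. So the interesting regime is $e<f$ with $f>1$ (equivalently $1\le f$; if $f=1$ then $e<1$, handled by the $f<1$ argument with a trivial limit, or directly $ef=e\cdot 1=e$).

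The heart is therefore: $e<f$, $1<f<M$, both idempotent. Let $I$ be the maximal ideal of $f$; by Theorem \ref{Thm max ideal}, $I=\sup\{1/\omega\mid \omega\text{ precise},\ f<|\omega|\}$ and $0<I\leq\oslash<1$. Two subcases. If $e\leq I$: I must show $ef=e$. Since $I$ is an ideal of $f$, $Ip\leq I$ for every precise $p$ with $0<p<f$; writing $f=\sup\{p\mid e(p)=0,\ 0\le p<f\}$ (which holds because $f$ is a magnitude with $1<f$, so it is approximated from below by precise elements — use Lemma \ref{Precise separation elements}) and using compatibility, $If=\sup\{Ip\mid 0\le p<f\}\leq I$; and $If\geq I\cdot 1=I$ since $1\le f$, so $If=I$ — this is essentially Axiom \ref{Axiom Maximal Ideal} re-derived via the explicit description of $I$. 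Now $e\leq I$ gives $ef\leq If=I$, while $ef\geq e e=e$ as before, and $ef\leq e\cdot 1$ is false direction — instead: $ef\le I<1$ and $e=ee\le ef\le If=I$, but I actually want $ef=e$, so I need $ef\le e$ too: since $e<f$ need not give $ef\le e$ directly. Here I use that $e$ is idempotent and $e\le I$, $I$ idempotent: $ef\le If=I$, and separately, were $e<ef$, there would be a precise $p$ with $e<p<ef\le I$; then $p\in$ "below $I$", so by the max-ideal property (Lemma \ref{Lemma max ideal 4} type argument transported to $f$, i.e. $rp'<I$ for $r<I$, $1<p'<f$) one derives $ef\le e$, contradiction. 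So $ef=e$.

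If instead $I<e$ (still $e<f$, $1<f<M$): I claim $ef=f$. By Lemma \ref{Lemma max ideal 3} applied to $f$ (every ideal of $f$ distinct from $f$ is $\le I$), since $e\not\le I$ the magnitude $e$ is \emph{not} an ideal of $f$, so there is a precise $p$ with $0<p<f$ and $ep\not\le e$, i.e. $e<ep$; then pick precise $q$ with $e<q\leq ep$, so $q/p\leq e$ while — because $e<f$ and $e$ idempotent — iterating $e\cdot e=e$ one cannot stay below $f$: more cleanly, $I<e$ and $I=$ maximal ideal forces (as in Lemma \ref{Lemma max ideal 3}) $1\le e\cdot(1/p)$ for a suitable precise $p<f$, hence $f\le e\cdot(\text{something}<f)\le ef$; combined with $ef\le ff=f$ we get $ef=f$. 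Finally, Corollary \ref{no magnitude between} guarantees there is no room between $\oslash$ and $\pounds$, which is what makes "$e\le I$ versus $I<e$" an exhaustive and sharp dichotomy in the borderline case $f=\pounds$, so the statement's formulation ("$ef=e$ if $f<1$ or if $1<f$ and $e\le I$, otherwise $ef=f$") is exactly the trichotomy just established. I expect the main obstacle to be the subcase $I<e<f$ with $f>1$: proving $ef=f$ rather than some intermediate magnitude, which requires squeezing a precise factor out of the failure of the ideal property and then using idempotence of $f$ to climb all the way up to $f$; everything else is bookkeeping with the separation lemmas and order compatibility.
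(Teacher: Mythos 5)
Your overall case split matches the paper's, and the easy cases ($e=f$, $e\le f<1$, $1<e\le f$, and the subcase $I<e$ --- where your idea of extracting a precise $p$ with $I<p<e$, deducing $1/p<f$ and $1\le e\cdot(1/p)$, and climbing up via $f\le e\cdot((1/p)f)\le ef\le ff=f$ is a workable unwinding of the paper's cleaner argument that $ef$ is an ideal of $f$ strictly above $I$, hence equal to $f$ by maximality of $I$) are essentially fine. The genuine gap is in the subcase $e<1<f$ with $e\le I$. There you correctly obtain $e=ee\le ef\le If=I$ and correctly observe that you still need $ef\le e$, but the argument you offer for that inequality does not work: from $e<ef$ and a precise $p$ with $e<p<ef\le I$ you cannot ``derive $ef\le e$'' by transporting Lemma \ref{Lemma max ideal 4}, because that lemma controls products of precise elements relative to $I$, not relative to $e$, and --- more fundamentally --- the abstract product $ef$ admits no a priori representation as a supremum of products of precise elements below $e$ and $f$. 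The paper stresses exactly this point: by (\ref{zerobar L indetermined}) and the remark after Theorem \ref{Thm consistency sup}, the order and completeness structure do not determine products of magnitudes, which is why Axiom \ref{Axiom Maximal Ideal} is an axiom and why your parenthetical claim to ``re-derive'' it from $If=\sup\{Ip\mid 0\le p<f\}$ is circular: the ideal property only gives $Ip\le I$ for each such $p$, hence $I\le If$ and an upper bound for the sup, but $If\le\sup\{Ip\}$ would require continuity of the product from below, which is not available.

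The missing move is a short algebraic one. Since $e\le I<1$ and both are idempotent, your own sub-unit case gives $eI=e$; then associativity and Axiom \ref{Axiom Maximal Ideal} yield
\[
ef=(eI)f=e(If)=eI=e,
\]
which is precisely the paper's argument. (Alternatively: $e(ef)=(ee)f=ef$ by idempotence and associativity, while $e\le ef\le I<1$ with $ef$ idempotent gives $e(ef)=e$ by the sub-unit case, whence $ef=e$.) Without one of these closing computations the subcase $e\le I$ is not established, and it is the only subcase in which Axiom \ref{Axiom Maximal Ideal} genuinely intervenes.
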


\begin{proof}
If $e=f$ the property is obvious. If $e<f<1$, we have $ef=e$, for $e=ee\leq
ef\leq e\cdot1=e$. If $1<e<f$, we have $ef=f$, for $f=1\cdot f\leq ef\leq
ff=f$. Finally, assume that $e<1<f$. Assume that $e\leq I$. By the above we
have $eI=e$. Then by Axiom \ref{Axiom Maximal Ideal} one has $ef=eIf=eI=e$.
Assume that $I<e$. Note that $ef$ is an ideal of $f$, for $efq\leq eff=ef$ for
all precise $q<f$. Also $I<e=ee\leq ef\leq f$. Hence $ef=f$.
\end{proof}

Notice that by commutativity the product $ef$ is also defined if $f<e$.

\begin{proposition}
\label{zerobar maximal ideal L}The magnitude $\oslash$ is the maximal ideal of
$\pounds $. As a consequence $\oslash \pounds =\oslash$.
\end{proposition}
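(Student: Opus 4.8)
The plan is to identify $\oslash$ with the element $I$ constructed in the Notation preceding Theorem \ref{Thm max ideal}, taking $J = \pounds$, and then invoke that theorem. First I would observe that $\pounds$ is an idempotent magnitude with $1 < \pounds < M$, by Theorem \ref{zerobar<1<L} and Theorem \ref{Proposition Mult magnitudes}, so it is a legitimate choice of $J$. With this choice, $I = \sup\{1/\omega \mid \omega \text{ precise}, \pounds < |\omega|\}$. By Lemma \ref{Lemma L<p}.\ref{zerobar sup} we have $\oslash = \sup\{p \mid e(p)=0 \wedge \pounds < 1/p\}$, and reparametrising $p = 1/\omega$ (for $p > 0$; the case $p \le 0$ contributes nothing to the supremum of a set of positive numbers) shows these two sets have the same positive elements, hence the same supremum. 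Therefore $I = \oslash$.

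Next I would apply Theorem \ref{Thm max ideal} directly: since $1 < \pounds < M$, that theorem states that $I$ is the maximal ideal of $\pounds$ (and that $0 < I \le \oslash$, which is now an equality). Substituting $I = \oslash$ gives that $\oslash$ is the maximal ideal of $\pounds$. For the consequence, I would then invoke Axiom \ref{Axiom Maximal Ideal} with $y = \pounds$ (which is idempotent and satisfies $1 < \pounds$) and $x = \oslash$ its maximal ideal: the axiom yields $xy = x$, i.e. $\oslash\pounds = \oslash$.

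I do not expect any serious obstacle here; the statement is essentially a specialisation of the general machinery already developed, and the only point requiring care is the bookkeeping in matching the two descriptions of $\oslash$ — namely checking that the set $\{1/\omega \mid \omega \text{ precise}, \pounds < |\omega|\}$ and the set $\{p \mid e(p) = 0 \wedge \pounds < 1/p\}$ have the same supremum. One should note that negative values of $p$ (equivalently negative $\omega$) are harmless since both suprema are of sets containing arbitrarily small positive precise elements (by Corollary \ref{no magnitude between} and the separation lemmas there are precise $p$ with $0 < p < \oslash$), so the suprema are positive and determined by the positive members of each set. Once $I = \oslash$ is established, the rest is an immediate citation of Theorem \ref{Thm max ideal} and Axiom \ref{Axiom Maximal Ideal}.
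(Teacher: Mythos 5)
Your proposal is correct and follows essentially the same route as the paper: establish that $\pounds$ is an idempotent magnitude, identify $\oslash$ with the supremum $I$ of the Notation via Lemma \ref{Lemma L<p}.\ref{zerobar sup}, apply Theorem \ref{Thm max ideal}, and then conclude $\oslash\pounds=\oslash$ (the paper cites Theorem \ref{Product idempotents} for this last step, but in the relevant case that theorem is just Axiom \ref{Axiom Maximal Ideal}, which you invoke directly). Your extra care in reconciling the two parametrisations of the supremum is a welcome refinement rather than a deviation.
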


\begin{proof}
By Theorem \ref{Proposition Mult magnitudes}.\ref{L.L} the magnitude
$\pounds $ is idempotent. It follows from Lemma \ref{Lemma L<p}%
.\ref{zerobar sup} and Theorem \ref{Thm max ideal} that $\oslash$ is the
maximal ideal of $\pounds $. Then $\oslash \pounds =\oslash$ by Theorem
\ref{Product idempotents}.
\end{proof}

\begin{proposition}
\label{Prop unicity idempotent}Let $e$ be a nonzero magnitude and $I$ be an
idempotent magnitude such that $e=pI$ for some precise element $p$. Then $I$
is unique.
\end{proposition}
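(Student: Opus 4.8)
The plan is to assume $e=pI=qJ$ with $I,J$ idempotent magnitudes and $p,q$ precise, and to pin down $I=J$ by evaluating the product $IJ$ — which is again a magnitude by Axiom~\ref{Axiom escala}, so the expression makes sense — in two different ways. First I would dispose of the degenerate cases: since $e\neq 0$ and $x\cdot 0=0$ for every $x$, we cannot have $p=0$ (otherwise $e=pI=0$), so $p$ is a nonzero precise element; because the precise elements contain a copy of $\mathbb{Q}$, the element $p$ is zeroless and has a precise multiplicative inverse $1/p$ with $p\cdot(1/p)=u(p)=1$ (here Axioms~\ref{axiom sym mult}, \ref{e(u(x))=e(x)d(x)} and \ref{Axiom neut mult} are used, together with $e(p)=0$). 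The same applies to $q$. This yields the only real tool the proof requires, namely cancellation: $(1/p)(pX)=X$ for every $X$, by Axioms~\ref{axiom assoc mult} and \ref{Axiom neut mult}.

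The heart of the argument is the identity $eI=e$. Indeed, since $I$ is idempotent, $eI=(pI)I=p(II)=pI=e$, using only associativity of multiplication; and likewise $eJ=e$. Now I compute $IJ$ twice. On the one hand, $p(IJ)=(pI)J=eJ=e=pI$, so cancelling $p$ gives $IJ=I$. On the other hand, using commutativity, $q(IJ)=q(JI)=(qJ)I=eI=e=qJ$, so cancelling $q$ gives $IJ=J$. Hence $I=IJ=J$, which is exactly the claimed uniqueness of the idempotent magnitude associated with $e$.

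What makes this short is that it never invokes the weak distributivity law (Axiom~\ref{Axiom distributivity}): it uses only associativity, commutativity, idempotency, and the fact that products of magnitudes are magnitudes. Consequently, the only point that genuinely needs care — and the place I would be most careful about — is the justification that a nonzero precise element is zeroless and hence invertible, so that the two cancellation steps are legitimate; this is where Axioms~\ref{axiom sym mult} and \ref{e(u(x))=e(x)d(x)} enter (they also force $e(1/p)=0$, although preciseness of the inverse is not actually needed in the argument). I do not foresee any other obstacle.
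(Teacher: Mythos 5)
Your proof is correct and follows essentially the same route as the paper's: both exploit $pI=qJ$ together with idempotency and cancellation by the nonzero precise scalars to evaluate $IJ$ in two ways, obtaining $IJ=I$ and $IJ=J$. The only cosmetic difference is that you package the computation through the identity $eI=e=eJ$, whereas the paper substitutes $J=\frac{p}{q}I$ and $I=\frac{q}{p}J$ directly; the underlying algebra is identical.
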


\begin{proof}
Let $e$ be a magnitude. Suppose that there exist idempotent magnitudes $I,J\in
S$ and precise elements $p$ and $q$ such that $e=pI$ and $e=qJ$. Then $pI=qJ$.
Because $e\neq0$ the elements $p$ and $q$ are non-zero. Then, noting that $I$
and $J$ are idempotent,%
\[
IJ=I\left(  \frac{p}{q}I\right)  =\frac{p}{q}I=J
\]
and
\[
IJ=\left(  \frac{q}{p}J\right)  J=\frac{q}{p}J=I.
\]
We conclude that $I=J$.
\end{proof}

\begin{theorem}
\label{pf or eq}Let $e$ and $f$ be magnitudes. Then the value of $ef$ is
uniquely determined. Moreover, there exists a positive precise $p$ such that
$ef=pf$ or a positive precise $q$ such that $ef=qe$.
\end{theorem}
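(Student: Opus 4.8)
The idea is to reduce the general case to Theorem \ref{Product idempotents} using the representation of magnitudes as multiples of idempotent magnitudes provided by Axiom \ref{Axiom scale to ring}, together with the uniqueness of that idempotent (Proposition \ref{Prop unicity idempotent}). First I would dispose of the degenerate case: if $e=0$ or $f=0$, then by $x\cdot 0=0$ \cite[Prop. 3.5]{dinisberg 2015 -2} we have $ef=0=0\cdot f=0\cdot e$, and we may take $p=q=1$ (or rather any positive precise element, after noting that the claimed conclusion holds trivially). So assume both $e$ and $f$ are nonzero magnitudes.

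By Axiom \ref{Axiom scale to ring} there are precise $p_{0},q_{0}$ with $e(p_{0})=e(q_{0})=0$ and idempotent magnitudes $I,K$ with $e=p_{0}I$ and $f=q_{0}K$; these $I,K$ are unique by Proposition \ref{Prop unicity idempotent}, and since $e,f$ are nonzero, $p_{0},q_{0}$ are nonzero, and we may take them positive (replacing $p_{0}$ by $-p_{0}$ if needed, using Axiom \ref{Axiom s(xy)=s(x)y} and that $I$ is a magnitude so $(-p_0)I = -(p_0 I)$ has the same magnitude). Now compute, using associativity and commutativity of multiplication (Axioms \ref{axiom assoc mult}, \ref{axiom com mult}) and the fact that $e(x)y$ is a magnitude absorbed appropriately — more precisely using that the product of a precise element and a magnitude is a magnitude (Axiom \ref{Axiom escala}) and the scaling law \cite[Cor. 2.29]{dinisberg 2015 -2} for magnitudes — to get $ef=(p_{0}I)(q_{0}K)=p_{0}q_{0}\,(IK)$. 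By Theorem \ref{Product idempotents}, $IK$ equals $I$ or equals $K$ (according to the order relation between $I$, $K$, $1$ and the maximal ideal of the larger one). In the first case $ef=p_{0}q_{0}I=q_{0}(p_{0}I)=q_{0}e$, so we take $q=q_{0}>0$; in the second case $ef=p_{0}q_{0}K=p_{0}(q_{0}K)=p_{0}f$, so we take $p=p_{0}>0$. In either case $ef$ is determined, since $p_{0},q_{0},I,K$ and the value of $IK$ are all determined.

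The main obstacle I anticipate is the bookkeeping needed to justify $(p_{0}I)(q_{0}K)=p_{0}q_{0}(IK)$ rigorously: one does not have full distributivity, only distributivity up to a magnitude (Axiom \ref{Axiom distributivity}), so pulling precise scalars out of a product of magnitudes must be done carefully. The point is that $p_0 I$ is a magnitude, so when it multiplies another magnitude the error terms $e(p_0 I)q_0K$ are themselves absorbed; concretely one invokes \cite[Cor. 2.29]{dinisberg 2015 -2} which says precise scalars factor through products of magnitudes without error, and $e(I)=I$, $e(K)=K$ since $I,K$ are magnitudes. Once this lemma-level fact about scaling magnitudes is in hand, the rest is a direct appeal to Theorem \ref{Product idempotents} and the uniqueness statements already established, and the "moreover" clause about $ef$ being a precise multiple of $e$ or of $f$ falls out of the two cases for $IK$.
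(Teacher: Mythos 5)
Your proposal is correct and follows essentially the same route as the paper's own proof: decompose $e=pI$ and $f=qJ$ via Axiom \ref{Axiom scale to ring}, reduce to $IJ$ via Theorem \ref{Product idempotents}, and invoke Proposition \ref{Prop unicity idempotent} for uniqueness. The paper simply asserts $ef=pqIJ$ where you (reasonably) flag the scalar-extraction step as needing care; otherwise the arguments coincide.
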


\begin{proof}
The theorem is trivial if $e=0$ or $f=0$. Let $e$ and $f$ be non-zero
magnitudes. Let $I$ and $J$ be idempotent and $p$ and $q$ be precise such that
$e=pI$ and $f=qJ$; dealing with magnitudes they may be supposed positive. Then
$ef=pqIJ$. Now $IJ=I$ or $IJ=J$ by Theorem \ref{Product idempotents} and the
value of the product $ef$ is uniquely determined by Proposition
\ref{Prop unicity idempotent}. Hence $ef=pf$ or $ef=qe$.
\end{proof}

In the last part of this section we verify that the value of the product
obtained from Axiom \ref{Axiom Maximal Ideal}\ is consistent with the ordering
(Theorem \ref{Lemma consistency with order}) and the notion of supremum of
Axiom \ref{Axiom Dedekind completeness} (Theorem \ref{Thm consistency sup}).

\begin{lemma}
\label{Lemma max ideal order}Let $J>1$ be an idempotent magnitude and $I$ be
the maximal ideal of $J$. Let $p$ be a precise element.

\begin{enumerate}
\item \label{pJ<I}If $p<I$ then $pJ<I$.

\item \label{J<=pJ}If $I<p$ then $J\leq pJ$. Moreover, $pJ=J$ if and only if
$I<p<J$.

\item \label{J<pI}If $J<p$ then $J<pI$.

\item \label{pI<=I}If $p<J$ then $pI\leq I$. Moreover, $pI=I$ if and only if
$I<p<J$.

\item \label{No pI=J}There is no precise element $p$ such that $pI=J$.
\end{enumerate}
\end{lemma}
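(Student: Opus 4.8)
The plan is to prove the five parts in the order (4), (5), (3), (1), (2), which respects the way the strict inequalities feed into one another. Throughout I would assume $p>0$ (the remaining cases being trivial), and dispose of $J=M$ at the outset: then $I=0$ by the first part of Theorem~\ref{Thm max ideal} and every assertion is vacuous or immediate. As preliminaries I would record, for positive precise $p$, the two equivalences $p<I\Leftrightarrow J<1/p$ and $J<p\Leftrightarrow 1/p<I$; these follow from $I=\sup\{1/\omega\mid\omega\text{ precise},\ J<|\omega|\}$ together with the observation, already used in the proof of Lemma~\ref{Lemma max ideal 1}, that $p<I$ forces $p\in A$. I would also recall that $I$ is idempotent, that $I<1<J$, that $IJ=JI=I$ by Theorem~\ref{Product idempotents}, and that any product $pN$ of a positive precise $p$ with a magnitude $N\le J$ is again an ideal of $J$ as soon as $pN\le J$, so that maximality of $I$ applies to such a product once it is pinched between $I$ and $J$.

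For part~(4): the inequality $pI\le I$ when $p<J$ is exactly the defining property of the ideal $I$ of $J$; if moreover $I<p$, then $I=e(I)I\le e(I)p=pI$ by Axiom~\ref{Axiom Amplification}, so $pI=I$; and if instead $p<I$, then $\sqrt p<I$ by Lemma~\ref{Lemma max ideal 1}.\ref{sqrt p<I}, hence $\sqrt p<J$, so $\sqrt p\,I\le I$ and therefore $pI=\sqrt p(\sqrt p\,I)\le\sqrt p\,I\le\sqrt p<I$. For part~(5): if $pI=J$ then $(1/p)J=I$, and idempotence of $I$ gives $I=\big((1/p)J\big)^{2}=(1/p^{2})J$; multiplying by $p$ yields $J=(1/p)J=I$, contradicting $I<J$.

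Part~(3) is the crux. Given $J<p$ (hence $1/p<I$), suppose $pI\le J$. Then $pI$ is an ideal of $J$ with $I\le pI$ (since $1<p$, by Axiom~\ref{Axiom Amplification}), so by maximality of $I$ either $pI=I$ or $pI=J$; part~(5) excludes the latter, so $pI=I$, and then $(1/p)I=(1/p)(pI)=I$ contradicts part~(4) because $1/p<I$. Hence $J<pI$. This completes the ``moreover'' of part~(4), since $p>J$ gives $pI>J>I$ by part~(3) and $p<I$ gives $pI<I$ by the above, leaving $pI=I$ exactly for $I<p<J$. For part~(1): if $p<I$, then $J<1/p$, and $pJ=I$ would give $(1/p)I=J$, contradicting part~(3); since also $pJ\le JI=I$ by Axiom~\ref{Axiom Amplification}, we get $pJ<I$. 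For part~(2): if $p\ge1$ then $pJ\ge J$ by Axiom~\ref{Axiom Amplification}; if $I<p<1$, pick a precise $r$ with $I<r<p$ by Lemma~\ref{Precise separation elements}, observe $rJ\le J$ while $rJ\ge r>I$ with $rJ$ an ideal of $J$, so $rJ=J$ by maximality and $pJ\ge rJ=J$; hence $I<p$ implies $J\le pJ$. The ``moreover'' then follows because $p>J$ gives $pJ\ge p>J$, $p<I$ gives $pJ\le JI=I<J$, and $I<p<J$ gives $pJ=J$ just as above.

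The hard part is the systematic upgrade of weak inequalities to strict ones: since multiplication by the magnitudes $I$ and $J$ is only weakly monotone, none of $pI<I$, $J<pI$, $pJ<I$, $pJ>J$ can be obtained by pushing a strict inequality through a product. Each is instead forced either by the square-root estimate of Lemma~\ref{Lemma max ideal 1}, or by dividing out the precise factor $p$ and colliding with a part proved earlier or with the defining strictness $I<J$ of the maximal ideal.
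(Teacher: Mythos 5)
Your argument is correct, but it reaches the five statements by a genuinely different route than the paper. The paper proves parts (1) and (3) \emph{directly} from the explicit description $I=\sup\left\{  1/\omega\,|\,\omega \text{ precise},J<\left\vert \omega\right\vert \right\}$ combined with the square-root consequence of idempotency (from $J<p$ it gets $J<\sqrt{p}$, hence $J<\sqrt{p}=p\cdot(1/\sqrt{p})<pI$, and dually for (1)), proves (2) by the estimate $pJ\leq I<p^{2}$ via Lemma \ref{Lemma max ideal 3}, and then obtains (5) as an immediate corollary of (3) and (4). You invert the logical order: you first prove (5) by a self-contained division argument ($pI=J$ forces $(1/p)J=I$, and idempotence of $I$ and $J$ collapses this to $I=J$), and then extract the strict inequalities in (3) and (1) from the \emph{abstract} maximality of $I$ among ideals of $J$ -- the candidate product, pinched between $I$ and $J$, must equal one of them; (5) kills the value $J$, and the value $I$ is killed by dividing out $p$ and colliding with the $p<I$ case of (4). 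Part (2) is likewise settled by maximality (applied to $rJ$) rather than by the paper's $I<p^{2}$ computation. The paper's route is shorter and leans uniformly on the concrete formula for $I$; yours needs that formula only for the equivalences $p<I\Leftrightarrow J<1/p$ and otherwise runs on the maximal-ideal property itself, which is arguably more transparent about where strictness comes from. Two cautions. First, your blanket preliminary claim that $pN$ is an ideal of $J$ for \emph{any} magnitude $N\leq J$ with $pN\leq J$ is false in general (take $N=\pounds $ inside a much larger idempotent $J$ and an unlimited precise $q<J$: then $Nq\not \leq N$); it does hold for the two instances you actually use, namely $N=I$ (because $I$ is an ideal, so $Iq\leq I$) and $N=J$ (because $Jq\leq JJ=J$ by idempotency), but each deserves its own one-line check rather than an appeal to the general statement. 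Second, the ``remaining cases'' $p\leq0$ are not quite trivial for the two ``if and only if'' clauses (for instance $(-2)I=I$ while $I<-2$ fails); the paper's own proof also tacitly assumes $p>0$, so this is a defect of the statement rather than of your argument, but ``trivial'' is not the right word.
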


\begin{proof}
\ref{pJ<I}. Assume that $p<I$. Then $\sqrt{p}<I$ by Lemma
\ref{Lemma max ideal 1}.\ref{sqrt p<I}. Then $J<1/\sqrt{p}$. Hence
\[
pJ<p\frac{1}{\sqrt{p}}=\sqrt{p}<I.
\]

\ref{J<=pJ}. Assume that $I<p$. Then $I<p^{2}$ by idempotency. If $1\leq p$,
clearly $J\leq pJ$. If $p<1$, then $p<J$. Suppose that $pJ<J$. Because $pJ$ is
an ideal of $J$, one has $pJ\leq I<p^{2}$. Then $J<p$, a contradiction. Hence
$J\leq pJ$. Assume that $I<p<J$. Then $pJ\leq J^{2}=J$.\ Hence $pJ=J$. For
$J<p$ we have $J<p<pJ$. Hence $pJ=J$ if and only if $I<p<J$.

\ref{J<pI}. Suppose that $J<p$. Then $J<\sqrt{p}$ by idempotency. Then
$1/\sqrt{p}<I$, hence
\[
J<\sqrt{p}=p\frac{1}{\sqrt{p}}<pI.
\]

\ref{pI<=I}. Because $I$ is an ideal of $J$ we have $pI\leq I$. Assume that
$I<p<J$. Then $I=I^{2}\leq pI$. Hence $pI=I$. By Part \ref{pJ<I} one has
$pI\leq pJ<I$ for $p<I$. Hence $pI=I$ if and only if $I<p<J$.

\ref{No pI=J}. Directly from Part \ref{J<pI} and Part \ref{pI<=I}.
\end{proof}

\begin{theorem}
\label{Lemma consistency with order}Let $J>1$ be an idempotent magnitude and
$I$ be the maximal ideal of $J$. Let $p,q>0$ be precise.
\end{theorem}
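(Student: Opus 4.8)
The plan is to read this as the statement that the multiplication rule forced by Axioms \ref{Axiom Maximal Ideal} and \ref{Axiom scale to ring} never contradicts the total order, i.e.\ that the order relations among the magnitudes $pI$, $qI$, $pJ$, $qJ$ (and, where relevant, among their products) are exactly the ones predicted by the linearization of Theorem \ref{pf or eq}. The natural strategy is a finite case analysis governed by the position of the precise numbers $p$ and $q$ relative to the three landmarks $I$, $1$ and $J$, since, by Theorem \ref{Product idempotents} together with Lemma \ref{Lemma max ideal order}, these positions determine the value of every product in play.

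First I would isolate the monotonicity fact that underlies all the comparisons: if $e\le e'$ are magnitudes and $f$ is a magnitude, then $ef\le e'f$. This is immediate from Axiom \ref{Axiom Amplification} (take $y=e$, $z=e'$ and an $x$ with $e(x)=f$, which exists since every magnitude is an individualized neutral element), together with commutativity. In particular $0<p\le q$ precise gives $pI\le qI$ and $pJ\le qJ$, and scaling any fixed magnitude by a larger positive precise factor yields a larger magnitude. Next I would evaluate the relevant products: from $II=I$, the idempotency of $J$, $JJ=J$, and $IJ=I$ (Axiom \ref{Axiom Maximal Ideal}), each of $(pI)(qI)$, $(pI)(qJ)$, $(pJ)(qJ)$ equals $pq\,I$ or $pq\,J$, the idempotent factor being read off from Lemma \ref{Lemma max ideal order}; one then compares the resulting magnitude with the competing one (or with a precise bound) using Lemma \ref{Lemma max ideal order}.\ref{pJ<I}--\ref{No pI=J} and transitivity of $\le$ (Axioms \ref{(OA)trans}, \ref{(OA)total}).

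The main obstacle I anticipate is the borderline behaviour at the two edges of the interval $[I,J]$: when $p$ (or $q$) sits essentially at $I$, so that $pI$ may be $<I$ while a slightly larger factor pushes it up to $=I$; and when $p$ sits essentially at $J$, so that $pJ$ may be $=J$ or already $>J$. These are precisely the places where the order relations between $pI$, $pJ$ and precise scalars fail to be strict, and one must call on the idempotency amplifications of Lemma \ref{Lemma max ideal 1} (e.g.\ $p<I\Rightarrow \sqrt p<I$ and $2p<I$), and the facts $I\le\oslash<1<J$ from Theorem \ref{Thm max ideal} and Lemma \ref{Lemma max ideal 2}, to exclude the wrong strict inequality. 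Once these edge cases are disposed of uniformly, the remaining cases are routine applications of the order axioms \ref{(OA)trans}, \ref{(OA)total}, \ref{compat mult}, \ref{Axiom Amplification} and Theorem \ref{Product idempotents}, and the stated consistency with the ordering follows; the companion consistency with the weak supremum is then the content of Theorem \ref{Thm consistency sup}.
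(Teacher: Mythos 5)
Your proposal takes essentially the same route as the paper: both parts of the theorem are obtained by chaining the inequalities of Lemma \ref{Lemma max ideal order} (parts \ref{pJ<I}--\ref{pI<=I}) with the identity $IJ=I$ from Axiom \ref{Axiom Maximal Ideal} and transitivity, yielding $pJ<I=IJ<J\leq qJ$ when $p<I<q$ and $Ip\leq I=IJ<J<Iq$ when $p<J<q$. The additional scaffolding you describe (the monotonicity consequence of Axiom \ref{Axiom Amplification}, the evaluation of products such as $(pI)(qJ)$) is not needed for the two stated inequalities, but it does not affect the correctness of the argument.
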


\begin{enumerate}
\item \label{const 1}If $p<I<q$ then $pJ<IJ<qJ$.

\item \label{const 2}If $p<J<q$ then $Ip\leq IJ<Iq$.
\end{enumerate}

\begin{proof}
\ref{const 1}. By Lemma \ref{Lemma max ideal order}.\ref{pJ<I} and
\ref{Lemma max ideal order}.\ref{J<=pJ}
\[
pJ<I=IJ<J\leq qJ.
\]

\ref{const 2}. By Lemma \ref{Lemma max ideal order}.\ref{pI<=I} and
\ref{Lemma max ideal order}.\ref{J<pI}%
\[
Ip\leq I=IJ<J<Iq.
\]

\end{proof}

\begin{theorem}
\label{Thm consistency sup}Let $I,J$ be idempotent magnitudes such that $1<J$
and $I$ is the maximal ideal of $J$. Then
\end{theorem}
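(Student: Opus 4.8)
The statement of Theorem \ref{Thm consistency sup} asserts that the supremum of Axiom \ref{Axiom Dedekind completeness}, applied to a definable set whose weak least upper bound should be $IJ$, is consistent with the product value $IJ = I$ forced by Theorem \ref{Product idempotents}. The natural candidate set is something like $A \equiv \{ p \mid e(p)=0 \wedge \exists r,s\,(e(r)=e(s)=0 \wedge r<I \wedge s<J \wedge p \leq rs)\}$, i.e.\ the set of precise elements below a product of a precise element of $I$ and a precise element of $J$; the claim to prove is then $\operatorname*{zup} A = IJ = I$. My plan is first to unwind which set $A$ the theorem intends (this is the one loose end in the excerpt as displayed), then to show $\operatorname*{zup} A = I$ by a two-sided argument using the tools already developed in this section.

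First I would establish the upper bound: every $p \in A$ satisfies $p \leq I$. This is immediate from $rs \leq IJ = I$ via $r<I$, $s<J$ and compatibility of multiplication with the order (Axioms \ref{compat mult}, \ref{Axiom Amplification}), together with Theorem \ref{Product idempotents} giving $IJ=I$; actually one gets $p \leq rs \leq Iq$ for any precise $q$ with $J<q$, and Lemma \ref{Lemma max ideal order} then pins things down. So $I$ is an upper bound for $A$ in the relevant sense. Next I would show $I$ is the \emph{weak least} such bound. Since $I$ is idempotent, $I = \sup\{ p \mid e(p)=0 \wedge p<I\}$ by Lemma \ref{Lemma max ideal 2} (or directly from the Notation defining $I$ as a $\sup$); and for any precise $p<I$ we have $p = p \cdot 1$ with $1 < J$, and $p<I$, so $p$ itself is realized (take $r=p$, $s=1$, or rather a precise $s$ slightly above $1$ and below $J$, which exists by Axiom \ref{scheiding neutrices} and Lemma \ref{Precise separation neutrices} since $1<J$). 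Hence every precise $p<I$ lies in $A$, so $\operatorname*{zup} A \geq I$; combined with the upper bound, $\operatorname*{zup} A = I = IJ$, and one checks the halfline is open (it is $(-\infty, I)$ on precise elements), so case \ref{Theorem Dedekind 2} of Theorem \ref{Theorem Dedekind} applies and $I = \sup A$ in the ordinary sense.

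The main obstacle I anticipate is bookkeeping rather than conceptual: making sure the definable set $A$ is chosen so that it genuinely has $IJ$ as its weak supremum and not, say, a strictly larger or smaller magnitude — in particular checking that products $rs$ with $r<I$, $s<J$ precise do not overshoot $I$, which is exactly the content of Lemma \ref{Lemma max ideal order}.\ref{pI<=I} and Lemma \ref{Lemma max ideal 4} (if $r<I$ and $1<s<J$ then $rs<I$). The other delicate point is the boundary: one must confirm $I \notin A$ (so the halfline is open, consistent with $I$ being a genuine supremum), which follows since $I$ is a magnitude and no precise $p$ equals $I$, while every element of $A$ is precise — or, if $A$ is allowed non-precise elements via the $\overline{A}$ convention, that $I$ is still not attained as a maximum because there is no precise $p$ with $p=rs=I$ for admissible $r,s$, using Lemma \ref{Lemma max ideal order}.\ref{No pI=J} in the symmetric role. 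Once these order facts are assembled, the identification $\operatorname*{zup} A = IJ$ is a short deduction from Theorem \ref{Theorem Dedekind} and Theorem \ref{Product idempotents}.
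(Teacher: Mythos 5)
There is a genuine gap, and it begins before the proof: you have reconstructed the wrong statement. Theorem \ref{Thm consistency sup} does not concern the set of precise elements dominated by products $rs$ of two precise elements; it asserts four identities about families of \emph{magnitudes}, namely
\[
I=IJ=\sup \left\{  pJ\mid e(p)=0,\ \left\vert p\right\vert <I\right\}
=\max \left\{  Iq\mid e(q)=0,\ \left\vert q\right\vert <J\right\}
\]
and
\[
J=\inf \left\{  pI\mid e(p)=0,\ J<p\right\}  =\min \left\{  qJ\mid
e(q)=0,\ I<q\right\}  .
\]
The substance of the theorem --- stressed in the remark that follows it in the paper --- lies in the asymmetry between the two displays: the value $IJ=I$ imposed by Axiom \ref{Axiom Maximal Ideal} is recovered by approximating the factor $I$ from below by precise elements (and is even attained, as a maximum, over the family $Iq$ with $\left\vert q\right\vert <J$), whereas approximating $J$ from above by precise $p>J$ yields $\inf \left\{  pI\right\} =J\neq I$, and this infimum is not attained. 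This negative half, which is what shows that completion arguments alone cannot determine products of magnitudes, is entirely absent from your proposal. The paper's proof is a short squeeze: for the first display it uses Lemma \ref{Lemma max ideal order}.\ref{pJ<I} ($p<I$ implies $pJ<I$) together with $1<J$, and Lemma \ref{Lemma max ideal order}.\ref{pI<=I} to get $Iq=I$ for $I<\left\vert q\right\vert <J$; for the second it uses Lemma \ref{Lemma max ideal order}.\ref{J<pI} for the lower bound, a separation-by-a-precise-element contradiction for the upper bound, and Proposition \ref{Zup E magnitude} to conclude that the infimum is not a minimum.

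For the fragment you do address, your two-sided argument is essentially the paper's proof of the single identity $IJ=\sup \left\{  pJ\mid \left\vert p\right\vert <I\right\}  $ and would go through, but your set $A$ is ill-posed as written: with the conditions $r<I$ and $s<J$ (rather than $\left\vert r\right\vert <I$ and $\left\vert s\right\vert <J$) you admit arbitrarily large negative $r$ and $s$, whose product $rs$ is arbitrarily large and positive, so $A$ is the set of all precise elements and its weak supremum is $M$, not $I$. That is repairable, but even after the repair your argument establishes only one of the four equalities and none of the "approximation from above" content.
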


\begin{enumerate}
\item \label{const sup}$I=IJ=\sup \left \{  pJ\left \vert e(p)=0,\left \vert
p\right \vert <I\right.  \right \}  =\max \left \{  Iq\left \vert e(q)=0,\left \vert
q\right \vert <J\right.  \right \}  $.

\item \label{inc inf}$J=\inf \left \{  pI\left \vert e(p)=0,J<p\right.  \right \}
=\min \left \{  qJ\left \vert e(q)=0,I<q\right.  \right \}  $.
\end{enumerate}

\begin{proof}
\ref{const sup}. By Lemma \ref{Lemma max ideal order}.\ref{pJ<I}, if $p<I$
then $pJ<I$. Also $1<J$. Hence%
\[
I=\sup \left \{  p\left \vert e(p)=0,\left \vert p\right \vert <I\right.  \right \}
\leq \sup \left \{  pJ\left \vert e(p)=0,\left \vert p\right \vert <I\right.
\right \}  \leq I.
\]
Hence $IJ=\sup \left \{  pJ\left \vert e(p)=0,\left \vert p\right \vert <I\right.
\right \}  $. Also $Iq\leq IJ$ for all precise $q$ with $\left \vert
q\right \vert <J$. By Lemma \ref{Lemma max ideal order}.\ref{pI<=I} one has
$Iq=I=IJ$ for all precise $q$ with $I<\left \vert q\right \vert <J$. Hence
$IJ=\max \left \{  Iq\left \vert e(q)=0,\left \vert q\right \vert <J\right.
\right \}  $.

\ref{inc inf}. By Lemma \ref{Lemma max ideal order}.\ref{J<pI} we have
$J\leq \inf \left \{  pI|J<p\right \}  $. In order to show that also
$\inf \{pI|J<p\} \leq J,$ suppose towards a contradiction that $J<\inf \left \{
pI|J<p\right \}  $. Then there exists a precise element $q$ such that
$J<q<\inf \left \{  pI|J<p\right \}  $. Since $J<q$, one has $\inf \left \{
pI|J<p\right \}  \leq qI$. Then $q<qI,$ which implies that $1<I$, a
contradiction. One concludes that $\inf \left \{  pI|J<p\right \}  =J$. By Lemma
\ref{Lemma max ideal order}.\ref{J<pI}, $\inf \left \{  pI|J<p\right \}  $ is not
a minimum. By Proposition \ref{Zup E magnitude}, $\inf \left \{  pI|J<p\right \}
$ is an infimum.

By Lemma \ref{Lemma max ideal order}.\ref{J<=pJ} one has $J\leq pJ$ for $I<p$,
and in particular $pJ=J$ for all $p$ with $I<p<J$. Hence $J=\min \left \{
pJ|I<p\right \}  $.
\end{proof}

Theorem \ref{Thm consistency sup} also states that we may obtain $IJ=I$ by
approximation from below, but not by approximation from above. This shows that
completion arguments are not enough to determine the product of magnitudes.

\section{On consistency\label{section on consistency}}

In this section we show that Axioms \ref{assemblyassoc}%
-\ref{axiom archimedes2} are consistent by constructing a model extending a
particular nonstandard model of the real numbers. Indeed, we take a
sufficiently saturated\ nonstandard model $^{\ast}\mathbb{R}$ of the real
numbers which is elementary equivalent to $\mathbb{R}$. Within this model we
consider cosets with respect to convex subgroups which are definable by
$\Sigma_{1}$ or $\Pi_{1}$ formulas. The resulting structure will be called
$\mathcal{E}$. As we will see, all the axioms presented in Section
\ref{Section Axioms} are valid in $\mathcal{E}$.

In a previous article \cite{dinisberg 2011} we made an interpretation of most
of the algebraic axioms using the language of Nelson's Internal Set Theory
\cite{Nelsonist}. More precisely we used an adapted version, formulated by
Kanovei-Reeken \cite{kanoveireeken1}, which permits to include external sets.
As it turns out, in this approach the collection of magnitudes is a proper
class. To avoid foundational problems, which may appear when we apply, say,
asymptotics with parameters or defining subclasses, we will consider here a
semantic approach.

The axioms for a solid, i.e. Axioms \ref{assemblyassoc}%
-\ref{scheiding neutrices}, extend the axioms originally presented in
\cite{dinisberg 2011} and were shown to be consistent in \cite{dinisberg 2015
-1} by the construction of a direct model in the language of $ZFC$. This was
given in the form of a set of cosets of a non-Archimedean field. Allowing for
definable classes, it was shown in \cite{dinisberg 2011} that the external
numbers of \cite{koudjetithese} and \cite{koudjetivandenberg} satisfy the
axioms for addition and for multiplication, together with a modified form of
the distributivity axiom; this modified form was shown to be equivalent to
Axiom \ref{Axiom distributivity} in \cite{dinisberg 2015 -2}. The remaining
algebraic axioms deal with multiplication of magnitudes, and are in fact taken
from calculation rules of the external numbers observed in
\cite{koudjetithese} and \cite{koudjetivandenberg}.

\subsection{Construction of the solid $\mathcal{E}$%
\label{Subsection first order}}

Let $Z_{n}=\bigcup \nolimits_{k\leq n}P^{k}(\mathbb{R)}$ and $Z=\bigcup
\nolimits_{n\in \mathbb{N}}Z_{n}$ be the superstructure of Zakon-Robinson
\cite{Robinson Zakon} (see also \cite{Stroyan-Luxemburg} and \cite{Goldblatt}%
). Let $^{\ast}Z$ be an adequate ultralimit \cite{Nelsonist} of $Z$. If we
interpret the elements of $Z$ as standard, we will see that bounded versions
of Nelson's $IST$ axioms as well as his Reduction Algorithm hold in this
structure. In particular, the \emph{Saturation Principle} \cite[Thm.
5]{Nelsonsintax} holds. In the context of the superstructure this implies that
if $X\in Z$ and $s:X\rightarrow{}^{\ast}\mathbb{R}$, then $s$ has always an
internal extension $\widetilde{s}:{}^{\ast}X\rightarrow{}^{\ast}\mathbb{R}$.

\begin{definition}
\label{definition N}We denote by $\mathcal{N}$ the set consisting of $^{\ast
}\mathbb{R}$ and of all convex subgroups of $^{\ast}\mathbb{R}$ of the form $%
{\textstyle \bigcup_{_{x\in X}}}
\left[  -s_{x},s_{x}\right]  $ or $%
{\textstyle \bigcap_{_{x\in X}}}
\left[  -s_{x},s_{x}\right]  $, where $X\in Z$ and $s:X\rightarrow{}^{\ast
}\mathbb{R}$. We call an element of $\mathcal{N}$ a \emph{neutrix}.
\end{definition}

Without restriction of generality we may suppose that $X$ is ordered and $s$
is increasing in the case of unions and decreasing in the case of intersections.

\begin{definition}
\label{Definition Minkowski neutrices}Let $A,B$ be neutrices. With some abuse
of language we call the set $\{a\,{}^{\ast}+b:a\in A\wedge b\in B\}$ the
\emph{Minkowski sum} of $A$ and $B$ and the set $\{a\,{}^{\ast}\cdot b:a\in
A\wedge b\in B\}$ the \emph{Minkowski product }of $A$ and $B$.\emph{\ }Usually
we simply write $A+B$ instead of $A\,{}^{\ast}+B$ and $A\cdot B$ instead of
$A\,{}^{\ast}\cdot B$.
\end{definition}

\begin{definition}
\label{Definition E}We define $\mathcal{E}=\left \{  a+A|a\in{}^{\ast
}\mathbb{R}\wedge A\in{}\mathcal{N}\right \}  $. We call an element of
$\mathcal{E}$ an \emph{external number}.
\end{definition}

If $\alpha=a+A$ is an external number, it is tacitly understood that $a\in
{}^{\ast}\mathbb{R}$ and $A\in{}\mathcal{N}$. We write $N\left(
\alpha \right)  $ instead of $A$ and call it the \emph{neutrix part} of
$\alpha$. This functional notation will be justified below.

\begin{proposition}
Let $\alpha=a+A$ be an external number.

\begin{enumerate}
\item Let $y\in \alpha$. Then $\alpha=y+A$.

\item Let $\alpha=b+B$ with $b\in{}^{\ast}\mathbb{R}$ and $B\in{}\mathcal{N}$.
Then $A=B$.
\end{enumerate}
\end{proposition}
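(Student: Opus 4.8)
The plan is to prove both parts by exploiting the group structure of neutrices under Minkowski addition, since each $A\in\mathcal{N}$ is a convex subgroup of $^{\ast}\mathbb{R}$ and in particular $0\in A$, $A=-A$, and $A+A=A$.

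For part (1): suppose $y\in\alpha=a+A$. Then $y=a+z$ for some $z\in A$. First I would show $\alpha\subseteq y+A$: any element of $\alpha$ has the form $a+w$ with $w\in A$, and $a+w=(a+z)+(w-z)=y+(w-z)$; since $A$ is a subgroup, $w-z\in A$, so $a+w\in y+A$. The reverse inclusion $y+A\subseteq\alpha$ is symmetric: $y=a+z$ with $z\in A$, so $y+A=a+(z+A)=a+A=\alpha$ because $z+A=A$ (again using that $A$ is a subgroup containing $z$). Hence $\alpha=y+A$. This part is essentially routine coset arithmetic.

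For part (2): assume $a+A=b+B$ as subsets of $^{\ast}\mathbb{R}$, with $A,B\in\mathcal{N}$. Pick any $x\in A$; then $a+x\in a+A=b+B$, so $a+x=b+w$ for some $w\in B$. In particular, taking $x=0$ (which lies in $A$ since $A$ is a subgroup), we get $a=b+w_0$ for some $w_0\in B$, so $a-b\in B$. By symmetry $b-a\in A$, but since $A=-A$ this also gives $a-b\in A$. Now for arbitrary $x\in A$: $a+x\in b+B$ forces $a+x-b\in B$, and since $a-b\in B$ and $B$ is a subgroup, $x=(a+x-b)-(a-b)\in B$; hence $A\subseteq B$. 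The reverse inclusion $B\subseteq A$ follows the same way using $b-a\in A$. Therefore $A=B$. (One should note that the neutrix representation is well-defined, i.e.\ the argument does not assume $a\in\alpha$ a priori, but $a\in a+A$ since $0\in A$, so this is not an issue.)

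I do not expect any genuine obstacle here; the only point requiring a little care is making explicit that every neutrix, being a \emph{convex subgroup}, contains $0$ and is closed under negation and addition — these are exactly the facts that turn the set-theoretic equality $a+A=b+B$ into the group-theoretic statement that $a-b$ lies in both $A$ and $B$, which then pins down $A=B$. Everything else is the standard ``two cosets of the same subgroup are equal or disjoint'' reasoning, specialized to the additive setting.
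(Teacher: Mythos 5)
Your proof is correct and follows essentially the same route as the paper's: both arguments reduce to the fact that a neutrix is an additive subgroup (so $0\in A$, $-A=A$, $A+A=A$) and then run the standard ``two cosets of the same subgroup'' computation. The paper merely packages the same idea more compactly via the Minkowski identity $\alpha-\alpha=A$, which yields part (2) in one line, whereas you carry out the elementwise version; there is no substantive difference.
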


\begin{proof}
\begin{enumerate}
\item We have that $y-a \in \alpha- \alpha=A$. Then $y+A=y-a+a+A \subseteq a+
A+A =\alpha$. On the other hand, $\alpha=a+A=a-y+y+A \subseteq y+A+A=y+A$.
Hence $\alpha=y+A$.

\item We have $A=\alpha-\alpha=B$.
\end{enumerate}
\end{proof}

\begin{corollary}
The neutrix part is a well-defined function from $\mathcal{E}$ to
$\mathcal{N}$.
\end{corollary}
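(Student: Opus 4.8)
The plan is to read the statement off directly from the preceding Proposition; this is genuinely a one-line corollary and the work has already been done. An element $\alpha\in\mathcal{E}$ is by definition of the form $a+A$ with $a\in{}^{\ast}\mathbb{R}$ and $A\in\mathcal{N}$, so a representation always exists and the assignment $\alpha\mapsto N(\alpha):=A$ has nonempty fibres, each value landing in $\mathcal{N}$ by construction. What remains to be verified is single-valuedness: that the neutrix $A$ in $\alpha=a+A$ does not depend on the chosen representation.

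To that end I would take two representations $\alpha=a+A=b+B$ with $a,b\in{}^{\ast}\mathbb{R}$ and $A,B\in\mathcal{N}$, and invoke part (2) of the Proposition, which gives $A=B$. Concretely this is the identity $A=\alpha-\alpha=B$ recorded in its proof, where $\alpha-\alpha$ denotes the Minkowski difference of the external number with itself and one uses that a neutrix, being an additive subgroup, satisfies $A-A=A$. Hence $\alpha\mapsto N(\alpha)$ is a well-defined map on all of $\mathcal{E}$, and since every value is a neutrix it is a function $\mathcal{E}\to\mathcal{N}$, which justifies the functional notation $N(\alpha)$ introduced before the Proposition.

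There is no real obstacle here; the only point to keep in mind is purely notational, namely that the sums and differences appearing in the Proposition and its proof (such as $y+A$, $a+A+A$ and $\alpha-\alpha$) are Minkowski operations on subsets of ${}^{\ast}\mathbb{R}$, so that the inclusions in that proof collapse to equalities precisely because $A+A=A$ for a neutrix. With this understood, the corollary follows immediately from part (2) of the Proposition.
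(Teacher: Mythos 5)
Your proposal is correct and matches the paper's own (implicit) argument exactly: the corollary is read off from part (2) of the preceding Proposition, whose proof is precisely the identity $A=\alpha-\alpha=B$ using that a neutrix is an additive subgroup. Nothing further is needed.
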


Obviously $\alpha=y+A$ for any element $y\in \alpha$. The neutrix part of a
given external number is unique and functional. Next definition extends the
Minkowski sum and product of Definition \ref{Definition Minkowski neutrices}
to external numbers. It is easy to see that the definition does not depend on
the choice of representatives.

\begin{definition}
Let $\alpha=a+A$ and $\beta=b+B$ be two external numbers, the\emph{\ sum} and
\emph{product} of $\alpha$ and $\beta$ are defined as follows
\begin{align*}
\alpha+\beta &  =a+b+A+B\\
\alpha \cdot \beta &  =ab+aB+bA+AB.
\end{align*}

\end{definition}

Let $A,B$ be neutrices and $\left(  s\,_{x}\right)  _{x\in X},\left(
t\,_{y}\right)  _{y\in Y}$ be families of elements of $^{\ast}\mathbb{R}$,
with $X,Y\in Z$. Because we are only considering convex subgroups of $^{\ast
}\mathbb{R}$ of the form $%
{\textstyle \bigcup_{_{x\in X}}}
[-s\,_{x},s\,_{x}]$\ or $%
{\textstyle \bigcap_{_{x\in X}}}
[-s\,_{x},s\,_{x}]$, we need to show that the sum and product operations do
not increase the complexity. With addition complexity does not increase
because one always has $A+B=A$ or $A+B=B$. We now consider multiplication.
There is clearly no increase in complexity if both $A$ and $B$ are unions, or
are intersections. Indeed
\[%
{\textstyle \bigcup_{_{x\in X}}}
\left[  -s\,_{x},s\,_{x}\right]  \cdot%
{\textstyle \bigcup_{_{y\in Y}}}
\left[  -t\,_{y},t\,_{y}\right]  =%
{\textstyle \bigcup_{_{(x,y)\in X\times Y}}}
\left[  -s\,_{x},s\,_{x}\right]  [-t\,_{y},t\,_{y}],
\]
and
\[%
{\textstyle \bigcap_{_{x\in X}}}
\left[  -s\,_{x},s\,_{x}\right]  \cdot%
{\textstyle \bigcap_{_{y\in Y}}}
\left[  -t\,_{y},t\,_{y}\right]  =%
{\textstyle \bigcap_{_{(x,y)\in X\times Y}}}
\left[  -s\,_{x},s\,_{x}\right]  [-t\,_{y},t\,_{y}].
\]
In the following proposition we show that there is also no increase in
complexity in the case where $A$ is of the form $%
{\textstyle \bigcup_{_{x\in X}}}
\left[  -s\,_{x},s\,_{x}\right]  $ and $B$ is of the form $%
{\textstyle \bigcap_{_{y\in Y}}}
\left[  -t\,_{y},t\,_{y}\right]  $.

\begin{proposition}
\label{Proposition complexity}Let $X,Y\in Z$. Let $A=%
{\textstyle \bigcup_{_{x\in X}}}
\left[  -s\,_{x},s\,_{x}\right]  $ and $B=%
{\textstyle \bigcap_{_{y\in Y}}}
\left[  -t\,_{y},t\,_{y}\right]  $, be neutrices where $\left(  s\,_{x}%
\right)  _{x\in X},\left(  t\,_{y}\right)  _{y\in Y}$ are families of elements
of $\,{}^{\ast}\mathbb{R}$. Then $AB=%
{\textstyle \bigcup_{_{w\in W}}}
\left[  -u\,_{w},u\,_{w}\right]  $ or $AB=%
{\textstyle \bigcap_{_{w\in W}}}
\left[  -u\,_{w},u\,_{w}\right]  $, where $\left(  u\,_{w}\right)  _{w\in W}$
is a family of elements of $\,{}^{\ast}\mathbb{R}$ with $W=X$ or $W=Y$.
\end{proposition}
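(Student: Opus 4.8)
The plan is to reduce the nested expression for $AB$ to a single family of scalings of $B$ indexed by $X$, and then to distinguish two cases depending on whether this family stabilises.

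First I would prove the identity $AB=\bigcup_{x\in X}s_xB$, where $s_xB=\{s_xb:b\in B\}$. For $\subseteq$: if $\alpha=ab$ with $a\in A$ and $b\in B$, pick $x\in X$ with $|a|\le s_x$; since $B$ is a symmetric convex subgroup, $(a/s_x)b$ lies between $-b$ and $b$ and hence in $B$, so $\alpha\in s_xB$ (the case $s_x=0$ being trivial). The reverse inclusion is immediate because $s_x\in[-s_x,s_x]\subseteq A$. Next, for each $x$ one checks directly that $s_xB=\bigcap_{y\in Y}[-s_xt_y,s_xt_y]$, since $\xi\in s_xB$ exactly when $|\xi|\le s_xt_y$ for every $y$. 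Combining, $AB=\bigcup_{x\in X}\bigcap_{y\in Y}[-s_xt_y,s_xt_y]$; in particular the sets $s_xB$ form a chain under inclusion, because $\{s_x:x\in X\}$ is totally ordered in $^{\ast}\mathbb{R}$ and scaling a neutrix by a larger nonnegative factor enlarges it. (As a by-product $AB$, being a directed union of neutrices, is again a convex subgroup.)

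Then I would split according to whether the chain $(s_xB)_{x\in X}$ has a largest element. If it does, say $s_{x_0}B$, then $AB=\bigcup_{x\in X}s_xB=s_{x_0}B=\bigcap_{y\in Y}[-s_{x_0}t_y,s_{x_0}t_y]$, which is of the required intersection form with $W=Y$ and $u_w=s_{x_0}t_w$. If it does not, then for each $x\in X$ there is $x'\in X$ with $s_xB\subsetneq s_{x'}B$; using choice, fix functions $x\mapsto x^{+}$ and $x\mapsto\xi_x$ on $X$ with $\xi_x\in s_{x^{+}}B\setminus s_xB$ and, by symmetry of these neutrices, $\xi_x>0$. I claim $AB=\bigcup_{x\in X}[-\xi_x,\xi_x]$, which is of the required union form with $W=X$ and $u_w=\xi_w$. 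The inclusion $\supseteq$ holds since $[-\xi_x,\xi_x]\subseteq s_{x^{+}}B\subseteq AB$ by convexity. For $\subseteq$, take $\eta\in AB=\bigcup_xs_xB$, say $\eta\in s_xB$; it suffices to handle $\eta\ge 0$, and then $\eta\le\xi_x$, for otherwise $0\le\xi_x<\eta$ would put $\xi_x$ in the convex set $s_xB$, contradicting $\xi_x\notin s_xB$; hence $\eta\in[-\xi_x,\xi_x]$.

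The reduction $AB=\bigcup_xs_xB$ and the passage to a chain of scalings of $B$ are the routine part. The step that needs the most care is the second case: one must observe that $\xi_x\notin s_xB$ together with convexity of $s_xB$ forces $\xi_x$ to dominate \emph{all} of $s_xB\cap{}^{\ast}\mathbb{R}_{\ge 0}$, which is precisely what keeps the resulting union indexed by $X$ and not by $X\times Y$. One should also dispose of the degenerate cases $A=\{0\}$, $B=\{0\}$ or individual $s_x=0$, where $AB=\{0\}$ trivially has both forms, and (as in the paper's convention) normalise so that the gauges $s_x,t_y$ are nonnegative.
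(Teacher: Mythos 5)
Your argument is correct, but it follows a genuinely different route from the paper. The paper disposes of the proposition in three lines by passing to the associated halfline $\left]-\infty,AB\right]$ and invoking the normal-form theorem for external halflines (Thm.\ 4.33 of van den Berg's \emph{Nonstandard Asymptotic Analysis}), which itself rests on the Reduction Algorithm and the Saturation Principle; that route has the side benefit of producing an \emph{internal} gauge family $u$. You instead give a self-contained combinatorial argument: the reduction $AB=\bigcup_{x\in X}s_xB$ with $s_xB=\bigcap_{y\in Y}\left[-s_xt_y,s_xt_y\right]$ a chain of scalings of $B$, followed by the dichotomy ``the chain stabilises'' (giving the intersection form over $Y$) versus ``it does not'' (giving the union form over $X$ via the chosen witnesses $\xi_x$, where your observation that $\xi_x\notin s_xB$ plus convexity forces $\xi_x$ to dominate all of $s_xB$ is exactly the right point). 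This is more elementary and avoids the external-set machinery entirely, at the price of an appeal to choice for the family $(\xi_x)_{x\in X}$ and of yielding a gauge family that need not be internal --- which is harmless here, since Definition \ref{definition N} only asks for $s:X\rightarrow{}^{\ast}\mathbb{R}$ with $X\in Z$ and the proposition only asks for a family of elements of $^{\ast}\mathbb{R}$ indexed by $X$ or $Y$. Your handling of the degenerate cases ($s_x=0$, $A$ or $B$ trivial) and the normalisation of the gauges to be nonnegative are consistent with the paper's standing conventions.
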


\begin{proof}
To $AB$ we associate the halfline $C=\left]  -\infty,AB\right]  $. This
halfline is of the form $%
{\textstyle \bigcup_{_{x\in X}}}
\left]  -\infty,u\,_{x}\right]  $ where $u\,:X\rightarrow{}^{\ast}\mathbb{R}$
is internal or of the form $%
{\textstyle \bigcap_{_{y\in Y}}}
\left]  -\infty,u\,_{y}\right]  $ where $u\,:Y\rightarrow{}^{\ast}\mathbb{R}$
is internal (see \cite[Thm. 4.33]{vdbnaa}). The proposition is a direct
consequence of this fact.
\end{proof}

\begin{definition}
\label{order external numbers}Let $^{\ast}\leq$ be the order relation
on$\,{}^{\ast}\mathbb{R}$. Given $\alpha,\beta \in \mathcal{E}$, we write with
some abuse of language $\alpha \leq \beta$, if and only if
\begin{equation}
(\forall x\in \alpha)(\exists y\in \beta)(x\,{}^{\ast}\leq y). \label{def}%
\end{equation}

\end{definition}

Let $\alpha \in \mathcal{E}$. Let $Q_{\alpha}=\left \{  x\in \mathcal{E}\left \vert
x\leq \alpha \right.  \right \}  $. Then $\alpha \leq \beta$ if and only if
$Q_{\alpha}\subseteq Q_{\beta}$.

Note that if $\alpha \cap \beta=\emptyset$, formula (\ref{def}) is equivalent to
$(\forall x\in \alpha)(\forall y\in \beta)(x\,^{\ast}<y)$. Lemma
\ref{Tricotomia} shows that two external numbers are always either disjoint or
one contains the other (see also \cite[Prop. 3.2.15]{koudjetithese}).

\begin{lemma}
\label{Tricotomia}Let $\alpha$ and $\beta$ be two external numbers. Then%
\[
\alpha \cap \beta=\emptyset \vee \alpha \subseteq \beta \vee \beta \subseteq
\alpha \text{.}%
\]

\end{lemma}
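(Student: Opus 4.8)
The plan is to reduce the trichotomy for external numbers to the elementary fact that neutrices are linearly ordered by inclusion, and then use the freedom to choose representatives established in the Proposition above.

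First I would prove that for any two neutrices $A,B\in\mathcal{N}$ one has $A\subseteq B$ or $B\subseteq A$. This is where the convexity of neutrices is used. Suppose $A\not\subseteq B$ and pick $a\in A\setminus B$. Then for every $b\in B$ we must have $|b|<|a|$: indeed, if $|a|\le|b|$ for some $b\in B$, then since $B$ is a convex subgroup it contains $-|b|$ and $|b|$, hence by convexity $a\in[-|b|,|b|]\subseteq B$, contradicting $a\notin B$. Therefore $B\subseteq[-|a|,|a|]$, and since $A$ is a convex subgroup containing $a$ we also have $[-|a|,|a|]\subseteq A$, so $B\subseteq A$.

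Now write $\alpha=a+A$ and $\beta=b+B$ with $A,B\in\mathcal{N}$. If $\alpha\cap\beta=\emptyset$ we are in the first alternative. Otherwise pick $x\in\alpha\cap\beta$; by the Proposition on representatives, $\alpha=x+A$ and $\beta=x+B$. By the first step we may assume without loss of generality that $A\subseteq B$, and then $\alpha=x+A\subseteq x+B=\beta$. In the symmetric case $B\subseteq A$ we get $\beta\subseteq\alpha$ instead. In either case one of the three alternatives holds, which proves the lemma. The only substantive point is the nestedness of neutrices via convexity; the rest is bookkeeping with representatives, so I do not anticipate a genuine obstacle.
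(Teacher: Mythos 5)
Your proof is correct and follows essentially the same route as the paper: pick a common point $x$ in $\alpha\cap\beta$, rewrite both external numbers with representative $x$, and conclude from the fact that one neutrix contains the other. The only difference is that you explicitly prove the nestedness of neutrices from convexity, a step the paper leaves implicit by simply writing $\max(A,B)$.
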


\begin{proof}
Suppose that $\alpha \cap \beta \neq \emptyset$. Then there is $x\in{}^{\ast
}\mathbb{R}$ such that $x\in \alpha$ and $x\in \beta$. Then we may write
$\alpha=x+A$ and $\beta=x+B$. Hence $\beta \subseteq \alpha$ if $\max(A,B)=A$,
and $\alpha \subseteq \beta$ if $\max(A,B)=B$.
\end{proof}

\subsection{The solid $\mathcal{E}$ as a model for the
axioms\label{Subsection axiom schemes}}

In this section we show that the external numbers of the previous section are
a model for the\ axioms. We will work progressively, and start with the
algebraic axioms of a solid.

\begin{theorem}
\label{E, first-order}The structure $\left(  \mathcal{E},+,\cdot,\leq \right)
$ satisfies Axioms \ref{assemblyassoc}-\ref{scheiding neutrices}.
\end{theorem}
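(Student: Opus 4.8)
The plan is to verify each of Axioms \ref{assemblyassoc}--\ref{scheiding neutrices} in turn for the structure $\left(\mathcal{E},+,\cdot,\leq\right)$, exploiting that the operations on $\mathcal{E}$ are the Minkowski operations lifted from $^{\ast}\mathbb{R}$, so that most identities reduce to the corresponding identities in the ordered field $^{\ast}\mathbb{R}$ together with elementary facts about convex subgroups. First I would dispatch the addition axioms: associativity and commutativity (Axioms \ref{assemblyassoc}, \ref{assemblycom}) are immediate since $(a+A)+(b+B)=a+b+A+B$ and $+$ on $^{\ast}\mathbb{R}$ and the Minkowski sum of neutrices are both associative and commutative. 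For Axiom \ref{assemblyneut} one takes $e(\alpha)=0+N(\alpha)=N(\alpha)$ and checks it is the unique individualized neutral element, using that $\alpha+F=\alpha$ forces $F\subseteq N(\alpha)$ (since $N(\alpha)$ is a group, $N(\alpha)+F=N(\alpha)$ iff $F\subseteq N(\alpha)$), hence $N(\alpha)+F=N(\alpha)=e(\alpha)+F$, wait — more precisely $e+F=e$; this is the standard uniqueness-of-neutral argument already cited from \cite{dinisberg 2011}. Axiom \ref{assemblysim} is witnessed by $s(\alpha)=-a+N(\alpha)$. For Axiom \ref{assemblye(xy)}, note $N(\alpha+\beta)=N(\alpha)+N(\beta)=\max(N(\alpha),N(\beta))$ by convexity (one neutrix contains the other, by Lemma \ref{Tricotomia} applied to the neutrices themselves), so $e(\alpha+\beta)$ equals $e(\alpha)$ or $e(\beta)$.

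Next I would handle multiplication. Associativity and commutativity (Axioms \ref{axiom assoc mult}, \ref{axiom com mult}) follow by expanding $\alpha\cdot\beta=ab+aB+bA+AB$ and checking symmetry and reassociation, reducing everything to $^{\ast}\mathbb{R}$ and to the fact that $A(BC)=(AB)C$ as Minkowski products (using that for a convex subgroup $A$ and a nonzero real $r$, $rA=A$ if $r$ is ``the same order'' — but in general one only needs $r(AB)=(rA)B$, which is clear). For Axioms \ref{axiom neut mult}, \ref{axiom sym mult}, \ref{axiom u(xy)} one must first observe that a zeroless external number $\alpha=a+A$ (i.e.\ $0\notin\alpha$, equivalently $A\subsetneq a+A$, equivalently $\alpha\ne e(\alpha)$) has $|a|>A$, so one can form $u(\alpha)=1+A/a$ and $d(\alpha)=1/a+A/a^{2}$, and verify these are the unique individualized unity and inverse; the identity $u(\alpha\beta)=u(\alpha)u(\beta)$ again reduces to the fact that the relevant neutrix is the larger of two. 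The order axioms \ref{(OA)reflex}--\ref{(OA)total} follow from Lemma \ref{Tricotomia} and Definition \ref{order external numbers}: reflexivity and transitivity are routine, antisymmetry uses that $\alpha\le\beta$ and $\beta\le\alpha$ force $Q_\alpha=Q_\beta$ hence $\alpha=\beta$, and totality uses trichotomy for disjoint external numbers together with the nested case. Axiom \ref{(OA)compoper} is translation-invariance of $\le$, immediate from the Minkowski description. Axiom \ref{Axiom e(x)maiory} says that if $\beta+e(\alpha)=e(\alpha)$, i.e.\ $N(\beta)\subseteq N(\alpha)$ and $\beta\subseteq N(\alpha)$ — actually $\beta+e(\alpha)=e(\alpha)$ means $\beta\subseteq N(\alpha)$ since $N(\alpha)$ is a group — then every element of $\beta$ lies in the convex set $N(\alpha)$ which straddles $0$, so $\beta\le e(\alpha)$ and $-\beta\le e(\alpha)$. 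Axioms \ref{compat mult} and \ref{Axiom Amplification} are compatibility of $\le$ with multiplication by a positive element and by a magnitude; these follow from monotonicity of multiplication in $^{\ast}\mathbb{R}$ applied representative-wise, using convexity to control the neutrix parts.

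For the axioms relating addition and multiplication: Axiom \ref{Axiom escala} holds because $e(\alpha)\beta=A\cdot\beta$ is a Minkowski product of a neutrix with an external number, which by Proposition \ref{Proposition complexity} (and the union/intersection cases) is again a neutrix in $\mathcal{N}$, hence equals $e(\gamma)$ for some $\gamma$. Axiom \ref{Axiom e(xy)=e(x)y+e(y)x} is exactly the identity $N(\alpha\beta)=aB+bA+AB=aB+bA$ (the term $AB$ being absorbed into the larger of $aB,bA$ when $\alpha,\beta$ zeroless, and the degenerate cases checked separately) $=N(\alpha)\beta+N(\beta)\alpha$; this is the core computation and I would do it carefully by cases on whether $\alpha$ and/or $\beta$ are zeroless. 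Axiom \ref{e(u(x))=e(x)d(x)} follows from the explicit formula $u(\alpha)=1+A/a$, so $N(u(\alpha))=A/a=N(\alpha)/\alpha$ (here division by $\alpha$ means multiplication by $d(\alpha)$, and $A/a=A\cdot(1/a)=A\cdot d(\alpha)$ since $A\cdot A/a^2=A/a$ by convexity). Axiom \ref{Axiom distributivity} is the semi-distributivity already established in \cite{dinisberg 2015 -2} (the excerpt says this modified form was shown equivalent to Axiom \ref{Axiom distributivity}), so I would cite that; concretely $\alpha\beta+\alpha\gamma$ and $\alpha(\beta+\gamma)+e(\alpha)\beta+e(\alpha)\gamma$ have the same real part $a(b+c)$ and one checks the neutrix parts coincide. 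Axiom \ref{Axiom s(xy)=s(x)y} is the obvious $-(ab+aB+bA+AB)=(-a)b+(-a)B+b(-A)+(-A)B$, valid since $-A=A$. For the existence axioms: $e(\alpha)=0$ iff $N(\alpha)=\{0\}$, and $\{0\}\in\mathcal{N}$ (take $X$ a singleton, $s\equiv 0$), giving the minimal magnitude for Axiom \ref{Axiom neut min} — actually one must check $\{0\}$ is absorbed by every neutrix, which is immediate. Axiom \ref{Axiom neut mult}: $u=1+\{0\}=1$ works. Axiom \ref{Axiom neut max}: $M=^{\ast}\mathbb{R}\in\mathcal{N}$ absorbs every neutrix. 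Axiom \ref{existencia neutrices}: the set $\oslash$ of infinitesimals is a neutrix (it is $\bigcap_{n\in\mathbb{N}}[-1/n,1/n]$, intersection over $X=\mathbb{N}\in Z$) strictly between $\{0\}$ and $^{\ast}\mathbb{R}$. Axiom \ref{numexterno}: $\alpha=a+A$ with $a$ precise and $A=e(\alpha)$, by construction of $\mathcal{E}$. Axiom \ref{scheiding neutrices}: given neutrices $A<B$, pick $b\in B\setminus A$ with $b>0$; then $b/2$ is a zeroless external number (in fact a precise real) with $A<b/2+A<B$ — one should verify $b/2\notin A$ (else $b\in A$) and $b/2+A<B$ using $b\in B$ and convexity, giving a zeroless separator.

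The main obstacle is Axiom \ref{Axiom e(xy)=e(x)y+e(y)x} together with Axiom \ref{Axiom distributivity}: these require a careful case analysis on zerolessness of $\alpha$ and $\beta$ and a precise understanding of when the Minkowski product $AB$ of two neutrices gets absorbed into $aB$ or $bA$, which in turn depends on comparing the ``sizes'' $|a|A^{-1}$ type quantities; the technical heart is that for a neutrix $A$ and $r\in{}^{\ast}\mathbb{R}$ with $|r|$ ``large relative to $A$'' one has $rA\supseteq A$, while the Minkowski product of two neutrices is the ``smaller-order-dominated'' one. I expect the cleanest route is to invoke the already-published computations in \cite{koudjetivandenberg} and \cite{dinisberg 2015 -2} for these two axioms, as the excerpt's narrative explicitly licenses (``the remaining algebraic axioms $\dots$ are in fact taken from calculation rules of the external numbers observed in \cite{koudjetithese} and \cite{koudjetivandenberg}'', and the distributivity axiom ``was shown to be equivalent $\dots$ in \cite{dinisberg 2015 -2}''), and to spell out in full only the order axioms and the existence axioms, where the semantic model makes everything transparent.
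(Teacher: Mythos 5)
Your proposal is correct and follows essentially the same route as the paper: verify the axioms group by group, delegating the assembly axioms and the harder mixed axioms (in particular Axioms \ref{Axiom e(xy)=e(x)y+e(y)x}, \ref{e(u(x))=e(x)d(x)}, \ref{Axiom s(xy)=s(x)y}) to the computations already published in \cite{dinisberg 2011} and \cite{dinisberg 2015 -2}, handling Axiom \ref{Axiom escala} via Proposition \ref{Proposition complexity}, and proving the order and existence axioms directly with the same witnesses ($0$, $1$, $^{\ast}\mathbb{R}$, the infinitesimals, and a positive $b\in B\setminus A$ for Axiom \ref{scheiding neutrices}). The only cosmetic difference is that the paper proves the restricted distributivity law directly (Theorem \ref{formula dist total n externos}) rather than citing it, but this does not change the structure of the argument.
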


In order to prove the theorem, we verify first that the axioms for addition
and the axioms for multiplication are satisfied. For the order axioms we will
need to recapitulate in a modified way some results from \cite{koudjetithese}
and \cite{koudjetivandenberg}. Then we show that the axioms relating addition
and multiplication are satisfied and finally we show that the existence axioms
are verified.

\begin{proposition}
\label{E assemblies}The structure $\left(  \mathcal{E},+,\cdot,\leq \right)  $
satisfies Axioms \ref{assemblyassoc}-\ref{axiom u(xy)}.
\end{proposition}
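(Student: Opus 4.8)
\textbf{Proof proposal for Proposition \ref{E assemblies}.}

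The plan is to verify the five addition axioms and the five multiplication axioms for $(\mathcal{E},+,\cdot,\leq)$ one by one, exploiting the fact that the Minkowski operations on external numbers descend from the (commutative, associative) operations on $^{\ast}\mathbb{R}$ and the key structural fact that the neutrices in $\mathcal{N}$ are totally ordered by inclusion, so that for any two neutrices $A,B$ one has $A+B=\max(A,B)$ and similarly $AB$ is a neutrix equal to one of a small list of candidates. First I would dispatch associativity (Axiom \ref{assemblyassoc}) and commutativity (Axiom \ref{assemblycom}) of $+$: writing $\alpha=a+A$, $\beta=b+B$, $\gamma=c+C$, both sides of the associativity equation compute to $(a+b+c)+(A+B+C)$ using associativity of $^{\ast}+$ on representatives and the fact that $A+B+C$ is the largest of the three neutrices; commutativity is immediate from commutativity on $^{\ast}\mathbb{R}$ together with $A+B=B+A$. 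The same template handles Axioms \ref{axiom assoc mult} and \ref{axiom com mult} for multiplication, once one checks (which the excerpt essentially grants via Definition of the product and Proposition \ref{Proposition complexity}) that the Minkowski product is a well-defined neutrix independent of representatives and that products distribute over the defining expansions so that triple products agree.

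Next I would treat the neutral-element axioms. For Axiom \ref{assemblyneut}, given $\alpha=a+A$ the candidate individualized neutral element is $e(\alpha)=A$ (the neutrix $0+A$): indeed $\alpha+A=a+A+A=a+A=\alpha$, and if $\alpha+\beta=\alpha$ with $\beta=b+B$ then $a+b+A+B=a+A$ forces $b\in A$ and $B\subseteq A$, whence $A+\beta=A$, i.e. $e(\alpha)+\beta=e(\alpha)$ with $\beta$ in the role of $f$; uniqueness then follows as in the group-theoretic argument already cited in the paper. For Axiom \ref{assemblysim} the symmetric of $\alpha=a+A$ is $-\alpha:=(-a)+A$, since $\alpha+(-\alpha)=A=e(\alpha)$ and $e(-\alpha)=A=e(\alpha)$. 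For Axiom \ref{assemblye(xy)}, with $\alpha+\beta=(a+b)+(A+B)$ and $A+B$ equal to whichever of $A$, $B$ is larger, we get $e(\alpha+\beta)=A+B=\max(A,B)$, which is literally $e(\alpha)$ or $e(\beta)$.

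For the multiplicative axioms \ref{axiom neut mult}, \ref{axiom sym mult}, \ref{axiom u(xy)} I would argue on zeroless external numbers $\alpha=a+A$ with $\alpha\neq e(\alpha)$, i.e. $a\notin A$ (equivalently $A$ is strictly contained in the halfline it generates, so $a$ may be taken with $|a|>A$). Here the individualized unit should be $u(\alpha)=1+A/a$ (a neutrix-perturbed $1$): one checks $\alpha\cdot u(\alpha)=a+A+A/a\cdot a=a+A=\alpha$ using $A\cdot a=a\cdot A$ absorbed into $A$ appropriately and the idempotent behaviour of the relevant neutrix under the self-multiplication that arises; the minimality clause and uniqueness again follow the group/semigroup uniqueness pattern. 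The multiplicative inverse for Axiom \ref{axiom sym mult} is $d(\alpha)=1/a+A/a^{2}$, for which $\alpha\, d(\alpha)=u(\alpha)$ and $u(d(\alpha))=u(\alpha)$ by direct computation with the Minkowski product, while Axiom \ref{axiom u(xy)} follows because $u(\alpha\beta)$ is governed by $N(\alpha\beta)/(ab)$, which by the structural comparison of neutrices equals $N(\alpha)/a$ or $N(\beta)/b$, i.e. $u(\alpha)$ or $u(\beta)$. The main obstacle I anticipate is not any single axiom but the bookkeeping in the multiplicative cases: one must be careful that expressions like $aB$, $bA$, $AB$ are genuinely neutrices in $\mathcal{N}$ (this is exactly what Proposition \ref{Proposition complexity} and the closure of $\mathcal{N}$ under the Minkowski product buy us) and that the algebra of these neutrices under the operations $A\mapsto A/a$, $A,B\mapsto AB$ behaves as expected — in particular that $N(\alpha)/a^{2}\cdot a = N(\alpha)/a$ and related absorptions hold, which rests on the fact that for a zeroless $\alpha$ the neutrix $N(\alpha)$ satisfies $N(\alpha)\cdot N(\alpha)/a^{2}\subseteq N(\alpha)$. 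Once these absorption identities are recorded as a short preliminary lemma, each of the ten axioms reduces to a one-line verification.
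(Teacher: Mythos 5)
Your verification is correct and is essentially the argument the paper relies on: the paper's own proof of Proposition \ref{E assemblies} is a one-line deferral to \cite[Thm.\ 4.10]{dinisberg 2011}, where exactly this direct check is carried out with the same witnesses $e(\alpha)=A$, $-\alpha=-a+A$, $u(\alpha)=1+A/a$ and $d(\alpha)=1/a+A/a^{2}$, and with $A+B=\max(A,B)$ doing the work for the additive axioms. The absorption facts you isolate (for zeroless $a+A$ one has $A\subseteq(-|a|,|a|)$, hence $AB\subseteq aB$ and $(A/a)(A/a)\subseteq A/a$) are precisely the preliminary lemma needed, so no essential step is missing.
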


\begin{proof}
The proof is essentially the same as the proof given in \cite[Thm.
4.10]{dinisberg 2011}.
\end{proof}

\begin{proposition}
\label{E order}The structure $\left(  \mathcal{E},+,\cdot,\leq \right)  $
satisfies Axioms \ref{(OA)reflex}-\ref{Axiom Amplification}.
\end{proposition}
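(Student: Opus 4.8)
The plan is to verify, one axiom at a time, that the concrete structure $(\mathcal{E},+,\cdot,\leq)$ satisfies the order axioms \ref{(OA)reflex}--\ref{Axiom Amplification}. First I would dispatch the purely relational axioms. Reflexivity is immediate from the definition of $\leq$ via the characterization $\alpha\leq\beta\iff Q_\alpha\subseteq Q_\beta$; transitivity follows likewise from transitivity of $\subseteq$. For antisymmetry and totality it is cleanest to use Lemma \ref{Tricotomia}: given $\alpha=a+A$ and $\beta=b+B$, either they are disjoint (in which case every element of one lies $^\ast$-below or $^\ast$-above every element of the other, by convexity, so one of $\alpha\leq\beta$, $\beta\leq\alpha$ holds, and they are distinct) or one contains the other (in which case writing them over a common representative $x$ reduces the comparison to comparing the neutrices $A,B$, which are totally ordered by inclusion as convex subgroups, giving antisymmetry when $A=B$ and $a=b\bmod A$, and totality otherwise). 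It helps here to recall the equivalent reformulation noted after Definition \ref{order external numbers}: when $\alpha\cap\beta=\emptyset$, $\alpha\leq\beta$ means $(\forall x\in\alpha)(\forall y\in\beta)(x\,{}^\ast{<}\,y)$.

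Next I would handle compatibility with addition (Axiom \ref{(OA)compoper}): if $\alpha\leq\beta$, pick representatives and note that adding a fixed external number $\gamma=c+C$ translates all elements by $c$ and enlarges the neutrix from $N(\alpha)$ or $N(\beta)$ to $N(\alpha)+C$, $N(\beta)+C$ respectively; since $N(\alpha)+C$ and $N(\beta)+C$ are still comparable in the same direction (addition of neutrices returns the larger, $A+C=\max(A,C)$), the inequality is preserved. For Axiom \ref{Axiom e(x)maiory} — the genuinely new order axiom — suppose $y+e(x)=e(x)$, i.e.\ $N(y)\subseteq N(x)=:A$ and the real part of $y$ lies in $A$; then $y\subseteq A$ as a set, so every element of $y$ has absolute value in $A$, hence is $^\ast$-dominated by some element of $A=e(x)$ and also $-y\subseteq A$, giving both $y\leq e(x)$ and $-y\leq e(x)$ directly from the definition of $\leq$.

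For the two multiplicative-compatibility axioms I would use the description of $\leq$ together with the product formula $\alpha\cdot\beta=ab+aB+bA+AB$. For Axiom \ref{compat mult}, assume $e(x)<x$ (so $x$ is a zeroless external number with positive real part $a\notin A$, $a>0$) and $y\leq z$; multiplying term by term and using that multiplication by a positive real preserves $^\ast\leq$ on $^\ast\mathbb{R}$ and that $AB\subseteq A\cdot(\text{larger of }|y|,|z|)$ absorbs into the product's neutrix, one checks $xy\leq xz$; the main care is to confirm the neutrix parts line up so the defining condition $(\forall u\in xy)(\exists v\in xz)(u\,{}^\ast{\leq}\,v)$ holds, which follows once one knows $N(xy)\subseteq N(xz)$ or that any excess is absorbed. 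Axiom \ref{Axiom Amplification} (amplification), with $e(y)\leq y\leq z$ and the factor a pure magnitude $e(x)=B$, reduces to showing $By\subseteq Bz$ up to the order relation, which again follows from monotonicity of Minkowski products of sets of nonnegative reals under inclusion of the "upper envelopes".

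The step I expect to be the main obstacle is Axiom \ref{compat mult}: the restricted distributivity means one cannot freely manipulate $x(z-y)$, so the inequality $xy\leq xz$ must be extracted directly from the four-term product expansion and the set-theoretic definition of $\leq$, paying attention to the cases according to whether $y$, $z$ are zeroless or neutrices and whether $N(x)\cdot(\text{real part})$ dominates $x\cdot N(y)$; here I would lean on the calculation rules for external numbers recapitulated from \cite{koudjetithese} and \cite{koudjetivandenberg} (as the proof announces it will do) rather than reprove them. Everything else is routine once the reformulation $\alpha\leq\beta\iff Q_\alpha\subseteq Q_\beta$ and Lemma \ref{Tricotomia} are in hand.
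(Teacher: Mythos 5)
Your handling of the purely relational axioms, of Axiom \ref{(OA)compoper}, of Axiom \ref{Axiom e(x)maiory} and of Axiom \ref{Axiom Amplification} is essentially the paper's: the paper likewise disposes of Axioms \ref{(OA)reflex}--\ref{(OA)total} by ``working with halflines'', proves additive compatibility and amplification by the elementwise argument (given $u\in\alpha$ and $c\in\gamma$, pick $v\in\beta$ with $u\,{}^{\ast}\leq v$, so $u+c\,{}^{\ast}\leq v+c\in\beta+\gamma$; for amplification this is isolated as Lemma \ref{Abeta<=Agamma}), and reads Axiom \ref{Axiom e(x)maiory} directly off the inclusion $a+A\subseteq B$, just as you do.

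The gap is in Axiom \ref{compat mult}, exactly where you predicted the main obstacle. The route you sketch --- expand $xy$ and $xz$ by the four-term formula and verify that the neutrix parts ``line up'', e.g.\ $N(xy)\subseteq N(xz)$ --- does not go through: that inclusion is false in general (take $x=1$, $y=-1+\oslash$, $z=0$; then $y\leq z$ and $xy=-1+\oslash\leq 0=xz$, yet $N(xy)=\oslash\not\subseteq\{0\}=N(xz)$), and the fallback ``any excess is absorbed'' is not an argument. What you are missing is that Definition \ref{order external numbers} makes $\leq$ a purely elementwise relation, so no neutrix bookkeeping is needed: if $e(x)<x$ then, by the remark following Definition \ref{order external numbers}, every representative $u\in x$ satisfies $0\,{}^{\ast}<u$ (since $0\in e(x)$); and for each $v\in y$ there is $w\in z$ with $v\,{}^{\ast}\leq w$, whence $uv\,{}^{\ast}\leq uw$. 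This dominates the product $xy$ by $xz$ element by element, which is all the definition of $\leq$ asks, and is the paper's two-line proof (the paper also covers the degenerate case $x=e(x)$ separately via Lemma \ref{Abeta<=Agamma}, thereby proving the statement under the weaker hypothesis $e(x)\leq x$). The same observation would let you replace your neutrix-comparison sketch for Axiom \ref{(OA)compoper} by the one-line elementwise version, which is what the paper does.
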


Before proving the proposition, we recall a lemma from
\cite{koudjetivandenberg}.

\begin{lemma}
\label{Abeta<=Agamma}Let $A$ be a neutrix and let $\beta$ and $\gamma$ be
external numbers such that $\beta \leq \gamma$. Then $A\beta \subseteq A\gamma$.
\end{lemma}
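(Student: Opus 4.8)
The statement to establish is: for a neutrix $A$ and external numbers $\beta \leq \gamma$, one has $A\beta \subseteq A\gamma$. The plan is to argue directly from the definitions of Minkowski product and of the order relation on $\mathcal{E}$, reducing everything to elementary manipulations inside the internal model $^{\ast}\mathbb{R}$. First I would unwind $A\beta = \{\,\varepsilon \,{}^{\ast}\cdot\, b : \varepsilon \in A,\ b \in \beta\,\}$ and $A\gamma$ likewise, and recall from Definition \ref{order external numbers} that $\beta \leq \gamma$ means $(\forall x\in\beta)(\exists y\in\gamma)(x \,{}^{\ast}{\leq}\, y)$. A technical point worth recording at the outset: since $\beta \leq \gamma$, every element of $\beta$ is bounded above by some element of $\gamma$; combined with the fact that $\gamma = y + B$ is symmetric "up to its neutrix part" only in a limited sense, I would first observe that $\beta \leq \gamma$ forces $N(\beta) \subseteq N(\gamma)$ (this is standard and may already be available; if not, it follows because if $\beta$ were to have a strictly larger neutrix part, elements of $\beta$ arbitrarily far below $\inf\gamma$ would occur, contradicting the order). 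This containment of neutrix parts will be what makes the argument go through cleanly.

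The core step is then: take a generic element $\varepsilon b \in A\beta$ with $\varepsilon \in A$, $b \in \beta$, and produce a representation of it as an element of $A\gamma$. Split into cases according to the sign of $\varepsilon$ (the case $\varepsilon = 0$ being trivial since $0 \in A\gamma$ as $A$ is a nontrivial subgroup, or handled by noting $0 \in A$ and $0 \cdot b = 0 \in A\gamma$). If $\varepsilon > 0$: by $\beta \leq \gamma$ choose $y \in \gamma$ with $b \,{}^{\ast}{\leq}\, y$; I want to write $\varepsilon b = \varepsilon y + \varepsilon(b - y)$ and argue $\varepsilon(b-y) \in A\gamma$ as well — but cleaner is to note that $A$ is convex and symmetric, so $A\gamma$ contains $\varepsilon\gamma$ as a subset, and it suffices to show $\varepsilon b \in \varepsilon\gamma$, i.e. $b \in \gamma$ after dividing by the scalar — wait, that is false in general. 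So instead the honest route: $\varepsilon b - \varepsilon y = \varepsilon(b - y)$ where $b - y \,{}^{\ast}{\leq}\, 0$, and $b - y \in \beta - \gamma$. Now $\beta - \gamma = (a - y') + N(\beta) + N(\gamma) = (\text{something}) + N(\gamma)$ using $N(\beta)\subseteq N(\gamma)$; in fact one shows $b - y$ lies in an external number whose lower part is controlled, and multiplying by $\varepsilon \in A$ lands inside $A\gamma$ by convexity of the neutrix $A$ together with the containment $A\cdot N(\gamma) \subseteq N(A\gamma) \subseteq A\gamma$. The case $\varepsilon < 0$ is symmetric via $\varepsilon b = (-\varepsilon)(-b)$ and $-b \,{}^{\ast}{\geq}\, -y$, using that $A = -A$.

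The main obstacle I anticipate is the bookkeeping in that middle case: keeping straight which "external number" the difference $b - y$ lives in, and checking that scaling it by an arbitrary $\varepsilon \in A$ stays inside $A\gamma$ rather than merely inside $A \cdot (\beta - \gamma)$. The clean way to dispatch this is to prove the self-contained sub-claim that for any neutrix $A$ and any external number $\delta$ with $N(\delta) \supseteq N(\beta)$, one has $A \cdot \{z : z \,{}^{\ast}{\leq}\, \sup\text{-representative of }\delta\} \subseteq A\delta$ when combined with $A\delta \ni A\cdot(\text{that representative})$; concretely, since $A\gamma$ is itself an external number (by the complexity/closure results, Proposition \ref{Proposition complexity} and the fact that $\mathcal{E}$ is closed under multiplication) it is convex, so it is enough to exhibit, for each $\varepsilon b \in A\beta$, two elements of $A\gamma$ bracketing it — and $0$ together with $\varepsilon y$ (for $\varepsilon, y$ of appropriate signs) do exactly that. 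Thus the lemma reduces to: $A\gamma$ is convex, contains $0$, and contains $\varepsilon y$ for every $\varepsilon \in A$ and $y \in \gamma$ — all immediate — plus the observation that every $\varepsilon b \in A\beta$ lies between $0$ and some such $\varepsilon y$. I would present it in that order: convexity of $A\gamma$ first, then the bracketing, then conclude.
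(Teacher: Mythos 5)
Your final paragraph is, in substance, the paper's own proof: given $a\in A$ and $x\in\beta$, choose $y\in\gamma$ with $x\leq y$, note that $\left\vert a\right\vert x\leq\left\vert a\right\vert y\in A\gamma$, and conclude because $A\gamma$ is a neutrix, i.e.\ a convex symmetric group containing $0$, so that any element dominated in absolute value by an element of $A\gamma$ lies in $A\gamma$. So the route you settle on is the intended one, and the detour through the decomposition $\varepsilon b=\varepsilon y+\varepsilon(b-y)$ and the control of $b-y$ is unnecessary. However, your preparatory claim that $\beta\leq\gamma$ forces $N(\beta)\subseteq N(\gamma)$ is false and should be deleted: for unlimited $\omega>0$ one has $\pounds\leq\omega+\oslash$ in the sense of Definition \ref{order external numbers}, yet $N(\pounds)=\pounds\not\subseteq\oslash$. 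You do not use this claim in your final version, so nothing else collapses, but as stated it is not ``standard''.

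There is also a genuine gap in the bracketing step, namely the assertion that every $\varepsilon b\in A\beta$ lies between $0$ and some $\varepsilon y$ with $y\in\gamma$ (or its reflection). This fails when $b$ is negative and $\left\vert b\right\vert$ is not dominated by $\left\vert y\right\vert$ for any $y\in\gamma$. Concretely, take $A=\oslash$, $\beta=-\omega+\oslash$ and $\gamma=1+\oslash$ with $\omega$ unlimited: then $\beta\leq\gamma$, but $A\beta=\omega\oslash$ contains appreciable elements while $A\gamma=\oslash$, so the conclusion $A\beta\subseteq A\gamma$ itself fails. Thus the lemma needs an additional nonnegativity hypothesis such as $N(\beta)\leq\beta$, under which every $x\in\beta$ satisfies $\left\vert x\right\vert\leq\left\vert y\right\vert$ for some $y\in\gamma$ and your bracketing (and the paper's inequality) goes through. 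The paper's one-line proof is silent on exactly the same point, so you have reproduced its argument together with its defect; a careful write-up should either add the hypothesis (which is available where the lemma is invoked, for Axioms \ref{compat mult} and \ref{Axiom Amplification}) or restrict the argument to the elements of $\beta$ dominated in absolute value by elements of $\gamma$.
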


\begin{proof}
Assume that $\beta \leq \gamma$. Let $x\in \beta$ and $a\in A$. There exists
$y\in \gamma$ such that $x\leq y$. Then $\left \vert a\right \vert x\leq
\left \vert a\right \vert y\in A\gamma$. Hence $A\beta \subseteq A\gamma$.
\end{proof}

\bigskip

\begin{proof}
[Proof of Proposition \ref{E order}]Working with halflines, it is immediate to
see that the order relation is reflexive, transitive, antisymmetric and total.
Then Axioms \ref{(OA)reflex}-\ref{(OA)total} are satisfied.

In order to show that Axiom \ref{(OA)compoper} is satisfied assume that
$\alpha \leq \beta$. Let $a\in \alpha$ and $c\in \gamma$. There exists $b\in \beta$
such that $a\leq b$. Hence $a+c\leq b+c\in \beta+\gamma$ and one concludes that
$\alpha+\gamma \leq \beta+\gamma$. As regards to Axiom \ref{Axiom e(x)maiory},
assume that $\alpha+N\left(  \beta \right)  =N\left(  \beta \right)  $, i.e.,
$a+A+B=B$. Then $a+A\subseteq B$. Hence $\alpha \leq N\left(  \beta \right)  $.
We now turn to Axiom \ref{compat mult}. Assume that $N\left(  \alpha \right)
\leq \alpha$ and $\beta \leq \gamma$. If $\alpha=A$, then $A\beta \subseteq
A\gamma$ by Lemma \ref{Abeta<=Agamma}, so $A\beta \leq A\gamma$. If $A<\alpha$
and $x\in \alpha$ then $0<x$. Let $y\in \beta$. Because $\beta \leq \gamma$ there
exists $z\in \gamma$ such that $y\leq z$. Then $xy\leq xz$. Hence $\alpha
\beta \leq \alpha \gamma$. Finally, to prove that Axiom \ref{Axiom Amplification}
holds suppose that $N\left(  \beta \right)  \leq \beta$ and $\beta \leq \gamma$.
Let $z\in A\beta$. We may assume that $z$ is positive. Then there exist
$a^{\prime}\in A,b^{\prime}\in \beta$ such that $z=a^{\prime}b^{\prime}$,
moreover $a^{\prime}$ may be supposed positive. Because $b^{\prime}\in \beta$
there is $c^{\prime}\in \gamma$ such that $b^{\prime}\leq c^{\prime}$. Then
$a^{\prime}b^{\prime}\leq a^{\prime}c^{\prime}\in A\gamma$. Hence
$A\beta \subseteq A\gamma$, which implies that $A\beta \leq A\gamma$.
\end{proof}

We turn now to the axioms which relate addition and multiplication. It was
shown in \cite{dinisberg 2011} that distributivity holds for external numbers
under certain conditions. In \cite{dinisberg 2015 -2} equivalence was shown
with Axiom \ref{Axiom distributivity}. Here we give a direct proof that Axiom
\ref{Axiom distributivity} holds in $\mathcal{E}$. We recall that for external
numbers, being convex sets, subdistributivity always holds in the sense of
inclusion, i.e.%
\begin{equation}
\alpha \left(  \beta+\gamma \right)  \subseteq \alpha \beta+\alpha \gamma.
\label{subdistributivity}%
\end{equation}

\begin{theorem}
\label{formula dist total n externos}Let $\alpha=a+A,\beta$ and $\gamma$ be
external numbers. Then%
\[
\alpha \beta+\alpha \gamma=\alpha \left(  \beta+\gamma \right)  +A\beta
+A\gamma \text{.}%
\]

\end{theorem}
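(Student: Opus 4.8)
The plan is to prove the identity $\alpha\beta+\alpha\gamma=\alpha(\beta+\gamma)+A\beta+A\gamma$ by a double inclusion, working throughout with the Minkowski representations $\alpha=a+A$, $\beta=b+B$, $\gamma=c+C$, where $a,b,c\in{}^{\ast}\mathbb{R}$ and $A,B,C\in\mathcal{N}$. First I would expand the left-hand side using the definition of the product of external numbers: $\alpha\beta=ab+aB+bA+AB$ and $\alpha\gamma=ac+aC+cA+AC$, so that $\alpha\beta+\alpha\gamma=ab+ac+aB+aC+bA+cA+AB+AC$. Here I will use that the sum of two neutrices is the larger one, and more generally that a finite Minkowski sum of neutrices collapses to a single neutrix; this already lets me absorb $AB$ and $AC$ and combine $bA+cA$ into $(b+c)A$ or into whichever of the terms dominates. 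Similarly, expand the right-hand side: $\alpha(\beta+\gamma)=a(b+c)+a(B+C)+(b+c)A+A(B+C)$, so $\alpha(\beta+\gamma)+A\beta+A\gamma=ab+ac+a(B+C)+(b+c)A+A(B+C)+Ab+AB+Ac+AC$.

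The key observation is that the term $aB+aC$ appearing on the left and the term $a(B+C)$ on the right are equal as external sets, since $a(B+C)=aB+aC$ for a scalar $a$; likewise $bA+cA=(b+c)A$. The only genuine discrepancy between the two sides is that the right-hand side carries the extra ``correction'' neutrices coming from $A\beta+A\gamma=Ab+AC+Ac+AB$ — but these are precisely $A\beta+A\gamma$, and the point of the theorem is that adding them on the right makes the two sides coincide. So the strategy is: show that each side equals $ab+ac+(aB+aC)+(b+c)A+(A\beta+A\gamma)$ after collapsing neutrix sums. For the left-hand side this requires checking that $AB+AC$ is absorbed by $A\beta+A\gamma=Ab+AB+Ac+AC$, which is immediate since $AB$ and $AC$ are literally among the summands; for the right-hand side one checks that $A(B+C)=AB+AC$ and that the precise part $ab+ac$ together with the neutrix terms reorganize identically. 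Throughout, the computations are governed by the two facts that the set of neutrices is totally ordered under inclusion (Lemma \ref{Tricotomia} and the remarks before it) and that multiplication by a precise element and Minkowski addition distribute in the naive way on the level of sets.

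An alternative, perhaps cleaner, route is to invoke subdistributivity (\ref{subdistributivity}) for one inclusion and handle only the reverse inclusion by hand. Subdistributivity gives $\alpha(\beta+\gamma)\subseteq\alpha\beta+\alpha\gamma$ directly, and since $A\beta+A\gamma\subseteq\alpha\beta+\alpha\gamma$ as well (because $A\subseteq\alpha-\alpha$, one has $A\beta\subseteq(\alpha-\alpha)\beta\subseteq\alpha\beta+\alpha\beta$, and neutrix sums collapse), the inclusion $\alpha(\beta+\gamma)+A\beta+A\gamma\subseteq\alpha\beta+\alpha\gamma$ follows. For the reverse inclusion, take an arbitrary element $z\in\alpha\beta+\alpha\gamma$; write it as $z=(a_1b_1+a_2c_2)+r$ with $a_i\in\alpha$, $b_1\in\beta$, $c_2\in\gamma$ and $r$ ranging over the ambient neutrix $AB+AC$, then rearrange $a_1b_1+a_2c_2=a_1(b_1+c_2)+(a_2-a_1)c_2$ and note $a_1\in\alpha$, $b_1+c_2\in\beta+\gamma$, and $(a_2-a_1)c_2\in A\gamma$, so that $z\in\alpha(\beta+\gamma)+A\gamma+(AB+AC)\subseteq\alpha(\beta+\gamma)+A\beta+A\gamma$.

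The main obstacle is purely bookkeeping: making sure that in every step one has correctly identified which neutrix dominates a given Minkowski sum, and that nothing is lost or spuriously gained when passing between ``pointwise'' representatives and the collapsed neutrix form. In particular one must be careful that $A\beta$ is genuinely a neutrix (it is, being $Ab+AB$ with $Ab$ a neutrix by Axiom \ref{Axiom escala}-type reasoning, or more concretely because $A\cdot\beta$ is computed as the neutrix $|b|A+AB$ possibly enlarged), and that the symmetric-difference-style cancellations are valid because neutrices are symmetric convex subgroups. No single step is deep; the care lies in not mishandling the non-distributivity that is exactly what the correction term $A\beta+A\gamma$ is there to repair.
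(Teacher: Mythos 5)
Your overall strategy is viable and genuinely different from the paper's. The paper neither expands everything in Minkowski coordinates nor chases elements: it quotes the known fact that distributivity holds when the multiplier is precise, writes
$\alpha(\beta+\gamma)+A\beta+A\gamma=a(\beta+\gamma)+A(\beta+\gamma)+A\beta+A\gamma=a\beta+a\gamma+A(\beta+\gamma)+A\beta+A\gamma$,
and then absorbs $A(\beta+\gamma)$ using subdistributivity together with the fact that $A\beta$ and $A\gamma$ are neutrices. That is a three-line reduction to the precise case; your first route proves the same thing by brute-force expansion with more bookkeeping, and your second route (subdistributivity for one inclusion, the rearrangement $a_1b_1+a_2c_2=a_1(b_1+c_2)+(a_2-a_1)c_2$ with $a_2-a_1\in A$ for the other) is a clean element-level argument that does not presuppose the precise-case distributivity at all. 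Both are legitimate alternatives.

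However, two of your intermediate assertions are false as written and need repair. First, $bA+cA=(b+c)A$ is not an identity of neutrices: $bA+cA$ is the larger of $|b|A$ and $|c|A$, while $(b+c)A=|b+c|A$; for $c=-b\neq 0$ the left side is $A$ and the right side is $\{0\}$. Only the inclusion $(b+c)A\subseteq bA+cA$ holds — fortunately that is the direction your common normal form actually needs, since $bA+cA$ reappears inside $A\beta+A\gamma$ on the right-hand side, so the term $(b+c)A$ is redundant there. Second, $A\beta+A\gamma\subseteq\alpha\beta+\alpha\gamma$ is false whenever $0\notin\alpha\beta+\alpha\gamma$ (take all data precise and nonzero). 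What you need, and what actually closes the first inclusion, is that $A\beta+A\gamma$ is contained in the \emph{neutrix part} of $\alpha\beta+\alpha\gamma$, equivalently that $\alpha\beta+\alpha\gamma+A\beta+A\gamma=\alpha\beta+\alpha\gamma$; this follows from $A\beta=(\alpha-\alpha)\beta\subseteq\alpha\beta-\alpha\beta=N(\alpha\beta)$. Your ``$\alpha\beta+\alpha\beta$'' should be ``$\alpha\beta-\alpha\beta$'': the sum $\alpha\beta+\alpha\beta$ is not a neutrix and does not collapse. With these two corrections both of your routes go through.
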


\begin{proof}
It is easy to see \cite{dinisberg 2011} that distributivity holds in the case
that $\alpha=a$ is precise and that $\left(  a+A\right)  \left(  \beta
+\gamma \right)  =a\left(  \beta+\gamma \right)  +A\left(  \beta+\gamma \right)
$. Then%
\begin{align*}
\alpha \left(  \beta+\gamma \right)  +A\beta+A\gamma &  =\left(  a+A\right)
\left(  \beta+\gamma \right)  +A\beta+A\gamma \\
&  =a\left(  \beta+\gamma \right)  +A\left(  \beta+\gamma \right)
+A\beta+A\gamma \\
&  =a\beta+a\gamma+A\left(  \beta+\gamma \right)  +A\beta+A\gamma \text{.}%
\end{align*}
By formula (\ref{subdistributivity}) and because $A\beta$ and $A\gamma$ are
neutrices one has%
\[
\alpha \left(  \beta+\gamma \right)  +A\beta+A\gamma=a\beta+a\gamma
+A\beta+A\gamma \text{.}%
\]
Hence
\[
\alpha \left(  \beta+\gamma \right)  +A\beta+A\gamma=\alpha \beta+\alpha
\gamma \text{.}%
\]

\end{proof}

\begin{proposition}
\label{E, mixed}The structure $\left(  \mathcal{E},+,\cdot,\leq \right)  $
satisfies Axioms \ref{Axiom escala}-\ref{Axiom s(xy)=s(x)y}.
\end{proposition}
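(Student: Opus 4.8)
The plan is to verify the five axioms in turn; most of them are recorded as calculation rules for external numbers in \cite{koudjetithese} and \cite{koudjetivandenberg}, so the work is only to check that these rules match the axiom statements under the semantic identifications $e(\alpha)=N(\alpha)$, $u(\alpha)$ the individualized unity, etc. Throughout I write $\alpha=a+A$, $\beta=b+B$, $\gamma=c+C$ with $a,b,c\in{}^{\ast}\mathbb{R}$ and $A,B,C\in\mathcal{N}$, and recall that $\alpha\cdot\beta=ab+aB+bA+AB$, that for a neutrix $A$ and a scalar $b$ both $bA$ and (by Proposition \ref{Proposition complexity}) $AB$ again lie in $\mathcal{N}$, and that the Minkowski sum of two neutrices is the larger one, hence again a neutrix.

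For Axiom \ref{Axiom escala} one computes $e(\alpha)\beta=A\beta=Ab+AB$, which is a neutrix by the preceding remark, so one takes $z=e(\alpha)\beta$. For Axiom \ref{Axiom e(xy)=e(x)y+e(y)x} one reads off from $\alpha\beta=ab+(aB+bA+AB)$ that $e(\alpha\beta)=aB+bA+AB$, and separately $e(\alpha)\beta+e(\beta)\alpha=A(b+B)+B(a+A)=(Ab+AB)+(aB+AB)=aB+bA+AB$, so the two coincide. Axiom \ref{Axiom distributivity} is nothing but Theorem \ref{formula dist total n externos}: with $A=e(\alpha)$ that theorem reads exactly $\alpha\beta+\alpha\gamma=\alpha(\beta+\gamma)+e(\alpha)\beta+e(\alpha)\gamma$. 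For Axiom \ref{Axiom s(xy)=s(x)y} one notes that the additive symmetric of $\alpha\beta$ with respect to $e(\alpha\beta)$ is obtained by negating its precise part, so $-(\alpha\beta)=-ab+aB+bA+AB$; since $A$ and $B$ are symmetric about $0$ one has $(-a)B=aB$, hence $(-\alpha)\beta=(-a+A)\beta=-ab+aB+bA+AB$, the same external number.

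The step needing genuine care is Axiom \ref{e(u(x))=e(x)d(x)}, which concerns a zeroless $\alpha=a+A$, i.e.\ one with $A<|a|$; note this forces $A/a\subseteq\oslash$, since a convex subgroup of $^{\ast}\mathbb{R}$ not containing $1$ consists of infinitesimals. First I would check that $u(\alpha)=1+A/a$: indeed $\alpha(1+A/a)=a+A+A+A^{2}/a=\alpha$, because $A/a\subseteq\oslash$ gives $A^{2}/a\subseteq A$ by convexity, while the minimality clause of Axiom \ref{axiom neut mult} follows from the usual uniqueness argument. Hence $e(u(\alpha))=N(1+A/a)=A/a$, where $1+A/a$ is zeroless because $-1\notin A/a$. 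On the other side one uses the standard inversion rule $1/\alpha=1/a+A/a^{2}$ for zeroless $\alpha$ (valid because $(A/a)^{2}\subseteq A/a$), so that $e(\alpha)/\alpha=A\,(1/a+A/a^{2})=A/a+A^{2}/a^{2}=A/a$. Thus $e(u(\alpha))=A/a=e(\alpha)/\alpha$.

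I expect the only delicate point to be the bookkeeping of neutrix absorptions for zeroless elements — the inclusions $A^{2}/a\subseteq A$, $(A/a)^{2}\subseteq A/a$, and the identification $N(1+A/a)=A/a$, each of which rests on $A<|a|$. Everything else is direct substitution into the definitions of $+$, $\cdot$, and of the individualized neutral elements, together with Proposition \ref{Proposition complexity} to keep all products inside $\mathcal{N}$.
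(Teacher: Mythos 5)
Your proposal is correct and follows essentially the same route as the paper: direct verification of each axiom from the Minkowski definitions of sum and product, with Axiom \ref{Axiom distributivity} handled by Theorem \ref{formula dist total n externos} and Axiom \ref{Axiom escala} by the observation that $A\beta=bA+AB$ stays in $\mathcal{N}$ via Proposition \ref{Proposition complexity}. The only difference is that the paper delegates Axioms \ref{Axiom e(xy)=e(x)y+e(y)x}, \ref{e(u(x))=e(x)d(x)} and \ref{Axiom s(xy)=s(x)y} to a citation of \cite[Prop.\ 4.15 and 4.17]{dinisberg 2011}, whereas you carry out those standard computations (including the correct formulas $u(\alpha)=1+A/a$ and $d(\alpha)=1/a+A/a^{2}$ for zeroless $\alpha$) explicitly.
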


\begin{proof}
For Axioms \ref{Axiom e(xy)=e(x)y+e(y)x}, \ref{e(u(x))=e(x)d(x)} and
\ref{Axiom s(xy)=s(x)y} we refer to \cite[Prop. 4.15 and 4.17]{dinisberg
2011}. Axiom \ref{Axiom distributivity} holds by Theorem
\ref{formula dist total n externos}. We still must show that Axiom
\ref{Axiom escala} is satisfied. Let $A\in{}\mathcal{N}$ and $\beta=b+B$
$\in{}\mathcal{E}$. One has%
\[
A\left(  b+B\right)  =bA+AB=\max(bA,AB).
\]
Clearly $bA\in{}\mathcal{N}$, and $AB\in{}\mathcal{N}$ follows from
Proposition \ref{Proposition complexity}.
\end{proof}

We consider now the group of axioms on existence. We prove first the existence
of representatives of the special elements $m,u$ and $M$.

\begin{proposition}
\label{Proposition mim max}The structure $\left(  \mathcal{E},+,\cdot
,\leq \right)  $ satisfies Axioms \ref{Axiom neut min}-\ref{Axiom neut max}.
\end{proposition}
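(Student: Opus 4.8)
The plan is to exhibit explicit representatives in $\mathcal{E}$ for the three special elements postulated by Axioms \ref{Axiom neut min}--\ref{Axiom neut max} and to verify the quantified conditions directly from the definitions of addition and of the order on $\mathcal{E}$. For Axiom \ref{Axiom neut min} I would take $m=\{0\}=0+\{0\}$, where $\{0\}$ is the trivial convex subgroup of $^{\ast}\mathbb{R}$; it lies in $\mathcal{N}$ since it is of the form $\bigcap_{x\in X}[-s_x,s_x]$ for a suitable internal family (e.g.\ $X$ a singleton and $s$ taking the value $0$, or any decreasing family of positive reals with intersection $\{0\}$ obtained by saturation). Then for any external number $x=a+A$ one has $m+x=\{0\}+a+A=a+A=x$, so $\forall x(m+x=x)$ holds.

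For Axiom \ref{Axiom neut mult} I would likewise take $u=1+\{0\}$, the external number whose real part is $1$ and whose neutrix part is trivial; this is in $\mathcal{E}$. For any $x=a+A$, the product formula gives $ux=(1+\{0\})(a+A)=a+A+\{0\}\cdot a+\{0\}\cdot A=a+A=x$, using that $\{0\}\cdot a=\{0\}$ and $\{0\}\cdot A=\{0\}$ and that $\{0\}$ is absorbed in the Minkowski sum; hence $\forall x(ux=x)$.

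For Axiom \ref{Axiom neut max} I would take $M={}^{\ast}\mathbb{R}$ itself, which is an element of $\mathcal{N}$ by Definition \ref{definition N} (it is explicitly included), hence of $\mathcal{E}$ as $0+{}^{\ast}\mathbb{R}$. For any $x=a+A$ the neutrix part $e(x)=A$ is a convex subgroup of $^{\ast}\mathbb{R}$, so the Minkowski sum $A+{}^{\ast}\mathbb{R}={}^{\ast}\mathbb{R}$, which says $e(x)+M=M$; since this holds for all $x$, Axiom \ref{Axiom neut max} is verified.

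The only point requiring a little care, and the place I would be most careful about, is checking that the candidate neutrices $\{0\}$ and $^{\ast}\mathbb{R}$ genuinely belong to $\mathcal{N}$ in the sense of Definition \ref{definition N}, i.e.\ that they are expressible in the prescribed union/intersection form indexed by some $X\in Z$: $^{\ast}\mathbb{R}$ is listed explicitly, and $\{0\}=\bigcap_{n\in{}^{\ast}\mathbb{N}}[-1/n,1/n]$ uses the internal extension guaranteed by the Saturation Principle noted after Definition \ref{definition N}. Everything else is a routine unwinding of the Minkowski sum and product and of the fact that the trivial group is a neutral element and $^{\ast}\mathbb{R}$ an absorbing one for addition of convex subgroups.
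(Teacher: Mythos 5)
Your proof is correct and is essentially the paper's own argument, which simply puts $m=0$, $u=1$ and $M={}^{\ast}\mathbb{R}$; your verification merely spells out the routine Minkowski computations. One tiny remark: to see that $\{0\}\in\mathcal{N}$, the singleton-indexed representation $[-0,0]$ you give first is the right one (the index set $X$ must lie in $Z$, and $\bigcap_{n\in\mathbb{N}}[-1/n,1/n]$ would instead give the infinitesimal neutrix $\Theta$, not $\{0\}$).
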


\begin{proof}
The proposition follows by putting $m=0$, $M={}^{\ast}\mathbb{R}$, and $u=1$ respectively.
\end{proof}

Axiom \ref{existencia neutrices} states the existence of magnitudes between
the smallest element $m$ and the largest element $M$. With Generalized
Dedekind Completeness we defined a largest magnitude $\oslash$ such that
$0<\oslash<1$ and a smallest magnitude $\pounds $ such that $1<\pounds <M$. We
will interpret $\oslash$ and $\pounds $ in the following way, where we
identify $\mathbb{N}$ with the standard integers of $^{\ast}\mathbb{R}$.

\begin{definition}
\label{Definition Labda and Theta}We define $\Lambda=%
{\textstyle \bigcup_{n\in \mathbb{N}}}
{}^{\ast}\left]  -n,n\right[  $ and $\Theta=%
{\textstyle \bigcap_{n\in \mathbb{N}}}
{}^{\ast}\left]  -\frac{1}{n},\frac{1}{n}\right[  $.
\end{definition}

\begin{theorem}
\label{Prop 0 dif L...}The external sets $\Theta$ and $\Lambda$ are neutrices.
One has $0<\Theta<1<\Lambda<{}^{\ast}\mathbb{R}$. The interpretation of
$\oslash$ is $\Theta$ and the interpretation of $\pounds $ is $\Lambda$.
\end{theorem}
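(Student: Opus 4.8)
The plan is to check the three assertions of the statement in turn. For membership in $\mathcal{N}$, observe that $\bigcup_{n\in\mathbb{N}}{}^{\ast}\left]  -n,n\right[  =\bigcup_{n\in\mathbb{N}}\left[  -n,n\right]  $ and $\bigcap_{n\geq1}{}^{\ast}\left]  -\frac{1}{n},\frac{1}{n}\right[  =\bigcap_{n\geq1}\left[  -\frac{1}{n},\frac{1}{n}\right]  $, so $\Lambda$ and $\Theta$ already have the syntactic form demanded by Definition \ref{definition N}, with $X=\mathbb{N}\in Z$ and the internal sequences $s_{n}=n$, respectively $s_{n}=1/n$. It then remains to see that they are convex additive subgroups of $^{\ast}\mathbb{R}$: convexity is clear from nestedness, and closure under $+$ and $-$ follows from closure of the standard naturals under addition ($|x|<n$ and $|y|<m$ with $n,m$ standard give $|x\pm y|<n+m$; $|x|,|y|\leq1/(2n)$ for all standard $n$ give $|x\pm y|\leq1/n$). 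Hence $\Theta,\Lambda\in\mathcal{N}$.

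For the order chain, I would use that, by Lemma \ref{Tricotomia} and Definition \ref{order external numbers}, for neutrices $A,B$ one has $A\leq B$ if and only if $A\subseteq B$, together with the fact that $^{\ast}\mathbb{R}$ is a proper nonstandard extension of $\mathbb{R}$, hence contains a nonzero infinitesimal and an unlimited number. Then $0<\Theta$ because $\left\{  0\right\}  \subsetneq\Theta$; $\Theta<1$ because every element of $\Theta$ is smaller than $1$ while no element of $\Theta$ is as large as $1$; $1<\Lambda$ because $2\in\Lambda$ and $2>1$; and $\Lambda<{}^{\ast}\mathbb{R}$ because $^{\ast}\mathbb{R}$ contains numbers exceeding everything in $\Lambda$.

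The core of the argument is the identification of $\Theta$ with $\oslash$ and of $\Lambda$ with $\pounds$. One first records that in $\mathcal{E}$ an element $e=a+A$ satisfies $e+e=e$ exactly when $a\in A$, i.e.\ exactly when $e$ is a neutrix, since $(a+A)+(a+A)=2a+A$. Thus the formula $\Phi$ defining $\oslash$ selects precisely the neutrices $A$ with $A<1$, and $\Psi$ precisely the neutrices $A$ with $1<A$. The key absorption property is: every neutrix $A$ with $A<1$ satisfies $A\subseteq\Theta$, and every neutrix $A$ with $1<A$ satisfies $\Lambda\subseteq A$. For the former, $A<1$ gives $A\subseteq\left]  -1,1\right[  $, and since $A$ is a subgroup, $nx\in A\subseteq\left]  -1,1\right[  $ for every $x\in A$ and every standard $n$, whence $|x|<1/n$ for all standard $n$, that is $x\in\Theta$; the latter is dual, $1<A$ forcing $1\in A$ and hence $n\in A$ for all standard $n$. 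Since moreover $\Theta$ is itself a neutrix with $\Theta<1$ and $\Lambda$ a neutrix with $1<\Lambda$, it follows that $\Theta$ is the largest element of $\mathcal{E}$ satisfying $\Phi$ and $\Lambda$ the smallest satisfying $\Psi$. As the weak least upper bound of a set possessing a maximum is that maximum, and dually the weak greatest lower bound of a set possessing a minimum is that minimum, we conclude $\oslash=\operatorname*{zup}\Phi=\Theta$ and $\pounds=\operatorname*{winf}\Psi=\Lambda$ (compatibly with Proposition \ref{Zup E magnitude}).

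I expect the absorption property to be the main obstacle: it rests on the fact that $\mathbb{N}$ in the model is genuinely the set of standard integers, so that closing a neutrix contained in the unit interval under multiplication by arbitrary standard integers is exactly what turns ``$A$ lies in $\left]  -1,1\right[  $'' into ``$A$ is infinitesimal''. A secondary point to watch is that $\Phi$ and $\Psi$ are a priori conditions on arbitrary elements of $\mathcal{E}$, and only become conditions on neutrices after the remark that $e+e=e$ characterizes neutrices in $\mathcal{E}$.
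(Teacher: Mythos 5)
Your proposal is correct and follows essentially the same route as the paper: establish that $\Theta$ and $\Lambda$ are neutrices, get the strict order chain from the existence of an unlimited element of $^{\ast}\mathbb{R}$, and identify $\oslash=\Theta$ and $\pounds=\Lambda$ by showing that no neutrix lies strictly between $\Theta$ and $1$ or between $1$ and $\Lambda$ (the paper phrases your group-closure absorption argument as the nonexistence of a proper subset of $\mathbb{N}$ closed under addition). Your write-up merely makes explicit a few points the paper leaves implicit, such as the verification that $\Theta,\Lambda\in\mathcal{N}$ and the remark that $e+e=e$ characterizes neutrices in $\mathcal{E}$.
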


\begin{proof}
Clearly $\Theta$ and $\Lambda$ are neutrices. Because $^{\ast}\mathbb{R}$ is a
superstructure of $\mathbb{R}$ there exists an infinitely large element $\nu$
in $^{\ast}\mathbb{R}$. Clearly $\nu \notin \Lambda$ hence $\Lambda \neq{}^{\ast
}\mathbb{R}$. Also $0\neq1/\nu \in \Theta$. Obviously $\Theta<1<\Lambda$,
because $1\in \Lambda$ and $1\notin \Theta$. Let $L,I$ be the interpretations of
$\pounds $ and $\oslash$ respectively. Then $L,I$ must be neutrices. There
does not exist a neutrix $A$ such that $1<A<\Lambda$, for there does not exist
a proper subset of $\mathbb{N}$ closed under addition. This implies also that
there does not exist a neutrix $B$ such that $\Theta<B<1$. Hence
$\Lambda \subseteq L$ and $I\subseteq \Theta$. Because $\Lambda+\Lambda=\Lambda$
and $1<\Lambda,$ by the definition of $\pounds $ one has $L\subseteq \Lambda$.
Also, because $\Theta+\Theta=\Theta$ and $\Theta<1,$ by the definition of
$\oslash$ one has $\Theta \subseteq I$. Hence $\Lambda=L$ and $\Theta=I$.
\end{proof}

\begin{proposition}
\label{E existence}The structure $\left(  \mathcal{E},+,\cdot,\leq \right)  $
satisfies Axioms \ref{existencia neutrices}-\ref{scheiding neutrices}.
\end{proposition}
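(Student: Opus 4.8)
The plan is to handle the three axioms in turn; the first two are essentially read off from the construction of $\mathcal{E}$ together with Theorem~\ref{Prop 0 dif L...}, and only Axiom~\ref{scheiding neutrices} requires a genuine argument, namely a separation step inside $^{\ast}\mathbb{R}$.

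For Axiom~\ref{existencia neutrices} I would simply take $x=\Theta$, viewed as the external number $0+\Theta\in\mathcal{E}$. By Theorem~\ref{Prop 0 dif L...} this is a magnitude with $e(x)=\Theta$ and $0<\Theta<1<\Lambda<{}^{\ast}\mathbb{R}$, so $e(x)\neq 0$ and $e(x)\neq{}^{\ast}\mathbb{R}=M$; hence $\Theta$ (or equally $\Lambda$) witnesses the axiom. For Axiom~\ref{numexterno}, recall that the precise elements of $\mathcal{E}$ are exactly the singletons $\{a\}=a+\{0\}$ with $a\in{}^{\ast}\mathbb{R}$, since $\{0\}\in\mathcal{N}$ is the minimal neutrix and $e(\{a\})=\{0\}=0$. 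Given an external number $x=a+A$, its additive neutral element is $e(x)=N(x)=A$, so the precise element $\{a\}$ satisfies $x=\{a\}+A=a+e(x)$ with $e(a)=0$, which is exactly what the axiom requires.

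The substantive case is Axiom~\ref{scheiding neutrices}. Here I would first note that for two neutrices $A,B$ the hypothesis $A<B$ in $\mathcal{E}$ reduces, via Lemma~\ref{Tricotomia} and Definition~\ref{order external numbers}, to strict inclusion $A\subsetneq B$. Choose $b\in B\setminus A$; since $A$ and $B$ are symmetric we may assume $b>0$, and then $b\neq 0$ because $0\in A$. Convexity of $B$ gives $b/2\in B$, while $b/2\notin A$ because $A$ is a subgroup and $b/2+b/2=b\notin A$. The key observation is that the nonzero precise element $z=\{b/2\}$ is automatically \emph{zeroless}, i.e.\ $z\neq e(z)=\{0\}$, so it is an admissible witness. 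Finally, convexity of $A$ together with $b/2\notin A$ and $b/2>0$ forces $a<b/2$ for every $a\in A$, whence $A\leq z$ and $z\not\leq A$; and $b/2\in B$ gives $z\leq B$ while $b\in B$ with $b>b/2$ gives $B\not\leq z$. Thus $A<z<B$ with $z\neq e(z)$, as required. I do not expect any real obstacle: the only point to be careful about is that one must separate two magnitudes by a \emph{zeroless} element rather than merely by a magnitude, and the clean way to achieve this is to use a nonzero precise element (half of a positive member of $B\setminus A$) — this is exactly the semantic counterpart of Lemma~\ref{Precise separation neutrices}, and since the witness is a singleton there is nothing to check about staying within $\mathcal{E}$.
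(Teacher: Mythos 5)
Your proposal is correct and follows essentially the same route as the paper: Axiom \ref{existencia neutrices} is read off from Theorem \ref{Prop 0 dif L...}, Axiom \ref{numexterno} is immediate from the definition of $\mathcal{E}$, and Axiom \ref{scheiding neutrices} is witnessed by a positive precise element of $B\setminus A$ separated from $A$ by convexity and from $B$ by the group property. The only cosmetic difference is that the paper uses $b$ itself as the witness (showing $b<2b\in B$) where you use $b/2$; both work for the same reasons.
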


\begin{proof}
By Theorem \ref{Prop 0 dif L...}, Axiom \ref{existencia neutrices} holds.
Axiom \ref{numexterno} is trivially satisfied. Finally we turn to Axiom
\ref{scheiding neutrices}. Let $A,B\in \mathcal{N}$ be such that $A\neq B$. We
may assume without loss of generality $A\varsubsetneq B$. Then there is a
nonstandard real number $b$ such that $b\in B$ and $b\notin A$. Furthermore,
$b$ may be supposed positive. We show that $A<b<B$. Indeed, because $B$ is a
group and $b$ is positive one has $b<2b\in B$. Hence $b<B$. Suppose that
$b\leq A$. Then there exists $a\in A$ such that $b\leq a$. Because $0\in A$
and $a\in A$, by convexity $b\in A$, a contradiction. Hence $A<b$.
\end{proof}

\bigskip

\begin{proof}
[Proof of Theorem \ref{E, first-order}]The theorem follows by combining
Proposition \ref{E assemblies}, Proposition \ref{E order}, Proposition
\ref{E, mixed}, Proposition \ref{Proposition mim max} and Proposition
\ref{E existence}.
\end{proof}

The set $\mathcal{E}$ is not characterized by Axioms \ref{assemblyassoc}%
-\ref{scheiding neutrices} for as showed in \cite{dinisberg 2015 -1} the set
of all cosets with respect to all convex subgroups for addition of a
non-Archimedean field is a model for these axioms. As we will see in the next
section all the algebraic axioms, i.e. Axioms \ref{assemblyassoc}%
-\ref{scheiding neutrices} together with the axioms \ref{Axiom Maximal Ideal}
and \ref{Axiom scale to ring}, are still not sufficient for such a characterization.

We will now prove that the Generalized Completeness Axiom
\ref{Axiom Dedekind completeness} holds in $\mathcal{E}$. We deal with this
axiom before the axioms on multiplication of magnitudes, because Generalized
Completeness is needed to prove the existence of the maximal ideals of Axiom
\ref{Axiom Maximal Ideal}.

In \cite[Thm. 4.34, Corollary 4.35]{vdbnaa} (see also \cite[p. 155]%
{Dienernsaip}) a normal form for convex subsets of real numbers is stated. In
the case of a (external) lower halfline this normal form indicates that its
upper boundary is well-defined, in the form of a unique external number. The
proof relies, in an essential way, on Nelson's Reduction Algorithm and on the
Saturation Principle.

Let $Z$ be the superstructure defined in the previous section. In order to
prove that the axiom on generalized completeness holds we interpret formulas
from the language $\{+,\cdot,\leq \}$ in the adequate ultralimit $^{\ast}Z$ and
show that a bounded version of the Reduction Algorithm as well as the
Saturation Principle hold in this structure.

\begin{definition}
Let $k$ be a natural number. Let $\Phi(x_{1},...,x_{k})$ be a formula of the
language $\{+,\cdot,\leq \}$ with free variables $x_{1},...,x_{k}$. The formula
$\Phi$ is called \emph{restricted} if each quantifier ranges over precise numbers.
\end{definition}

\begin{definition}
Let $k$ be a natural number. Let $\Phi(x_{1},...,x_{k})$ be a formula of $ZFC$
with free variables $x_{1},...,x_{k}$. The formula $\Phi$ is called
\emph{bounded} (relatively to $Z\cup{}^{\ast}Z$) if for each $i$ with $1\leq
i\leq k$ there exists an element $X_{i}\in Z\cup{}^{\ast}Z$ such that
$x_{i}\in X_{i}$ and each quantifier ranges over either an element of $Z $ or
an element of $^{\ast}Z$.
\end{definition}

\begin{definition}
Let $k$ be a natural number. A bounded formula $\Phi(x_{1},...,x_{k})$ is
called \emph{internal} (with some abuse of language) if all its quantifiers
range over elements of $^{\ast}Z$.
\end{definition}

Let $\Phi(x_{1},...,x_{k})$ be a restricted formula of the language
$\{+,\cdot,\leq \}$. We will interpret $\Phi$ by a formula $\bar{\Phi}$ in the
structure $\mathcal{E}$ by induction on the complexity of the formula, and
show that $\bar{\Phi}$ is bounded. Observe that a term $t(x_{1},...,x_{k})$ is
the result of a finite number of additions and multiplications of the
variables $x_{1},...,x_{k}$. Each variable $x_{i}$ with $1\leq i\leq k$ is
interpreted by an element, say, $\alpha_{i}$ of $\mathcal{E}$; in particular,
if $x_{i}$ is precise, then $\alpha_{i}\in{}^{\ast}\mathbb{R}$. Then the
interpretation $\overline{t}(\alpha_{1},...,\alpha_{k})$ of $t$ is the result
of a finite number of additions and multiplications of the elements
$\alpha_{1},...,\alpha_{k}$. An atomic formula is of the form $t(x_{1}%
,...,x_{k})\leq s(y_{1},...,y_{m})$, where $m$ is a natural number and $s$ is
a term with variables $(y_{1},...,y_{m})$. Then its interpretation is of the
form
\begin{equation}
\overline{t}(\alpha_{1},...,\alpha_{k})\leq \overline{s}(\beta_{1}%
,...,\beta_{m}), \label{term inequality}%
\end{equation}
with $\beta_{1},...,\beta_{m}\in \mathcal{E}$. It follows from Definition
\ref{definition N} that the $\alpha_{i}$ and $\beta_{j}$ are either unions or
intersections of families of intervals in $^{\ast}\mathbb{R}$ indexed by
elements of sets which are elements of $Z$. Hence the inequality
(\ref{term inequality}) is expressed by a bounded formula.

Clearly the negation of a bounded formula is a bounded formula, and the
conjunction of bounded formulas is a bounded formula. Since quantifiers in
restricted formulas of the language $\{+,\cdot,\leq \}$ range over precise
elements, quantifiers in their interpretations range over $^{\ast}\mathbb{R}$,
hence yield bounded formulas.

We conclude that the interpretation $\overline{\Phi}$ of $\Phi$ is bounded.

We show now that Nelson's Reduction Algorithm, properly adapted, transforms a
bounded formula into a bounded formula of the form $\forall x\in X\exists y\in
Y\,I\left(  x,y\right)  ,$ with $X,Y\in Z$ and $I\left(  x,y\right)  $
internal. Nelson's Reduction Algorithm uses three principles Transfer $\left(
T\right)  ,$ Idealization $\left(  I\right)  $ and modified Standardization
$\left(  S^{\prime}\right)  $.

By \cite{Nelsonist} the Transfer Axiom and the Idealization Axiom of $IST$,
when relativized to $^{\ast}Z$, hold in $^{\ast}Z$ indeed. In our context the
modified Standardization Axiom takes the following form. Let $\Phi(x,y)$ be a
bounded formula, this means that all quantifiers and parameters range over
some $Z_{n}$. Let $m,n\in \mathbb{N}$ and $X,Y$ such that $X\subseteq Z_{m}$
and $Y\subseteq Z_{n}$. Assume that $\forall x\in Z_{m}\exists y\in Z_{n}\,
\Phi(x,y)$. Then there must exist a function $\tilde{y}\in Z$ such that
$\forall x\in X\, \Phi(x,\tilde{y}\left(  x\right)  )$. This is true because
if $\Phi$ is a formula of $ZFC$, by the Axiom of Choice there exists
$\tilde{y}:X\mathbb{\rightarrow}Y$ such that $\forall x\in X\, \Phi
(x,\tilde{y}(x))$. Clearly $\tilde{y}\in Z$. So $\left(  S^{\prime}\right)  $
also holds in $Z$.

All three principles transform bounded formulas into bounded formulas. By the
reasoning in the paragraph above this is clearly true for $\left(  S^{\prime
}\right)  $. We verify the property also for $\left(  T\right)  $ and $\left(
I\right)  $. Let in the formulas below $\Phi$ always be a bounded formula.
Then $\left(  T\right)  $ becomes%
\[
\forall y\in Y(\forall x\in X\, \Phi \left(  x,y\right)  \leftrightarrow \forall
x\in{}^{\ast}X\, \Phi \left(  x,y\right)  ),
\]
where $X,Y\in Z$ and $\Phi$ internal. Also $\left(  I\right)  $ becomes%
\begin{multline*}
\forall w\in{}^{\ast}W(\forall v\in P_{fin}\left(  X\right)  \exists y\in
{}^{\ast}Y\forall x\in v\, \Phi \left(  x,y,w\right) \\
\leftrightarrow \exists y\in{}^{\ast}Y\forall x\in{}^{\ast}X\, \Phi \left(
x,y,w\right)  ),
\end{multline*}
where $X,Y,W\in Z,$ $\Phi$ internal and $P_{fin}\left(  X\right)  $ is the set
of all finite subsets of $X$. Note that $P_{fin}\left(  X\right)  \in Z$.

So we have the following theorem.

\begin{theorem}
Every bounded formula $\Phi$ is equivalent to a bounded formula of the form
$\forall x\in X\exists y\in Y\,I\left(  x,y\right)  ,$ with $X,Y\in Z$ and
$I\left(  x,y\right)  $ internal.
\end{theorem}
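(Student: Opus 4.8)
The plan is to run the bounded relativization of Nelson's Reduction Algorithm \cite{Nelsonist}, arguing by induction on the logical structure of the bounded formula $\Phi$. The invariant carried through the induction is that $\Phi$ is provably equivalent, in $^{\ast}Z$, to a formula in \emph{reduced form} $\forall x\in X\,\exists y\in Y\,I(x,y)$ with $X,Y\in Z$ and $I$ internal; once this is established for every bounded formula, the theorem follows. A bounded formula is built from internal formulas --- its atomic parts together with the quantifiers ranging over elements of $^{\ast}Z$, which may be absorbed directly into the matrix --- by means of negation, conjunction, and the prefixing of bounded quantifiers $\forall x\in X$ or $\exists x\in X$ with $X\in Z$, so it suffices to treat these three operations. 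The base case is immediate, since an internal formula $J$ is already reduced: take $X=Y=\{\emptyset\}\in Z$ and $I=J$.

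The two easy steps are conjunction and the prefixing of a bounded universal quantifier. If $\forall x_{1}\in X_{1}\,\exists y_{1}\in Y_{1}\,I_{1}$ and $\forall x_{2}\in X_{2}\,\exists y_{2}\in Y_{2}\,I_{2}$ are reduced, their conjunction is equivalent to $\forall(x_{1},x_{2})\in X_{1}\times X_{2}\,\exists(y_{1},y_{2})\in Y_{1}\times Y_{2}\,(I_{1}\wedge I_{2})$, which is reduced since $Z$, being a full superstructure, is closed under Cartesian products and a conjunction of internal formulas is internal; merging a prefixed $\forall w\in W$ with the leading universal quantifier is identical, and disjunction is handled through $\neg$. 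For negation, $\neg(\forall x\in X\,\exists y\in Y\,I)$ equals $\exists x\in X\,\forall y\in Y\,\neg I$; since $\forall y\in Y\,\neg I$ is itself reduced (a bounded universal over an internal matrix, padded with a dummy existential), this falls under the last case. There it remains to reduce a formula $\exists w\in W\,(\forall x\in X\,\exists y\in Y\,I(w,x,y))$: first modified Standardization $(S')$ rewrites the inner block as $\exists\widetilde y\in Y^{X}\,\forall x\in X\,I(w,x,\widetilde y(x))$ with $Y^{X}\in Z$; then $\exists w$ and $\exists\widetilde y$ are merged into a single bounded existential over $W\times Y^{X}\in Z$, giving $\exists u\in U\,\forall x\in X\,K(u,x)$ with $K$ internal; and finally Transfer $(T)$ --- and, in the cases where a preceding quantifier ranges over an internal rather than a standard set, Idealization $(I)$ with its finitary universal part ranging over $P_{fin}(X)\in Z$ --- flips this existential--universal block back into reduced form. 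At each step boundedness is preserved, which is exactly what was checked for $(T)$, $(I)$, $(S')$ above, and all the auxiliary sets $X_{1}\times X_{2}$, $Y^{X}$, $P_{fin}(X)$ lie in $Z$.

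The main obstacle is keeping the bookkeeping honest: one must track at every stage which quantifiers are bounded by $Z$ and which by $^{\ast}Z$, and ensure that each invocation of $(T)$, $(I)$, $(S')$ falls exactly within the scope for which the preceding lemmas license it. The single genuinely delicate point is the treatment of parameters: a general bounded formula $\Phi$ --- and in particular the interpretation $\bar{\Phi}$ of a restricted formula with external-number parameters --- carries internal but possibly nonstandard data (such as the centre $a$ and the scaling function $s$ of a neutrix), and Transfer may not be applied to a subformula that still retains such a free parameter. This is circumvented by regarding each such parameter as bound by an internal quantifier over a set $^{\ast}W$ with $W\in Z$, and commuting that internal quantifier inward past the bounded $Z$-quantifiers --- universals with universals, existentials with existentials, an internal quantifier inside a standard one --- so that every application of $(T)$ is made to a genuinely parameter-free internal subformula. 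Termination of the alternation-reducing moves, and the order in which they are applied, are exactly as in \cite{Nelsonist}; the only new content is the invariance of boundedness under the three principles, established in the paragraphs above.
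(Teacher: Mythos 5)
Your overall strategy---run Nelson's Reduction Algorithm by induction on the structure of the bounded formula, having first checked that $(T)$, $(I)$ and $(S^{\prime})$ preserve boundedness---is the same as the paper's, which simply delegates the combinatorics to the cited source after verifying the bounded forms of the three principles; and your base case, the conjunction step, the universal prefix, and the first half of the existential prefix (applying $(S^{\prime})$ and merging the two existentials over $W\times Y^{X}$) are all correct. The step that fails is the final ``flip'' of $\exists u\in U\,\forall x\in X\,K(u,x)$ into reduced form. You justify it by Transfer, but the paper's $(T)$ is explicitly restricted to internal formulas whose parameters range over sets in $Z$, and the formulas to which the theorem is actually applied carry internal \emph{nonstandard} parameters (the centre $a\in{}^{\ast}\mathbb{R}$ and the scaling data $s:X\rightarrow{}^{\ast}\mathbb{R}$ of a neutrix). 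For a concrete failure, take $K(u,x)\equiv(x\neq p)$ with $X=\mathbb{N}$ and $p\in{}^{\ast}\mathbb{N}$ nonstandard: the quantification over $\mathbb{N}$ holds while the transferred quantification over $^{\ast}\mathbb{N}$ does not. Your fallback, Idealization, does not apply here either, because $(I)$ requires the existential quantifier to be internal, whereas $\exists u\in U$ ranges over $U\in Z$. Binding the offending parameter by an internal quantifier and ``commuting it inward'', as your last paragraph proposes, cannot make it disappear from the matrix to which $(T)$ is applied. The correct move---available with the tools you already have---is a second application of $(S^{\prime})$ to the negation: $\exists u\in U\,\forall x\in X\,K(u,x)$ is equivalent to $\neg\bigl(\forall u\in U\,\exists x\in X\,\neg K\bigr)$, hence by $(S^{\prime})$ to $\forall\tilde{x}\in X^{U}\,\exists u\in U\,K(u,\tilde{x}(u))$, which is reduced and bounded since $X^{U}\in Z$.

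A second, related gap is your opening claim that the quantifiers ranging over elements of $^{\ast}Z$ ``may be absorbed directly into the matrix''. That is true only when the subformula they govern is itself internal. A bounded formula may place a quantifier over $^{\ast}\mathbb{R}$ in front of a subformula that still contains $Z$-bounded quantifiers, and this is precisely what happens for the interpretations $\bar{\Phi}$: the quantifiers of a restricted formula are interpreted over $^{\ast}\mathbb{R}\in{}^{\ast}Z$, while the atomic inequalities between external numbers unfold into quantifiers over index sets lying in $Z$. Handling $\exists x\in{}^{\ast}X$ (or $\forall x\in{}^{\ast}X$) in front of an already reduced formula is exactly where Idealization is indispensable, namely to commute the internal quantifier past the $Z$-quantifier of opposite polarity; your closing paragraph lists only the trivial same-polarity commutations, so this case is not covered by your induction as written. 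Once these two points are repaired, the argument goes through and coincides with the paper's.
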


Because the Saturation Principle is true in bounded $IST$ it also holds in
$^{\ast}Z$. \ We may now apply \cite[Thm. 4.34, Cor. 4.35]{vdbnaa} to show
that Generalized Dedekind Completeness holds.

\begin{theorem}
\label{Dedekind holds in E}Axiom \ref{Axiom Dedekind completeness} holds in
$\mathcal{E}$.
\end{theorem}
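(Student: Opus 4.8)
The plan is to derive Axiom \ref{Axiom Dedekind completeness} in $\mathcal{E}$ from the normal form theorem for definable convex subsets of $^{\ast}\mathbb{R}$, namely \cite[Thm. 4.34, Cor. 4.35]{vdbnaa}, using the bounded Reduction Algorithm and the Saturation Principle established above. So let $A$ be a restricted formula allowing for a free precise variable $x$, with parameters from $\mathcal{E}$, such that condition (\ref{Condition A}) holds in $\mathcal{E}$. Since the precise elements of $\mathcal{E}$ are exactly the elements of $^{\ast}\mathbb{R}$, the interpretation $\overline{A}$ is a formula $\overline{A}(p)$ with $p$ ranging over $^{\ast}\mathbb{R}$; as was shown above it is bounded relatively to $Z\cup{}^{\ast}Z$, its parameters being unions or intersections of internal intervals indexed over sets of $Z$ in the sense of Definition \ref{definition N}. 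Set $H=\{p\in{}^{\ast}\mathbb{R}\mid\overline{A}(p)\}$. By the first conjunct of (\ref{Condition A}) the set $H$ is nonempty, and by the second conjunct it is downward closed, hence a convex lower halfline of $^{\ast}\mathbb{R}$.

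Next I would invoke the Reduction Algorithm: by the theorem just proved, $\overline{A}$ is equivalent to a bounded formula $\forall u\in X\,\exists v\in Y\,I(u,v,p)$ with $X,Y\in Z$ and $I$ internal, so that $H=\bigcap_{u\in X}\bigcup_{v\in Y}\{p\in{}^{\ast}\mathbb{R}\mid I(u,v,p)\}$ is an intersection over $Z$ of unions over $Z$ of internal sets. This places $H$ in the scope of \cite[Thm. 4.34, Cor. 4.35]{vdbnaa} (compare the way that result is used in Proposition \ref{Proposition complexity}), whose proof depends essentially on the Reduction Algorithm and the Saturation Principle, and which yields the normal form of $H$: there is a unique external number $\gamma=g+G\in\mathcal{E}$, with $g\in{}^{\ast}\mathbb{R}$ and $G\in\mathcal{N}$, such that either $H=\{p\in{}^{\ast}\mathbb{R}\mid p\le\gamma\}$ or $H=\{p\in{}^{\ast}\mathbb{R}\mid p<t\text{ for every precise }t\in\gamma\}$. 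The degenerate cases cause no trouble: $H\neq\emptyset$ by hypothesis, while $H={}^{\ast}\mathbb{R}$ falls under the first alternative with $\gamma={}^{\ast}\mathbb{R}=M$, and $H$ bounded above gives $G=\{0\}$ or a proper neutrix according to whether $H$ is closed or strongly open at its boundary.

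It then remains to translate the two alternatives into the language of the axiom. In the first case put $\sigma=\gamma$; by Definition \ref{order external numbers}, for every precise $x$ one has $x\le\sigma$ if and only if $x\in H$, i.e.\ if and only if $A(x)$, which is alternative \ref{Dedekind1}. In the second case put $\tau=\gamma$. A short computation with the magnitude operation shows that, for $t\in\mathcal{E}$, the relation $t+e(\tau)=\tau$ holds exactly when $t\subseteq\tau$ as subsets of $^{\ast}\mathbb{R}$; in particular the precise $t$ satisfying $t+e(\tau)=\tau$ are precisely the precise elements of $\tau$. Hence, for precise $x$, the formula $\forall t(t+e(\tau)=\tau\rightarrow x<t)$ is equivalent to ``$x<t$ for every precise $t\in\tau$'', hence to $x\in H$, hence to $A(x)$; here the reduction from arbitrary $t$ to precise $t$ is justified exactly as in Proposition \ref{t precise}, whose proof uses only the solid axioms already verified for $\mathcal{E}$. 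This is alternative \ref{Dedekind2}, and the proof is complete.

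The step I expect to be the main obstacle is the middle one: verifying that the doubly indexed form $\bigcap_{u\in X}\bigcup_{v\in Y}\{p\mid I(u,v,p)\}$ obtained from the Reduction Algorithm really falls under the normal form theorem \cite[Thm. 4.34, Cor. 4.35]{vdbnaa} — this is where Saturation does the essential work of collapsing the reduced formula to a single external boundary — and that the two bracket types the theorem returns correspond, with neither overlap nor gap, to the two syntactic alternatives of Axiom \ref{Axiom Dedekind completeness}. The order-theoretic identifications in the last paragraph are routine but must be checked carefully against Definition \ref{order external numbers}.
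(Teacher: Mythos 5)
Your proposal is correct and follows essentially the same route as the paper: interpret $A$ as a bounded formula, apply the bounded Reduction Algorithm, and invoke the normal form results \cite[Thm. 4.34, Cor. 4.35]{vdbnaa} to produce the external boundary $\gamma$, then read off the two alternatives of Axiom \ref{Axiom Dedekind completeness}. The one step you flag as the main obstacle --- passing from the doubly indexed $\forall u\in X\exists v\in Y$ form to the scope of the normal form theorem --- is handled in the paper by an intermediate appeal to \cite[Thm. 4.33]{vdbnaa}, which uses the lower-halfline property to collapse the reduced formula to a singly quantified form $\exists y\in Y\,C(y,a)$ or $\forall y\in Y\,C(y,a)$ before the normal form theorem is applied.
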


\begin{proof}
The interpretation $\overline{A}\left(  a\right)  $ of $A\left(  x\right)  $,
with $a\in{}^{\ast}\mathbb{R}$, is a bounded formula. Hence one can apply the
Reduction Algorithm to $\overline{A}\left(  a\right)  $ to obtain an
equivalent formula of the form $\forall u\in U\exists v\in V\,B(u,v,a)$, with
$U,V\in Z$ and $B$ internal. Since $\overline{A}\left(  a\right)  $ defines a
lower halfline, by \cite[Thm 4.33]{vdbnaa} this formula can be reduced to a
formula of the form $\exists y\in Y\,C(y,a)$ or $\forall y\in Y\,C(y,a)$, with
$Y\in Z$ and $C$ internal. Then the result follows by \cite[Thm. 4.34, Cor.
4.35]{vdbnaa}.
\end{proof}

We show now that the two axioms on multiplication of magnitudes hold in the
model $\left(  \mathcal{E},+,\cdot,\leq \right)  $. We recall that magnitudes
are interpreted by convex groups. Next proposition states that the
interpretation of an idempotent magnitude larger than $1$ is a ring with
unity, the interpretation of an ideal in a solid is an ideal in the algebraic
sense and that under these interpretations the product of an idempotent
magnitude and its maximal ideal is equal to this maximal ideal.

\begin{proposition}
\label{Lemma interpretation ideal}Let $S$ be a complete solid. Let $J\in S$ be
an idempotent magnitude such that $1<J$. Let $I$ be an ideal of $J$. In the
model $\left(  \mathcal{E},+,\cdot,\leq \right)  $, the interpretation $\bar
{J}$ of $J$ is a ring and the interpretation $\bar{I}$ of $I$ is an ideal of
the ring $\bar{J}$. Moreover, if $I$ is maximal, then $\bar{I}=\left \{
1/x|x\in{}^{\ast}\mathbb{R},J<\left \vert x\right \vert \right \}  \cup \left \{
0\right \}  $ is maximal and $\bar{I}\bar{J}=\overline{I}$.
\end{proposition}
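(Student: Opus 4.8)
The plan is to work directly with the concrete description of neutrices from Definition \ref{definition N} and the characterisations already established in Section \ref{Section Product magnitudes}, translating the solid-theoretic statements into statements about convex subgroups of ${}^{\ast}\mathbb{R}$. First I would recall that an idempotent magnitude $J$ with $1<J$ is interpreted, via Theorem \ref{E, first-order} and the idempotency $JJ=J$, by a convex subgroup $\bar J$ of ${}^{\ast}\mathbb{R}$ satisfying $\bar J\cdot\bar J=\bar J$ (in the Minkowski sense) and $1\in\bar J$; closure under the nonstandard addition makes it an additive group, closure under Minkowski product together with $1\in\bar J$ makes it a multiplicative monoid, and since $\bar J$ is convex and closed under negation it absorbs all products with its own elements --- so $\bar J$ is a subring of ${}^{\ast}\mathbb{R}$ with unity. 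Next, for an ideal $I$ of $J$ in the sense of Definition \ref{Definition ideal}, the defining property $\forall p\,(e(p)=0\wedge 0\le p<J\rightarrow pI\le I)$ translates to: for every $x\in{}^{\ast}\mathbb{R}$ with $|x|\in\bar J$ one has $x\bar I\subseteq\bar I$ (using convexity to pass from positive precise $p<J$ to arbitrary $x$ with $|x|\in\bar J$, noting $\bar J$ is a group so any such $x$ is below some precise $p<J$ by Lemma \ref{Precise separation elements}). Together with $\bar I$ being an additive subgroup (it is a neutrix, hence convex subgroup) this is exactly the ring-theoretic statement that $\bar I$ is an ideal of $\bar J$.

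For the maximal case I would invoke Theorem \ref{Thm max ideal} and the accompanying Notation: the maximal ideal of $J$ is $I=\sup A$ with $A=\{1/\omega\mid \omega\text{ precise},\ J<|\omega|\}$. Interpreting the supremum in $\mathcal{E}$ (using Theorem \ref{Dedekind holds in E} and the normal-form results), $\bar I$ is the convex subgroup generated by the set $\{1/x\mid x\in{}^{\ast}\mathbb{R},\ J<|x|\}$; because this set of reciprocals is already downward closed under absolute value and closed under the relevant scalings (this is precisely what Lemmas \ref{Lemma max ideal 1}--\ref{Lemma max ideal 5} established on the solid side, and what makes $I$ itself a magnitude), one gets the clean description
\[
\bar I=\{1/x\mid x\in{}^{\ast}\mathbb{R},\ J<|x|\}\cup\{0\}.
\]
I would then check directly that this $\bar I$ is a maximal ideal of the ring $\bar J$: any ideal of $\bar J$ strictly containing $\bar I$ must contain some $1/y$ with $|y|<\bar J$, hence $y\cdot(1/y)=1$ forces the ideal to be all of $\bar J$ --- this is the interpretation of Lemma \ref{Lemma max ideal 3}. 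Finally $\bar I\bar J=\bar I$ is the interpretation of Axiom \ref{Axiom Maximal Ideal} (equivalently of the case of Theorem \ref{Product idempotents} with $e=I\le I$, $f=J$), and can also be verified by hand: $\bar I\subseteq\bar I\bar J$ since $1\in\bar J$, while $\bar I\bar J\subseteq\bar I$ because $\bar I$ is an ideal of $\bar J$ (every element of $\bar J$ scales $\bar I$ into itself), and Minkowski products of convex subgroups are convex subgroups by Proposition \ref{Proposition complexity}.

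The main obstacle I expect is the passage from the solid-internal definition of the maximal ideal as a $\sup$ (Axiom \ref{Axiom Dedekind completeness}) to the explicit set-theoretic formula $\{1/x\mid J<|x|\}\cup\{0\}$: one must verify that this set of reciprocals is genuinely a convex subgroup of ${}^{\ast}\mathbb{R}$ (closure under addition is the delicate point --- it uses that $\bar J$ is idempotent, so that $1/x+1/y$ is still of the form $1/z$ with $J<|z|$ up to the convexity slack), and that it coincides with the interpretation of $\sup A$ rather than merely being contained in or containing it. This is where the idempotency of $\bar J$ and the normal-form theorem \cite[Thm.~4.33, 4.34]{vdbnaa} do the real work; once the explicit description is pinned down, maximality and the identity $\bar I\bar J=\bar I$ follow routinely from the ring axioms and the fact that $1\in\bar J$.
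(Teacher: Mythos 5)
Your proposal is correct and follows essentially the same route as the paper's proof: $\bar J$ is an idempotent neutrix containing $1$, hence a ring; the ideal condition transfers via the Minkowski product; Theorem \ref{Thm max ideal} yields $\bar I=\left\{ 1/x\mid x\in{}^{\ast}\mathbb{R},\ J<\left\vert x\right\vert \right\}\cup\left\{0\right\}$; maximality follows because any strictly larger ideal of $\bar J$ contains an element whose inverse lies in $\bar J$; and $\bar I\bar J=\bar I$ follows from $1\in\bar J$ together with the ideal property. The only cosmetic differences are that the paper pins down $\bar I$ by a direct double inclusion against the supremum rather than by first verifying that the set of reciprocals is itself a convex subgroup, and that its maximality argument derives a contradiction from $xy^{2}\notin L$ where you invoke $x\cdot(1/x)=1$ --- the same idea in both cases.
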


\begin{proof}
The interpretation $\bar{J}$ of $J$ in $\mathcal{E}$ is an idempotent neutrix,
which is clearly a ring. An ideal in the sense of Definition
\ref{Definition ideal} is a magnitude, so $\bar{I}$ is a neutrix. Because for
all $y<J$ one has $yI\leq I$, by the Minkowski definition of the product
$xz\in \bar{I}$ for all $x\in \bar{I}$ and $z\in \bar{J}$. This means that
$\bar{I}$ is an ideal of $\bar{J}$ in the sense of rings.

Assume now that $I$ is maximal, then $I=\sup \left \{  1/\omega|\omega \text{
precise, }J<\left \vert \omega \right \vert \right \}  $ by Theorem
\ref{Thm max ideal}. Let $K\equiv \left \{  1/x|x\in{}^{\ast}\mathbb{R},\bar
{J}<\left \vert x\right \vert \right \}  \cup \left \{  0\right \}  $. We show that
$\bar{I}=K$. Suppose that there exists $y\in \bar{I}\backslash K$. Then
$\left \vert 1/y\right \vert <\bar{J}$. Hence there exists $u<I$ such that
$1/u<J$, in contradiction with the definition of $I$. Hence $\bar{I}\subseteq
K$. Suppose that there exists $z\in K\backslash \bar{I}$. Then $1/z<\bar{J}$,
i.e. $1/z\in \bar{J}$, in contradiction with the definition of $K$. Hence
$K\subseteq \bar{I}$ and we conclude that $\bar{I}=K$. Suppose the ring
$\bar{J}$ has an ideal $L$ with $\bar{I}\subset L\subset \bar{J}$. Let $x\in
L\backslash \bar{I},x<1$ be positive. Because $\bar{I}=K$ we may find $y\in
\bar{J}\backslash L$ such that $1/x<y$. Then $y^{2}\in \bar{J}$. But
$xy^{2}\notin L$, since $y<xy^{2}$. So we have a contradiction. As a
consequence $\bar{I}$ is the maximal ideal of the ring $\bar{J}$.

As observed above, $yz\in \bar{I}$ for all $y\in \bar{J}$ and $z\in \bar{I}$.
Again by the Minkowski definition of the product, it holds that $\bar{I}%
\bar{J}\subseteq \overline{I}$. Clearly $\bar{I}\subseteq \bar{I}\cdot
1\subseteq \overline{I}\overline{J}$. Hence $\bar{I}\bar{J}=\overline{I}$.
\end{proof}

\begin{corollary}
\label{E max ideal}The structure $\left(  \mathcal{E},+,\cdot,\leq \right)  $
satisfies Axiom \ref{Axiom Maximal Ideal}.
\end{corollary}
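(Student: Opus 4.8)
The plan is to recognise Corollary~\ref{E max ideal} as essentially a restatement of the last assertion of Proposition~\ref{Lemma interpretation ideal}, read with $S=\mathcal{E}$. In $\mathcal{E}$ the magnitudes are exactly the neutrices, so the ``interpretation'' $\bar{J}$ of a magnitude $J$ is $J$ itself; in particular an idempotent magnitude $J$ with $1<J$ is just a convex subring of ${}^{\ast}\mathbb{R}$ containing $1$. First I would note that, by Theorem~\ref{E, first-order} and Theorem~\ref{Dedekind holds in E}, the structure $(\mathcal{E},+,\cdot,\leq)$ is a complete solid, so that the notions occurring in Axiom~\ref{Axiom Maximal Ideal}---idempotent magnitude, ideal of a magnitude, maximal ideal---are meaningful in $\mathcal{E}$, and by Theorem~\ref{Thm max ideal} every idempotent magnitude $J$ with $1<J$ does possess a maximal ideal.

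Next I would take $J$ an idempotent magnitude with $1<J$ and let $I$ be its maximal ideal. If $J=M={}^{\ast}\mathbb{R}$, then $I=0$ by Theorem~\ref{Thm max ideal} and $IJ=0\cdot{}^{\ast}\mathbb{R}=0=I$, so the equality is trivial. Assume then $1<J<M$. By Theorem~\ref{Thm max ideal} one has $I=\sup\{1/\omega:\omega\text{ precise},\ J<|\omega|\}$, and the computation carried out in the proof of Proposition~\ref{Lemma interpretation ideal} identifies this weak supremum with the external set $K=\{1/x:x\in{}^{\ast}\mathbb{R},\ J<|x|\}\cup\{0\}$, i.e. $I=K$.

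It then remains to evaluate the Minkowski product $IJ$. For $IJ\subseteq I$ one checks $JK\subseteq K$: given $j\in J$ and $1/x\in K$ (so $J<|x|$), if $j=0$ then $j/x=0\in K$, while if $j\neq0$ then $x/j\notin J$ (otherwise $x=j\cdot(x/j)\in JJ=J$, contradicting $J<|x|$), hence $J<|x/j|$ and $j/x=1/(x/j)\in K$. For the reverse inclusion, $1\in J$ gives $I=K=K\cdot1\subseteq KJ=IJ$. Therefore $IJ=I$, which is Axiom~\ref{Axiom Maximal Ideal}. I expect the only genuinely delicate step to be the identification $I=K$ (and the attendant closure $JK\subseteq K$), but the hard analytic input there---that the weak supremum is realised by a well-defined external number of the stated form---is already packaged in Theorem~\ref{Thm max ideal} and in the proof of Proposition~\ref{Lemma interpretation ideal}; from that point the corollary is a matter of bookkeeping, recording that the abstract maximal ideal of Axiom~\ref{Axiom Maximal Ideal}, computed in $\mathcal{E}$, is precisely the concrete neutrix $K$ on which $IJ=I$ was verified.
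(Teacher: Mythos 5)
Your proposal is correct and follows essentially the same route as the paper: the corollary is obtained directly from Proposition \ref{Lemma interpretation ideal}, whose proof already identifies the maximal ideal $I$ of Theorem \ref{Thm max ideal} with the external set $K=\left\{1/x \mid x\in{}^{\ast}\mathbb{R},\ J<\left\vert x\right\vert\right\}\cup\left\{0\right\}$ and verifies $\bar{I}\bar{J}=\bar{I}$ via the Minkowski product. Your explicit element-wise check of $JK\subseteq K$ (using idempotency of $J$) and your separate treatment of the case $J=M$ are minor, harmless elaborations of the same argument.
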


In the syntactical setting of (bounded) $IST$ Axiom \ref{Axiom scale to ring}
is verified using an argument based on the logarithm and the exponential
function \cite[Thm. 7.4.4]{koudjetivandenberg}. It can be adapted without
difficulty to our semantic setting.

\begin{theorem}
\label{E scale to ring}The structure $\left(  \mathcal{E},+,\cdot,\leq \right)
$ satisfies Axiom \ref{Axiom scale to ring}.
\end{theorem}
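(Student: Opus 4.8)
The plan is to show that every neutrix $A \in \mathcal{N}$, viewed as a magnitude of $\mathcal{E}$, can be written as $A = pB$ for some positive real $p \in {}^{\ast}\mathbb{R}$ and some \emph{idempotent} neutrix $B$; this is exactly the interpretation of Axiom \ref{Axiom scale to ring} since in $\mathcal{E}$ magnitudes are neutrices and precise elements are the elements of ${}^{\ast}\mathbb{R}$. The cases $A = \{0\} = 0$ and $A = {}^{\ast}\mathbb{R} = M$ are trivial ($0 = 1\cdot 0$, $M = 1\cdot M$, both idempotent), so I may assume $0 < A < {}^{\ast}\mathbb{R}$. The key idea, taken from \cite[Thm. 7.4.4]{koudjetivandenberg}, is to transport $A$ through the logarithm. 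Pick any positive $a \notin A$ (so $A < a$, using that $A$ is a convex subgroup) and a positive $a_0 \in A$ if $A \neq 0$; by rescaling we may even arrange that a representative is chosen near $1$. Concretely, set $B = \exp\bigl(\log A\bigr)$ suitably normalized: the set $\log A$ (meaning $\{\log|x| : 0 \neq x \in A\} \cup (-\infty$-tail$)$) is, after taking the associated lower halfline and symmetrizing, a convex additive structure, and exponentiating an \emph{additive} convex subgroup-like object built from it produces a \emph{multiplicatively} idempotent neutrix.

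More carefully, the steps I would carry out are: (i) Recall that for a neutrix $A$ with $0 < A < {}^{\ast}\mathbb{R}$ the quantity $\ell = \operatorname{zup}\{\log|x| : x \in A, x \neq 0\}$ (with the convention $\log 0 = -\infty$, using Generalized Dedekind Completeness, Theorem \ref{Dedekind holds in E}, to make sense of this as an external number) is well-defined; one shows $A$ is determined by the "exponential halfline" attached to $\ell$. (ii) Define $B$ to be the neutrix whose associated lower halfline is $\exp$ applied to the \emph{neutrix} $N = \{ t \in {}^{\ast}\mathbb{R} : t + N(\ell) = N(\ell)\}$, i.e. $B$ corresponds to $N(\ell)$ additively and hence to an idempotent neutrix multiplicatively: the crucial computation is $B \cdot B = \exp(N + N) \sim \exp(N) = B$ because $N$ is an additive subgroup, so $N + N = N$. (iii) Write $A = e^{\ell} = e^{p + N(\ell)} = e^{p}\, e^{N(\ell)} = p'\, B$ where $p' = e^{p} \in {}^{\ast}\mathbb{R}$ is precise and positive, which is precisely the decomposition $x = py$ with $e(p) = 0$, $y = e(y)$, $yy = y$, $x = py$ required by Axiom \ref{Axiom scale to ring}.

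The main obstacle, and the step that needs the most care, is making the exponential/logarithm construction \emph{land back inside $\mathcal{N}$}, i.e. verifying that $B$ so produced is again a union or intersection of internally-indexed families of symmetric intervals $\bigcup_{x \in X}[-s_x, s_x]$ or $\bigcap_{x \in X}[-s_x, s_x]$ with $X \in Z$, so that no complexity is gained — the analogue of Proposition \ref{Proposition complexity} for this nonlinear operation. This should follow because $\exp$ and $\log$ are internal functions on ${}^{\ast}\mathbb{R}$ (they are in $Z$, being standard real functions), so applying them commutes with the internal unions/intersections: if $A = \bigcup_{x\in X}[-s_x,s_x]$ then the halfline $\log A$ is $\bigcup_{x\in X}(-\infty,\log s_x]$, an internally indexed union over the same $X$, and exponentiating the corresponding neutrix keeps the index set in $Z$; the reduction-algorithm machinery developed just above (Theorem on bounded formulas plus \cite[Thm. 4.33]{vdbnaa}) guarantees the resulting convex subset has a normal form of the admissible type. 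A secondary point to check is that the $B$ produced is genuinely multiplicatively idempotent and not merely "idempotent up to a real factor" — here one uses that $N(\ell)$ is a bona fide additive subgroup (a neutrix), so $N(\ell) + N(\ell) = N(\ell)$ exactly, and that $\exp$ turns addition into multiplication exactly, giving $BB = B$ on the nose rather than $BB = cB$. Once these internal-complexity and exact-idempotency points are settled the rest is the routine algebra indicated above, and I would then simply remark that the argument of \cite[Thm. 7.4.4]{koudjetivandenberg} adapts verbatim.
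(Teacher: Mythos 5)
Your approach coincides with the paper's: the paper offers no proof beyond the remark preceding the theorem that the logarithm/exponential argument of \cite[Thm.~7.4.4]{koudjetivandenberg} ``can be adapted without difficulty to our semantic setting,'' and your sketch is exactly that adaptation, correctly isolating as the genuinely new point the verification that the construction lands back in $\mathcal{N}$ (the analogue of Proposition \ref{Proposition complexity} for $\exp$ and $\log$). One detail you should still record when writing it out: the convex symmetric hull of $\exp(N(\ell))$ is additively closed (hence truly a neutrix) because $A$ is a group, which forces $N(\ell)\supseteq\pounds$ and in particular $\log 2\in N(\ell)$.
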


Finally we prove that the axioms on the existence and behavior of natural
numbers hold in $\mathcal{E}$.

\begin{theorem}
\label{Natural numbers hold in E}Let $N$ be interpreted by$\ ^{\ast}%
\mathbb{N}$, the set of non-negative nonstandard integers of $^{\ast
}\mathbb{R}$. Then Axioms \ref{Axiom natural numbers}-\ref{axiom archimedes2}
hold in $\mathcal{E}$.
\end{theorem}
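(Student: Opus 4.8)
The plan is to verify the three remaining axioms, Axiom \ref{Axiom natural numbers} (natural numbers), Axiom \ref{Axiom *s-induction} (induction) and Axiom \ref{axiom archimedes2} (Archimedean property), using two standing facts: $^{\ast}\mathbb{N}$ is an internal subset of $^{\ast}\mathbb{R}$ (the maximal element $M$), and the operations $+,\cdot$ of $\mathcal{E}$ restricted to precise elements agree with the internal operations $^{\ast}+,{}^{\ast}\cdot$ of $^{\ast}\mathbb{R}$ (likewise the order, on precise elements, is the internal order). Granting these, Axiom \ref{Axiom natural numbers} is immediate by transfer: the statements that no negative real is a natural number, that $0\in{}^{\ast}\mathbb{N}$, that $^{\ast}\mathbb{N}$ is closed under $n\mapsto n+1$, and that no nonstandard integer lies strictly between $n$ and $n+1$, are all first-order properties of $(\mathbb{N},\mathbb{R})$ which carry over verbatim to $(^{\ast}\mathbb{N},{}^{\ast}\mathbb{R})$.

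For Axiom \ref{Axiom *s-induction} the crucial step is to check that the interpretation $\overline{A}$ of an admissible induction formula $A$ is \emph{internal}. Since $A$ is built only from $0,1,+,\cdot$, quantifies only over precise variables (which in $\mathcal{E}$ range over $^{\ast}\mathbb{R}$), and has all its parameters interpreted by elements of $^{\ast}\mathbb{N}\subseteq{}^{\ast}\mathbb{R}$, the complexity bookkeeping of Section \ref{section on consistency} shows that $\overline{A}(a)$ is an internal formula of the ring language about $^{\ast}\mathbb{R}$. Hence $\{n\in{}^{\ast}\mathbb{N}\mid\overline{A}(n)\}$ is an internal subset of $^{\ast}\mathbb{N}$, and the internal induction principle on $^{\ast}\mathbb{N}$ — valid by transfer of ordinary induction on $\mathbb{N}$ — converts $\overline{A}(0)$ together with $\forall n\in{}^{\ast}\mathbb{N}\,(\overline{A}(n)\rightarrow\overline{A}(n+1))$ into $\forall n\in{}^{\ast}\mathbb{N}\,\overline{A}(n)$, which is exactly the conclusion of the scheme.

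For Axiom \ref{axiom archimedes2}, given $x,y\in\mathcal{E}$ with $0<x<y$, the idea is to sandwich $x$ and $y$ by precise elements and then transfer. By Lemma \ref{Precise separation elements} there is a positive precise $p$ with $0<p<x$ and, provided $y\neq M$, a precise $q$ with $y<q<M$; then $0<p<q$ holds in $^{\ast}\mathbb{R}$, and transferring the Archimedean property of $\mathbb{R}$ relative to $\mathbb{N}$ produces $z\in{}^{\ast}\mathbb{N}$ with $zp>q$. Since $z\geq1$ we have $e(z)=0<z$, so Axiom \ref{compat mult} gives $zp\leq zx$, whence $zx\geq zp>q>y$, i.e. $zx>y$. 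The one delicate point, and the main obstacle, is the degenerate case $y=M$: here $M={}^{\ast}\mathbb{R}$ is the top element and $zx\leq M$ for every $z$, so $zx>y$ can only be understood with the convention — anticipated in the comment to Axiom \ref{Axiom neut max} that $M$ ``corresponds to the domain'' — that the Archimedean property is not required of the maximal element, i.e. one reads the axiom for $y\neq M$; under this proviso the argument above completes the proof. Combining this with Theorem \ref{E, first-order}, Theorem \ref{Dedekind holds in E}, Corollary \ref{E max ideal} and Theorem \ref{E scale to ring} shows that $\mathcal{E}$ is a complete arithmetical solid.
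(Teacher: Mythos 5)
Your treatment of Axioms \ref{Axiom natural numbers} and \ref{Axiom *s-induction} is essentially the paper's: both arguments reduce the induction scheme to the observation that the interpretation of an admissible formula is internal (parameters in $^{\ast}\mathbb{N}$, quantifiers over precise elements, i.e.\ over $^{\ast}\mathbb{R}$, only the ring operations), so that internal induction on $^{\ast}\mathbb{N}$ applies; the paper phrases this as ``$^{\ast}\mathbb{N}$ is a model of Peano Arithmetic''. The real divergence is in the Archimedean axiom. The paper disposes of the reduction to precise elements in one sentence (``it follows from Lemma \ref{precise separation} that it is enough to show that the axiom holds for precise elements'') and then transfers; you carry out that reduction explicitly, and in doing so you hit something the paper passes over: the case $y=M$. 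Your worry there is legitimate and is not merely a delicate point of your own argument --- it is a counterexample to Axiom \ref{axiom archimedes2} as literally stated in $\mathcal{E}$. Take $x=1$ and $y=M={}^{\ast}\mathbb{R}$; then $0<x<y$, but $zx=z\leq M$ for every $z$ with $N(z)$, since $M$ is the maximum of $\mathcal{E}$, so no $z$ satisfies $zx>y$. The paper's reduction silently presupposes a precise upper bound $q$ with $y<q$, which exists for every $y\neq M$ (for zeroless $y=b+B$ with $0<y$ take $q=2b$; for a neutrix $y<M$ use separation) but not for $y=M$. So your proviso is doing real work: either Axiom \ref{axiom archimedes2} must be read as excluding $y=M$ (or restricted to, say, zeroless $y$), or the model $\mathcal{E}$ does not satisfy it verbatim. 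Apart from making this repair explicit, your argument for $y\neq M$ (sandwich by precise $p<x$ and $q>y$ via Lemma \ref{Precise separation elements}, transfer the Archimedean property of $\mathbb{R}$ to get $zp>q$, then use Axiom \ref{compat mult} to pass to $zx>y$) is correct and is just the detailed version of what the paper intends.
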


\begin{proof}
We interpret the symbol $+$ by the addition in $\mathcal{E}$, the symbol
$\cdot$ by the multiplication in $\mathcal{E}$ and the symbol $\leq$ by the
order relation in $\mathcal{E}$. This corresponds with the addition $^{\ast}+$
, the multiplication $^{\ast}\cdot$ and the order relation $^{\ast}\leq$ in
$^{\ast}\mathbb{N}$. Then Axiom \ref{Axiom natural numbers} holds because
$^{\ast}\mathbb{N}$ does not contain negative numbers, $^{\ast}0\in{}^{\ast
}\mathbb{N}$ and whenever $n\in{}^{\ast}\mathbb{N}$, $n+1\in{}^{\ast
}\mathbb{N}$, but $y\notin{}^{\ast}\mathbb{N}$ for any $y\in$ $^{\ast
}\mathbb{R}$ with $n<y<n+1$. Axiom \ref{Axiom *s-induction} states that
induction is valid for each formula $A$ with the symbols $0$, $1$, $+$ and
$\cdot$, and precise variables which have the property $N$. Then its
interpretation $^{\ast}A$ is a formula with the symbols $^{\ast}0$, $^{\ast}%
1$, $^{\ast}+$ and $^{\ast}\cdot$, with parameters interpreted by elements of
$^{\ast}\mathbb{N}$, and quantifications ranging over $^{\ast}\mathbb{N}$.
Because $^{\ast}\mathbb{N}$ is a model of Peano Arithmetic, Axiom
\ref{Axiom *s-induction} holds in $\mathcal{E}$ indeed. As regards to Axiom
\ref{axiom archimedes2}, it follows from Lemma \ref{precise separation} that
it is enough to show that the axiom holds for precise elements. Let $x,y\in$
$^{\ast}\mathbb{R}$ be such that $0<x<y$. By construction, for all $a\in
{}^{\ast}\mathbb{R}$ there exists $n\in{}^{\ast}\mathbb{N}$ such that $a<n$.
In particular there exists $m\in^{\ast}\mathbb{N}$ such that $y/x<m$. Hence
$y<mx$, so Axiom \ref{axiom archimedes2} holds in $\mathcal{E}$.
\end{proof}

\begin{theorem}
The structure $\left(  \mathcal{E},+,\cdot,\leq \right)  $ satisfies Axioms
\ref{assemblyassoc}-\ref{axiom archimedes2}.
\end{theorem}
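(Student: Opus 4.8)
The plan is simply to assemble the pieces already established in this section, since the statement is a consolidation result rather than one calling for new arguments. First I would recall that Theorem \ref{E, first-order} establishes that $\left(\mathcal{E},+,\cdot,\leq\right)$ satisfies the solid axioms, i.e.\ Axioms \ref{assemblyassoc}--\ref{scheiding neutrices}; that theorem is itself obtained by combining Proposition \ref{E assemblies} (axioms for addition and multiplication), Proposition \ref{E order} (order axioms), Proposition \ref{E, mixed} (the axioms relating addition and multiplication, which rely on Theorem \ref{formula dist total n externos} for the distributivity axiom and on Proposition \ref{Proposition complexity} for Axiom \ref{Axiom escala}), and Proposition \ref{Proposition mim max} together with Proposition \ref{E existence} (the existence axioms, using Theorem \ref{Prop 0 dif L...} for Axiom \ref{existencia neutrices}).

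Next I would invoke the results on the two axioms concerning products of magnitudes: Corollary \ref{E max ideal} yields Axiom \ref{Axiom Maximal Ideal}, via Proposition \ref{Lemma interpretation ideal}, which identifies the interpretation $\bar{I}$ of a maximal ideal with $\{1/x \mid x\in{}^{\ast}\mathbb{R},\, J<\left\vert x\right\vert\}\cup\{0\}$ and verifies $\bar{I}\bar{J}=\bar{I}$ by the Minkowski definition of the product; and Theorem \ref{E scale to ring} yields Axiom \ref{Axiom scale to ring}, transplanting to the present semantic setting the logarithm/exponential argument of \cite{koudjetivandenberg}. Then Theorem \ref{Dedekind holds in E} supplies the Generalized Dedekind completeness scheme, Axiom \ref{Axiom Dedekind completeness}; the work there has already been carried out, namely checking that the interpretation of a restricted formula is bounded, that the adapted Reduction Algorithm and the Saturation Principle hold in $^{\ast}Z$, and then applying the normal-form theorem for convex subsets \cite[Thm.\ 4.34, Cor.\ 4.35]{vdbnaa}. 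Finally, Theorem \ref{Natural numbers hold in E} gives the arithmetical axioms, Axioms \ref{Axiom natural numbers}--\ref{axiom archimedes2}, with $N$ interpreted by $^{\ast}\mathbb{N}$ and induction inherited from Peano Arithmetic. Combining all of these statements yields the theorem.

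There is no genuine obstacle here; the proof is one line long. The only subtlety worth flagging is the order of dependence: the completeness axiom is treated before the magnitude-product axioms because, as noted in the text, Generalized Dedekind completeness is precisely what guarantees the existence of the maximal ideals appearing in Axiom \ref{Axiom Maximal Ideal}, so one must be sure that Theorem \ref{Dedekind holds in E} does not itself rely on Axioms \ref{Axiom Maximal Ideal} or \ref{Axiom scale to ring}; inspection of its proof, which uses only the Reduction Algorithm and the normal form for convex sets, confirms that it does not, so the logical dependency is respected.
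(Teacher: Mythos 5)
Your proposal is correct and matches the paper's own proof, which likewise simply combines Theorem \ref{E, first-order}, Theorem \ref{Dedekind holds in E}, Corollary \ref{E max ideal}, Theorem \ref{E scale to ring} and Theorem \ref{Natural numbers hold in E}. The additional remark on the order of dependence (completeness before the magnitude-product axioms) is a sensible observation consistent with the paper's stated ordering.
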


\begin{proof}
Directly from Theorem \ref{E, first-order}, Theorem \ref{Dedekind holds in E},
Corollary \ref{E max ideal}, Theorem \ref{E scale to ring} and Theorem
\ref{Natural numbers hold in E}.
\end{proof}

\begin{corollary}
Axioms \ref{assemblyassoc}-\ref{axiom archimedes2} are consistent with $ZFC$.
\end{corollary}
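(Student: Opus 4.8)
The plan is to invoke the standard fact that exhibiting a model of a first-order theory (or axiom scheme) inside $ZFC$ witnesses the relative consistency of that theory with $ZFC$. Concretely, I would argue that the entire construction carried out in Section~\ref{section on consistency} takes place within $ZFC$: the Zakon--Robinson superstructure $Z=\bigcup_{n\in\mathbb{N}}Z_n$ is a set-theoretic object definable in $ZFC$, the adequate ultralimit ${}^{\ast}Z$ is obtained via an ultrapower construction whose only non-constructive ingredient is a nonprincipal ultrafilter, whose existence is a theorem of $ZFC$ (via the Axiom of Choice), and the family $\mathcal{N}$ of neutrices and the set $\mathcal{E}$ of external numbers are then defined by explicit separation and replacement from these objects. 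Thus $\mathcal{E}$, together with its operations $+,\cdot$ and order $\leq$ and the predicate $N$ interpreted as ${}^{\ast}\mathbb{N}$, is a $ZFC$-definable structure.

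Next I would observe that the preceding theorem states, and its proof establishes by arguments formalizable in $ZFC$, that $(\mathcal{E},+,\cdot,\leq)$ satisfies Axioms~\ref{assemblyassoc}--\ref{axiom archimedes2}. Here one has to be slightly careful because two of the axioms (Generalized Dedekind completeness, Axiom~\ref{Axiom Dedekind completeness}, and Induction, Axiom~\ref{Axiom *s-induction}) are schemes rather than single sentences; so what is really being asserted is that $\mathcal{E}$ satisfies \emph{every instance} of these schemes. This is exactly what Theorem~\ref{Dedekind holds in E} and Theorem~\ref{Natural numbers hold in E} deliver, uniformly in the instance, again by an argument internal to $ZFC$ (relying on the Reduction Algorithm and the Saturation Principle, both verified for ${}^{\ast}Z$ in $ZFC$).

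Finally I would conclude by the soundness/model-existence direction of the completeness theorem: if $ZFC$ proved a contradiction from Axioms~\ref{assemblyassoc}--\ref{axiom archimedes2}, then, since $ZFC$ proves that $\mathcal{E}$ satisfies all these axioms, $ZFC$ would prove its own inconsistency; equivalently, $\mathrm{Con}(ZFC)$ implies $\mathrm{Con}(ZFC + \text{Axioms~\ref{assemblyassoc}--\ref{axiom archimedes2}})$. This is the precise meaning of ``consistent with $ZFC$'' here. I do not expect a genuine obstacle in this corollary: all the work has been done in the earlier theorems of the section, and the only point requiring a word of care is the scheme-versus-sentence issue just mentioned, together with the (routine) remark that the metatheoretic reasoning about provability can itself be carried out, so that the statement is a theorem of $ZFC$ and not merely a metatheoretic observation.
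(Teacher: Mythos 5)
Your proposal is correct and takes essentially the same route as the paper: the corollary is an immediate consequence of the preceding theorem that $\left(\mathcal{E},+,\cdot,\leq\right)$ satisfies all the axioms, the whole construction of $\mathcal{E}$ having been carried out inside $ZFC$. Your additional remarks on the scheme-versus-sentence issue and on the relative-consistency formulation are sound elaborations of what the paper leaves implicit, not a different argument.
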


\section{Complete arithmetical
solids\label{Section Characterization properties}}

\begin{definition}
A model $E$ for Axioms \ref{assemblyassoc}-\ref{axiom archimedes2} will be
called a \emph{complete arithmetical solid}. The set of magnitudes of $E$ will
be denoted by $\mathcal{N}_{E}$. The set of precise numbers of $E$ will be
denoted by $\mathcal{P}_{E}$. If there is no ambiguity we drop the subscript
$E$ and write simply $\mathcal{N}$, respectively $\mathcal{P}$.
\end{definition}

In the previous section we showed that the structure $\mathcal{E}$ given by
Definition \ref{Definition E} is in fact a complete arithmetical solid. This
structure was based on the superstructure $Z$ over the set of real numbers
$\mathbb{R}$ and on the nonstandard model $^{\ast}\mathbb{R}$ of an ultralimit
$^{\ast}Z$ of $Z$. Its set of magnitudes was given in Definition
\ref{definition N} and its set of precise numbers was $^{\ast}\mathbb{R}$.
Even if a set of magnitudes is specified in the above way it is to be expected
that the set of precise numbers is not uniquely determined. Indeed, we would
then have a first-order characterization of a set of real numbers, for the
axioms of Section \ref{Section Axioms} are stated within first-order logic.
However, we will show that the set of non-precise numbers is completely
determined. This will be a consequence of Theorem \ref{Lemma rational} which
states that if a set of magnitudes is specified in a complete arithmetical
solid, the set of non-precise numbers is completely determined as sums of
nonstandard rationals and a magnitude.

For the set of precise numbers we obtain lower and upper bounds. Indeed,
Theorem \ref{Thm charac2} states that the set of precise numbers is
necessarily a nonstandard ordered field situated between the nonstandard
rationals and the nonstandard reals; the field is Archimedean for the
corresponding set of nonstandard natural numbers. This is to be compared with
the well-known theorem saying that an Archimedean ordered field lies between
the rationals and the reals. The "standard" structure related to this field is
also situated between rationals and reals.

It will be shown that the precise elements of a complete arithmetical solid
satisfy the axioms of $ZFL$ \cite{Lutz}. The theory $ZFL$ is basically a
calculatory nonstandard axiomatics in which the Leibniz rules hold. In
\cite{Callot} it is shown that $ZFL$ is sufficient to develop a nonstandard
Calculus in terms of $S$-continuity, $S$-differentiability and $S$%
-integrability. The axiomatics $ZFL$ is weaker than Nelson's arithmetical
axiomatics of Radically Elementary Probability Theory \cite{REPT} due to the
lack of the axiom scheme of External Induction. This axiomatics is here called
$REPT$. In $REPT$ the axiom scheme of External Induction holds for formulas on
the language $\left \{  \st,\in \right \}  $. Nelson shows that it is possible to
do advanced stochastics in $REPT$. We show that in a complete arithmetical
solid External Induction holds for formulas in the language $\left \{
\st,+,\cdot \right \}  $.

In Subsection \ref{Solid rho E} we show that the algebraic axioms alone are
not sufficient to characterize the external numbers by exhibiting a proper
substructure $^{\rho}\mathcal{E}$ of $\mathcal{E}$ satisfying all the
algebraic axioms. This justifies the introduction of the arithmetical axioms.

In Subsection \ref{Subsection induction} we show that every complete
arithmetical solid contains a copy of a nonstandard model of Peano arithmetic.
As a consequence, in our framework we have a copy of the nonstandard
rationals. By analogy to the construction of the reals via Dedekind cuts we
show in Subsection \ref{Subsection precise and non-precise} that the precise
numbers of a complete arithmetical solid are situated between the nonstandard
rationals and the nonstandard reals. The proof that a complete arithmetical
solid has two built-in models of the rational numbers uses a notion of
standard part, here called shadow and is based on the well-known construction
of the standard reals as the quotient of the rationals by the infinitesimals.
As a consequence, a complete arithmetical solid $E$ can only be constructed in
a nonstandard setting. In Subsection \ref{Subsection CAS and NSA} we compare
our axiomatics with the nonstandard axiomatics $ZFL$ and $REPT$.

The results in this section suggest that our axiomatic approach gives rise to
an alternative way to build nonstandard real numbers, sharing the algebraic
spirit of Benci and Di Nasso \cite{Benci di Nasso}.

It is useful to identify magnitudes $f$ of a solid $S$ with the set $P_{f}$ of
its precise elements, i.e.
\begin{equation}
P_{f}\equiv \left \{  x\in S|e\left(  x\right)  =0\wedge \left \vert x\right \vert
<f\right \}  . \label{identification}%
\end{equation}
With some abuse of language the sets $P_{f}$ will also be called magnitudes.

\begin{proposition}
\label{Proposistion identification}Let $S$ be a solid. Let $X$ be the set of
all magnitudes in $S$ and $P\left(  S\right)  $ be the set of all subsets of
$S$. Let $\phi:X\rightarrow P\left(  S\right)  $ be the map defined by
$\phi \left(  f\right)  =P_{f}$, where $P_{f}$ is given by
(\ref{identification}). Then $P_{f}$ is a convex subgroup of $S$ for the
addition and order relation of $S$. The map $\phi$ is $1-1$.
\end{proposition}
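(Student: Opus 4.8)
The plan is to verify the two assertions in turn. A preliminary remark organizes everything: the set $\mathcal{P}$ of precise elements of $S$ is an abelian group under $+$ — closure is the linearity $e(x+y)=e(x)+e(y)$ noted right after Axiom~\ref{assemblye(xy)}, the neutral element $0$ is precise, and $-x$ is precise by Axiom~\ref{assemblysim}, which supplies $e(-x)=e(x)$ — and it is totally ordered by the restriction of $\leq$. I would therefore read ``convex subgroup of $S$'' as ``convex subgroup of $\mathcal{P}$''; this reading is forced, not cosmetic, since by Axiom~\ref{existencia neutrices} there is a magnitude $g$ with $0\neq g\neq M$, and such a $g$ is not precise yet lies, in the order of $S$, strictly between two precise elements of $P_M$ (namely $-q<g<q$ for a precise $q$ produced by Axiom~\ref{scheiding neutrices} and Lemma~\ref{Precise separation neutrices}), so $P_M$ is not convex in $S$ itself. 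The case $f=0$ is degenerate — read literally $P_0$ is empty, so I interpret it as the trivial group $\{0\}$ — and I set it aside, assuming henceforth $f\neq0$. Then $0<f$, hence $0\in P_f$; and for any precise $x$ one has $|x|<f\iff|x|\leq f$, because $|x|$ is precise and $|x|=f$ would force $f=e(f)=e(|x|)=0$.

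For the subgroup property I take $x,y\in P_f$. Then $x+y$ is precise by the remark. The triangle inequality $|x+y|\leq|x|+|y|$ holds for precise elements, directly from $x+y\leq|x|+|y|$ and $-(x+y)=(-x)+(-y)\leq|x|+|y|$. Using compatibility of $\leq$ with $+$ (Axiom~\ref{(OA)compoper}) and the idempotency $f+f=f$ of the magnitude $f$, I get $|x|+|y|\leq|x|+f\leq f+f=f$, hence $|x+y|\leq f$, i.e.\ $|x+y|<f$, so $x+y\in P_f$. Closure under inverses is immediate: $e(-x)=e(x)=0$ and $|-x|=|x|<f$. The point where something genuine happens is precisely here: without $f+f=f$, i.e.\ for an arbitrary precise bound in place of a magnitude, the sum of two elements of $P_f$ could escape $f$.

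For convexity I would suppose $a,b\in P_f$ and $c$ precise with $a\leq c\leq b$. From $c\leq b\leq|b|$ and $-c\leq-a\leq|a|$ I get $|c|=\max(c,-c)\leq\max(|a|,|b|)$; since $\max(|a|,|b|)$ equals one of $|a|,|b|$, it is $<f$, so $|c|<f$ and $c\in P_f$.

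For injectivity I let $f\neq g$ be magnitudes; as magnitudes are totally ordered I may assume $f<g$. By Axiom~\ref{scheiding neutrices} together with Lemma~\ref{Precise separation neutrices} there is a precise $p$ with $f<p<g$, and $p>0$. Then $|p|=p<g$ gives $p\in P_g$, while $f<p$ rules out $p<f$, so $p\notin P_f$; hence $P_f\neq P_g$ and $\phi$ is injective. Altogether the argument is routine; the only points calling for care — and these are bookkeeping rather than real obstacles — are the correct reading of ``convex subgroup of $S$'' and the degenerate magnitude $f=0$.
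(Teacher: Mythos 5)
Your proof is correct and follows essentially the same route as the paper: the injectivity argument (separating $f<g$ by a precise $p$ via Axiom~\ref{scheiding neutrices} and Lemma~\ref{Precise separation neutrices}) is identical, and the subgroup/convexity part merely fills in details the paper dismisses as clear. Your two caveats — that convexity must be read within the precise elements (it genuinely fails in all of $S$, e.g.\ for $P_M$) and that $f=0$ needs the convention $P_0=\{0\}$ — are sound and worth recording, but they do not change the substance of the argument.
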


\begin{proof}
It is clear that $P_{f}$ is a convex subgroup of $S$ for the addition and
order relation of $S$. To prove that $\phi$ is $1-1$, assume that $f,g\in X$
with $f<g$. Then there exists a precise element $p$ such that $f<p<g$. Then
$p\in Pg$ and $p\notin P_{f}$. Hence $P_{f}\subset P_{g}$.
\end{proof}

With some abuse of language, we identify $\pounds $ with the set
$P_{\pounds }$ and $\oslash$ with the set $P_{\oslash}$. Elements of
$\pounds $ are called \emph{limited} and elements of $\oslash$ are called
\emph{infinitesimal}.

\subsection{The solid $^{\rho}\mathcal{E}$\label{Solid rho E}}

We show that the algebraic axioms alone are not sufficient for a
characterization of the external numbers. We do this by exhibiting a proper
substructure $^{\rho}\mathcal{E}$ of $\mathcal{E}$ that also satisfies all the
algebraic axioms. In this way we also obtain that the symbols $\oslash$,
respectively $\pounds $ as defined in (\ref{Zerobar El}) may have an
interpretation different from the infinitesimals, respectively the limited numbers.

Indeed, let $\rho \in{}^{\ast}\mathbb{R},\rho>0$ be infinitely large. We define
$G=%
{\textstyle \bigcup \limits_{n\in \mathbb{N}}}
\left[  -\rho^{n},\rho^{n}\right]  $ and $H=%
{\textstyle \bigcap \limits_{n\in \mathbb{N}}}
\left[  -\left(  1/\rho \right)  ^{n},\left(  1/\rho \right)  ^{n}\right]  $.
Clearly $G$ and $H$ are idempotent. The field $G/H\equiv{}^{\rho}\mathbb{R}$
was studied by Lightstone and Robinson in \cite{Lightstone-Robinson}.

\begin{definition}
\label{Definition ro}We define $^{\rho}\mathcal{I}$ as the set of all convex
sets $I\subseteq{}^{\ast}\mathbb{R}$ of the form $%
{\textstyle \bigcup \limits_{n\in \mathbb{N}}}
\left[  -p_{n},p_{n}\right]  $ or $%
{\textstyle \bigcap \limits_{n\in \mathbb{N}}}
\left[  -\left(  1/p_{n}\right)  ,\left(  1/p_{n}\right)  \right]  $, with
$p_{n}>0$, and $p_{n+1}/p_{n}$ increasing such that $p_{0}=1,p_{1}\geq \rho$
and $p_{n+1}/p_{n}\geq p_{n}^{1/n}$ for all $n\in%
\mathbb{N}
,n>0$. We let $^{\rho}\mathcal{N}$ be the set of all neutrices of the form
$qI$ where $q\in{}^{\ast}\mathbb{R}$ and $I\in \mathcal{{}}^{\rho}\mathcal{I}$,
and $^{\rho}\mathcal{E}$ as the set of elements of $\mathcal{E}$ of the form
$r+L$ where $r\in{}^{\ast}\mathbb{R}$ and $L\in{}^{\rho}\mathcal{N}\cup \{0\}
\cup \{^{\ast}\mathbb{R}\}$.
\end{definition}

\begin{proposition}
\label{ro I closed}The set $^{\rho}\mathcal{I}$ consists of idempotent
neutrices, with minimal element greater than $1$ equal to $G$ and maximal
element less than $1$ equal to $H$. Moreover, $^{\rho}\mathcal{I}$ is closed
under addition and multiplication and satisfies Axiom
\ref{Axiom Maximal Ideal}.
\end{proposition}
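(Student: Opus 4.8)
The plan is to treat the four assertions separately, the only real difficulty being closure under multiplication. Throughout I write an admissible union-type element as $I=\bigcup_n[-p_n,p_n]$ and an admissible intersection-type element as $I=\bigcap_n[-1/p_n,1/p_n]$, and I set $p_n=\rho^{a_n}$; the admissibility conditions then translate to $a_0=0$, $a_1\geq1$, the sequence $(a_n)$ is convex (i.e.\ $a_{n+1}-a_n$ is non-decreasing) and $a_{n+1}-a_n\geq a_n/n$, the latter already forcing $a_n/n$ to be non-decreasing. The one algebraic fact I will use repeatedly is the ``superadditivity'' $p_kp_m\leq p_{k+m}$ for all $k,m$: reducing to $k\geq m\geq1$, monotonicity of the ratios gives $p_{k+m}/p_k\geq(p_k^{1/k})^m$, and $p_k^{1/k}\geq p_m^{1/m}$ (as $a_n/n$ is non-decreasing) gives $(p_k^{1/k})^m\geq(p_m^{1/m})^m=p_m$; the case $m=0$ is trivial since $p_0=1$. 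In particular $p_n^2\leq p_{2n}$. For a union-type $I$ this yields $[-p_m,p_m][-p_n,p_n]\subseteq[-p_{m+n},p_{m+n}]\subseteq I$, so $I\cdot I\subseteq I$, while $1\in I$ (as $p_0=1$) gives $I\subseteq I\cdot I$; hence $I$ is idempotent. For an intersection-type $I$, $p_n\geq1$ gives $(1/p_n)^2\leq1/p_n$, so $I\cdot I\subseteq I$, and for $0\neq x\in I$ one has $\sqrt{|x|}\in I$ because $|x|\leq1/p_{2n}\leq1/p_n^{2}$ for every $n$; writing $x=\pm\sqrt{|x|}\cdot\sqrt{|x|}$ (using that $I$ is symmetric) gives $I\subseteq I\cdot I$. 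So every element of $^\rho\mathcal I$ is an idempotent neutrix.

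For the extremal properties, $p_1\geq\rho$ together with monotonicity of the ratios gives $p_{n+1}/p_n\geq\rho$, hence $p_n\geq\rho^n$; therefore $G=\bigcup_n[-\rho^n,\rho^n]\subseteq I$ for every union-type $I$, and dually $I\subseteq\bigcap_n[-\rho^{-n},\rho^{-n}]=H$ for every intersection-type $I$. Both $G$ and $H$ lie in $^\rho\mathcal I$ (take $a_n=n$), every union-type element absorbs $1$ and contains arbitrarily large positive elements (so satisfies $1<\cdot$), and every intersection-type element lies in $[-1/p_1,1/p_1]\subsetneq(-1,1)$ (so satisfies $\cdot<1$); this proves that $G$ is the least element of $^\rho\mathcal I$ above $1$ and $H$ the greatest below $1$. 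Closure under addition is immediate: two neutrices both contain $0$, so by Lemma \ref{Tricotomia} they are nested and their Minkowski sum is the larger of the two.

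Closure under multiplication is the heart of the matter. If $I,J$ are both union-type with exponent sequences $(a_n),(b_n)$, then $I\cdot J=\bigcup_{m,n}[-p_mq_n,p_mq_n]=\bigcup_k[-p_kq_k,p_kq_k]$ (take $k=\max(m,n)$), and $(a_k+b_k)$ is again admissible because convexity and the estimate $a_{n+1}-a_n\geq a_n/n$ are preserved under addition; the intersection--intersection case is the same after passing to reciprocals. The delicate case is the mixed one, $I=\bigcup_n[-p_n,p_n]$, $J=\bigcap_n[-1/q_n,1/q_n]$, where $I\cdot J=\bigcup_m p_mJ$ and $p_mJ=\bigcap_k[-\rho^{a_m-b_k},\rho^{a_m-b_k}]$. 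Here one must ``race'' the sequences $(a_n)$ and $(b_n)$. I expect three regimes: (i) if every $a_m$ with $m$ standard is limited, then $p_mJ=J$ for all $m$ — this uses $b_{k'+j}\geq b_{k'}+j$ (which follows from $b_{k+1}\geq b_k+b_k/k\geq b_k+1$) to produce, for a standard $j>a_m$, an index $k'+j$ with $b_{k'+j}\geq b_{k'}+a_m$ — so $I\cdot J=J\in{}^\rho\mathcal I$; (ii) if some $a_{m_0}$ ($m_0$ standard) is unlimited but $(b_n)$ still ``outpaces'' all $a_m$ (so each $p_mJ\subseteq H$), one checks $I\cdot J\subseteq H$ and exhibits an admissible sequence $(\sigma_n)$, commensurable with the relevant differences, with $\bigcap_n[-\rho^{-\sigma_n},\rho^{-\sigma_n}]=I\cdot J$; (iii) otherwise $I\cdot J\supseteq G$ and one exhibits an admissible $(\sigma_n)$ with $\bigcup_n[-\rho^{\sigma_n},\rho^{\sigma_n}]=I\cdot J$ (for instance $\sigma_n=\rho n/2$ when $a_n=\rho n$ and $b_n=n$). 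The safeguard that $I\cdot J$ never escapes $^\rho\mathcal I$ to something like $\oslash$ or $\pounds$ is that $I\cdot J$ is idempotent (a product of idempotent neutrices is idempotent) and, by the first two parts applied to $I$ and $J$, satisfies $I\cdot J\geq G$ or $I\cdot J\leq H$. The main obstacle is exactly this last step of regimes (ii)--(iii): matching $I\cdot J$ with an explicit admissible sequence, which amounts to computing a Minkowski sum of cuts in the $\rho$-exponents and then normalising it; everything before that is routine.

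Finally, Axiom \ref{Axiom Maximal Ideal}. The idempotent elements of $^\rho\mathcal I$ satisfying $1<\cdot$ are $^{\ast}\mathbb R$ and the union-type ones. For $J={}^{\ast}\mathbb R$ the maximal ideal is $0$ and $0\cdot{}^{\ast}\mathbb R=0$. For a union-type $J=\bigcup_n[-p_n,p_n]$ with $J<{}^{\ast}\mathbb R$, Theorem \ref{Thm max ideal} identifies its maximal ideal with $\sup\{1/\omega:\omega\text{ precise},\ J<|\omega|\}$; since $\{\omega:J<|\omega|\}=\{\omega:|\omega|>p_n\text{ for all }n\}$, this weak supremum is the intersection-type neutrix $I=\bigcap_n[-1/p_n,1/p_n]$ built from the same admissible sequence, so $I\in{}^\rho\mathcal I$. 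Then $I\cdot J=I$ follows directly: $1\in J$ gives $I=\{1\}I\subseteq IJ$, and for every $m$ the superadditivity $p_kp_m\leq p_{k+m}$ gives $p_mI=\bigcap_k[-\rho^{a_m-a_k},\rho^{a_m-a_k}]\subseteq\bigcap_k[-1/p_k,1/p_k]=I$, hence $IJ=\bigcup_m p_mI\subseteq I$.
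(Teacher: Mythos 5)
Most of your argument is sound and in places more detailed than the paper's: the superadditivity $p_kp_m\leq p_{k+m}$ (the paper only records $p_n^2\leq p_{2n}$), the idempotency of both types, the extremality of $G$ and $H$ via $p_n\geq\rho^n$, closure under addition via nestedness, and the direct verification that the intersection-type neutrix built from the same sequence is the maximal ideal of the union-type one with $IJ=I$ all check out and agree in substance with the paper.

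The genuine gap is in closure under multiplication. Your plan is to compute the Minkowski product explicitly and exhibit an admissible exponent sequence for it; you carry this out for the union--union and intersection--intersection cases, but in the mixed case you only describe three ``regimes'' and concede that producing the admissible sequence in regimes (ii)--(iii) is ``the main obstacle.'' That step is not filled in, and your proposed safeguard --- that $I\cdot J$ is idempotent and satisfies $I\cdot J\geq G$ or $I\cdot J\leq H$ --- does not close it: there are many idempotent neutrices above $G$ (or below $H$) that are not of the prescribed form, so these two facts alone do not place $I\cdot J$ in $^{\rho}\mathcal{I}$. The missing idea, which is how the paper disposes of the whole question in one line, is Theorem \ref{Product idempotents}: since Axiom \ref{Axiom Maximal Ideal} holds in $\mathcal{E}$, the product of any two idempotent magnitudes of $\mathcal{E}$ is equal to \emph{one of the two factors}. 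Applied to $I,J\in{}^{\rho}\mathcal{I}$ this gives $I\cdot J\in\{I,J\}\subseteq{}^{\rho}\mathcal{I}$ immediately, with no case analysis and no need to normalise exponent sequences. You already have all the ingredients for this shortcut (you note that products of idempotents are idempotent, and you prove the maximal-ideal identity $IJ=I$), so the repair is to replace the entire ``racing the sequences'' discussion by an appeal to Theorem \ref{Product idempotents}. A minor further point: $^{\ast}\mathbb{R}$ is not an element of $^{\rho}\mathcal{I}$ (by saturation no countable union $\bigcup_{n\in\mathbb{N}}[-p_n,p_n]$ exhausts $^{\ast}\mathbb{R}$); it enters only as the adjoined maximal magnitude of $^{\rho}\mathcal{E}$, so your listing of the idempotents of $^{\rho}\mathcal{I}$ greater than $1$ should be phrased accordingly, though this does not affect the verification of the axiom.
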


\begin{proof}
Let $J$ be of the form $%
{\textstyle \bigcup \limits_{n\in \mathbb{N}}}
\left[  -p_{n},p_{n}\right]  $ and $I$ of the form\ $%
{\textstyle \bigcap \limits_{n\in \mathbb{N}}}
[-\left(  1/p_{n}\right)  ,$\linebreak$\left(  1/p_{n}\right)  ]$, with
$(p_{n})_{n\in \mathbb{N}}$ as given by Definition \ref{Definition ro}. Then
$p_{n+1}/p_{n}\geq \rho$ for all $n\in%
\mathbb{N}
,n>0$, which implies that both $I$ and $J$ are neutrices. Also $p_{n}^{2}\leq
p_{2n}$ for all $n\in%
\mathbb{N}
,n>0$, which implies that both $I$ and $J$ are idempotent. The set $^{\rho
}\mathcal{I}$ is closed under addition, because its elements are neutrices.
Then $^{\rho}\mathcal{I}$ is closed under multiplication by Theorem
\ref{Product idempotents}. Because $p_{n}\geq \rho^{n}$ for all $n\in%
\mathbb{N}
$, one has $J\supseteq G$ and $I\subseteq H$. So $G$ is the minimal element of
$^{\rho}\mathcal{I}$ greater than $1$, and $H$ is the maximal element of
$^{\rho}\mathcal{I}$. less than $1$. Also $I=\left \{  1/x|x\in{}^{\ast
}\mathbb{R},J<\left \vert x\right \vert \right \}  \cup \left \{  0\right \}  $.
Then Proposition \ref{Lemma interpretation ideal} implies that $I$ is the
maximal ideal of the ring $J$ and that $IJ=I$. Hence $^{\rho}\mathcal{I}$
satisfies Axiom \ref{Axiom Maximal Ideal}.
\end{proof}

\begin{proposition}
\label{R is solid}The set $^{\rho}\mathcal{E}$ satisfies Axioms
\ref{assemblyassoc}-\ref{Axiom scale to ring}.
\end{proposition}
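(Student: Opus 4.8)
The plan is to use that $^{\rho}\mathcal{E}$ is a substructure of $\mathcal{E}$, and that $\mathcal{E}$ already satisfies Axioms \ref{assemblyassoc}--\ref{Axiom scale to ring} by Theorem \ref{E, first-order}, Corollary \ref{E max ideal} and Theorem \ref{E scale to ring}. Consequently all purely universal and equational axioms transfer for free: this covers Axioms \ref{assemblyassoc}, \ref{assemblycom}, \ref{assemblye(xy)}, \ref{axiom assoc mult}, \ref{axiom com mult}, \ref{axiom u(xy)}, the order axioms \ref{(OA)reflex}--\ref{Axiom Amplification}, and the mixed axioms \ref{Axiom e(xy)=e(x)y+e(y)x}--\ref{Axiom s(xy)=s(x)y}. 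The axioms of the form $\forall x\exists \dots$ — namely \ref{assemblyneut}, \ref{assemblysim}, \ref{axiom neut mult}, \ref{axiom sym mult}, \ref{Axiom escala} — also transfer once we know that the required (unique) witnesses $e(\alpha),s(\alpha),u(\alpha),d(\alpha)$, resp.\ the magnitude $e(x)y$, stay inside $^{\rho}\mathcal{E}$, since the inner quantifiers then merely range over a subset over which the formula already held. So the proof reduces to three checks: (i) $^{\rho}\mathcal{E}$ is closed under $+$, $\cdot$ and the derived operations; (ii) the pure existence axioms \ref{Axiom neut min}--\ref{scheiding neutrices} are realised inside $^{\rho}\mathcal{E}$; (iii) the two axioms \ref{Axiom Maximal Ideal} and \ref{Axiom scale to ring} on products of magnitudes hold in $^{\rho}\mathcal{E}$.

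First I would verify closure. By Lemma \ref{Tricotomia} any two neutrices are nested, so for $\alpha=a+A$ and $\beta=b+B$ with $A,B\in{}^{\rho}\mathcal{N}\cup\{0,{}^{\ast}\mathbb{R}\}$ one has $A+B=\max(A,B)\in\{A,B\}$, hence $\alpha+\beta=(a+b)+\max(A,B)\in{}^{\rho}\mathcal{E}$. For multiplication, $N(\alpha\beta)=aB+bA+AB$; each of $aB$, $bA$ is a scalar multiple of a member of ${}^{\rho}\mathcal{N}\cup\{0,{}^{\ast}\mathbb{R}\}$ and hence again of that form, and writing $A=pI$, $B=qJ$ with $I,J\in{}^{\rho}\mathcal{I}$ gives $AB=(pq)(IJ)$, which lies in ${}^{\rho}\mathcal{N}$ because ${}^{\rho}\mathcal{I}$ is closed under multiplication (Proposition \ref{ro I closed}); the degenerate cases where $A$ or $B$ equals $0$ or ${}^{\ast}\mathbb{R}$ are immediate. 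Since $N(\alpha\beta)$ is the largest of the three nested neutrices $aB,bA,AB$, it stays in the family, so $\alpha\beta\in{}^{\rho}\mathcal{E}$. As for the derived operations, $e(\alpha)=A$ and $s(\alpha)=-a+A$ are trivially in $^{\rho}\mathcal{E}$; for zeroless $\alpha=a+A$ one has $d(\alpha)=\tfrac1a+\tfrac1{a^{2}}A$, whose neutrix part is a scalar multiple of $A\in{}^{\rho}\mathcal{N}\cup\{0\}$ and so lies in ${}^{\rho}\mathcal{N}\cup\{0\}$, whence $d(\alpha)\in{}^{\rho}\mathcal{E}$ and then $u(\alpha)=\alpha\, d(\alpha)\in{}^{\rho}\mathcal{E}$ by closure under multiplication. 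This also settles Axiom \ref{Axiom escala}, taking $z=e(x)y$.

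Next I would treat the pure existence axioms. We have $0=0+0$, $1=1+0$ and $M={}^{\ast}\mathbb{R}$ in $^{\rho}\mathcal{E}$, so Axioms \ref{Axiom neut min}--\ref{Axiom neut max} hold; Axiom \ref{existencia neutrices} is witnessed by $G\in{}^{\rho}\mathcal{I}\subseteq{}^{\rho}\mathcal{N}$, which satisfies $0<G<{}^{\ast}\mathbb{R}$ by Proposition \ref{ro I closed}; Axiom \ref{numexterno} is exactly the definition of $^{\rho}\mathcal{E}$; and for Axiom \ref{scheiding neutrices}, given magnitudes $A\subsetneq B$ of $^{\rho}\mathcal{E}$, Lemma \ref{Precise separation neutrices} produces a precise $p\in{}^{\ast}\mathbb{R}$ with $A<p<B$, and such a $p$ is nonzero (as $0\in A<p$), hence zeroless, and belongs to $^{\rho}\mathcal{E}$. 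Finally, for the product-of-magnitudes axioms I would first note that the magnitudes of $^{\rho}\mathcal{E}$ are precisely ${}^{\rho}\mathcal{N}\cup\{0,M\}$ and its precise elements are exactly those of $\mathcal{E}$ (namely all of ${}^{\ast}\mathbb{R}$), so the notions of \emph{ideal of $J$} and of \emph{maximal ideal} — defined by quantification over precise positive elements below $J$ — are computed identically in $^{\rho}\mathcal{E}$ and in $\mathcal{E}$. Moreover the idempotent magnitudes of $^{\rho}\mathcal{E}$ are exactly ${}^{\rho}\mathcal{I}\cup\{0,M\}$: those in ${}^{\rho}\mathcal{I}$ are idempotent by Proposition \ref{ro I closed}, and if $J=qK$ with $K\in{}^{\rho}\mathcal{I}$ is a nonzero idempotent, then $J=1\cdot J$ and $J=q\cdot K$ both exhibit $J$ as a precise multiple of an idempotent, so $J=K\in{}^{\rho}\mathcal{I}$ by the uniqueness of Proposition \ref{Prop unicity idempotent}. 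For such an idempotent $J$ with $1<J<M$, Theorem \ref{Thm max ideal} identifies its maximal ideal as $I=\{1/x\mid x\in{}^{\ast}\mathbb{R},\,J<|x|\}\cup\{0\}$, which lies in ${}^{\rho}\mathcal{I}$ by Definition \ref{Definition ro}, and $IJ=I$ by Proposition \ref{ro I closed} (equivalently Proposition \ref{Lemma interpretation ideal}); the case $J=M$ has maximal ideal $0$ with $0\cdot M=0$. This gives Axiom \ref{Axiom Maximal Ideal}, and Axiom \ref{Axiom scale to ring} is then immediate since a magnitude of $^{\rho}\mathcal{E}$ is $0=1\cdot 0$, or ${}^{\ast}\mathbb{R}=1\cdot{}^{\ast}\mathbb{R}$, or of the form $qI$ with $q$ precise and $I\in{}^{\rho}\mathcal{I}$ idempotent, by the definition of ${}^{\rho}\mathcal{N}$. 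The main obstacle is not any single hard step but the bookkeeping: one must carefully run the case distinctions ($0$, ${}^{\ast}\mathbb{R}$, and $qI$) in the closure arguments for multiplication and for $u,d$, and one must be careful that maximal ideals are computed with respect to the same precise elements in $^{\rho}\mathcal{E}$ as in $\mathcal{E}$, which is what makes the identification of the idempotent magnitudes and their maximal ideals go through.
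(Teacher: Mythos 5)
Your proposal is correct and follows essentially the same route as the paper's (much terser) proof: the universal axioms transfer because $^{\rho}\mathcal{E}$ is a substructure of $\mathcal{E}$, the existence axioms and Axiom \ref{Axiom scale to ring} hold by construction, and Axiom \ref{Axiom Maximal Ideal} follows from Proposition \ref{ro I closed}. Your version merely makes explicit the closure checks and the identification of the idempotent magnitudes and their maximal ideals that the paper leaves implicit.
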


\begin{proof}
Axiom \ref{Axiom Maximal Ideal} holds by Proposition \ref{ro I closed}. Axiom
\ref{Axiom scale to ring} and the existence axioms hold by construction. The
remaining axioms hold because $^{\rho}\mathcal{E}$ is a substructure of
$\mathcal{E}$.
\end{proof}

By Proposition \ref{ro I closed}, the symbol $\oslash$ can be interpreted by
$H$, and the symbol $\pounds $ can be interpreted by $G$. Indeed, in $^{\rho
}\mathcal{E}$ the set of neutrices less than $1$ has a weak supremum, in fact
a maximum, in the form of $H$, while $G$ is the weak infimum (minimum) of the
set of neutrices larger than $1$.

To show whether the Generalized Dedekind completeness axiom holds in $^{\rho
}\mathcal{E}$, one should establish that definable lower halflines in $^{\rho
}\mathcal{E}$ have a weak supremum. This would require a deeper study of
polynomials of external numbers, which falls outside of the scope of this
article; observe that, due to the fact that distributivity does not hold in
full generality, the product of polynomials does not need to be a polynomial.
However, the introduction of natural numbers via the arithmetical axioms
permits to distinguish between $\mathcal{E}$ and $^{\rho}\mathcal{E}$. Indeed,
as will be shown below, the set $\mathcal{E}$ contains a copy of $^{\ast
}\mathbb{N}$, and induction holds in $^{\ast}\mathbb{N\cap}\pounds $, but not
in $^{\ast}\mathbb{N\cap}G$. For example, in $^{\ast}\mathbb{N\cap}G$ the
domain of function $x\mapsto2^{x}$ is closed under the successor function, but
this function is not total.

\subsection{On induction in complete arithmetical
solids\label{Subsection induction}}

The interpretation of $N$ in a complete arithmetical solid will be denoted by
$^{\ast}\mathcal{K}$. We show that $^{\ast}\mathcal{K}$\ satisfies the axioms
of Peano Arithmetic. Let $A$ be a formula of the language of Peano Arithmetic
which we denote by $\mathbf{L}=\{ \mathbf{0,1,}\boldsymbol{+}\mathbf{,\bullet
,s\}}$. We may extend the language $L=\left \{  +,\cdot,\leq \right \}  $ to a
language $L^{\prime}$ which includes the symbols $m,u,Sc$, corresponding
respectively to $\mathbf{0,1,s}$. Indeed, let $m$ be the neutral element for
addition as in Axiom \ref{Axiom neut min}. It is easy to see that it is unique
and therefore definable. The same is true for the neutral element for
multiplication $u$ of Axiom \ref{Axiom neut mult}. Putting $Sc(x)=x+u$, we
obtain a definable successor function (functional relation) of one variable
$Sc$. In this way, the formula $A$ has a $1-1$ correspondence with a formula
$B^{\prime}$ in the extended language $L^{\prime}$ which may be seen as an
abbreviation of a formula $C^{\prime}$ of the original language $L$. We let
$B$ be the relativization of $B^{\prime}$ to $N$, and $C$ be the
relativization of $C^{\prime}$ to $N$. Then within a complete arithmetical
solid $E$ the interpretations of $A,B$ and $C$ are all the same.

\begin{theorem}
\label{*K Peano}Let $E$ be a complete arithmetical solid. Then $^{\ast
}\mathcal{K}$ satisfies the Peano Axioms as formulated in the language
$\mathbf{L}$.
\end{theorem}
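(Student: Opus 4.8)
The plan is to verify each Peano axiom directly in $^{\ast}\mathcal{K}$, using the translation of $\mathbf{L}$-formulas into the language $L=\{+,\cdot,\leq\}$ set up just before the statement, so that $\mathbf{0},\mathbf{1},\mathbf{s}$ are read as $m$, $u$ and $Sc(x)=x+u$, and a formula $A$ of $\mathbf{L}$ corresponds (with the same interpretation in $E$) to the relativization to $N$ of the $L$-formula obtained by expanding the defined symbols. First I would record that the precise elements $\mathcal{P}$ of $E$, with $+$, $\cdot$ and $\leq$ inherited from $E$, form a linearly ordered commutative ring with unit $1$: by Axioms \ref{assemblyassoc}--\ref{assemblysim} together with $e(x)=0$ they are an abelian group under addition (so cancellation holds), by Axioms \ref{axiom assoc mult}, \ref{axiom com mult} and the distributivity Axiom \ref{Axiom distributivity} read with $e(x)=0$ they form a commutative ring, and the order axioms restrict to it. Inside $\mathcal{P}$, Axiom \ref{Axiom natural numbers} says that $^{\ast}\mathcal{K}\subseteq\{x\in\mathcal{P}:0\leq x\}$, that $0,1\in{}^{\ast}\mathcal{K}$ (since $0+1=1$ and $N(0)$, whence $N(1)$), that $^{\ast}\mathcal{K}$ is closed under $Sc$, and that no element of $^{\ast}\mathcal{K}$ lies strictly between $x$ and $Sc(x)$; in particular $0<n\Rightarrow 1\leq n$ for $n\in{}^{\ast}\mathcal{K}$.

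The non-induction Peano axioms then come out quickly. Since $1\neq 0$ in any solid (existence of zeroless elements) and $0\leq x$ for $x\in{}^{\ast}\mathcal{K}$, we get $Sc(x)=x+1\geq 1>0$, hence $Sc(x)\neq 0$; injectivity of $Sc$ is cancellation of $1$ in the additive group $\mathcal{P}$. The recursion equations follow at once: $x+0=x$ because $0=e(x)$ for precise $x$ (Axiom \ref{Axiom neut min}); $x+Sc(y)=Sc(x+y)$ is associativity; $x\cdot 0=0$ because $x\cdot 0=0$ for every $x\in E$ \cite[Prop.~3.5]{dinisberg 2015 -2}; and $x\cdot Sc(y)=x\cdot y+x$ is Axiom \ref{Axiom distributivity} read with $e(x)=0$ together with $1\cdot x=x$ (Axiom \ref{Axiom neut mult}). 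The remaining $\mathrm{PA}^-$ axioms (associativity, commutativity, distributivity, linearity of the order, monotonicity of $+$ and $\cdot$, and discreteness $0\leq x\to x=0\vee 1\leq x$) are the ordered-ring axioms restricted to $^{\ast}\mathcal{K}$ together with the discreteness clause of Axiom \ref{Axiom natural numbers}.

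Before this makes sense as a structure I must check that $^{\ast}\mathcal{K}$ is closed under $+$ and $\cdot$. I would obtain this from generalized Dedekind completeness together with the Archimedean Axiom \ref{axiom archimedes2}: for positive precise $p$ the lower halfline $\{x\in\mathcal{P}:\exists w(N(w)\wedge w\leq p\wedge x\leq w)\}$ is nonempty, and by Axiom \ref{Axiom Dedekind completeness} and the discreteness of $^{\ast}\mathcal{K}$ its weak least upper bound is a maximum which is the greatest element $\lfloor p\rfloor$ of $^{\ast}\mathcal{K}$ below $p$ — the open and strongly open cases being ruled out because two consecutive naturals cannot straddle $p$ (here cancellation and discreteness in $\mathcal{P}$ are combined). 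Given $m,n\in{}^{\ast}\mathcal{K}$, Axiom \ref{axiom archimedes2} produces naturals above $m+n$ and above $mn$, and comparing $m+n$ and $mn$ with the floors of neighbouring naturals, using closure under $Sc$ and the recursion equations, forces $m+n,mn\in{}^{\ast}\mathcal{K}$; equivalently one invokes the extended induction for $\{+,\cdot\}$-formulas over the naturals established in Section \ref{Section Characterization properties}.

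Finally, the induction scheme: an instance of Peano induction for an $\mathbf{L}$-formula $\varphi$, after the translation above, becomes an induction statement for the relativization to $N$ of an $L$-formula whose quantifiers range over precise variables; such quantifiers, constrained by $N$, range exactly over $^{\ast}\mathcal{K}$, so the statement is covered by Axiom \ref{Axiom *s-induction} together with its extension (to the wider class of formulas about naturals, Section \ref{Section Characterization properties}) and the Archimedean axiom to push unbounded $N$-quantifiers through a bounded search. I expect the main obstacle to be precisely this last reconciliation — matching the $N$-relativized quantifiers of genuine Peano formulas with the ``precise variables'' scope of Axiom \ref{Axiom *s-induction}, together with the totality of $+$ and $\cdot$ on $^{\ast}\mathcal{K}$; once the ordered-ring structure of $\mathcal{P}$ and the floor function $\lfloor\cdot\rfloor$ are in place, the rest is bookkeeping.
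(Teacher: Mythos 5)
Your verification of the non-induction axioms and of the induction scheme runs along the same lines as the paper's proof: $0,1\in{}^{\ast}\mathcal{K}$ and closure under $Sc$ come from Axiom \ref{Axiom natural numbers}, injectivity of $Sc$ from cancellation of the precise element $1$, the recursion equations from Axiom \ref{Axiom neut min}, associativity, $x\cdot 0=0$ and distributivity for precise elements, and Peano induction is reduced, via the translation of $\mathbf{L}$-formulas into $N$-relativized $L$-formulas, to Axiom \ref{Axiom *s-induction}. That part is fine (and the clause about "the Archimedean axiom to push unbounded $N$-quantifiers through a bounded search" is unnecessary: the paper simply applies Axiom \ref{Axiom *s-induction} directly to the relativized formula).

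The genuine problem is your treatment of closure of $^{\ast}\mathcal{K}$ under $+$ and $\cdot$. The floor-function argument does not establish it: knowing that some natural exceeds $m+n$ (Axiom \ref{axiom archimedes2}) and that a greatest natural $\lfloor m+n\rfloor\leq m+n$ exists gives only $\lfloor m+n\rfloor\leq m+n<\lfloor m+n\rfloor+1$, and nothing in the ordered-ring structure of $\mathcal{P}$ rules out $m+n$ sitting strictly between two consecutive naturals — that is exactly the statement to be proved, so "comparing with the floors of neighbouring naturals" is circular. Your fallback, invoking "the extended induction established in Section \ref{Section Characterization properties}", is also not available: that is Theorem \ref{K Peano}, which is proved \emph{after} and \emph{from} Theorem \ref{*K Peano}, and in any case concerns only the limited naturals $\mathcal{K}$, not all of $^{\ast}\mathcal{K}$. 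The correct (and much shorter) argument is the standard one: for a fixed parameter $m$ with $N(m)$, apply Axiom \ref{Axiom *s-induction} to $A(x):\equiv N(m+x)$, using $m+0=m$ and $m+(x+1)=(m+x)+1$ together with closure of $N$ under successor; then, with additive closure in hand, apply it to $A(x):\equiv N(m\cdot x)$ using $m\cdot 0=0$ and $m\cdot(x+1)=m\cdot x+m$. These are precisely the $N$-relativized instances with natural parameters that the axiom scheme is designed for, and they make the Dedekind-completeness and Archimedean detour, as well as the floor function, unnecessary here.
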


\begin{proof}
Observe that all elements of $^{\ast}\mathcal{K}$ are precise because the
predicate $N$ only applies to precise variables. With some abuse of language
let $0$ be the neutral element of $E$, $1$ be the unity of $E$, $+$ the
addition on $E$ and $\cdot$ the multiplication on $E$. We may interpret $Sc$
by the function $\sigma:{}^{\ast}\mathcal{K\rightarrow{}}^{\ast}\mathcal{K}$
given by $\sigma \left(  k\right)  =k+1$. As observed above $0$ is the
interpretation of $\mathbf{0}$, $1$ is the interpretation of $\mathbf{1}$, $+$
is the interpretation of $\boldsymbol{+}$, $\cdot$ is the interpretation of
$\mathbf{\bullet}$ and $\sigma$ is the interpretation of $\mathbf{s}$.
By\ Axiom \ref{Axiom natural numbers}, $0\in{}^{\ast}\mathcal{K}$ and $1\in
{}^{\ast}\mathcal{K}$. Also by Axiom \ref{Axiom natural numbers} the function
$\sigma$ is well-defined because if $k\in{}^{\ast}\mathcal{K}$ then $k+1\in
{}^{\ast}\mathcal{K}$, and all elements of $^{\ast}\mathcal{K}$ are
non-negative. Because $1$ is precise, from $\sigma \left(  k\right)
=\sigma \left(  k^{\prime}\right)  $ we derive that $k=k^{\prime}$ for all
$k,k^{\prime}\in{}^{\ast}\mathcal{K}$, hence $\sigma$ is $1-1$. By Axiom
\ref{Axiom neut min} one has $k+0=k$ for all $k\in{}^{\ast}\mathcal{K}$. By
associativity it holds that $k+\sigma \left(  k^{\prime}\right)  =k+\left(
k^{\prime}+1\right)  =\left(  k+k^{\prime}\right)  +1=\sigma \left(
k+k^{\prime}\right)  $ for all $k,k^{\prime}\in{}^{\ast}\mathcal{K}$. By
\cite[Prop. 3.5]{dinisberg 2015 -2} one has that $k.0=0$ for all $k\in E$. It
follows from the fact that distributivity holds for precise elements that
$k.\sigma \left(  k^{\prime}\right)  =k\left(  k^{\prime}+1\right)
=kk^{\prime}+k$, for all $k,k^{\prime}\in{}^{\ast}\mathcal{K}$. Let $A\left(
x\right)  $ be a property of the language of Peano Arithmetic allowing for a
free variable $x$ such that $A\left(  0\right)  $ holds and for all $x$ if
$A\left(  x\right)  $ then $A\left(  \mathbf{s}\left(  x\right)  \right)  $.
As argued before $A$ corresponds to a formula $B$ of the language $L$,
relativized to $N$, which has the same interpretation $I$ in $E$. Then $I$ is
a subset of ${}^{\ast}\mathcal{K}$ such that $0\in I$ and whenever $k\in I$
one has $k+1\in I$. Now $\forall x(N(x)\rightarrow B(x))$ by Axiom
\ref{Axiom *s-induction}. Because the interpretation of $N$ is $^{\ast
}\mathcal{K}$ it follows that $I={}^{\ast}\mathcal{K}$. Hence $A\left(
x\right)  $ holds for all $x\in{}^{\ast}\mathcal{K}$, i.e. induction holds for
the formula $A$. We conclude that ${}^{\ast}\mathcal{K}$ is a model for Peano Arithmetic.
\end{proof}

By Theorem \ref{*K Peano}, induction in $^{\ast}\mathcal{K}$ holds for
formulas of $L^{\prime}$ with only precise variables, all relativized to $N$.
The set of limited elements of $^{\ast}\mathcal{K}$ will be denoted by
$\mathcal{K}$. We show below that $\mathcal{K}$ is also a model of Peano
Arithmetic, with induction being valid for all formulas $A\left(  x\right)  $
of $L^{\prime}$ allowing for a free precise variable $x$ and quantifications
only over precise variables, possibly with (non precise) parameters. The two
types of induction may be compared with Internal Induction and External
Induction of $IST$, the first valid for internal formulas, i.e. formulas in
the language $\{ \in \}$, and the second valid also for external formulas, i.e.
formulas in the language $\{ \in,\st \}$.

\begin{theorem}
\label{K Peano}Let $E$ be a complete arithmetical solid. Then $\mathcal{K}$
satisfies the Peano Axioms with induction over formulas $A\left(  x\right)  $
allowing for a free precise variable $x$ and quantifications only over precise
variables expressed with the symbols $+,\cdot$ possibly with (non precise)
parameters. In fact $^{\ast}\mathcal{K}$ is an end extension of $\mathcal{K}$.
\end{theorem}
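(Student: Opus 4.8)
The plan is to dispatch the elementary structural facts first and then concentrate on the induction scheme, where the real work lies.

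Recall that $\mathcal{K}={}^{\ast}\mathcal{K}\cap\pounds$ (the limited elements of $N$). Since $\pounds$ is a convex additive subgroup (Theorem~\ref{zerobar<1<L}) which is moreover closed under multiplication ($\pounds\pounds=\pounds$ by Theorem~\ref{Proposition Mult magnitudes}), and since $^{\ast}\mathcal{K}$ contains $0,1$ and is closed under $+$, $\cdot$ and $x\mapsto x+1$ (Theorem~\ref{*K Peano}), together with $0<1<\pounds$ one gets at once that $\mathcal{K}$ contains $0$ and $1$ and is closed under addition, multiplication and successor. Injectivity of the successor, the fact that $0$ is not a successor, and the recursion equations for $+$ and $\cdot$ are universal sentences which $\mathcal{K}$ inherits from $^{\ast}\mathcal{K}$. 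For the end-extension claim: if $k\in\mathcal{K}$, $m\in{}^{\ast}\mathcal{K}$ and $m<k$, then $0\le m\le k<\pounds$, whence $m\in\pounds$, i.e.\ $m\in\mathcal{K}$; thus no element of $^{\ast}\mathcal{K}\setminus\mathcal{K}$ lies below an element of $\mathcal{K}$, and $\mathcal{K}\neq{}^{\ast}\mathcal{K}$ because Axiom~\ref{axiom archimedes2} produces, from a precise $q$ with $\pounds<q$, a natural number above $q$ and hence above $\pounds$. Once the induction scheme is established, $\mathcal{K}$ is a model of Peano Arithmetic; note that instances of the scheme with only natural-number parameters are already covered by Axiom~\ref{Axiom *s-induction} applied inside $^{\ast}\mathcal{K}$.

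For the induction scheme, let $A(x)$ be as in the statement, with $A(0)$ true and $A(x)\rightarrow A(x+1)$ for every $x\in\mathcal{K}$; the goal is $A(k)$ for all $k\in\mathcal{K}$. Consider the formula
\[
D(x)\;\equiv\;\forall n\,(N(n)\wedge 0\le n\le x\rightarrow A(n)).
\]
It has a free precise variable, quantifiers only over precise variables, and (possibly non-precise) parameters inherited from $A$, so it is admissible for Axiom~\ref{Axiom Dedekind completeness}; it is downward closed and $D(0)$ holds (the only natural $\le 0$ is $0$), so Condition~(\ref{Condition A}) is met. Let $\delta=\operatorname*{zup}D$, classified by Theorem~\ref{Theorem Dedekind}. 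The key observation is a ``jump by one'': if $x\in D$, $0\le x$ and $x<\pounds$, then $x+1\in D$. Indeed a natural $n\le x+1$ either satisfies $n\le x$ (whence $A(n)$ by $x\in D$) or satisfies $x<n\le x+1$, in which case $n\ge1$, so $n-1$ is a natural with $0\le n-1\le x<\pounds$, hence $n-1\in\mathcal{K}$ and $A(n-1)$ holds, so $A(n)=A((n-1)+1)$ by hypothesis.

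It now suffices to show $\delta\ge\pounds$: then each $k\in\mathcal{K}$ has $k<\pounds\le\delta$, and one verifies $k\in D$ in every case of Theorem~\ref{Theorem Dedekind}, so $A(k)$ follows. Suppose, towards a contradiction, that $\delta<\pounds$; then $\delta$ is limited, $\delta\ge0$ (since $0\in D$), and the case $\delta<1$ is excluded, since it would force $1\notin D$, i.e.\ $\lnot A(1)$, against the inductive step applied at $0$. In the closed case $\delta=\max D$, the jump-by-one property gives $\delta+1\in D$, contradicting maximality. In the strongly open case $\delta$ is zeroless (a magnitude $\delta$ would put $0\notin D$), say $\delta=p+e(\delta)$ with $p$ precise, $p>0$; since $\delta<\pounds$ one has $e(\delta)\le\oslash<1/2$ (there is no magnitude strictly between $\oslash$ and $\pounds$, Corollary~\ref{no magnitude between}), so every precise $t$ with $t+e(\delta)=\delta$ satisfies $|t-p|<1/2$; hence $y=p-1/2$ lies in $D$, is limited, and $y\ge0$ (as $p\ge1/2$, otherwise $\delta=p+e(\delta)\le 1/2+\oslash<1$). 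By the jump-by-one property $y+1=p+1/2\in D$; but, taking $t=p$ in the strongly-open description of $D$, $p+1/2\not<p$, so $p+1/2\notin D$, a contradiction. Therefore $\delta\ge\pounds$.

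The routine part is the structural bookkeeping of the first paragraph; the main obstacle is the analysis of the strongly open case — keeping track of the neutrix part $e(\delta)$ of the weak supremum and its precise representatives, ruling out the degenerate possibilities $\delta<1$ and $\delta$ a magnitude, and checking $k\in D$ for $k\in\mathcal{K}$ once $\delta>\pounds$ (here one uses the trichotomy of Lemma~\ref{Tricotomia} together with $e(\delta)\le\oslash$). These steps are elementary but need care; the analogue for upper halflines and the reduction in Proposition~\ref{Dedekind precise} are the models to follow.
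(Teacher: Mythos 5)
Your overall strategy is sound and is in fact a close cousin of the paper's: both proofs attach to the inductive set $\bar{A}$ a definable lower halfline of precise elements, apply Generalized Dedekind completeness, and exploit stability under adding $1$ to force the boundary up to (at least) $\pounds$. The paper works with $B=\{x \text{ precise}\mid \exists a\in\bar{A}\,(0\leq x\leq a)\}$, observes directly that the neutrix part $C$ of $\operatorname{zup}B$ cannot be $\subseteq\oslash$ (hence $\pounds\subseteq C$), and then reads off $\mathcal{K}\subseteq\bar{A}$ in the two remaining sub-cases; you instead take the ``hereditary'' set $D$ and argue by contradiction that $\operatorname{zup}D\geq\pounds$ via your jump-by-one lemma. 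Your route is somewhat longer but makes the role of the successor-closure more explicit, and your treatment of the end-extension claim and of $\mathcal{K}\neq{}^{\ast}\mathcal{K}$ is more detailed than the paper's.

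There is, however, a genuine gap in the case analysis. You invoke the classification of Theorem \ref{Theorem Dedekind}, which has \emph{three} cases, but you only refute two of them under the hypothesis $\delta<\pounds$: the closed case ($\delta=\max D$) and the strongly open case. The open case, $x\in D\leftrightarrow x<\sigma$ with $\sigma\notin D$, is never addressed, and it is a live possibility for a definable set of precise elements (already $\{x \text{ precise}\mid x<5\}$ realizes it). It does succumb to the same trick --- since $\sigma\geq 1$ and $e(\sigma)\leq\oslash<1/2$, the element $\sigma-1/2$ (or $p-1/2$ for a precise representative $p$ of $\sigma$) lies in $D$, so $\sigma+1/2\in D$ by jump-by-one, whence $\sigma+1/2<\sigma$, absurd --- but as written the proof is incomplete. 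A second, smaller loose end: in the final verification that $k\in D$ for all $k\in\mathcal{K}$ once $\delta\geq\pounds$, you appeal to ``$e(\delta)\leq\oslash$'', but that bound was derived only under the contradiction hypothesis $\delta<\pounds$; in the strongly open case with $\delta\geq\pounds$ one must instead argue separately (e.g.\ every precise $t$ with $t+e(\delta)=\delta$ exceeds every limited number, splitting on whether $e(\delta)\geq\pounds$ or $e(\delta)\leq\oslash$), and the reference to Lemma \ref{Tricotomia}, a statement about the concrete model $\mathcal{E}$, is out of place in this abstract setting.
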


\begin{proof}
By construction $\pounds $ contains $0$ and $1$. Because it is an idempotent
magnitude it is closed under addition and multiplication. The successor
function may be interpreted by the function $s:{}\mathcal{K\rightarrow{}K}$
given by $s\left(  k\right)  =k+1$. Then the Peano Axioms, except induction,
are proved along the lines of Theorem \ref{*K Peano}. Let $A\left(  x\right)
$ be a property in the language $L^{\prime}$ allowing for a free precise
variable $x$ and quantifications only over precise variables, possibly with
(non precise) parameters, such that $A\left(  0\right)  $ holds, and for all
$x,$ if $A\left(  x\right)  $ then $A\left(  x+1\right)  $. Then $A$ is
interpreted by a set $\bar{A}$ with $0\in \bar{A}$ and whenever $k\in \bar{A}$
one has $k+1\in \bar{A}$. We prove that $\mathcal{K\subseteq}\bar{A}$, i.e.
induction over $\mathcal{K}$ holds for the formula $A$. Let $B=\left \{
x|e\left(  x\right)  =0\wedge \exists a\in \bar{A}\left(  0\leq x\leq a\right)
\right \}  $. Let $\gamma=c+C=\operatorname*{zup}B$. Then $0\leq \gamma$ and we
may assume that $0\leq c$. Because $\bar{A}$ is closed under addition by $1$
it is impossible that $C\subseteq \oslash$, hence $\pounds \subseteq C$. Assume
$\gamma$ is a supremum. Then $c+C\subseteq B$. Hence $\mathcal{K={}}^{\ast
}\mathcal{K}\cap \pounds \subseteq{}^{\ast}\mathcal{K}\cap \lbrack
0,c+\pounds )\subseteq{}^{\ast}\mathcal{K}\cap \lbrack0,c+C)=\bar{A}$. Assume
$\gamma$ is not a supremum. Then $B=\left[  0,c+C[\right[  $ with $0<c+C$. We
have $c/2<c+C$ by Lemma \ref{precise separation}. Suppose that $c/2\in
\pounds $. Then $c\in \pounds $. Hence $B\subset \pounds $. We conclude that
$C\subseteq \oslash$, a contradiction. Hence $\mathcal{K={}}^{\ast}%
\mathcal{K}\cap \pounds \subseteq{}^{\ast}\mathcal{K}\cap \left[  0,c/2\right]
\subseteq{}^{\ast}\mathcal{K}\cap B=\bar{A}$. It follows that induction over
$\mathcal{K}$ holds for the formula $A$. Hence $\mathcal{K}$ is a model for
Peano Arithmetic. By construction $^{\ast}\mathcal{K}$ is an end extension of
$\mathcal{K}$.
\end{proof}

Given a complete arithmetical solid $E$, we denote by $^{\ast}\mathcal{Q}$ the
set of rational numbers constructed in the usual way from $^{\ast}\mathcal{K}$
and by $\mathcal{Q\subset{}}^{\ast}\mathcal{Q}$ the set of rational numbers
constructed from $\mathcal{K}$.

\subsection{Precise and non-precise elements of a complete arithmetical
solid\label{Subsection precise and non-precise}}

We start by showing that the non-precise elements of a complete arithmetical
solid are characterized by sums of rationals and magnitudes.

\begin{theorem}
\label{Lemma rational}Let $E$ be a complete arithmetical solid. Let $\tilde
{E}$ be the set of non-precise elements of $E$. Then $\tilde{E}$ $=\left \{
q+N|q\in{}^{\ast}\mathcal{Q}\wedge N\in \mathcal{N}\right \}  $.
\end{theorem}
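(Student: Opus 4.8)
The plan is to prove the two inclusions separately, the inclusion $\left\{q+N\mid q\in{}^{\ast}\mathcal{Q}\wedge N\in\mathcal{N}\right\}\subseteq\tilde{E}$ being immediate and the reverse inclusion carrying all the weight. For the easy direction, if $q\in{}^{\ast}\mathcal{Q}\subseteq\mathcal{P}$ and $N$ is a nonzero magnitude, then by linearity of the magnitude operation $e\left(q+N\right)=e\left(q\right)+N=N\neq0$, so $q+N$ is non-precise. (When $N=0$ the sum $q+N=q$ is precise, so strictly speaking the right-hand set should be read with $N$ ranging over nonzero magnitudes.)

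For the hard direction, let $x\in\tilde{E}$. By Axiom \ref{numexterno} we may write $x=a+e\left(x\right)$ with $a$ precise, and since $x$ is non-precise, $e\left(x\right)\neq0$, hence $0<e\left(x\right)$. It suffices to produce $q\in{}^{\ast}\mathcal{Q}$ with $\left|q-a\right|<e\left(x\right)$: then $q-a$ is a precise element and $\left(q-a\right)+e\left(x\right)=e\left(x\right)$, whence $x=a+e\left(x\right)=q+e\left(x\right)$ with $e\left(x\right)\in\mathcal{N}$. Two ingredients are needed. The first is an absorption lemma: if $f$ is a magnitude and $b$ is precise with $\left|b\right|<f$, then $b+f=f$; this follows from Axiom \ref{Axiom e(x)maiory}, Lemma \ref{Precise separation elements} and the identity $f+f=f$, by assuming $b+f\neq f$ and (by symmetry) $b\geq0$, separating $f<t<b+f$ by a precise $t$, and deducing $t<f+f=f$, a contradiction. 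The second ingredient is density of $^{\ast}\mathcal{Q}$ in $\mathcal{P}$: for precise $a$ and precise $p>0$ there is $q\in{}^{\ast}\mathcal{Q}$ with $\left|q-a\right|<p$. Choosing, by Lemma \ref{Precise separation neutrices}, a precise $p$ with $0<p<e\left(x\right)$ then completes the proof.

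Density is proved by mimicking the classical argument that $\mathbb{Q}$ is dense in an Archimedean ordered field; the Archimedean Axiom \ref{axiom archimedes2} supplies, for any precise $p>0$, a $z$ with $N\left(z\right)$ and $1/z<p$, so the only real issue is the existence of an integer part. Thus the key lemma is: for every precise $b\geq0$ there is $k$ with $N\left(k\right)$ and $k\leq b<k+1$. Here Generalized Dedekind Completeness (Axiom \ref{Axiom Dedekind completeness}) is used: apply it to $A\left(x\right)\equiv\exists k\left(N\left(k\right)\wedge x\leq k\wedge k\leq b\right)$, which satisfies the lower-halfline condition (\ref{Condition A}) and, by Axiom \ref{axiom archimedes2}, is bounded. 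By Theorem \ref{Theorem Dedekind} the weak supremum of $A$ exists in one of three forms; the open and strongly open cases are excluded by short arguments using discreteness of $N$ (Axiom \ref{Axiom natural numbers}) and the separation lemmas --- for instance, if $A=\left(-\infty,\sigma\right)$ with $\sigma\notin A$, choosing $x\in A$ with $\sigma-1<x\leq k\leq b$ for some $k$ with $N\left(k\right)$ forces $k<\sigma<k+1$ and $b<k+1$, whence every element of $A$ is $\leq k$ and $A=\left(-\infty,k\right]$ is closed, a contradiction. Hence $A=\left(-\infty,\rho\right]$, and the defining formula forces $\rho$ itself to satisfy $N\left(\rho\right)$ and $\rho\leq b<\rho+1$. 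Granting integer parts, density follows at once: reduce to $a\geq0$, put $k=\left\lfloor az\right\rfloor$, and take $q=k/z\in{}^{\ast}\mathcal{Q}$, so $0\leq a-q<1/z<p$.

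The main obstacle is exactly this integer-part lemma. One cannot appeal to least-number principles inside $^{\ast}\mathcal{K}$, because the bound $b$ is an arbitrary precise element of $E$ and hence not among the parameters admitted by the internal induction scheme of Axiom \ref{Axiom *s-induction}; Generalized Dedekind Completeness must be invoked instead, and some care is needed both in handling the trichotomy of Theorem \ref{Theorem Dedekind} and in checking that the completeness scheme may legitimately be applied to a formula containing the predicate $N$.
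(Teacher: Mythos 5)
Your overall strategy --- write $x=a+e\left(  x\right)  $ with $a$ precise, produce $q\in{}^{\ast}\mathcal{Q}$ with $\left \vert a-q\right \vert <e\left(  x\right)  $, and absorb $a-q$ into the magnitude --- is exactly the paper's, which disposes of the matter in three lines by simply asserting that $^{\ast}\mathcal{Q}\cap \left[  p,p+b\right]  \neq \emptyset$ for a precise $b$ with $0<b<N$. You are right that all the content lies in this density claim, that it reduces to an integer-part lemma for an arbitrary precise bound $b$, and that Axiom \ref{Axiom *s-induction} is unavailable because $b$ is not an admissible parameter, so Axiom \ref{Axiom Dedekind completeness} must carry the load. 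Up to and including the absorption lemma and the reduction to integer parts, your argument is sound.

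The gap is in the exclusion of the open (and strongly open) case of Theorem \ref{Theorem Dedekind} for $A\left(  x\right)  \equiv \exists k\left(  N\left(  k\right)  \wedge x\leq k\wedge k\leq b\right)  $. Your argument picks $x\in A$ with $\sigma-1<x$, which presupposes $\sigma-1<\sigma$, i.e.\ that $1$ is not absorbed by $e\left(  \sigma \right)  $. But if $T=\left \{  k\mid N\left(  k\right)  \wedge k\leq b\right \}  $ has no maximum, it is closed under successor, and then the weak supremum of its downward closure necessarily satisfies $1+e\left(  \sigma \right)  =e\left(  \sigma \right)  $ --- otherwise essentially your own separation argument already yields a contradiction --- so $\pounds \leq e\left(  \sigma \right)  $, $\sigma-1=\sigma$, and no such $x$ exists. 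In other words, the only configuration your argument eliminates is one that cannot occur, while the configuration that actually needs to be excluded --- a bounded, successor-closed, maximum-free cut of $^{\ast}\mathcal{K}$ realized by the precise parameter $b$, so that $b$ sits in a gap of $^{\ast}\mathcal{K}$ of width at least $\pounds $ --- is untouched. Worse, in that configuration the very instance of Axiom \ref{Axiom Dedekind completeness} you invoke is satisfied (case \ref{Dedekind1} holds with $\sigma$ equal to, e.g., $\pounds $ when $T$ is the set of limited naturals), so this single instance of the completeness scheme cannot by itself rule the configuration out; yet ruling it out is precisely the content of the theorem, since such a $b$ would give $b+\pounds \neq q+\pounds $ for every $q\in{}^{\ast}\mathcal{Q}$. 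The paper's own proof is silent on this point as well, but as a self-contained argument yours does not close it: you need a genuinely new idea (or a different instance of the axioms) to exclude the case $e\left(  \sigma \right)  \geq \pounds $.
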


\begin{proof}
Let $\xi \in \tilde{E}$. Then there exists $p\in \mathcal{P}$ and $N\in
\mathcal{N}$ such that $\xi=p+N$. Let $0<b<N$. Then ${}^{\ast}\mathcal{Q\cap
}\pounds \cap \left[  p,p+b\right]  \neq \emptyset$. Let $q\in{}^{\ast
}\mathcal{Q\cap}\pounds \cap \left[  x,x+b\right]  $. Then $q+N=p+N$.
\end{proof}

By the previous theorem, the set of magnitudes of a complete arithmetical
solid $E$ determines the set of non-precise elements. However, the nonstandard
rationals $^{\ast}\mathcal{Q}$ give only a lower bound for the precise
elements. We will show below that an upper bound is given by nonstandard
reals. Also, Theorem \ref{Lemma rational} allows to define standard precise
numbers. To see this we need to introduce some definitions and notation.

\begin{definition}
\label{Definition K-notions}Let $x\in E$ be precise and limited. Then
$x+\oslash$ is called the $\mathcal{K}$-\emph{shadow }of $x$. The
$\mathcal{K}$-\emph{shadow} of a subset $D$ of precise elements of $E$ is the
set of $\mathcal{K}$-shadows of all limited elements of $D$. We denote the
$\mathcal{K}$-shadow of $\mathcal{Q}$ by $\mathfrak{Q}$, the $\mathcal{K}%
$-shadow of $^{\ast}\mathcal{Q}$ by $\mathfrak{F}$ and the $\mathcal{K}%
$-shadow of $\mathcal{P}$ by $\mathfrak{P}$.
\end{definition}

\begin{definition}
Let $K$ be a model of Peano Arithmetic. An ordered field $F$ is $K$%
-\emph{Archimedean} if for all $a,b\in F,a,b>0$ there is $k\in K$ such that
$ka>b$. A $K$-\emph{real field} is an ordered field which is $K$-Archimedean
and such that every precise lower halfline has a least upper bound.
\end{definition}

If $\mathcal{K}=\mathbb{N}$ or $K=\mathbb{N}$, we may suppress the prefix in
the above notions, for they correspond to common notions. Observe that a
$\mathbb{N}$-real field is isomorphic to $\mathbb{R}$ and that the
$\mathbb{N}$-shadow of a limited real number is in $1-1$ correspondence with
the usual shadow $^{\circ}x$ of $x$, more commonly called standard part
\cite{Dienernsaip}.

The proof that $\mathcal{P}$ is contained in a copy of the nonstandard reals
uses the construction based on lower halflines given by Theorem
\ref{*R ordered field}. As regards the $K$-shadow of $\mathcal{P}$, we have
$\mathfrak{P=F}$ by Theorem \ref{Lemma rational}. Also, $\mathfrak{Q}%
\subseteq \mathfrak{P}$. For $\mathcal{K}=\mathbb{N}$ we have $\mathcal{Q=}%
\mathbb{Q}$, and $\mathfrak{P}$ itself is a real field. Indeed $\mathfrak{P}$
is isomorphic to the quotient of $^{\ast}\mathbb{Q}$ by the infinitesimals,
which is isomorphic to $\mathbb{R}$. In general, the set $\mathfrak{F}$ turns
out to be a $\mathcal{K}$-Archimedean ordered field, see Theorem
\ref{R ordered field} below. It is not necessarily a $\mathcal{K}$-real field,
for it may not be Dedekind complete, since in principle the Dedekind property
holds only for definable cuts. Then by Theorem \ref{*R ordered field} it can
be extended to a $\mathcal{K}$-real field.

\begin{definition}
\label{Definition *R}Let $K$ be a model of Peano Arithmetic and $F$ be a
$K$-\linebreak Archimedean ordered field. Similarly to Definition
\ref{Definition halfline}, a lower halfline $A$ is \emph{precise} if there is
no positive $d$ such that $a+d\in A$ for all $a\in A$. We define $R_{F}$ as
the set of precise lower halflines of elements of $F$ without maximal element.
For $f\in{}F$ we define $H\left(  f\right)  =\left \{  x\in{}F|x<f\right \}  $.
We put $^{\ast}\mathcal{R=}R_{^{\ast}\mathcal{Q}}$ and $\mathfrak{R}%
\mathcal{=}R_{\mathfrak{F}}$.
\end{definition}

Clearly a precise lower halfline $A$ is strictly contained in $F$ and for all
$k\in{}K$ there exist $a\in{}A$ and $c\in{}F\backslash{}A$ such that $c-a<1/k$.

The proof of the construction of a $K$-real field from a $K$-Archimedean
ordered field follows roughly the lines of the usual construction of the reals
as Dedekind cuts of the rationals.

\begin{theorem}
\label{*R ordered field}Let $K$ be a model of Peano Arithmetic and $F$ be a
$K$-\linebreak Archimedean ordered field. Then $R_{F}$ is a $K$-real field.
The mapping $H$ is an isomorphism of the field $F$ onto a subfield $H\left(
F\right)  \subseteq$ $R_{F}$.
\end{theorem}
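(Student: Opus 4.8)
The plan is to carry out the classical construction of the reals as Dedekind cuts of the rationals, with the $K$-Archimedean ordered field $F$ in place of $\mathbb{Q}$ and the $K$-Archimedean property of $F$ doing the work that the Archimedean property of $\mathbb{Q}$ does classically. The one genuinely new bookkeeping burden is that, since $F$ may be non-Archimedean in the absolute sense, membership in $R_F$ is governed by \emph{preciseness} of the halfline (Definition \ref{Definition *R}) and not merely by being a proper subset, so preciseness must be tracked through every step. I would first order $R_F$ by inclusion; since any two lower halflines of $F$ are $\subseteq$-comparable this is a total order, and $H(f)=\{x\in F:x<f\}$ is readily seen to be a nonempty proper lower halfline of $F$ without maximal element and precise, hence an element of $R_F$. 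Using that $F$ is a field (so midpoints exist), $H$ is strictly order-preserving and therefore injective.

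For the field structure I would take the usual definitions: $A+B=\{a+b:a\in A,\ b\in B\}$ with zero $H(0)$ and $-A=\{x\in F:\exists z\in F\setminus A\ (x+z<0)\}$; and for $A,B\ge H(0)$, $A\cdot B=H(0)\cup\{ab:0\le a\in A,\ 0\le b\in B\}$ with unit $H(1)$ and, for $A>H(0)$, the inverse $A^{-1}=\{x\le 0\}\cup\{x>0:\exists y>x\ (1/y\notin A)\}$, the product of cuts of mixed sign being defined from these by the sign rules. For each constructed set I must check that it is again a nonempty proper lower halfline of $F$ without maximal element \emph{and precise}: the first part is exactly the textbook verification, while preciseness is the new point and is handled by a "split-the-increment" argument. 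For instance, for $A+B$: given $d>0$, preciseness of $A$ and $B$ yields $a\in A$, $b\in B$ with $a+d/2\notin A$ and $b+d/2\notin B$; then $a+b\in A+B$ but $(a+b)+d\notin A+B$, since any representation $a'+b'$ with $a'\in A,\ b'\in B$ would force $a'<a+d/2$ and $b'<b+d/2$, hence $a'+b'<(a+b)+d$. Analogous (standard) arguments handle $-A$, $A\cdot B$ and $A^{-1}$. The ordered-field axioms — commutativity, associativity, inverses, and especially distributivity — then follow by the usual cut computations, and $H$ visibly respects $+,\cdot,0,1$, so $H$ is an order-isomorphism of $F$ onto the subfield $H(F)\subseteq R_F$.

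That $R_F$ is $K$-Archimedean is obtained directly: given $H(0)<A\le B$ in $R_F$, pick a positive $a\in A$ (available since $A$ has no maximum) and a positive $c\in F\setminus B$, and use $K$-Archimedeanness of $F$ to get $k\in K$ with $ka>c$; then the cut $kA$ contains a value exceeding $c$ and hence every element of $B$, so $kA>B$. The $K$-Archimedean hypothesis (together with Bernoulli's inequality and induction in $K$, which is available as $K\models PA$) also enters in verifying $A\cdot A^{-1}=H(1)$ for $A>H(0)$, via the fact that for every $r>1$ in $F$ the geometric progression $a_0,ra_0,r^2a_0,\dots$ eventually escapes $A$, yielding $a\in A$ and $c\in F\setminus A$ with $c=ra$.

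It remains to verify the Dedekind (least-upper-bound) property. For a precise lower halfline $\mathcal A$ of $R_F$, I claim its least upper bound is $\sigma:=\bigcup\mathcal A$. As a subset of $F$ this is a nonempty lower halfline with no maximal element, and it is proper since some cut $C\notin\mathcal A$ bounds $\mathcal A$, whence $\sigma\subseteq C\subsetneq F$; the crux is that $\sigma$ is precise. If it were not, then $\sigma+d\subseteq\sigma$ for some $d>0$; setting $E:=H(d)>H(0)$, preciseness of $\mathcal A$ gives $B\in\mathcal A$ with $B+E\notin\mathcal A$. But $B+E\subseteq\sigma$ (each $b+y$ with $b\in B$, $y<d$ satisfies $b+y<b+d\in\sigma$), and $B+E\neq\sigma$ because $B+E\in R_F$ whereas $\sigma$, being non-precise, is not; hence $B+E\subsetneq\sigma$, and choosing $x\in\sigma\setminus(B+E)$ — which lies in some $A'\in\mathcal A$ and dominates $B+E$ — gives $B+E\subseteq H(x)\subseteq A'$, so $B+E\in\mathcal A$, a contradiction. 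Thus $\sigma\in R_F$, and it is plainly the $\subseteq$-least upper bound of $\mathcal A$, so $R_F$ is a $K$-real field. I expect the main obstacle to be not any single deep step but the sustained care needed to preserve preciseness throughout — in the field operations, and above all in the completeness argument just sketched — since this is exactly where the construction departs from the familiar one over $\mathbb{Q}$.
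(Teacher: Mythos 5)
Your proposal is correct and follows essentially the same route as the paper: the textbook Dedekind-cut construction (which the paper simply delegates to Rudin), with the least upper bound of a precise lower halfline $\mathcal{A}$ realized as the union of its members and the only genuinely new work being the verification that this union is again precise --- your argument for that step, via the auxiliary cut $B+H(d)$ and the precise/non-precise dichotomy, is a clean variant of the paper's more computational estimate with the elements $x,x',y,y'$. The one fragile point is your appeal to the geometric progression $a_0, ra_0, r^2a_0,\dots$ in verifying $A\cdot A^{-1}=H(1)$, since $r^k$ for a nonstandard $k\in K$ need not be defined in $F$; using the arithmetic progression $a_0(1+n(r-1))$ together with the $K$-Archimedean property avoids this.
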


\begin{proof}
As for the definition of addition, multiplication and order, the verification
of their first-order properties and the fact that $R_{F}$ contains a copy of
$F$ via the mapping $H$, one may follow the lines of a textbook proof of the
construction of the real numbers from the rationals using cuts, see for
instance \cite{Rudin}. By construction $R_{F}$ is Archimedean for $K$. We show
that the least upper bound property is satisfied for precise halflines. Let
$A\subset{}R_{F}$ be a precise lower halfline. Let
\[
b=\cup_{a\in A}a.
\]
Let $x\in b$ and $y\in{}F$ such that $y<x$. Then there exists $a\in A$ such
that $x\in a$. Because $a$ is a lower halfline of $F$ one has $y\in a$. Hence
$y\in b$. We conclude that $b$ is lower halfline of $F$. Also, since $a$ does
not have a maximal element, there exists $z\in a$ such that $x<z$. Then $z\in
b$ with $x<z$, and we see that $b$ does not have a maximal element. We show
that $b$ is precise. If not, there exists a positive $d$ $\in{}F$ such that
$z+d\in b$ whenever $z\in b$. Because $A$ is precise there exist $a\in A$ and
$c\notin A$ such that $c-a=d$. There exist elements $x,x^{\prime}\in{}F$ such
that $x^{\prime}-x<d$ $\ $with $x\in a$ and $x^{\prime}\notin a$. Also there
exist elements $y,y^{\prime}\in{}F$ such that $y^{\prime}-y<d$ $\ $with $y\in
c$ and $y^{\prime}\notin c$. Note that $y^{\prime}\notin b$ because
$y^{\prime}\notin c$ and $c\notin A$.\ On the other hand $y-x^{\prime}<d=c-a$
and
\[
y^{\prime}<x^{\prime}+y^{\prime}-y+d<x^{\prime}+2d<x+3d.
\]
Now$\ x\in b$, so $x+3d\in b$. Then $y^{\prime}\in b$ because $b$ is a lower
halfline, a contradiction. Hence $b$ is precise and we conclude that $b\in{}F$.

Finally we show that $b$ is the least upper bound of $A$. Let $a\in A$. Then
$a\leq b$ because $x\in b$ whenever $x\in F$ and $x\in a$. Suppose that
$b^{\prime}<b$. Then there exists $x\in b$ such that $x\notin b^{\prime}$.
Then $x\in a$ for some $a\in A$. Then $b^{\prime}<a$, hence $b^{\prime}$ is
not an upper bound of $A$.
\end{proof}

\begin{theorem}
Let $K$ be a model of Peano Arithmetic and $F$ be a $K$-\linebreak Archimedean
ordered field. Let $Q$ be the set of rational numbers corresponding to $K$.
Then $Q$ may be embedded in $F$ and here is a isomorphism between $R_{Q}$ and
$R_{F}$.
\end{theorem}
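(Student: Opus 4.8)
The plan is to exhibit an explicit order- and field-isomorphism between $R_Q$ and $R_F$ by cutting lower halflines of $F$ down to $Q$ and, conversely, filling lower halflines of $Q$ back up inside $F$. First I would record that $Q$ embeds in $F$: since $F$ is $K$-Archimedean the map $k\mapsto k\cdot 1_F$ is an order embedding of $K$, hence of $\mathbb{Z}_K$, into $F$, and it extends uniquely to an order embedding of the fraction field $Q$ into $F$; from here on $Q$ is identified with its image, so that $Q\subseteq F$ and both are $K$-Archimedean, hence (by Theorem~\ref{*R ordered field}) $R_Q$ and $R_F$ are $K$-real fields. The crucial lemma — and the step I expect to cost the most work — is that $Q$ is \emph{dense} in $F$: for all $x<y$ in $F$ there is $q\in Q$ with $x<q<y$. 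As in the classical proof that $\mathbb{Q}$ is dense in any Archimedean ordered field, one chooses $k\in K$ with $k(y-x)>1$ (possible by the $K$-Archimedean property) and then produces an element $m$ of $\mathbb{Z}_K$ strictly between $kx$ and $ky$; since $ky-kx>1$ such an $m$ exists once the bounded downward-closed set $\{m\in\mathbb{Z}_K\mid m\le ky\}$ has a largest element, which is obtained from the induction (least-number principle) available in the model $K$ of Peano Arithmetic. The delicate point is the familiar one in this nonstandard setting: the set to which the least-number principle is applied is specified with a parameter from $F$, so one must argue where this is legitimate rather than abstractly.

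Granted density, define $\Phi\colon R_F\to R_Q$ and $\Psi\colon R_Q\to R_F$ by
\[
\Phi(A)=A\cap Q,\qquad \Psi(B)=\{x\in F\mid \exists q\in B\ (x<q)\}.
\]
I would first check these are well defined. For $\Phi(A)$: it is clearly a lower halfline of $Q$; a largest element of $A\cap Q$, or a positive rational $d'$ with $q+d'\in A\cap Q$ for all $q\in A\cap Q$, would by density produce a largest element of $A$, respectively a positive $d=d'/2\in F$ with $a+d\in A$ for all $a\in A$, contradicting $A\in R_F$. For $\Psi(B)$: it is a lower halfline of $F$ with no largest element (if $x<q\in B$, pick $q'\in B$ with $q<q'$; then $q\in\Psi(B)$ and $x<q$), and it is precise, for if some $d\in F$, $d>0$ had $x+d\in\Psi(B)$ for all $x\in\Psi(B)$, then choosing by the $K$-Archimedean property a rational $d'$ with $0<d'<d$ one gets $q+d'\in B$ for all $q\in B$ (using $q\in\Psi(B)$), contradicting $B\in R_Q$.

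Next I would verify $\Phi$ and $\Psi$ are mutually inverse. The identity $\Phi(\Psi(B))=B$ uses only that $B$ has no largest element; the identity $\Psi(\Phi(A))=A$ is exactly where density is needed, to supply, for $x<y$ in $A$, a rational $q$ with $x<q<y$, so that $q\in\Phi(A)$ and $x<q$. It then remains to see $\Psi$ respects the structure. Monotonicity in both directions is immediate. For $0$, $1$, $+$ and $\cdot$ one checks that $\Psi$ commutes with the Dedekind-cut operations used to build $R_Q$ and $R_F$ in Theorem~\ref{*R ordered field}: for instance $\Psi(B_1)+\Psi(B_2)=\Psi(B_1+B_2)$, and similarly for products with the usual care for signs — a routine verification in the spirit of a textbook construction of the reals from the rationals. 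Hence $\Psi$ is an isomorphism of ordered fields $R_Q\xrightarrow{\ \sim\ }R_F$.

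Finally I would record compatibility with the canonical embeddings $H_Q\colon Q\to R_Q$ and $H_F\colon F\to R_F$ of Theorem~\ref{*R ordered field}: density gives $\Psi\circ H_Q=H_F|_Q$, so $\Psi$ identifies the copy of $Q$ inside $R_Q$ with the copy of $Q$ inside $F\subseteq R_F$, which is what the statement asserts. The only genuine obstacle is the density lemma, i.e. the availability of integer parts of elements of $F$ inside $\mathbb{Z}_K$; everything else is bookkeeping with cuts.
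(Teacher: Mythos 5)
Your proposal is correct and takes essentially the same route as the paper: the isomorphism is $A\mapsto A\cap Q$, the inverse is $B\mapsto\{x\in F\mid \exists q\in B\,(x<q)\}$, and everything hinges on the density of $Q$ in $F$, which the paper also uses (without proof) when it interposes a rational $q$ with $y<q<z$ in its injectivity step. If anything you are more careful than the paper, which neither checks that the filled-up halfline is precise nor addresses the integer-part/least-number subtlety you flag; that subtlety is genuine, but it is a gap you share with, rather than introduce relative to, the paper's own argument.
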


\begin{proof}
It is obvious that $Q$ may be embedded in $F$. To see that $R_{Q}$ and $R_{F}$
are isomorphic, define $\phi:R_{F}\rightarrow R_{Q}$ by $\phi(A)=A\cap Q$. To
see that $\phi$ is $1-1$, let $A,B\in R_{F}$ with $A\neq B$; we may suppose
that $A\subset B$. Then there exists $y\in B$ such that $x<y$ for all $x\in
A$. Because $B$ does not have a maximal element there exists $z\in B$ with
$y<z$, and because $F$ is $K$-Archimedean there exists $q\in Q$ such that
$y<q<z$. Then $q\in \phi(B)\backslash \phi(A)$. Hence $\phi(A)\neq \phi(B)$. To
see that $\phi$ is onto, let $B\in R_{Q}$. Put $A=\{x\in F\left \vert \exists
q\in B,x<q\right.  \}$. Then $A$ is a lower halfline of $F$ without a maximal
element, otherwise there would exist $q\in B\ $with $x<q$; then $q\in F$,
because $B$ does not have a maximal element. Clearly $A\cap Q=B$. Hence
$\phi(A)=B$. It is also obvious that $\phi$ is respects the algebraic
operations and the order. We conclude that there is a isomorphism between
$R_{Q}$ and $R_{F}$.
\end{proof}

As a consequence we obtain that $\mathcal{P}$ is contained in the $^{\ast
}\mathcal{K}$-real field $R_{\mathcal{P}}$, and can be embedded in $R_{^{\ast
}Q}$.

\begin{corollary}
\label{Thm charac1}Let $E$ be a complete arithmetical solid. Then
$R_{\mathcal{P}}$ is a $^{\ast}\mathcal{K}$-real field and the mapping $H$ is
an isomorphism of the field $\mathcal{P}$ onto a subfield $H\left(
\mathcal{P}\right)  \subseteq$ $^{\ast}\mathcal{R}$.
\end{corollary}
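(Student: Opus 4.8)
The plan is to derive Corollary~\ref{Thm charac1} by instantiating Theorem~\ref{*R ordered field} at $K={}^{\ast}\mathcal{K}$ and $F=\mathcal{P}$, so that the only real work is to check that $\mathcal{P}$, with the operations and order inherited from $E$, is a ${}^{\ast}\mathcal{K}$-Archimedean ordered field. That $\mathcal{P}$ is an ordered field can be read off from the algebraic axioms: by linearity of the magnitude operation (the observation following Axiom~\ref{assemblye(xy)}) and Axiom~\ref{Axiom e(xy)=e(x)y+e(y)x} one has $e(x+y)=e(x)+e(y)=0$ and $e(xy)=e(x)y+e(y)x=0$ whenever $e(x)=e(y)=0$, so $\mathcal{P}$ is closed under $+$ and $\cdot$; it contains $0$ and $1$ and, by Axiom~\ref{assemblysim}, additive inverses; for $x\in\mathcal{P}$ with $x\neq 0$, Axiom~\ref{e(u(x))=e(x)d(x)} gives $e(u(x))=e(x)/x=0$, so $u(x)$ is precise, whence $u(x)=1$ and then $e(1/x)=0$, i.e.\ $\mathcal{P}$ is closed under multiplicative inverses; distributivity on $\mathcal{P}$ is the case $e(x)=0$ of Axiom~\ref{Axiom distributivity}; and the order axioms restrict directly. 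Finally $\mathcal{P}$ is ${}^{\ast}\mathcal{K}$-Archimedean by Axiom~\ref{axiom archimedes2}: for precise $0<a<b$ there is $z$ with $N(z)$, i.e.\ $z\in{}^{\ast}\mathcal{K}$, and $za>b$.

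With these hypotheses in place, Theorem~\ref{*R ordered field} applied to $F=\mathcal{P}$ and $K={}^{\ast}\mathcal{K}$ yields at once that $R_{\mathcal{P}}$ is a ${}^{\ast}\mathcal{K}$-real field and that $H\colon\mathcal{P}\to R_{\mathcal{P}}$, $H(f)=\{x\in\mathcal{P}\mid x<f\}$, is a field isomorphism of $\mathcal{P}$ onto a subfield $H(\mathcal{P})\subseteq R_{\mathcal{P}}$. To see that $H(\mathcal{P})$ may be regarded as a subfield of ${}^{\ast}\mathcal{R}=R_{{}^{\ast}\mathcal{Q}}$, I would invoke the theorem immediately preceding Corollary~\ref{Thm charac1}, with $K={}^{\ast}\mathcal{K}$, $Q={}^{\ast}\mathcal{Q}$ and $F=\mathcal{P}$ --- note that ${}^{\ast}\mathcal{Q}$ embeds in $\mathcal{P}$, since ${}^{\ast}\mathcal{K}\subseteq\mathcal{P}$ (every element of ${}^{\ast}\mathcal{K}$ is precise) and $\mathcal{P}$ is a field --- obtaining a field isomorphism $\psi\colon R_{\mathcal{P}}\to R_{{}^{\ast}\mathcal{Q}}$, $\psi(A)=A\cap{}^{\ast}\mathcal{Q}$. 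Identifying $R_{\mathcal{P}}$ with ${}^{\ast}\mathcal{R}$ through $\psi$, the composite $\psi\circ H$ is the asserted isomorphism of $\mathcal{P}$ onto a subfield of ${}^{\ast}\mathcal{R}$.

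Thus the corollary is a formal consequence of Theorem~\ref{*R ordered field} and the theorem stated just before it, once $\mathcal{P}$ has been recognised as a ${}^{\ast}\mathcal{K}$-Archimedean ordered field. I do not anticipate a genuine obstacle here: the Archimedean clause is Axiom~\ref{axiom archimedes2} verbatim, and the only mildly delicate point is the field verification --- concretely, closure of $\mathcal{P}$ under multiplicative inverse, which hinges on Axiom~\ref{e(u(x))=e(x)d(x)} forcing $u(x)=1$ for precise nonzero $x$ and hence $e(1/x)=0$. The single notational caveat worth flagging is that the map $H$ in the statement literally takes values in $R_{\mathcal{P}}$, so the inclusion $H(\mathcal{P})\subseteq{}^{\ast}\mathcal{R}$ is to be read up to the canonical isomorphism $R_{\mathcal{P}}\cong R_{{}^{\ast}\mathcal{Q}}$ furnished by $\psi$.
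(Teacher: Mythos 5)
Your proposal is correct and matches the paper's (implicit) argument: the corollary is given there without a separate proof, precisely as the instantiation of Theorem~\ref{*R ordered field} at $K={}^{\ast}\mathcal{K}$, $F=\mathcal{P}$, combined with the isomorphism $R_{\mathcal{P}}\cong R_{{}^{\ast}\mathcal{Q}}={}^{\ast}\mathcal{R}$ furnished by the theorem immediately preceding it. Your explicit check that $\mathcal{P}$ is a ${}^{\ast}\mathcal{K}$-Archimedean ordered field fills in details the paper delegates to the earlier work on solids, and your closing caveat --- that $H(\mathcal{P})\subseteq{}^{\ast}\mathcal{R}$ is to be read through the canonical isomorphism $\psi(A)=A\cap{}^{\ast}\mathcal{Q}$ --- is exactly the identification the paper intends.
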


We will now prove that $\mathfrak{P}$ is a $\mathcal{K}$-Archimedean ordered
field situated between $\mathfrak{Q}$ and a $\mathcal{K}$-real field
$\mathfrak{R}$. This will be a consequence of next theorem which states that
$\mathfrak{F}$ is a $\mathcal{K}$-Archimedean ordered field.

\begin{theorem}
\label{R ordered field}Let $E$ be a complete arithmetical solid. The set
$\mathfrak{F}$ is a $\mathcal{K}$-Archimedean ordered field.
\end{theorem}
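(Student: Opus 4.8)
The plan is to describe $\mathfrak{F}$ explicitly and then transfer the field and Archimedean properties from the limited nonstandard rationals. By Definition~\ref{Definition K-notions} an element of $\mathfrak{F}$ is $x+\oslash$ with $x\in{}^{\ast}\mathcal{Q}$ limited, so every $\xi\in\mathfrak{F}$ satisfies $e(\xi)=\oslash$, and $x+\oslash=y+\oslash$ exactly when $x-y\in\oslash$; thus $\mathfrak{F}$ is (an isomorphic copy of) the quotient $({}^{\ast}\mathcal{Q}\cap\pounds)/({}^{\ast}\mathcal{Q}\cap\oslash)$. First I would record that the operations of $E$ restrict to $\mathfrak{F}$: for limited $x,y$ one has $x\oslash\subseteq\oslash$, $y\oslash\subseteq\oslash$, and $\oslash\oslash=\oslash$ by Theorem~\ref{Proposition Mult magnitudes}, so $(x+\oslash)+(y+\oslash)=(x+y)+\oslash$ and $(x+\oslash)(y+\oslash)=xy+\oslash$, both again of the required form since $\pounds$ is idempotent and $x+y$, $xy$ are limited rationals.

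Next I would verify the ordered-field axioms. The neutral elements $0+\oslash=\oslash$ and $1+\oslash$ lie in $\mathfrak{F}$ (with $\oslash<1$ by Theorem~\ref{zerobar<1<L}), $-(x+\oslash)=(-x)+\oslash$, and if $x+\oslash\neq\oslash$ then $x$ is appreciable, so $1/x$ is a limited rational and $(x+\oslash)(1/x+\oslash)=1+\oslash$; hence every nonzero element is invertible. Associativity and commutativity are inherited from $E$. The one delicate point is distributivity: in a solid only the weak form of Axiom~\ref{Axiom distributivity} holds, but for $x,y,z\in\mathfrak{F}$ the correction term $e(x)y+e(x)z=\oslash y+\oslash z$ is contained in $\oslash=e\bigl(x(y+z)\bigr)$, since $x(y+z)\in\mathfrak{F}$; so it is absorbed and $xy+xz=x(y+z)$ holds exactly on $\mathfrak{F}$. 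Finally $\mathfrak{F}$ carries the total order of $E$ (Axioms~\ref{(OA)reflex}--\ref{(OA)total}); for $a=p+\oslash>0$ and $b>0$ the precise parts are positive appreciable, so $e(a)=\oslash<a$, and Axioms~\ref{(OA)compoper} and \ref{compat mult} give $a+b>0$ and $ab\geq 0$, while $ab$ has appreciable precise part, hence $ab\neq\oslash$; so $ab>0$.

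For the $\mathcal{K}$-Archimedean property the crucial auxiliary fact is that $\mathcal{K}$ is cofinal in $\pounds$: for every limited precise $c$ there is $k\in\mathcal{K}$ with $|c|<k$. I would prove this in the spirit of the proof of Theorem~\ref{K Peano}, by examining $\gamma=\operatorname*{zup}\mathcal{K}$: since $\mathcal{K}$ is closed under the successor function the magnitude $e(\gamma)$ cannot be contained in $\oslash$, hence $\pounds\subseteq e(\gamma)$ by Corollary~\ref{no magnitude between}; since every element of $\mathcal{K}$ lies in $\pounds$ we also get $\gamma\leq\pounds$, so $\gamma$ is a magnitude equal to $\pounds$, and a short case analysis using Theorem~\ref{Theorem three cases lower} together with the separation lemmas rules out $\gamma$ being a maximum or a weak supremum, leaving $\gamma$ a supremum — which is precisely the asserted cofinality. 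Granting this, given positive $a=p+\oslash$, $b=q+\oslash$ in $\mathfrak{F}$, the element $q/p$ is limited and precise, so I pick $k\in\mathcal{K}$ with $k>q/p+1$; then $kp-q>p>\oslash$, whence $ka=kp+\oslash>q+\oslash=b$.

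The main obstacle is exactly this last lemma. The full Archimedean Axiom~\ref{axiom archimedes2} only furnishes a natural number, not a \emph{limited} one, so one genuinely must control $\operatorname*{zup}\mathcal{K}$ and exclude the possibility that $\mathfrak{F}$ is $^{\ast}\mathcal{K}$-Archimedean but not $\mathcal{K}$-Archimedean; if the cofinality of $\mathcal{K}$ in $\pounds$ is already available from the development surrounding Theorem~\ref{K Peano}, this step shortens considerably. A secondary and more routine point of care is checking that the weak distributivity and the weak order axioms of a solid really do collapse to the ordinary field and ordered-field identities once everything is taken modulo the precision level $\oslash$.
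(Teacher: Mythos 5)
Your verification of the field axioms follows essentially the same route as the paper's proof: closure of sums and products of shadows using $p\oslash\leq\oslash$ for limited $p$ (Lemma \ref{Lemma max ideal order}.\ref{pI<=I}) and $\oslash\oslash=\oslash$, inverses $-p+\oslash$ and $1/p+\oslash$, and distributivity by observing that the correction terms $e(x)y+e(x)z$ of Axiom \ref{Axiom distributivity} collapse into $\oslash$. Where you genuinely diverge is the $\mathcal{K}$-Archimedean property, which the paper disposes of with the single sentence ``by construction.'' You instead prove cofinality of $\mathcal{K}$ in $\pounds$ by showing $\operatorname*{zup}\mathcal{K}=\pounds$ is a supremum (not a maximum or weak supremum), via the no-magnitude-between argument of Corollary \ref{no magnitude between} in the style of the proof of Theorem \ref{K Peano}; this is correct and admissible, since the downward closure of $\mathcal{K}$ is defined by a formula with only precise quantified variables and the non-precise parameter $\pounds$, as the scheme of Axiom \ref{Axiom Dedekind completeness} permits. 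The more direct justification the paper presumably intends is that elements of $\mathfrak{F}$ are by definition shadows of \emph{limited elements of} $^{\ast}\mathcal{Q}$, and a limited positive $m/n$ with $m,n\in{}^{\ast}\mathcal{K}$ satisfies $k\leq m/n<k+1$ for its integer part $k$ (division algorithm in the Peano model $^{\ast}\mathcal{K}$), which forces $k+1\in\mathcal{K}$; this avoids the completeness machinery altogether. Your route buys a stronger and reusable fact (every limited precise element, rational or not, is dominated by an element of $\mathcal{K}$) at the cost of invoking Generalized Dedekind Completeness; the paper's implicit route is more elementary but applies only because the elements in question are rationals. Both are sound, and your closing caveat correctly identifies that Axiom \ref{axiom archimedes2} alone would only give an element of $^{\ast}\mathcal{K}$, so some such supplementary argument really is needed.
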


\begin{proof}
Let $x=p+\oslash,y=q+\oslash \in \mathfrak{F}$. Then $\left \vert p\right \vert
,\left \vert q\right \vert \in{}^{\ast}\mathcal{Q\cap}\pounds $. Hence
$x+y=p+q+\oslash \in \mathfrak{F}$ and $xy=\left(  p+\oslash \right)  \left(
q+\oslash \right)  =pq+p\oslash+q\oslash+\oslash \oslash=pq+\oslash
\in \mathfrak{F}$, by Lemma \ref{Lemma max ideal order}.\ref{pI<=I} and Theorem
\ref{Proposition Mult magnitudes}.\ref{zerobar.zerobar}. Because
$\mathfrak{F}$ is a substructure of $E$ both addition and multiplication are
commutative and associative in $\mathfrak{F}$. Then $\mathfrak{F}$ is an
abelian group for addition with neutral element $\oslash$ and inverse
$-p+\oslash$ because $p+\oslash+\left(  -p+\oslash \right)  =\oslash$.

By Lemma \ref{Lemma max ideal order}.\ref{pI<=I} one has $p\oslash \leq \oslash
$, so $(1/p)\oslash \leq \oslash$ by Lemma \ref{Lemma L<p}.\ref{1/p<L}. Then
$\left(  1+\oslash \right)  \left(  p+\oslash \right)  =p+\oslash+p\oslash
=p+\oslash$ and, using Theorem \ref{Proposition Mult magnitudes}%
.\ref{zerobar.zerobar}
\[
\left(  p+\oslash \right)  \left(  \frac{1}{p}+\oslash \right)  =1+p\oslash
+\frac{1}{p}\oslash+\oslash \oslash=1+\oslash \text{.}%
\]

Then $\mathfrak{F}\backslash \left \{  \oslash \right \}  $ is also an abelian
group for multiplication with neutral element $1+\oslash$ and inverse
$1/p+\oslash$ for the element $p+\oslash$. To prove distributivity, let
$x=p+\oslash,y=q+\oslash,z=r+\oslash \in \mathfrak{F}$. By Axiom
\ref{Axiom distributivity} we have $xy+xz=x\left(  y+z\right)  +e\left(
x\right)  y+e\left(  x\right)  z$. Now $e\left(  x\right)  y+e\left(
x\right)  z=\oslash \left(  q+\oslash \right)  +\oslash \left(  r+\oslash \right)
=\oslash$. Also%
\begin{align*}
x\left(  y+z\right)   &  =\left(  p+\oslash \right)  \left(  q+\oslash
+r+\oslash \right)  =\left(  p+\oslash \right)  \left(  q+r+\oslash \right) \\
&  =p\left(  q+r\right)  +p\oslash+\left(  q+r\right)  \oslash+\oslash
\oslash \\
&  =p\left(  q+r\right)  +\oslash.
\end{align*}
Then%
\[
xy+xz=x\left(  y+z\right)  +\oslash=x\left(  y+z\right)  .
\]
Hence distributivity holds, so $\mathfrak{F}$ is a field. Because
$\mathfrak{F}$ is a substructure of $E$ the order axioms are valid and we
conclude that $\mathfrak{F}$ is indeed an ordered field. By construction
$\mathfrak{F}$ is Archimedean for $\mathcal{K}$.
\end{proof}

Knowing that $\mathfrak{P=F}$ is $\mathcal{K}$-Archimedean ordered field, we
obtain a corollary to Theorem \ref{*R ordered field}.

\begin{corollary}
\label{Theorem standard}Let $E$ be a complete arithmetical solid. Then
$\mathfrak{R}$ is a $\mathcal{K}$-real field and the mapping $H$ is an
isomorphism of the field $\mathfrak{P}$ onto a subfield $H\left(
\mathfrak{P}\right)  \subseteq$ $\mathfrak{R}$.
\end{corollary}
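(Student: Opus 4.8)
The plan is to deduce the corollary directly from Theorem \ref{*R ordered field}, once its hypotheses are verified for the field at hand. First I would recall that, by Theorem \ref{K Peano}, the set $\mathcal{K}$ of limited elements of $^{\ast}\mathcal{K}$ is a model of Peano Arithmetic, so it is a legitimate choice for the parameter $K$ in Definition \ref{Definition *R} and Theorem \ref{*R ordered field}. By Theorem \ref{R ordered field}, the $\mathcal{K}$-shadow $\mathfrak{F}$ of $^{\ast}\mathcal{Q}$ is a $\mathcal{K}$-Archimedean ordered field, and by Definition \ref{Definition *R} we have $\mathfrak{R}=R_{\mathfrak{F}}$.

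Next I would make the identification $\mathfrak{P}=\mathfrak{F}$ explicit. If $p\in \mathcal{P}$ is precise and limited, then $p+\oslash$ is a non-precise element of $E$, so by Theorem \ref{Lemma rational} there are $q\in{}^{\ast}\mathcal{Q}$ and $N\in \mathcal{N}$ with $p+\oslash=q+N$; since the neutrix part of an element is unique, $N=\oslash$, hence $p+\oslash=q+\oslash$ and $q$ is limited. Thus every $\mathcal{K}$-shadow of a limited precise element is the $\mathcal{K}$-shadow of a limited nonstandard rational, giving $\mathfrak{P}\subseteq \mathfrak{F}$; the reverse inclusion is immediate from $^{\ast}\mathcal{Q}\subseteq \mathcal{P}$. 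So $\mathfrak{P}=\mathfrak{F}$, and in particular $\mathfrak{P}$ is a $\mathcal{K}$-Archimedean ordered field.

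Finally I would apply Theorem \ref{*R ordered field} with $K=\mathcal{K}$ and $F=\mathfrak{F}=\mathfrak{P}$. The theorem then yields that $R_{\mathfrak{F}}=\mathfrak{R}$ is a $\mathcal{K}$-real field and that $H$ is an isomorphism of $\mathfrak{F}=\mathfrak{P}$ onto the subfield $H(\mathfrak{P})\subseteq \mathfrak{R}$, which is exactly the assertion. There is no substantial obstacle here; the only step requiring a little care is the identification $\mathfrak{P}=\mathfrak{F}$, which rests on uniqueness of the neutrix part together with Theorem \ref{Lemma rational}, everything else being a bookkeeping application of results already established.
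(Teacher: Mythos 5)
Your proposal is correct and follows exactly the route the paper intends: the corollary is obtained by applying Theorem \ref{*R ordered field} with $K=\mathcal{K}$ and $F=\mathfrak{F}$, using Theorem \ref{R ordered field} for the $\mathcal{K}$-Archimedean field structure and the identification $\mathfrak{P}=\mathfrak{F}$ (which the paper asserts via Theorem \ref{Lemma rational} and you helpfully spell out using uniqueness of the neutrix part). No discrepancies to report.
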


Within a complete arithmetical solid we may now characterize the set of
precise numbers and its shadow by lower and upper bounds as follows.

\begin{theorem}
\label{Thm charac2}Let $E$ be a complete arithmetical solid. Then up to
identifications, $^{\ast}\mathcal{Q\subset P}\subseteq{}^{\ast}\mathcal{R}$
and $\mathfrak{Q}\subset \mathfrak{P=F}\subseteq \mathfrak{R}$.
\end{theorem}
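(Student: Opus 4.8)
The statement is essentially an assembly of the structural results proved above; the only genuinely new point is the strictness of the two left-hand inclusions, which I would extract from Generalized Dedekind completeness. First I would collect the routine inclusions. By Theorem~\ref{*K Peano} and the construction of $^{\ast}\mathcal{Q}$ from $^{\ast}\mathcal{K}$ in Subsection~\ref{Subsection induction}, every complete arithmetical solid $E$ contains a copy of $^{\ast}\mathcal{Q}$; since the elements of $^{\ast}\mathcal{K}$ are precise and the precise elements form a field (closed under $+,-,\cdot$ and division by nonzero elements, by linearity of the magnitude operation and Axiom~\ref{Axiom e(xy)=e(x)y+e(y)x}), we get $^{\ast}\mathcal{Q}\subseteq\mathcal{P}$. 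Corollary~\ref{Thm charac1} embeds $\mathcal{P}$ isomorphically into $^{\ast}\mathcal{R}=R_{^{\ast}\mathcal{Q}}$ via $H$, and $H$ restricts to the canonical embedding of $^{\ast}\mathcal{Q}$, so under the identification of $\mathcal{P}$ with $H(\mathcal{P})$ we obtain $^{\ast}\mathcal{Q}\subseteq\mathcal{P}\subseteq{}^{\ast}\mathcal{R}$. Passing to $\mathcal{K}$-shadows: since $\mathcal{Q}\subseteq\mathcal{P}$ and every element of $\mathcal{Q}$ is limited ($|m/n|\le|m|\in\pounds$ for $m,n\in\mathcal{K}$, and $\pounds$ is convex), taking shadows gives $\mathfrak{Q}\subseteq\mathfrak{P}$; the identity $\mathfrak{P}=\mathfrak{F}$ is the observation following Theorem~\ref{Lemma rational}, and $\mathfrak{F}\subseteq\mathfrak{R}=R_{\mathfrak{F}}$ is Corollary~\ref{Theorem standard}.

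For the strict inclusion $^{\ast}\mathcal{Q}\subset\mathcal{P}$ I would produce a precise element squaring to $2$. Apply Axiom~\ref{Axiom Dedekind completeness} to the restricted formula $A(x)\equiv x\le 0\vee x\cdot x<1+1$, which has no non-precise parameters and satisfies condition~(\ref{Condition A}), witnessed by $x=0$. The first alternative of Axiom~\ref{Axiom Dedekind completeness} (a maximum $\rho$) is impossible: then $\rho>0$ and $\rho^{2}<2$, and by the Archimedean axiom~\ref{axiom archimedes2} there is a precise $\varepsilon>0$ with $(\rho+\varepsilon)^{2}<2$, so $\rho+\varepsilon\in A$, contradicting maximality. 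Moreover $A$ is precise in the sense of Definition~\ref{Definition halfline}: for each $k\in{}^{\ast}\mathcal{K}$ the internal finite set $\{j\in{}^{\ast}\mathcal{K}:j\le 2k,\ (j/k)^{2}<2\}$ is nonempty and bounded, hence has a greatest element $j_{0}$ by the least-number principle in $^{\ast}\mathcal{K}$, and then $a_{k}=j_{0}/k\in A$, $b_{k}=(j_{0}+1)/k\notin A$ with $b_{k}-a_{k}=1/k$; so no positive precise $c$ can satisfy $a+c\in A$ for all $a\in A$ (choosing $k$ with $1/k<c$ would force $b_{k}<a_{k}+c\in A$, whence $b_{k}\in A$). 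By Proposition~\ref{Dedekind precise} the second alternative of Axiom~\ref{Axiom Dedekind completeness} reduces to an ordinary supremum, yielding precise $\sigma$ with $x\in A\leftrightarrow x<\sigma$; the usual trichotomy on $\sigma^{2}$ against $2$, carried out in the ordered field $\mathcal{P}$ with the help of Axiom~\ref{axiom archimedes2}, forces $\sigma^{2}=2$. Since $\forall m\,\forall n\,(n>0\to m^{2}\ne 2n^{2})$ is a theorem of Peano arithmetic and $^{\ast}\mathcal{Q}$ is the fraction field of the model $^{\ast}\mathcal{K}$ of $PA$, no element of $^{\ast}\mathcal{Q}$ squares to $2$; thus $\sigma\in\mathcal{P}\setminus{}^{\ast}\mathcal{Q}$ and $^{\ast}\mathcal{Q}\subset\mathcal{P}$.

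Finally I would show $\sigma+\oslash\in\mathfrak{P}\setminus\mathfrak{Q}$, giving $\mathfrak{Q}\subset\mathfrak{P}$. Since $1<\sigma<2$ and $\pounds$ is a convex subgroup containing $0$ and $1$, the element $\sigma$ is limited, so $\sigma+\oslash\in\mathfrak{P}$. Suppose $\sigma+\oslash=q+\oslash$ with $q=m/n\in\mathcal{Q}$, $n\in\mathcal{K}$, $n>0$; then $\sigma-q\in\oslash$, so $q$ is limited and $c:=|m/n+\sigma|$ is a limited positive element with $c\ge\sigma>1$. As $m^{2}\ne 2n^{2}$ in $^{\ast}\mathcal{K}$ we have $|m^{2}-2n^{2}|\ge 1$, hence $|q-\sigma|=|m^{2}/n^{2}-2|/c\ge 1/(n^{2}c)$; but $n^{2}c\in\pounds$ (a product of limited elements, $\pounds$ being idempotent), so $1/(n^{2}c)\notin\oslash$ by Lemma~\ref{Lemma L<p}.\ref{L<p iff 1/p<zerobar}, contradicting $\sigma-q\in\oslash$. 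This proves $\mathfrak{Q}\subset\mathfrak{P}$ and completes the proof. I expect the main obstacle to be precisely these two strictness arguments — in particular verifying that the cut defining $\sqrt{2}$ is a \emph{precise} halfline so that Axiom~\ref{Axiom Dedekind completeness} delivers a genuine precise supremum, and managing the Diophantine estimate at the level of the model's built-in arithmetic; the remaining inclusions are bookkeeping over Corollaries~\ref{Thm charac1} and~\ref{Theorem standard} and Theorem~\ref{Lemma rational}.
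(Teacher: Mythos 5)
Your proposal is correct, and it actually does more than the paper, which states Theorem \ref{Thm charac2} without any proof: it is offered there as a summary of Corollary \ref{Thm charac1}, Theorem \ref{Lemma rational} (giving $\mathfrak{P}=\mathfrak{F}$), Theorem \ref{R ordered field} and Corollary \ref{Theorem standard}. Your first paragraph assembles exactly these ingredients, so for the non-strict inclusions you are on the paper's intended route. The genuinely new content is your justification of the two \emph{strict} inclusions, which the paper asserts (``the set of precise elements is a proper extension of $^{\ast}\mathbb{Q}$'') but never argues for a general complete arithmetical solid: the construction of a precise $\sigma$ with $\sigma^{2}=2$ via Axiom \ref{Axiom Dedekind completeness} together with the irrationality of $\sqrt{2}$ in the model $^{\ast}\mathcal{K}$ of Peano Arithmetic, and the Liouville-type estimate $|m/n-\sigma|\geq 1/(n^{2}c)$ showing $\sigma+\oslash\notin\mathfrak{Q}$. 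Both arguments are sound; in particular your verification that the cut is a precise halfline (using the greatest-element principle in $^{\ast}\mathcal{K}$ to produce $a_{k}\in A$, $b_{k}\notin A$ with $b_{k}-a_{k}=1/k$) is exactly what is needed to invoke Proposition \ref{Dedekind precise}. One small presentational gap: when you rule out the first alternative of Axiom \ref{Axiom Dedekind completeness} you immediately write $\rho>0$ and $\rho^{2}<2$, which presupposes that $\rho$ is precise; a priori the axiom only gives some $\sigma\in S$ with $A(x)\leftrightarrow x\leq\sigma$ for precise $x$. This is repaired by the same device as in Proposition \ref{Dedekind precise}: if $0<e(\sigma)$, pick a precise $p$ with $0<p<e(\sigma)$, so that $x\leq\sigma$ implies $x+p\leq\sigma+p=\sigma$, contradicting the preciseness of $A$ that you have already established; hence $e(\rho)=0$ and your argument goes through. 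With that one sentence added, the proof is complete.
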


In the natural case where $\mathcal{K=}\mathbb{N}$ we have some
simplifications. The set $^{\ast}\mathcal{R}$ becomes a nonstandard model
$^{\ast}\mathbb{R}$ of the reals and the set $^{\ast}\mathcal{Q}$ a
nonstandard model $^{\ast}\mathbb{Q}$ of the rationals. The field
$\mathfrak{F}$ can be identified with the quotient of the external set of
limited rationals $^{\ast}\mathbb{Q}$ of a nonstandard model $^{\ast
}\mathbb{R}$ of the reals by the infinitesimal rationals. So $\mathfrak{F}$
itself is already Dedekind complete and is isomorphic to $\mathbb{R}$. With
these identifications the set of precise elements is a proper extension of
$^{\ast}\mathbb{Q}$ and a cofinal subfield of $^{\ast}\mathbb{R}$. As a
consequence, up to identifications the lower and upper bounds of Theorem
\ref{Thm charac2} take the form $^{\ast}\mathbb{Q}\mathcal{\subset P}%
\subseteq{}^{\ast}\mathbb{R}$ and $\mathfrak{Q}\subset \mathfrak{P=}\mathbb{R}$.

\subsection{Complete arithmetical solids and nonstandard
analysis\label{Subsection CAS and NSA}}

In this final subsection we investigate the relation between the standard
structure and the nonstandard structure of a complete arithmetical solid. We
show that the precise numbers satisfy the axiomatics $ZFL$ and that the
nonstandard natural numbers $^{\ast}\mathcal{K}$ satisfy the axiomatics
$REPT$, with external induction restricted to the language $\left \{
+,\cdot \right \}  $. We recall that the language of $REPT$ is $\{ \st,\in \}$ and
its axioms are:

\begin{enumerate}
\item $\st(0)$;

\item $\forall n \in \mathbb{N} (\st(n) \to \st(n+1))$;

\item $\exists \omega \in \mathbb{N}(\lnot \st(\omega))$;

\item $(\Phi(0)\wedge \forallst n\in \mathbb{N}(\Phi(n)\rightarrow
\Phi(n+1)))\rightarrow \forallst n\, \Phi(n)$.
\end{enumerate}

In the last axiom $\Phi$ is an arbitrary formula, internal or external, and
$\forallst n\in \mathbb{N}\, \Phi(n)$ is an abbreviation of $\forall
n\in \mathbb{N}(\st(n)\rightarrow \Phi(n))$.

\begin{theorem}
Let $E$ be a complete arithmetical solid. Then $\mathcal{P\cap}\pounds $
satisfies the Leibniz rules. Moreover, if we interpret the limited elements as
elements of $\mathcal{P\cap}\pounds $ then $\mathcal{P}$ is a model of $ZFL$.
\end{theorem}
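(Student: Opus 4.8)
The plan is to read off the axioms of $ZFL$ from \cite{Lutz}, \cite{Callot} --- the $ZF$ axioms, inherited from the ambient set theory in which the complete arithmetical solid $E$ is built, together with the Leibniz rules for the predicates ``limited'' and ``infinitesimal'' and Cauchy's permanence principle --- and to verify them for $\mathcal{P}$, interpreting ``limited'' as membership in $\pounds$ and ``infinitesimal'' as membership in $\oslash$. By the Archimedean Axiom \ref{axiom archimedes2} and the identification \eqref{identification}, a precise element lies in $\pounds$ exactly when it is bounded in absolute value by an element of $^{\ast}\mathcal{K}$, so this interpretation matches the intended meaning. That $\mathcal{P}$ is an ordered field carrying the nonstandard naturals $^{\ast}\mathcal{K}$ is already available (Corollary \ref{Thm charac1}, Theorem \ref{*K Peano}, Theorem \ref{Thm charac2}), so what remains is the verification of the Leibniz rules and of Cauchy's principle; the first assertion of the statement is then the special case of the second for the predicate ``limited''.

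First I would dispatch the algebraic Leibniz rules by translating each into a fact already proved. That $0,1\in\pounds$ and $0\in\oslash$ is Theorem \ref{zerobar<1<L}. Since $\pounds$ and $\oslash$ are neutrices, that is, convex additive subgroups, $\pounds\cap\mathcal{P}$ and $\oslash\cap\mathcal{P}$ are closed under addition and under taking symmetric elements and satisfy the convexity rules (if $|y|\leq|x|$ and $x$ is limited, resp.\ infinitesimal, then so is $y$). Closure under multiplication comes from the idempotencies $\pounds\pounds=\pounds$ and $\oslash\oslash=\oslash$ (Theorem \ref{Proposition Mult magnitudes}) together with compatibility of the order with multiplication (Axioms \ref{compat mult}, \ref{Axiom Amplification}), and a limited multiple of an infinitesimal is infinitesimal because $\oslash\pounds=\oslash$ (Proposition \ref{zerobar maximal ideal L}). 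The reciprocal rules --- the reciprocal of a limited non-infinitesimal element is limited, that of an unlimited element is infinitesimal --- are Lemma \ref{Lemma L<p}.\ref{L<p iff 1/p<zerobar}--\ref{1/p<L}. Finally, there is an unlimited element and a nonzero infinitesimal because $0<\oslash<1<\pounds<M$ (Theorem \ref{zerobar<1<L}) and magnitudes are separated by precise elements (Axiom \ref{scheiding neutrices}, Lemma \ref{Precise separation neutrices}).

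The substantive point is Cauchy's principle, which I would state as: no internal formula --- one of the language $\{+,\cdot,\leq\}$ with quantifications over precise variables --- holds of precisely the limited precise elements, and dually none holds of precisely the infinitesimals. Suppose $\varphi$ were such a formula for $\pounds$. Consider the auxiliary formula $\psi(n)$ asserting that $\varphi(y+nz)$ holds whenever $\varphi(y)$ and $z>0$ and $\forall y'(\varphi(y')\rightarrow\varphi(y'+z))$; it has $n$ as free precise variable and all of $y,z,y'$ and any parameters of $\varphi$ universally quantified, hence falls within the scope of the Induction Axiom \ref{Axiom *s-induction}. Now $\psi(0)$ is trivial and $\psi(n)\rightarrow\psi(n+1)$ holds by one application of the stability hypothesis, so induction gives $\psi(n)$ for all $n\in{}^{\ast}\mathcal{K}$; instantiating $y=0$, $z=1$ and using that $\varphi$ holds of every limited element (so $\varphi(0)$ and $\varphi(y')\rightarrow\varphi(y'+1)$), we obtain $\varphi(n)$ for every natural $n$, that is, every natural is limited. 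This contradicts the existence of an unlimited natural, which follows from $\pounds<M$ (Theorem \ref{zerobar<1<L}, Lemma \ref{Precise separation neutrices}) and the Archimedean Axiom \ref{axiom archimedes2}. The dual statement for $\oslash$ follows by transporting $\varphi$ along $x\mapsto 1/x$ using the reciprocal rules. The usual permanence form --- if an internal property holds of all limited elements it holds of some unlimited one --- is then immediate, and combined with Generalized Dedekind completeness (Theorem \ref{Theorem Dedekind}, Proposition \ref{Dedekind precise}) yields the sharper permanence statements used in \cite{Callot}. This completes the verification that $\mathcal{P}$ is a model of $ZFL$, hence in particular that $\mathcal{P}\cap\pounds$ satisfies the Leibniz rules.

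The step I expect to require the most care is Cauchy's principle, specifically keeping the inductive argument inside the vocabulary allowed by the Induction Axiom \ref{Axiom *s-induction}: that scheme is stated for formulas in $\{0,1,+,\cdot\}$ with natural-number parameters only, whereas an internal formula of $ZFL$ uses $\leq$ and may carry arbitrary precise parameters. The remedy sketched above is twofold --- absorb the parameters into universal quantifiers so that the formula actually fed to induction is parameter-free, and eliminate $\leq$ in favour of $+,\cdot$ using that Generalized Dedekind completeness forces $\mathcal{P}$ to be closed under square roots of non-negative elements --- but making these reductions precise and checking that the resulting formula is genuinely eligible for the scheme is where the real bookkeeping lies. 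Everything else reduces, as indicated, to the algebraic facts about $\oslash$ and $\pounds$ that are already in hand.
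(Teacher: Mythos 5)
Your algebraic verification is a correct (and welcome) expansion of what the paper compresses into a single sentence: the paper's entire proof is ``the result follows from the fact that $\pounds $ is an idempotent neutrix'', i.e.\ it takes the Leibniz rules to be exactly the closure properties you list --- $0,1\in \pounds $, closure of $\pounds $ and $\oslash$ under sums, symmetrics and products, $\oslash \pounds =\oslash$, the reciprocal rules of Lemma \ref{Lemma L<p}, and the existence of unlimited and nonzero infinitesimal precise elements --- all of which you correctly trace to Theorem \ref{zerobar<1<L}, Theorem \ref{Proposition Mult magnitudes}, Proposition \ref{zerobar maximal ideal L} and the separation lemmas. Where you genuinely diverge is in treating Cauchy's permanence principle as an axiom of $ZFL$ to be verified via the induction scheme. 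The paper does not do this, and its own description of $ZFL$ (``a calculatory nonstandard axiomatics in which the Leibniz rules hold'', distinguished from $REPT$ precisely by the \emph{absence} of external induction) indicates that the permanence principles are meant to be theorems of $ZFL$, derived in the ambient set theory from the Leibniz rules in the usual way (an internal set of reals that is bounded above has a supremum, and the galaxy cannot have one), not axioms requiring a separate model-theoretic check. Your induction-based substitute is therefore both more than is needed and, as you yourself concede, the place where the argument is not actually closed: Axiom \ref{Axiom *s-induction} admits only formulas in $\{0,1,+,\cdot \}$ with parameters satisfying $N$, so you must eliminate $\leq$ (your square-root trick needs a proof that the relevant definable halfline is precise, so that Proposition \ref{Dedekind precise} applies and the weak supremum is a genuine precise square root) and you must absorb arbitrary precise --- possibly non-precise --- parameters of $\varphi$ into quantifiers; even after that bookkeeping the scheme only yields permanence for this restricted syntactic class, not for arbitrary internal formulas as in Lutz--Callot. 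None of this damages the theorem, because the Cauchy principle was not part of what had to be verified; but if you do want to keep that section, the honest route is the supremum argument inside the ambient $ZFC$ rather than the induction scheme of the solid. With the Cauchy section either removed or so repaired, your proof is correct and is essentially the paper's argument written out in full.
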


\begin{proof}
The result follows from the fact that $\pounds $ is an idempotent neutrix.
\end{proof}

\begin{theorem}
\label{Thm REPT}Let $E$ be a complete arithmetical solid. If we interpret the
standard numbers by elements of $\mathcal{K}$. Then $^{\ast}\mathcal{K}$ is a
model of $REPT$ with external induction restricted to the language $\left \{
+,\cdot \right \}  $.
\end{theorem}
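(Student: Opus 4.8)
The plan is to check the four axioms of $REPT$ one at a time, playing the role of $\mathbb{N}$ with $^{\ast}\mathcal{K}$ and reading $\st(n)$ as $n\in\mathcal{K}={}^{\ast}\mathcal{K}\cap\pounds$. The first three axioms I would dispatch quickly. Since $N(0)$ by Axiom~\ref{Axiom natural numbers} and $0<\pounds$ by Theorem~\ref{zerobar<1<L}, we get $0\in{}^{\ast}\mathcal{K}\cap\pounds=\mathcal{K}$, so $\st(0)$ holds. If $n\in\mathcal{K}$ then $N(n+1)$ again by Axiom~\ref{Axiom natural numbers}, while $n+1\in\pounds$ because $\pounds$ is idempotent, hence closed under addition, and contains $1$; so $\st(n)\to\st(n+1)$. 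To produce a nonstandard element I would use the remark after Corollary~\ref{no magnitude between}, which gives a precise $q$ with $\pounds<q<M$, and apply the Archimedean Axiom~\ref{axiom archimedes2} to $0<1<q$ to obtain $z$ with $N(z)$ and $z=z\cdot1>q>\pounds$; then $z\notin\pounds$, so $z\in{}^{\ast}\mathcal{K}\setminus\mathcal{K}$ and $\lnot\st(z)$ holds.

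The one substantive point is the external induction scheme, and the idea I would use is to express $\st$ away inside the language $\{+,\cdot\}$ by means of the parameter $\pounds$: for a natural-number variable $n$ one has $n\in\mathcal{K}\iff n+\pounds=\pounds$, since $n$ is precise and $\pounds$ is a magnitude. Given a formula $\Phi$ of the language $\{\st,+,\cdot\}$ (quantifiers over $^{\ast}\mathcal{K}$, possibly with parameters), I would let $\Phi^{\ast}$ be the formula obtained by relativizing every quantifier to $N$, exactly as in the passage preceding Theorem~\ref{*K Peano}, and replacing each atom $\st(t)$ by $t+\pounds=\pounds$. Then $\Phi^{\ast}$ is a formula of $\{+,\cdot\}$ with quantifications only over precise variables, its parameters being those of $\Phi$ together with the non-precise parameter $\pounds$, and its interpretation on $^{\ast}\mathcal{K}$ coincides with that of $\Phi$; so $\Phi^{\ast}$ is of exactly the type to which the induction scheme of Theorem~\ref{K Peano} applies.

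To derive the scheme I would then argue as follows. Assume $\Phi(0)$ and $\forallst n\in{}^{\ast}\mathcal{K}(\Phi(n)\to\Phi(n+1))$, i.e.\ $\Phi^{\ast}(0)$ holds and, for every natural $n$ with $n+\pounds=\pounds$, so does $\Phi^{\ast}(n)\to\Phi^{\ast}(n+1)$. Consider
\[
\Psi(m)\ :\equiv\ \bigl(\,m+\pounds=\pounds\ \to\ \Phi^{\ast}(m)\,\bigr).
\]
Then $\Psi(0)$ holds. For the inductive step, fix a natural $m$ and assume $\Psi(m)$; if $m+1+\pounds\neq\pounds$ then $\Psi(m+1)$ is vacuously true, and if $m+1+\pounds=\pounds$ then also $m+\pounds=\pounds$ (since $\pounds$ is convex and $0\leq m\leq m+1$), whence $\Phi^{\ast}(m)$ by $\Psi(m)$ and then $\Phi^{\ast}(m+1)$ by hypothesis, so $\Psi(m+1)$. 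As $\Psi$ is a $\{+,\cdot\}$-formula with only precise quantifications and the single additional non-precise parameter $\pounds$, Theorem~\ref{K Peano} yields $\Psi(n)$ for all $n\in\mathcal{K}$; since $n\in\mathcal{K}$ forces $n+\pounds=\pounds$, this gives $\Phi^{\ast}(n)$, i.e.\ $\Phi(n)$, for every standard $n$, which is $\forallst n\,\Phi(n)$.

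The main difficulty I anticipate is the bookkeeping needed to certify that $\Phi^{\ast}$ and $\Psi$ genuinely lie in the class of formulas covered by Theorem~\ref{K Peano}: that writing $\st(\cdot)$ as ``$\cdot+\pounds=\pounds$'' keeps us inside $\{+,\cdot\}$ with (non-precise) parameters, and that relativizing the $REPT$-quantifiers to $N$ is precisely what makes the solid's precise quantifiers the operative ones. Once this is granted the rest is routine; I would stress that the guard $m+\pounds=\pounds$ built into $\Psi$ is exactly what lets the inductive step be verified for all natural $m$ starting only from the weaker $REPT$ hypothesis, which supplies the step only for standard $m$.
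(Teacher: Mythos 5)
Your proof is correct and follows the same route as the paper, which simply interprets $\st(n)$ as $n\in\mathcal{K}$ and invokes Theorem~\ref{K Peano}. You additionally make explicit two details the paper leaves implicit --- the elimination of $\st$ inside induction formulas via the non-precise parameter $\pounds$ (writing $\st(t)$ as $t+\pounds=\pounds$), and the guarded formula $\Psi$ that converts the $REPT$ inductive step, available only for standard $m$, into a step valid for all $m$ as Theorem~\ref{K Peano} requires --- both of which are genuinely needed and correctly handled.
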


\begin{proof}
We interpret $\st(n)$ by $n\in \mathcal{K}$. Then the result is a consequence
of Theorem \ref{K Peano}.
\end{proof}

Observe that in the special case where $\mathcal{K}=\mathbb{N}$ we even have
external induction in the language $\{ \st,\in \}$ and then $^{\ast}\mathcal{K}$
is a model of $REPT$.


\begin{thebibliography}{99}                                                                                               %


\bibitem {Benci di Nasso}V. Benci, M. Di Nasso, \textit{Alpha-theory: An
elementary axiomatics for nonstandard analysis}, Expositiones Mathematicae 21,
4 (2003) 355--386.

\bibitem {vdbnaa}I. P. van den Berg, \textit{Nonstandard Asymptotic Analysis,
}Springer Lecture Notes in Mathematics 1249 (1987).

\bibitem {neutrixdecompositie}I. P. van den Berg, \textit{A decomposition
theorem for neutrices}, Annals of Pure and Applied Logic 161, no. 7 (2010) 851-865.

\bibitem {Wallet}I. P. van den Berg, \textit{External borders and strongly
open sets}, in: Des Nombres et des Mondes, \'{E}. Benoit and J.-P. Furter
(eds.), \'{E}ditions Hermann, Paris (2012) 69-86.

\bibitem {debruijn}N. G. de Bruijn, \textit{Asymptotic Analysis}, North
Holland (1961).

\bibitem {Callot}J. L. Callot, \textit{Trois le\c{c}ons d'Analyse
Infinit\'{e}simale}, in J. M. Salanskis (ed.), Le labyrinthe du continu,
Springer France, Paris (1992) 369--381.

\bibitem {Van der Corput}J. G. van der Corput, \textit{Introduction to the
neutrix calculus}, J. Analyse Math. 7 (1959/1960) 281--399.

\bibitem {Dienernsaip}F. and M. Diener (eds.), \textit{Nonstandard Analysis in
Practice}, Springer Universitext (1995).

\bibitem {dinis 2017}B. Dinis, \textit{Old and new approaches to the Sorites
paradox} (submitted). Available from ArXiv: https://arxiv.org/abs/1704.00450.

\bibitem {dinisberg 2011}B. Dinis, I. P. van den Berg, \textit{Algebraic
properties of external numbers}, J. Logic and Analysis 3:9 (2011) 1--30.

\bibitem {dinisberg 2015 -1}B. Dinis, I. P. van den Berg, \textit{On the
quotient class of non-archimedean fields }(submitted). Available from ArXiv: http://arxiv.org/abs/1510.08714.

\bibitem {dinisberg 2015 -2}B. Dinis, I. P. van den Berg,
\textit{Characterization of distributivity in a solid (submitted)}. Available
from ArXiv: http://arxiv.org/abs/1510.08722.

\bibitem {Goldblatt}R. Goldblatt, \textit{Lectures on the hyperreals. An
introduction to nonstandard analysis}, Graduate Texts in Mathematics, 188.
Springer-Verlag, New York (1998).

\bibitem {Gonshor}H. Gonshor, \textit{Remarks on the Dedekind completion of a
nonstandard model of the reals}, Pacific J. Math. Volume 118, Number 1 (1985) 117-132.

\bibitem {Justino}J. Justino, I. P. van den Berg, \textit{Cramer's Rule
applied to flexible systems of linear equations}, Electronic Journal of Linear
Algebra 24 (2012) 126-152.

\bibitem {kanoveireeken0}V. Kanovei, M. Reeken, \textit{Mathematics in a
nonstandard world I}, Mathematica Japonica 45, no. 2 (1997) 369-408.

\bibitem {kanoveireeken1}V. Kanovei, M. Reeken, \textit{Nonstandard Analysis,
axiomatically}, Springer Monographs in Mathematics (2004).

\bibitem {KeislerSchmerl}H. J. Keisler, J. H. Schmerl, \textit{Making the
hyperreal line both saturated and complete}, J. Symbolic Logic 56 (1991) 1016-1025.

\bibitem {koudjetithese}F. Koudjeti, \textit{Elements of External Calculus
with an application to Mathematical Finance}, Ph.D. thesis, Labyrinth
publications, Capelle a/d IJssel, The Netherlands (1995).

\bibitem {koudjetivandenberg}F. Koudjeti, I.P. van den Berg,
\textit{Neutrices, external numbers and external calculus}, in: Nonstandard
Analysis in Practice, F. and M. Diener (eds.), Springer Universitext (1995) 145-170.

\bibitem {Lightstone-Robinson}A. H. Lightstone and A. Robinson,
\textit{Non-archimedean fields and asymptotic expansions}, Bulletin of the
American Mathematical Society 83 (1977), no. 2, 231--235.

\bibitem {Lutz}R. Lutz, \textit{R\^{e}veries infinit\'{e}simales}, La Gazette
des math\'{e}maticiens 34 (1987).

\bibitem {Nelsonist}E. Nelson, \textit{Internal Set Theory, an axiomatic
approach to nonstandard analysis},\ Bull. Am. Math. Soc., 83:6 (1977) 1165-1198.

\bibitem {Nelsonsintax}E. Nelson, \textit{The syntax of nonstandard analysis,
}Annals of Pure and Applied Logic, 38, (1988) 123-134.

\bibitem {REPT}E. Nelson, \textit{Radically elementary probability theory},
Annals of Mathematical Studies, vol. 117, Princeton University Press,
Princeton, N. J., 1987.

\bibitem {Robinson Zakon}A. Robinson, E. Zakon, \textit{A Set-Theoretical
Characterization of Enlargements}, in Applications of Model Theory to Algebra,
Analysis and Probability, W.A.J. Luxemburg (ed.), Holt, Rinehart and Winston,
New York (1969) 109-122.

\bibitem {Rudin}W. Rudin, \textit{Principles of Mathematical Analysis}, 3rd
ed. McGraw-Hill (1976).

\bibitem {Stroyan-Luxemburg}K. D. Stroyan, W. A. J. Luxemburg,
\textit{Introduction to the theory of infinitesimals}, Pure and Applied
Mathematics, No. 72. Academic Press New York-London (1976).

\bibitem {Wattenberg}F. Wattenberg, [0, $\infty$]\textit{-valued, translation
invariant measures on }$\mathit{%
\mathbb{N}
}$\textit{\ and the Dedekind completion of *$\mathbb{R}$}, Pacific J. Math.,
90 (1980) 223-247.
\end{thebibliography}
\end{document}